\documentclass[11pt, a4paper]{amsart}
\usepackage[top=15mm,bottom=15mm,left=15mm,right=15mm]{geometry}
\usepackage{amsfonts}
\usepackage[foot]{amsaddr}
\makeatletter
\renewcommand{\email}[2][]{%
\ifx\emails\@empty\relax\else{\g@addto@macro\emails{,\space}}\fi%
  \@ifnotempty{#1}{\g@addto@macro\emails{\textrm{(#1)}\space}}%
  \g@addto@macro\emails{#2}%
}
\makeatother

\usepackage[utf8]{inputenc}
\usepackage{enumitem}
\usepackage{csquotes}
\usepackage{graphicx,subcaption} 

\usepackage{soul}
\usepackage{amsmath, amsthm, amssymb, mathtools, amscd, graphicx, color}
\usepackage{accents}
\usepackage{bbm}
\usepackage{parskip}
\usepackage{textcomp}
\usepackage{comment}
\usepackage[colorlinks=true,urlcolor=blue,citecolor=blue,linkcolor=blue]{hyperref}

\newtheorem{theorem}{Theorem}[section]
\newtheorem{lemma}[theorem]{Lemma}
\newtheorem{proposition}[theorem]{Proposition}

\theoremstyle{definition}
\newtheorem{definition}[theorem]{Definition}

\newtheorem{claim}[theorem]{Claim}
\theoremstyle{remark}

\numberwithin{equation}{section}
\newcommand{\ubar}[1]{\underaccent{\bar}{#1}}
\newcommand{\absmod}[1]{\left|#1\right|}

\author{Javed Hazarika}
\author{Debashis Paul}
\address{Applied Statistics Division, Indian Statistical Institute, Kolkata}
\email{javarika@gmail.com; debpaul.isi@gmail.com}

\date{25/09/2024}
\title{Limiting Spectral Distribution of a Random Commutator Matrix}
\keywords{Commutator matrix; Limiting spectral distribution; Random matrix theory; Stieltjes transform}

\begin{document}
\begin{abstract}
We study the spectral properties of a class of random matrices of the form $S_n^{-} = n^{-1}(X_1 X_2^* - X_2 X_1^*)$ where
$X_k = \Sigma^{1/2}Z_k$, for $k=1,2$, 
$Z_k$'s are independent $p\times n$ complex-valued random matrices, and $\Sigma$ is a $p\times p$ positive semi-definite matrix, independent of 
the $Z_k$'s. We assume that $Z_k$'s have independent entries with zero mean and unit variance. The skew-symmetric/skew-Hermitian matrix $S_n^{-}$ will be referred to as a random commutator matrix associated with the samples $X_1$ and $X_2$. We show that, when the dimension $p$ and sample size $n$ increase simultaneously, so that $p/n \to c \in (0,\infty)$, there exists a limiting spectral distribution (LSD) for $S_n^{-}$, supported on the imaginary axis, under the assumptions that the spectral 
distribution of $\Sigma$ converges weakly and the entries of $Z_k$'s have moments of sufficiently high order. This nonrandom LSD can be 
described through its Stieltjes transform, which satisfies a coupled Mar\v{c}enko-Pastur-type functional equations. 
In the special case when 
$\Sigma = I_p$, we show that the LSD of $S_n^{-}$ is a mixture of a degenerate distribution at zero (with positive mass if $c > 2$), and a continuous distribution with a symmetric density function supported on a compact interval on the imaginary axis. Moreover, we show that the 
companion matrix $S_n^{+} = n^{-1}\Sigma_n^\frac{1}{2}(Z_1Z_2^* + Z_2Z_1^*)\Sigma_n^\frac{1}{2}$,  under identical assumptions, has an LSD supported on the real line, which can be similarly characterized.
\end{abstract}

\maketitle
\section{Introduction}
Since the seminal works on the behaviour of the 
empirical distribution of eigenvalues of large-dimensional symmetric matrices and sample 
covariance matrices by Wigner 
\cite{Wigner1958}  and  Mar\v{c}enko and Pastur \cite{MarcenkoPastur1967} respectively,
there have been extensive studies on establishing limiting behavior of
various classes of random matrices. With the traditional definitions of sample size and dimension for multivariate observations, one may refer to the high-dimensional asymptotic regime where these quantities are proportional as the random matrix regime. In the random matrix regime, there have been discoveries of nonrandom limits for the empirical distribution of sample eigenvalues of various classes of symmetric or hermitian matrices. Notable classes of examples include matrices known as Fisher matrices (or ``ratios'' of independent sample covariance matrices (\cite{CLTFMatrix1}, \cite{CLTFMatrix2}), signal-plus-noise matrices (\cite{SignalNoise}) arising in signal processing, sample covariance corresponding to data with separable population covariance structure (\cite{Lixin06},\cite{SepCovar}), with a given variance profile (\cite{varProfile}, symmetrized sample autocovariance matrices associated with stationary linear processes (\cite{Staionary1}, \cite{Staionary2}, \cite{Staionary3}), sample cross covariance matrix (\cite{CrossCov}), etc. Studies of the spectra of these classes of random matrices mentioned above are often motivated by various statistical inference problems. 
In this paper, we study the asymptotic behavior of the spectra of a class of random matrices that we refer to as ``random commutator matrices'' under the random matrix regime, and discuss a potential application to
a statistical inference problem involving covariance matrices. 

As the setup for introducing these random matrices, suppose we have $p$-variate independent samples of the same size $n$ 
(expressed as $p\times n$ matrices) denoted by
$X_j = [X_{j,1}:\cdots:X_{j,n}]$, for $j = 1,2$,
from two populations with zero mean and variance $\Sigma$. We shall study the spectral 
properties of the matrix $S_n^{-}$ defined as $S_n^{-} = n^{-1}(X_1 X_2^* - X_2 X_1^*)$, where $X_i^*$ denotes the Hermitian conjugate of $X_i$.
Given the analogy with a \textit{commutator matrix}, we shall refer to $S_n^{-}$ as a 
``sample commutator matrix'' associated with the data $(X_1,X_2)$.
A distinctive feature of $S_n^{-}$ is that it is skew-symmetric, so that the eigenvalues of  $S_n^{-}$ are purely imaginary numbers.

As a primary contribution, in this paper we establish the existence of limits
for the empirical spectral distribution (ESD) of $S_n^{-}$, when $p,n \to \infty$ such that $p/n \to c \in (0,\infty)$, and describe the limiting spectral distribution (LSD) through its Stieltjes transform, under additional technical assumptions
on the statistical model. This LSD 
can be derived as a unique solution of a 
pair of functional equations describing its Stieltjes transform. The proof techniques are largely based on the matrix decomposition based approach popularized by \cite{BaiSilv09}. 
Furthermore, 
in the special case when $\Sigma = I_p$, we completely describe 
the LSD of $S_n^{-}$ as a mixture distribution on the imaginary axis with a point mass at zero (only if $c > 2$),
and a symmetric distribution with a density. Establishment of 
this result requires a very careful analysis of the Stieltjes transform
of the LSD of $S_n^{-}$, since the latter satisfies a cubic equation 
for each complex argument. Somewhat remarkably, the density function 
of the continuous component of the LSD can be derived in a closed (albeit
complicated) functional form that depends only on the value of $c$.

As a further contribution, we also study the asymptotic
behaviour of the spectrum of the companion matrix $S_n^{+}$, which is 
symmetric/Hermitian, when $\Sigma_1 = \Sigma_2$. We also characterize the 
limiting spectral distribution when the common covariance is the identity matrix. 
The results follow a similar pattern, which is why 
we state these results in parallel with our main results (about 
the spectral distribution of $S_n^{-}$).

We provide brief statistical motivations for studying the spectrum of $S_n^{-}$. As a first interesting observation, suppose that we have a complex random matrix $X = X_1 + \mathbbm{i}X_2$ where $X_1,X_2$ are real matrices. Then $S_n^-$ is the imaginary part of the $n^{-1}XX^*$ which is the sample covariance matrix associated with the data given by $X$. 

As another potential application, consider a set of $n$ \textit{paired} $p$-dimensional observations from a multivariate Gaussian distribution. Denote the two samples as $X_1 =\Sigma^\frac{1}{2}W$ and $X_2 = \Sigma^\frac{1}{2}Z$ where    $W = (W_{ij}), Z = (Z_{ij}) \in \mathbb{R}^{p \times n}$ having independent entries with zero mean and unit variance. The experimenter suspects an element-wise dependence, i.e., $\operatorname{Corr}(Z_{ij}, W_{ij}) = \rho$, and would like to test the hypothesis $H_0: \rho = 0$ against $H_1: \rho \neq 0$.

We can characterize this dependence in terms of another independent Gaussian random matrix $V = (V_{ij})$, with i.i.d. standard normal entries, as follows. Observe that, distributionally, we have the following representation:
$$
W_{ij} = \rho Z_{ij} + \sqrt{1 - \rho^2}V_{ij}, \qquad \mbox{for}~i=1,\ldots,p,~j=1,\ldots,n.
$$
Denoting $[\cdot,\cdot]$ to be the associated commutator operator, we see that 
\begin{align}\label{test1}
[X_1, X_2] = \Sigma^\frac{1}{2}[Z, W]\Sigma^\frac{1}{2} =& \Sigma^\frac{1}{2}(ZW^* - WZ^*)\Sigma^\frac{1}{2}\\
       =& \Sigma^\frac{1}{2}\bigg(Z(\rho Z^* + \sqrt{1-\rho^2})V^*) - (\rho Z + \sqrt{1-\rho^2}V)Z^* \bigg)\Sigma^\frac{1}{2}\notag\\
       =& \sqrt{1-\rho^2}\Sigma^\frac{1}{2}(ZV^*-VZ^*)\Sigma^\frac{1}{2} = \sqrt{1-\rho^2}\Sigma^\frac{1}{2}[Z, V]\Sigma^\frac{1}{2} \notag    
\end{align}
Note that under the null hypothesis, $Z$ and $W$ are independent, thus allowing us to derive the limiting spectral distribution of $[X_1, X_2]$ using the results derived in this paper. Even under the alternative, (\ref{test1}) allows us to derive the limiting spectral distribution of $[X_1, X_2]$, by making used of the fact that $Z$ and $V$ are independent. Indeed, under the alternative, the only change in the form of the LSD is that the support of the LSD shrinks by a factor of $\sqrt{1-\rho^2}$. This result can be helpful in deriving asymptotic properties of test statistics for testing $H_0:\rho=0$ vs. $H_1:\rho \neq 0$ if such statistics are derived 
from linear functionals of the eigenavlues of 
$[X_1, X_2]$.

The rest of the paper is organized as follows. In Section \ref{sec:model},
we describe the basic data model and introduce the key objects.
Section \ref{sec:Measures} contains results relevant to measures on the imaginary axis. The main result (Theorem \ref{t3.1}) on the existence of LSD under a general common covariance for the two populations is presented in Section \ref{sec:general_covariance}. 
Section \ref{sec:identity_covariance} is focused on giving a detailed description of the LSD when the common covariance is the identity matrix. It also includes numerical validations of the theoretical distribution. Section \ref{sec:Hermitian} briefly 
describes the few analogous results for the Hermitian case, 
i.e., corresponding to the matrix $S_n^{+}$. 
Appendix-A carries a few general purpose results related to matrices and convergence of random variables. Appendix-B contains results and proofs of theorems stated in Section \ref{sec:general_covariance}. Results and proofs of Theorems stated in Section  \ref{sec:identity_covariance} are presented in Appendix-C. Appendix-D contains the proofs of the results in Section \ref{sec:Hermitian}.

\section{Model and preliminaries}\label{sec:model}

Suppose $\{Z_{1}^{(n)}\}_{n=1}^{\infty}, \{Z_{2}^{(n)}\}_{n=1}^{\infty}$ are sequences of random matrices each having dimension $p \times n$ such that $p/n \rightarrow c \in (0, \infty)$. The entries have zero mean, unit variance and uniformly bounded moments of order $4 + \eta_0$ for some $\eta_0 > 0$. Let $\Sigma_{n} \in \mathbb{C}^{p \times p}$ be a sequence of random positive definite matrices such that the empirical spectral distributions (ESD) of $\{\Sigma_{n}\}_{n=1}^{\infty}$ converge weakly to a probability distribution function $H$ in an almost sure sense. We are interested in the limiting behaviour (as $p, n \rightarrow \infty$) of the ESDs of matrices of the type 
\begin{align*}
&S_n^\pm := \frac{1}{n}X_{1}^{(n)} (X_{2}^{(n)})^{*} \pm \frac{1}{n}X_{1}^{(n)}(X_{2}^{(n)})^{*} \text{, where } \label{defining_Sn}\tag{1.1} \\
&X_{k}^{(n)} := (\Sigma_{n})^{\frac{1}{2}}Z_{k}^{(n)} \text{ for } k \in \{1, 2\}\label{defining_Xk}\tag{1.2}
\end{align*}
Henceforth, for simplicity we will use $Z_k$ (corresp., $X_k$) to denote $Z_k^{(n)}$ (corresp., $X_k^{(n)}$), respectively for $k=1,2$. We use the method of Stieltjes Transforms to arrive at the non-random LSD of such matrices. The main results of this paper are mentioned in Theorem \ref{t3.1} and Theorem \ref{DensityDerivation}.

Note that $S_n^+$ is Hermitian and $S_n^-$ is skew-Hermitian. As such their eigenvalues are completely supported on the real and imaginary axes respectively. An interesting thing we discovered through our analysis is that the results for $S_n^+$ (corresp., $S_n^-$) bear striking resemblance to each other. The proof techniques adopted for both cases are also very similar. To avoid repetition and because of our belief that the results from $S_n^-$ might find more practical applications, we will focus on the results of the skew-Hermitian version.

\section{Measures on the imaginary axis}\label{sec:Measures}

The existing definition of Stieltjes transform and basic results involving the weak convergence of probability measures are adequate when we consider measures defined over (subsets of) the real line. However, we are dealing with skew-Hermitian matrices which have purely imaginary eigenvalues. In this section, we modify/ develop existing results to derive some analogous results that fit our case.

Let $\mathbbm{i} = \sqrt{-1}$ and $\mathbb{C}_L := \{-u + \mathbbm{i}v : u > 0, v \in \mathbb{R}\}, \mathbb{C}_R := \{u + \mathbbm{i}v : u > 0, v \in \mathbb{R}\}$ denote the left and right halves of the complex plane respectively (excluding the imaginary axis). For a complex number $z$, we use $\Re(z)$ and $\Im(z)$ to denote its real and imaginary parts respectively. 
\begin{definition}
\textbf{Stieltjes Transform:} For a probability measure $\mu$ on the imaginary axis, define\\
$s_{\mu}: \mathbb{C} \backslash \operatorname{supp}(\mu) \rightarrow \mathbb{C}$, \hspace{3mm} $\displaystyle s_{\mu}(z) = \int_{\mathbb{R}} \dfrac{\mu(dt)}{\mathbbm{i}t - z}$    
\end{definition}

With this definition, we immediately observe the following properties.
\begin{description}
    \item[1] $s_\mu(.)$ is analytic on its domain and $s_{\mu}(\mathbb{C}_L) \subset \mathbb{C}_R$ and $s_{\mu}(\mathbb{C}_R) \subset \mathbb{C}_L$ 
    \item[2] $|s_{\mu}(z)| \leq 1/|\Re(z)|$
    \item[3] If $\mu$ admits a density at $\mathbbm{i}x$ where $x \in \mathbb{R}$, then 
    \begin{align}\label{inversion_density}
    f_{\mu}(x) = \dfrac{1}{\pi} \underset{\epsilon \downarrow 0}{\lim} \Re  (s_{\mu}(-\epsilon + \mathbbm{i} x))
    \end{align}
    \item[4] If $\mu$ admits a point mass at $\mathbbm{i}x$ where $x \in \mathbb{R}$, then 
    \begin{align}\label{inversion_pointMass}
    \mu(\{x\}) = \underset{\epsilon \downarrow 0}{\lim} \epsilon s_{\mu}(-\epsilon + \mathbbm{i} x)        
    \end{align}
    \item[5] For $\mathbbm{i}a,\mathbbm{i}b$ continuity points of $\mu$, we have
    \begin{align}\label{measureOfInterval}
        \mu([\mathbbm{i}a, \mathbbm{i}b]) = \dfrac{1}{\pi}\underset{\epsilon \downarrow 0}{\lim}\int_a^b\Re(s_\mu(-\epsilon + \mathbbm{i}x)dx
    \end{align}
\end{description}

\begin{definition}
    \textbf{Distribution Function over the imaginary axis:} For a skew-Hermitian matrix $S \in \mathbb{C}^{p \times p}$ with eigenvalues $\{\mathbbm{i}\lambda_j\}_{j=1}^p$, we define the empirical spectral distribution of $S$ as
    \begin{align}\label{ESD_of_skHerm}
F^S:\mathbbm{i}\mathbb{R} \rightarrow [0, 1]; \hspace{3mm} F^S(\mathbbm{i}x) = \frac{1}{p}\sum_{j=1}^p \mathbbm{1}_{\{\lambda_j \leq x\}}        
    \end{align}
\end{definition}
Note that $-\mathbbm{i}S$ is Hermitian with eigenvalues $\{\lambda_j\}_{j=1}^p$. Reconciling (\ref{ESD_of_skHerm}) with the established definition of ESD for Hermitian matrices (as per Section 2 of \cite{BaiSilv95}), we thus have 
\begin{align}\label{ESD_sksym_equal_sym}
F^S(\mathbbm{i}x) = F^{-\mathbbm{i}S}(x)  \hspace{2mm} \forall x \in \mathbb{R}   
\end{align}
We will be employing this strategy of looking at the real counterparts of distribution functions on the imaginary axis throughout the paper. In particular, if $F$ denotes the distribution function of a purely imaginary random variable $X$, then denoting $\Tilde{F}$ as the distribution function of $-\mathbbm{i}X$ we have 
\begin{align}\label{CDF_imag_equal_real}
F(\mathbbm{i}x) = \Tilde{F}(x)  \text{ for } x \in \mathbb{R}  
\end{align}
This allows us to define the analogous Levy metric between distribution functions $F, G$ on the imaginary axis as 
$L_{im}(F, G) := L(\tilde{F}, \tilde{G})$ where $L(\cdot, \cdot)$ is the ``standard" Levy metric between distributions over the real line. Similarly, we define the uniform metric between $F$ and $G$ as $||F-G||_{im} := ||\tilde{F} -\tilde{G}||$ where $||\cdot||$ represents the ``standard" uniform metric between distributions over the real line. Therefore, using Lemma B.18 of \cite{BaiSilv09} leads to the following analogous inequality between Levy and uniform metrics.
\begin{align}\label{Levy_vs_uniform}
    L_{im}(F, G) = L(\tilde{F}, \tilde{G}) \leq ||\tilde{F} -\tilde{G}|| = ||F-G||_{im}    
\end{align}

In a similar vein, the weak convergence of a sequence of probability distributions (${F_n}_{n=1}^\infty$) over the imaginary axis is equivalent to the weak convergence of their real counterparts to an appropriate probability distribution over the real line. The following is an analogue of a celebrated result linking convergence of Stieltjes transforms of measures to the weak convergence of measures on the real axis.
\begin{proposition}\label{GeroHill}
    For a skew-Hermitian matrix $S_n$, let $s_n$ be the Stieltjes transform of $F^{S_n}$. If $s_n(z) \xrightarrow{a.s.} s_F(z)$ for $z \in \mathbb{C}_L$ and $\underset{y \rightarrow +\infty}{\lim}ys_F(-y) = 1$, then $F^{S_n} \xrightarrow{d} F$ a.s. where $s_F$ is the Stieltjes transform of $F$. 
\end{proposition}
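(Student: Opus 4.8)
The plan is to transfer the statement to the real line through the rotation $S\mapsto-\mathbbm{i}S$ and then appeal to the classical continuity theorem for Stieltjes transforms. Since $-\mathbbm{i}S_n$ is Hermitian, $F^{-\mathbbm{i}S_n}$ is a genuine probability distribution on $\mathbb{R}$, and by (\ref{ESD_sksym_equal_sym}) it is exactly the real counterpart of $F^{S_n}$. Write $m_\nu(\zeta)=\int_{\mathbb{R}}(t-\zeta)^{-1}\nu(dt)$ for the ordinary Stieltjes transform of a measure $\nu$ on $\mathbb{R}$, and note that $\zeta:=-\mathbbm{i}z$ maps $\mathbb{C}_L$ bijectively onto the open upper half-plane. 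Then for any probability measure $\mu$ on the imaginary axis, with real counterpart $\tilde\mu$,
\begin{align*}
s_\mu(z) \;=\; \int_{\mathbb{R}}\frac{\mu(dt)}{\mathbbm{i}t-z} \;=\; \frac{1}{\mathbbm{i}}\int_{\mathbb{R}}\frac{\tilde\mu(dt)}{t-(-\mathbbm{i}z)} \;=\; -\mathbbm{i}\,m_{\tilde\mu}(-\mathbbm{i}z),
\end{align*}
equivalently $m_{\tilde\mu}(\zeta)=\mathbbm{i}\,s_\mu(\mathbbm{i}\zeta)$. Applying this with $\mu=F^{S_n}$ and with the limit, the hypothesis $s_n(z)\xrightarrow{a.s.}s_F(z)$ on $\mathbb{C}_L$ becomes $m_n(\zeta)\xrightarrow{a.s.}g(\zeta):=\mathbbm{i}\,s_F(\mathbbm{i}\zeta)$ for every $\zeta$ in the upper half-plane, where $m_n:=m_{F^{-\mathbbm{i}S_n}}$.

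Next I would handle the almost-sure bookkeeping, since a priori the exceptional null set depends on $\zeta$. Fix a countable set $D$ in the upper half-plane with an accumulation point there; on the intersection of the corresponding probability-one events, $m_n\to g$ pointwise on $D$. Because the $m_n$ are analytic and, by property \textbf{2} transported through the rotation (note $\Re(\mathbbm{i}\zeta)=-\Im\zeta$), satisfy the uniform bound $|m_n(\zeta)|\le 1/\Im\zeta$ on compact subsets of the upper half-plane, Montel's theorem together with Vitali's convergence theorem upgrade this to locally uniform convergence $m_n\to g$ throughout the upper half-plane, on this single probability-one event, with $g$ analytic there.

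On that event I would then invoke the classical Stieltjes-transform continuity theorem (the Geronimo--Hill theorem; cf.\ Theorem~B.9 of \cite{BaiSilv09}): a pointwise limit of Stieltjes transforms of probability measures on $\mathbb{R}$ is the Stieltjes transform $g=m_{\tilde F}$ of a sub-probability measure $\tilde F$ on $\mathbb{R}$, and $F^{-\mathbbm{i}S_n}\xrightarrow{d}\tilde F$ holds as soon as $\tilde F(\mathbb{R})=1$. It remains to recognize the given normalization as this mass condition: taking $z=-y$ (so $\zeta=\mathbbm{i}y$) in the identity above,
\begin{align*}
y\,s_F(-y) \;=\; -\mathbbm{i}y\,m_{\tilde F}(\mathbbm{i}y) \;=\; \int_{\mathbb{R}}\frac{y^{2}-\mathbbm{i}yt}{t^{2}+y^{2}}\,\tilde F(dt),
\end{align*}
which tends to $\tilde F(\mathbb{R})$ as $y\to+\infty$ by dominated convergence; hence the hypothesis $\lim_{y\to+\infty}y\,s_F(-y)=1$ forces $\tilde F(\mathbb{R})=1$, and the continuity theorem applies.

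Finally, $\tilde F$ being a probability measure on $\mathbb{R}$ with $m_{\tilde F}=g$, its imaginary-axis version $F$ — defined via $F(\mathbbm{i}x)=\tilde F(x)$ as in (\ref{CDF_imag_equal_real}) — is a probability distribution on the imaginary axis whose Stieltjes transform, by $s_F(z)=-\mathbbm{i}\,m_{\tilde F}(-\mathbbm{i}z)$, is precisely the limit $s_F$. Since weak convergence on the imaginary axis is equivalent to weak convergence of the real counterparts, $F^{-\mathbbm{i}S_n}\xrightarrow{d}\tilde F$ a.s.\ is the same as $F^{S_n}\xrightarrow{d}F$ a.s. I expect the only genuine subtlety to be the second step — promoting the per-argument almost sure convergence of the $m_n$ to a single almost sure event carrying locally uniform convergence — which is exactly where the uniform bound on Stieltjes transforms and the normal-families argument enter; the rest is a routine transcription of standard real-line facts through the map $\zeta=-\mathbbm{i}z$.
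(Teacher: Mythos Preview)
Your proof is correct and follows essentially the same route as the paper: reduce to the classical real-line continuity theorem of Geronimo--Hill via the rotation $\zeta=-\mathbbm{i}z$, and check that the normalization $\lim_{y\to+\infty}ys_F(-y)=1$ becomes exactly the mass condition $\lim_{y\to\infty}\mathbbm{i}y\,m_{\tilde F}(\mathbbm{i}y)=-1$. The paper's own proof is in fact terser than yours---it simply quotes Theorem~1 of \cite{GeroHill03} and asserts the adaptation---whereas you carry out the transcription explicitly and also address the almost-sure bookkeeping (countable dense set plus Montel/Vitali), a point the paper leaves implicit.
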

\begin{proof}
The proof can be adapted with similar arguments from Theorem 1 of \cite{GeroHill03} which is stated below.

``Suppose that ($P_n$) are real Borel probability measures with Stieltjes transforms ($S_n$) respectively. If $\underset{n\rightarrow\infty}{\lim}S_n(z)=S(z)$ for all z with $\Im(z) > 0$, then there exists a Borel probability measure $P$ with Stieltjes transform $S_P=S$ if and only if 
$$\underset{y\rightarrow\infty}{\lim}\mathbbm{i}yS(\mathbbm{i}y)=-1$$ in which case $P_n\rightarrow P$ in distribution."
\end{proof}

The next proposition states a sufficient condition for the existence of density of a measure over the imaginary axis at a point in its support.

\begin{proposition}\label{SilvChoiResult1}
    Let $m_G(.)$ is the Stieltjes Transform of G, a probability measure $G$ on the imaginary axis. Then G is differentiable at $\mathbbm{i}x_0$, if $m^*(\mathbbm{i}x_0) \equiv \underset{z \in \mathbb{C}_L \rightarrow \mathbbm{i}x_0}{\lim}\Re(m_G(z))$ exists where  and its derivative at $\mathbbm{i}x_0$ is $({1}/{\pi})m^*(\mathbbm{i}x_0)$.
\end{proposition}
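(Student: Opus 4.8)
The plan is to mimic the classical Silverstein--Choi argument (from their study of the support of the LSD of sample covariance matrices) but transported to the imaginary-axis setting via the dictionary $F(\mathbbm{i}x) = \widetilde F(x)$ established in \eqref{CDF_imag_equal_real}. Concretely, write $z = -\epsilon + \mathbbm{i}x$ with $\epsilon \downarrow 0$, and set $w = \mathbbm{i}z = x + \mathbbm{i}\epsilon$, which lies in the \emph{upper} half-plane. If $\widetilde G$ denotes the real-line measure with $G(\mathbbm{i}x)=\widetilde G(x)$, then the ordinary (real-line) Stieltjes transform $s_{\widetilde G}(w) = \int (t-w)^{-1}\widetilde G(dt)$ is related to $m_G$ by a simple identity, namely $m_G(z) = \int (\mathbbm{i}t - z)^{-1} G(dt) = \int (\mathbbm{i}t - z)^{-1}\widetilde G(dt)$; factoring out $\mathbbm{i}$ from the denominator gives $m_G(z) = -\mathbbm{i}\int (t + \mathbbm{i}z)^{-1}\widetilde G(dt) = -\mathbbm{i}\, s_{\widetilde G}(-\mathbbm{i}z)$. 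With $z=-\epsilon+\mathbbm{i}x$ we have $-\mathbbm{i}z = \mathbbm{i}\epsilon + x$, so $m_G(-\epsilon+\mathbbm{i}x) = -\mathbbm{i}\,s_{\widetilde G}(x+\mathbbm{i}\epsilon)$, and therefore $\Re\, m_G(-\epsilon + \mathbbm{i}x) = \Im\, s_{\widetilde G}(x+\mathbbm{i}\epsilon)$.

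First I would record this identity carefully and note that $z \in \mathbb{C}_L$ (i.e. $\Re z < 0$) corresponds exactly to $\Im(-\mathbbm{i}z) = -\Re z > 0$, so the limit ``$z \in \mathbb{C}_L \to \mathbbm{i}x_0$'' is precisely the limit ``$w \to x_0$ from the upper half-plane'' for $s_{\widetilde G}$. Then the hypothesis that $m^*(\mathbbm{i}x_0) = \lim_{z\in\mathbb{C}_L \to \mathbbm{i}x_0}\Re\, m_G(z)$ exists translates verbatim into: the imaginary part of the real-line Stieltjes transform $s_{\widetilde G}$ has a (finite) limit as $w\to x_0$ nontangentially from above. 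This is exactly the hypothesis of the classical Silverstein--Choi lemma (see \cite{SilvChoi95}, or the standard fact about boundary behaviour of Stieltjes transforms), whose conclusion is that $\widetilde G$ is differentiable at $x_0$ with density $\widetilde G{}'(x_0) = \frac1\pi \lim \Im\, s_{\widetilde G}(w)$.

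Next I would invoke that classical result as a black box to conclude $\widetilde G$ is differentiable at $x_0$ with $\widetilde G{}'(x_0) = \frac1\pi m^*(\mathbbm{i}x_0)$. Finally, I would translate back: differentiability of $\widetilde G$ at $x_0$ is, by definition, differentiability of $G$ at $\mathbbm{i}x_0$ in the sense introduced for measures on the imaginary axis, and the density value matches the claimed $\frac1\pi m^*(\mathbbm{i}x_0)$. This also dovetails with property~3 of the Stieltjes transform, \eqref{inversion_density}, since $\Re\, m_G(-\epsilon+\mathbbm{i}x) \to \frac1\pi$-multiple of the density, giving a consistency check rather than an independent argument.

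The only real obstacle is bookkeeping: getting the factors of $\mathbbm{i}$ and the sign of $\epsilon$ right in the change of variables $w = -\mathbbm{i}z$, and making sure that the nontangential/half-plane approach condition transfers correctly (it does, since multiplication by $-\mathbbm{i}$ is a rotation mapping $\mathbb{C}_L$ onto the open upper half-plane and fixing the relevant boundary point up to the rotation $\mathbbm{i}x_0 \mapsto x_0$). There is no analytic difficulty beyond what \cite{SilvChoi95} already supplies; the proposition is essentially a restatement of their lemma under the imaginary-axis dictionary, so I would keep the proof short, stating the identity $\Re\, m_G(-\epsilon+\mathbbm{i}x) = \Im\, s_{\widetilde G}(x+\mathbbm{i}\epsilon)$ and then citing the real-line result.
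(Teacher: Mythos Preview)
Your proposal is correct and takes essentially the same approach as the paper: both reduce the statement to Theorem~2.1 of \cite{SilvChoi} for real-line measures. The paper's proof is in fact nothing more than the sentence ``the proof is similar to that of Theorem~2.1 of \cite{SilvChoi}'' together with a quotation of that theorem; your write-up is more explicit, carrying out the rotation $w=-\mathbbm{i}z$ and verifying the identity $\Re\, m_G(-\epsilon+\mathbbm{i}x)=\Im\, s_{\widetilde G}(x+\mathbbm{i}\epsilon)$, but the underlying idea is identical.
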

\begin{proof}
The proof is similar to that of Theorem 2.1 of \cite{SilvChoi} which is stated below.

``Let G be a p.d.f. and $x_0 \in \mathbb{R}$. Suppose $\Im(m_G(x_0)) \equiv \underset{z \in \mathbb{C}^+ \rightarrow x_0}{\lim}\Im(m_G(z))$ exists. Then G is differentiable at $x_0$, and its derivative is $({1}/{\pi})\Im(m_G(x_0))$."    
\end{proof}

\section{LSD under an arbitrary common scaling matrix}\label{sec:general_covariance}

In this section, we address the problem assuming that the columns of $X_k= [X_{k1}:\ldots:X_{kn}]$ for $k \in \{1,2\}$ are independent samples from probability distributions with a common variance structure (say $\Sigma$). The main result of this section is stated below. We will use $S_n$ instead of $S_n^-$ (\ref{defining_Sn}) for simplicity.

\begin{theorem}\label{t3.1}
Suppose the following hold.
\begin{description}
    \item[T1] $c_n := p/n \rightarrow c \in (0, \infty)$
    \item[T2] $\{Z_{1}^{(n)}\}_{n=1}^{\infty}, \{Z_{2}^{(n)}\}_{n=1}^{\infty} \in \mathbb{C}^{p \times n}$ are random matrices each of dimension $p \times n$ with independent entries having zero mean, unit variance and and uniform bound exists on moments of order $4 + \eta_0$ for some $\eta_0 > 0$
    \item[T3] $\Sigma_{n} \in \mathbb{C}^{p \times p}$ is a sequence of p.s.d. matrices independent of $Z_{1n}, Z_{2n}$ such that the ESDs of $\{\Sigma_{n}\}_{n=1}^{\infty}$ converges weakly to the probability distribution $H \neq \delta_0$ almost surely, i.e. $F^{\Sigma_n} \xrightarrow{d} H$ a.s.
    \item[T4] Further, $\exists$ $C > 0$ such that $\underset{n \rightarrow \infty}{\limsup} \frac{1}{p}\operatorname{trace}(\Sigma_n) < C$.
\end{description}
Then, $F^{S_n} \xrightarrow{d} F$ a.s. where F is a non-random distribution with Stieltjes Transform at $z \in \mathbb{C}_L$ given by
\begin{equation*}\label{s_main_eqn}\tag{4.1}
s(z) = \dfrac{1}{z}\bigg(\dfrac{2}{c}-1\bigg) + \dfrac{1}{\mathbbm{i}cz}\bigg(\dfrac{1}{\mathbbm{i} +ch(z)} - \dfrac{1}{-\mathbbm{i} +ch(z)}\bigg)    
\end{equation*}
where $h(z) \in \mathbb{C}_R$ is the unique number such that 
\begin{equation*}\label{h_main_eqn}\tag{4.2}
h(z) = \int \dfrac{\lambda dH(\lambda)}{\bigg[-z + \lambda\bigg(\dfrac{1}{\mathbbm{i} +ch(z)} + \dfrac{1}{-\mathbbm{i} + ch(z)}\bigg)\bigg]}    
\end{equation*}
Further $h$ is analytic in $\mathbb{C}_L$ and has a continuous dependence on $H$.
\end{theorem}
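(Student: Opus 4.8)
\emph{Strategy.} By Proposition~\ref{GeroHill} it suffices to prove that, for each fixed $z\in\mathbb{C}_L$, the Stieltjes transform $s_n(z)=p^{-1}\operatorname{tr}(S_n-zI)^{-1}$ converges a.s.\ to the function $s(z)$ determined by \eqref{s_main_eqn}--\eqref{h_main_eqn}, and that $\lim_{y\to+\infty}y\,s(-y)=1$. (Note $S_n-zI$ is invertible because $S_n$ has purely imaginary spectrum and $\Re z\neq0$, with $\|(S_n-zI)^{-1}\|\le|\Re z|^{-1}$.) As preliminary reductions I would truncate, recenter and rescale the entries of $Z_1,Z_2$ so that they are bounded by $\delta_n\sqrt{n}$ (legitimate by T2), and---because T4 controls $p^{-1}\operatorname{tr}\Sigma_n$ but \emph{not} $\|\Sigma_n\|$---also cap the spectrum of $\Sigma_n$ at a large constant $K$, to be sent to $\infty$ at the very end. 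Each of these replacements perturbs $S_n$ by a matrix of rank $o(p)$ (using T3--T4 and the moment bound), so by the rank inequalities for the Levy and uniform metrics recorded in Appendix~A the LSD, if it exists, is unchanged; hence I may assume the entries are bounded and $\|\Sigma_n\|\le K$.

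\emph{The self-consistent system.} Write $S_n=n^{-1}\sum_{j=1}^{n}U_jJ_2U_j^{*}$ with $U_j=\Sigma_n^{1/2}[\,z_{1j}:z_{2j}\,]\in\mathbb{C}^{p\times 2}$ and $J_2=\left(\begin{smallmatrix}0&1\\-1&0\end{smallmatrix}\right)$, put $R=(S_n-zI)^{-1}$ and $h_n(z)=p^{-1}\operatorname{tr}(\Sigma_nR)$, and let $R^{(j)}$ be $R$ with the $j$-th rank-two summand deleted. A rank-two Woodbury identity gives $U_jJ_2U_j^{*}R=U_j(J_2^{-1}+M_j)^{-1}U_j^{*}R^{(j)}$ with $M_j=n^{-1}U_j^{*}R^{(j)}U_j$, and since $z_{1j},z_{2j}$ are independent of $R^{(j)}$ and of each other, the trace and quadratic-form concentration lemmas give $M_j\approx\bigl(n^{-1}\operatorname{tr}(\Sigma_nR^{(j)})\bigr)I_2\approx c_nh_nI_2$. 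Inserting this into the identity $I+zR=S_nR$ and then applying $p^{-1}\operatorname{tr}(C\,\cdot\,)$ for deterministic matrices $C$ yields, up to errors that are $o(1)$ a.s., the deterministic equivalent $R(z)\approx(\beta_n\Sigma_n-zI)^{-1}$, where $\beta_n=\operatorname{tr}\bigl((J_2^{-1}+c_nh_nI_2)^{-1}\bigr)=\tfrac{2c_nh_n}{c_n^2h_n^2+1}=\tfrac{1}{\mathbbm{i}+c_nh_n}+\tfrac{1}{-\mathbbm{i}+c_nh_n}$. Substituting the deterministic equivalent back gives $h_n=p^{-1}\operatorname{tr}(\Sigma_nR)\approx\int\lambda(\beta_n\lambda-z)^{-1}\,dF^{\Sigma_n}(\lambda)$, while the choice $C=I$ gives $1+zs_n\approx\beta_nh_n$; in the limit $n\to\infty$ these become \eqref{h_main_eqn} and, after the elementary rearrangement $\tfrac{1}{z}\bigl(\tfrac{2ch^2}{c^2h^2+1}-1\bigr)=\tfrac{1}{z}\bigl(\tfrac{2}{c}-1\bigr)-\tfrac{2}{cz(c^2h^2+1)}$ together with the identity $\tfrac{1}{\mathbbm{i}+ch}-\tfrac{1}{-\mathbbm{i}+ch}=\tfrac{-2\mathbbm{i}}{c^2h^2+1}$, the formula \eqref{s_main_eqn}. (The $\tfrac{1}{z}(\tfrac{2}{c}-1)$ term is also transparent from the $2n\times 2n$ companion matrix $n^{-1}JW^{*}\Sigma_nW$, $W=[Z_1:Z_2]$, which has the same nonzero spectrum as $S_n$ but a null space of different dimension.) That $s_n-\mathbb{E}s_n\to0$ and $h_n-\mathbb{E}h_n\to0$ a.s.\ follows from a martingale decomposition over the $n$ column pairs, the rank-two bound $|s_n-s_n^{(j)}|\le C(p|\Re z|)^{-1}$ (and its analogue for $h_n$, here using $\|\Sigma_n\|\le K$), Burkholder's inequality and Borel--Cantelli. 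The step I expect to be hardest is this error control---taming the $2\times 2$ factors $(J_2^{-1}+M_j)^{-1}$ and, more delicately, propagating the estimates when $\Sigma_n$ carries a handful of very large eigenvalues---which is exactly what the spectral cap on $\Sigma_n$ and hypothesis T4 are there to make possible.

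\emph{Solving the equation and closing.} It remains to establish that \eqref{h_main_eqn} has a unique solution $h(z)\in\mathbb{C}_R$. For $h\in\mathbb{C}_R$ both $\tfrac{1}{\mathbbm{i}+ch}$ and $\tfrac{1}{-\mathbbm{i}+ch}$ have positive real part, so the bracketed denominator in \eqref{h_main_eqn} has real part exceeding $-\Re z>0$ and is nonzero, and (since $H\neq\delta_0$) the right-hand side $\Phi_z(h)$ again lies in $\mathbb{C}_R$; thus $\Phi_z$ is a holomorphic self-map of $\mathbb{C}_R$, and being neither the identity nor a M\"obius automorphism it has at most one fixed point in $\mathbb{C}_R$ (a holomorphic self-map of the unit disk---hence of $\mathbb{C}_R$, which is conformally a disk---other than the identity has at most one interior fixed point). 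Existence of the fixed point I would get from the matrix side: after the truncations $\|S_n\|$ is a.s.\ eventually bounded, so $\{F^{S_n}\}$ is a.s.\ tight, and any weak subsequential limit has a Stieltjes transform solving the system; uniqueness then promotes this to convergence of the whole sequence, which together with the concentration step gives $s_n\to s$ and $h_n\to h$ a.s. At the fixed point $|\Phi_z'(h)|<1$ (strict Schwarz--Pick, as $\Phi_z$ is not an automorphism), so the implicit function theorem gives analyticity of $h$ in $z$ on $\mathbb{C}_L$ and, by the same stability argument, continuity of $h$ in $H$; the latter also licenses letting $K\to\infty$ to remove the cap on $\Sigma_n$. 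Finally, \eqref{h_main_eqn} gives $h(-y)\sim y^{-1}\int\lambda\,dH(\lambda)\to0$ as $y\to+\infty$ (the integral finite by T4 and Fatou), so that $y\,s(-y)=1-\tfrac{2c\,h(-y)^2}{c^2h(-y)^2+1}\to1$, which is the last hypothesis needed for Proposition~\ref{GeroHill}.
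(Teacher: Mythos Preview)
Your proposal is correct and follows the same overall architecture as the paper: truncate the entries and cap the spectrum of $\Sigma_n$, build the deterministic equivalent via the rank-two Woodbury identity (your $U_jJ_2U_j^{*}$ packaging is exactly the paper's Lemma~\ref{Rank2Woodbury}), establish concentration of $s_n,h_n$ by a martingale decomposition over the $n$ column pairs, pass to subsequential limits to produce a solution of \eqref{h_main_eqn}, and finally remove the spectral cap by continuity in $H$.

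The one genuine methodological difference is the uniqueness/stability step. You observe that $\Phi_z$ is a bounded holomorphic self-map of the half-plane $\mathbb{C}_R$, hence not an automorphism, so by Schwarz--Pick it has at most one fixed point and satisfies $|\Phi_z'(h)|<1$ there; this feeds directly into the implicit function theorem for analyticity in $z$ and continuity in $H$. The paper instead proves uniqueness (Theorem~\ref{Uniqueness}) by a hands-on computation: it writes out $h_1-h_2$, applies H\"older to split the two $\frac{1}{\pm\mathbbm{i}+ch}$ pieces, and shows the resulting factor $\sqrt{c\rho_2(ch_1)I_2(h_1,H)}\sqrt{c\rho_2(ch_2)I_2(h_2,H)}$ is strictly $<1$ via the identity \eqref{realOf_h}. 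Analyticity of $h$ is then obtained not from the implicit function theorem but from Montel's theorem (the $h_n$ form a normal family, and uniform limits of analytic functions are analytic), and continuity in $H$ (Lemma~\ref{Continuity}) by a separate direct estimate that in the end reproduces the same $<1$ contraction bound. Your route is shorter and more conceptual; the paper's explicit inequality \eqref{lessThanOne} is reused several times elsewhere (e.g.\ in the proof of Claim~\ref{Claim} ruling out $\Re h_0=0$ for the $\tau\to\infty$ limit), so keeping it visible has some value for the later arguments.
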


\textbf{REMARK}: For a justification of $H \neq \delta_0$ in Theorem \ref{t3.1}, please refer to Lemma \ref{tightness} which also serves as a proof for tightness of the sequence $\{F^{S_n}\}_{n=1}^\infty$.   

\subsubsection{Notations}\label{notations}
The following expressions will be used frequently.
\begin{itemize}
    \item $X_k = [X_{k1}: \ldots: X_{kn}]$ and $Z_k = [Z_{k1}: \ldots: Z_{kn}]$ for $k \in \{1, 2\}$
    \item $z_{ij}^{(k)}$ refers to the $ij^{th}$ element of $Z_k$ for $k \in \{1,2\}$
    \item $S_n = \frac{1}{n}(X_1X_2^* - X_2X_1^*) = \frac{1}{n}\sum_{r=1}^n (X_{1r}X_{2r}^* - X_{2r}X_{1r}^*)$
    \item $S_{nj} = \frac{1}{n}\sum_{r\neq j} (X_{1r}X_{2r}^* - X_{2r}X_{1r}^*)$ for $j \in \{1, \ldots, n\}$
    \item For $z \in \mathbb{C}_L$, $Q(z) = (S_n - zI_p)^{-1}$ and $Q_{-j}(z) = (S_{nj} - zI_p)^{-1}$
\end{itemize}

Since we will be using the method of Stieltjes Transform to prove the weak convergence, we define the central objects of our work below.
\begin{definition}\label{defining_sn}
Let For $z \in \mathbb{C}_L$, let $s_n(z) := \frac{1}{p}\operatorname{trace}\{Q(z)\}$ be the Stieltjes Transform of $F^{S_n}$. 
\end{definition}
\begin{definition}\label{defining_hn}
For $z \in \mathbb{C}_L$, $h_n(z) := \frac{1}{p}\operatorname{trace}\{\Sigma_nQ(z)\}$    
\end{definition}

Both $s_n(z), h_n(z)$ are analytic functions on $\mathbb{C}_L$ as they are both Stieltjes Transforms of certain measures. To see this, let $S_n = P\Lambda P^*$ where $\Lambda = \operatorname{diag}(\{\mathbbm{i}\lambda_j\}_{j=1}^p)$ and $P$ is an appropriate unitary matrix.  Then 
$$h_n(z) = \frac{1}{p}\operatorname{trace}\{\Sigma_n Q(z)\} = \frac{1}{p}\sum_{j=1}^p \frac{a_{jj}}{\mathbbm{i}\lambda_j - z} \text{ where } a_{jj} = (P^*\Sigma_nP)_{jj} \geq 0$$ is the Stieltjes Transform of a measure on the imaginary axis assigning a mass of $a_{jj}/p$ to the point $\mathbbm{i}\lambda_j$. For simplicity, $h_n(z)$ and $s_n(z)$ will be referred to as $h_n$ and $s_n$ respectively with $z$ being an arbitrary fixed point in $\mathbb{C}_L$ unless explicitly specified. 

\textbf{REMARK:} The assumptions on $\Sigma_n$ hold in an almost sure sense. Moreover, $F^{\Sigma_n}$ converges weakly to a non-random $H$ almost surely. By the end of this Section, we show that conditioning on $\Sigma_n$, $F^{S_n}$ converges weakly to a non-random limit $F$ that depends on $\Sigma_n$ only through their limit $H$ which is non-random. This result holds irrespective of whether $\Sigma_n$ is random or not. Therefore, we will henceforth treat $\{\Sigma_n\}_{n=1}^\infty$ as a non-random sequence.

\subsection{Sketch of the proof}\label{ProofSketch}

The theorem will be proved in the following steps. For arbitrary $z \in \mathbb{C}_L$,
\begin{enumerate}
    \item[1] Show that there can be at most one  solution to (\ref{h_main_eqn}) in $\mathbb{C}_R$. 
    \item[2] To prove existence, a solution to (\ref{h_main_eqn}) is furnished by showing that $h_n(z)$ converges almost surely to a quantity ($h^\infty(z) \in \mathbb{C}_R$) that uniquely satisfies the equation. This is proved by showing that any subsequential limit of $h_n$ satisfies equation (\ref{h_main_eqn}) which by uniqueness implies all subsequential limits are the same.
    \item[3] This unique solution ($h^\infty(z)$) when plugged into (\ref{s_main_eqn}) gives a function ($s^\infty(z)$) which is shown to be a Stieltjes transform (of a measure on the imaginary axis). Let $F$ be the probability distribution characterised by $s^\infty(z)$.
    \item[4] Finally we will show that $s_n(z) \xrightarrow{a.s.} s^\infty(z)$ and thus $F^{S_n} \xrightarrow{d} F$ a.s. from Proposition \ref{GeroHill}.
\end{enumerate}

\begin{definition}
Define the complex-valued functions $\rho, \rho_2$ as
\begin{align*}
\rho(z) :=& \frac{1}{\mathbbm{i} + z} + \frac{1}{-\mathbbm{i} + z}, z \not \in \{\mathbbm{i}, -\mathbbm{i}\} \label{definingRho}\tag{4.3}\\
\rho_2(z) :=& \frac{1}{|\mathbbm{i} + z|^2} + \frac{1}{|-\mathbbm{i} + z|^2}, z \not \in \{\mathbbm{i}, -\mathbbm{i}\} \label{definingRho2}\tag{4.4}
\end{align*}
Then for $z \not \in \{\mathbbm{i}, -\mathbbm{i}\}$ we have,
\begin{align}\label{realOfRho}\tag{4.5}
\Re(\rho(z)) =& \frac{\Re(\overline{\mathbbm{i}+z})}{|\mathbbm{i}+z|^2} + \frac{\Re(\overline{-\mathbbm{i}+z})}{|-\mathbbm{i}+z|^2} = \rho_2(z)\Re(z)
\end{align}    
\end{definition}
REMARK: Note that $\rho(\overline{z}) = \overline{\rho(z)}$ and $\rho$ is analytic in any open set not containing $\pm\mathbbm{i}$. Also $\rho_2(z) > 0$ in its domain. Now we prove the unique solvability of (\ref{h_main_eqn}).

\subsection{Proof of Uniqueness}

\begin{theorem}\label{Uniqueness}
There exists at most one solution to the following equation within the class of functions that map $\mathbb{C}_L$ to $\mathbb{C}_R$.
    $$h(z) = \displaystyle \int \dfrac{\lambda dH(\lambda)}{-z + \dfrac{\lambda}{\mathbbm{i} +ch(z)} + \dfrac{\lambda}{-\mathbbm{i} +ch(z)}} = \int \dfrac{\lambda dH(\lambda)}{-z + \lambda \rho(ch(z))}$$
where H is any probability distribution function such that $\operatorname{supp}(H) \subset \mathbb{R}_{+}$ and $H \neq \delta_0$.\end{theorem}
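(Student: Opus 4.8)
The plan is to argue by contradiction: suppose $h_1, h_2 : \mathbb{C}_L \to \mathbb{C}_R$ both satisfy the fixed-point equation at some $z \in \mathbb{C}_L$, and derive a self-map contraction-type inequality forcing $h_1(z) = h_2(z)$. First I would subtract the two defining integrals and factor the difference of the integrands. Writing $g_k = c\,h_k(z) \in \mathbb{C}_R$ and using the shorthand $\rho$ from \eqref{definingRho}, the denominators are $D_k(\lambda) = -z + \lambda\rho(g_k)$, so
\begin{align*}
h_1(z) - h_2(z) = \int \lambda\left(\frac{1}{D_1(\lambda)} - \frac{1}{D_2(\lambda)}\right) dH(\lambda) = \big(\rho(g_2) - \rho(g_1)\big)\int \frac{\lambda^2\, dH(\lambda)}{D_1(\lambda) D_2(\lambda)}.
\end{align*}
Next I would expand $\rho(g_2) - \rho(g_1)$ as $(g_1 - g_2)\left(\frac{1}{(\mathbbm{i}+g_1)(\mathbbm{i}+g_2)} + \frac{1}{(-\mathbbm{i}+g_1)(-\mathbbm{i}+g_2)}\right)$, so that, after substituting $g_k = c h_k(z)$, we obtain $h_1(z) - h_2(z) = (h_1(z) - h_2(z)) \cdot c\, A \cdot B$ for explicit integrals $A$ and $B$. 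It then suffices to show $|c\,A\,B| < 1$, which gives $h_1(z) = h_2(z)$.

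The key structural input is sign/location control of all the relevant quantities, coming from the fact that $z \in \mathbb{C}_L$ (so $\Re(z) < 0$) and $h_k(z) \in \mathbb{C}_R$ (so $\Re(g_k) > 0$). I would first check that $\Re(D_k(\lambda)) < 0$: indeed $\Re(D_k(\lambda)) = -\Re(z) + \lambda\Re(\rho(g_k)) = -\Re(z) + \lambda\,\rho_2(g_k)\Re(g_k)$ by \eqref{realOfRho}, and since $\Re(z)<0$, $\lambda \ge 0$, $\rho_2(g_k) > 0$, $\Re(g_k) > 0$, this real part is strictly positive — wait, I must be careful with signs; the point is that $D_k$ stays in a fixed half-plane bounded away from $0$, uniformly in $\lambda \in \operatorname{supp}(H)$, which makes all the displayed integrals well-defined and finite (here assumption $H \neq \delta_0$ and $\operatorname{supp}(H)\subset\mathbb{R}_+$ matter, ruling out the degenerate $\lambda\equiv 0$ situation where the equation reads $h \equiv 0$ trivially and there is nothing to prove). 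Then, to bound $|c A B|$, I would apply the Cauchy–Schwarz inequality to each factor — this is the standard Marčenko–Pastur uniqueness trick. For instance, $\left|\int \frac{\lambda^2 dH}{D_1 D_2}\right| \le \left(\int \frac{\lambda^2 dH}{|D_1|^2}\right)^{1/2}\left(\int \frac{\lambda^2 dH}{|D_2|^2}\right)^{1/2}$, and similarly the two pieces of $B$ are controlled by $\left(\int \frac{\lambda^2 dH}{|\mathbbm{i}+g_k|^2 |D_k|^2}\right)^{1/2}$-type quantities after inserting the identity $\lambda^2 = \lambda^2$ appropriately; one then recognizes each resulting $\int \frac{\lambda^2 dH}{|\mathbbm{i}+g_k|^2|D_k|^2}$ and $\int \frac{\lambda^2 dH}{|-\mathbbm{i}+g_k|^2|D_k|^2}$ and re-assembles them, using $\rho_2(g_k) = \frac{1}{|\mathbbm{i}+g_k|^2} + \frac{1}{|-\mathbbm{i}+g_k|^2}$, into an expression that is dominated by $\frac{1}{c}$ times the imaginary-part (or real-part) identity obtained by taking $\Re$ or $\Im$ of the original equation $h_k(z) = \int \lambda/D_k\, dH$.

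The main obstacle — and the step I would spend the most care on — is this last bookkeeping: producing the \emph{self-consistent} inequality. The trick is that $\Re(g_k) = c\,\Re(h_k(z)) = c\int \Re(\lambda/D_k)\,dH = -c\int \frac{\lambda \Re(D_k)}{|D_k|^2}dH$, and $\Re(D_k) = -\Re(z) + \lambda\rho_2(g_k)\Re(g_k)$; rearranging gives an exact identity expressing $\Re(g_k)\big(1 + c\int \frac{\lambda^2\rho_2(g_k)}{|D_k|^2}dH\big) = -c\,\Re(z)\int\frac{\lambda}{|D_k|^2}dH \ge 0$, hence $c\int \frac{\lambda^2\rho_2(g_k)}{|D_k|^2}dH < 1$ (strict, using $\Re(z) \ne 0$ and $H \neq \delta_0$). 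Since $\rho_2(g_k) = \frac{1}{|\mathbbm{i}+g_k|^2}+\frac{1}{|-\mathbbm{i}+g_k|^2}$, this says exactly $c\int \frac{\lambda^2 dH}{|\mathbbm{i}+g_k|^2|D_k|^2} + c\int \frac{\lambda^2 dH}{|-\mathbbm{i}+g_k|^2|D_k|^2} < 1$ for each $k$. Combining the Cauchy–Schwarz bounds on $A$ and on the two pieces of $B$ with these two strict inequalities (for $k=1$ and $k=2$) and one final Cauchy–Schwarz across $k$ yields $|c\,A\,B| < 1$, completing the proof. I would double-check the possibility that $\mathbbm{i}+g_k$ or $-\mathbbm{i}+g_k$ vanishes: since $\Re(g_k) > 0$ while $\pm\mathbbm{i}$ are purely imaginary, $\pm\mathbbm{i} + g_k$ never vanishes, so $\rho(g_k)$ is well-defined — this is where $h_k(z) \in \mathbb{C}_R$ (rather than merely $\mathbb{C}\setminus\{\pm\mathbbm{i}/c\}$) is essential.
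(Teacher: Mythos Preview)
Your approach is essentially identical to the paper's: subtract the two fixed-point equations, factor out $(h_1-h_2)$ via $\rho(g_2)-\rho(g_1)$, apply Cauchy--Schwarz to the resulting integral, and close the loop using the strict inequality $c\rho_2(g_k)\int \lambda^2|D_k|^{-2}\,dH < 1$ obtained by taking real parts of the defining equation; the paper even uses the same final combination step $\sqrt{ac}+\sqrt{bd}\le\sqrt{a+b}\sqrt{c+d}$ that you call ``one final Cauchy--Schwarz across $k$.'' One sign slip to fix: $\Re(1/D_k)=\Re(D_k)/|D_k|^2$ (no minus), so the rearranged identity should read $\Re(g_k)\big(1 - c\int \lambda^2\rho_2(g_k)|D_k|^{-2}\,dH\big) = -c\,\Re(z)\int \lambda|D_k|^{-2}\,dH>0$, which is exactly what yields the needed strict bound.
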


\begin{proof}
    Suppose for some $z = -u + \mathbbm{i}v \in \mathbb{C}_L$, $\exists$ $h_1, h_2 \in \mathbb{C}_R$ such that for $j \in \{1,2\}$, we have $$h_j = \displaystyle \int \dfrac{\lambda dH(\lambda)}{-z + \lambda \rho(ch_j)}$$
    Further let $\Re(h_j) = h_{j1}, \Im(h_j) = h_{j2}$ where $h_{j1} > 0$ by assumption for $j \in \{1,2\}$. Using (\ref{realOfRho}), we have  
\begin{align*}
        & h_{j1} = \Re(h_j)
         = \displaystyle\int \dfrac{\lambda\Re(\overline{-z + \lambda\rho(ch_j)})dH(\lambda)}{|-z + \lambda \rho(ch_j)|^2} = \int\dfrac{u\lambda + \lambda^2 [\rho_2(ch_j)\Re(ch_j)]}{|-z + \lambda\rho(ch_j)|^2}dH(\lambda)\\
\implies & h_{j1} = uI_1(h_j, H) + ch_{j1}\rho_2(ch_j)I_2(h_j, H) \label{realOf_h}\tag{4.6}\\           
\text {where } I_k(h_j, H) :=& \int \dfrac{\lambda^k dH(\lambda)}{|-z + \lambda \rho(ch_j)|^2} \text{ for } k \in \{1,2\}
\end{align*}

Note that $I_k(h_j, H) > 0, k \in \{1,2\}$ due to the conditions on H. Since $h_{j1} > 0$ and $u > 0$, using (\ref{realOf_h}), we must have 
\begin{align*}\tag{4.7}\label{lessThanOne}
 c\rho_2(ch_j)I_2(h_j, H) < 1
\end{align*}
    Then we have
\begin{equation*}
    \begin{split}
         h_1 - h_2
        =& \displaystyle\int \dfrac{(\rho(ch_2) - \rho(ch_1))\lambda^2}{[-z + \lambda\rho(ch_1)][-z + \lambda\rho(ch_2)]}dH(\lambda)\\
        =& (h_1-h_2)\displaystyle\int \dfrac{\dfrac{c\lambda^2}{(\mathbbm{i} +ch_1)(\mathbbm{i} +ch_2)} + \dfrac{c\lambda^2}{(-\mathbbm{i} +ch_1)(-\mathbbm{i} +ch_2)}}{[-z + \lambda\rho(ch_1)][-z + \lambda\rho(ch_2)]}dH(\lambda)\\
    \end{split}
\end{equation*}

By $\Ddot{H}$older's inequality, we have $|h_1-h_2| \leq |h_1-h_2|(T_1 + T_2)$ where 

\begin{align*}
T_1 =& \displaystyle\sqrt{\int\dfrac{c|\mathbbm{i} +ch_1|^{-2}\lambda^2dH(\lambda)}{|-z + \lambda\rho(ch_1)|^2}}\sqrt{\int\dfrac{c|\mathbbm{i} +ch_2|^{-2}\lambda^2dH(\lambda)}{|-z + \lambda\rho(ch_2)|^2}}\\    
=& \sqrt{c|\mathbbm{i} +ch_1|^{-2}I_2(h_1, H)}\sqrt{c|\mathbbm{i} +ch_2|^{-2}I_2(h_2, H)}\\
&\text{and}\\
T_2 =& \displaystyle\sqrt{\int\dfrac{c|-\mathbbm{i} +ch_1|^{-2}\lambda^2dH(\lambda)}{|-z + \lambda\rho(ch_1)|^2}}\sqrt{\int\dfrac{c|-\mathbbm{i} +ch_2|^{-2}\lambda^2dH(\lambda)}{|-z +\lambda\rho(ch_2)|^2}}\\
=& \sqrt{c|-\mathbbm{i} +ch_1|^{-2}I_2(h_1, H)}\sqrt{c|-\mathbbm{i} +ch_2|^{-2}I_2(h_2, H)}
\end{align*}
Noting that for $a,b,c,d \geq 0$, $\displaystyle\sqrt{ac} + \sqrt{bd} \leq \sqrt{a+b}\sqrt{c+d}$ with equality if and only if $a=b=c=d=0$, we get
\begin{align*}
    &T_1 + T_2\\
    =& \sqrt{c|\mathbbm{i} +ch_1|^{-2}I_2(h_1, H)}\sqrt{c|\mathbbm{i} +ch_2|^{-2}I_2(h_2, H)} +  \sqrt{c|-\mathbbm{i} +ch_1|^{-2}I_2(h_1, H)}\sqrt{c|-\mathbbm{i} +ch_2|^{-2}I_2(h_2, H)}\\
    \leq& \sqrt{(c|\mathbbm{i} +ch_1|^{-2} + c|-\mathbbm{i} +ch_1|^{-2})I_2(h_1, H)}\sqrt{(c|\mathbbm{i} +ch_2|^{-2} + c|-\mathbbm{i} +ch_2|^{-2})I_2(h_2, H)}\\
    =& \sqrt{c\rho_2(ch_1)I_2(h_1, H)}\sqrt{c\rho_2(ch_2)I_2(h_2, H)}
    <  1 \text{, using } (\ref{lessThanOne})
\end{align*}
This implies that $|h_1 - h_2|< |h_1 - h_2|$ which is a contradiction thus proving the uniqueness of $h(z) \in \mathbb{C}_R$.
\end{proof}
The proof of continuous dependence of $h$ on the distribution function $H$ is given in Section \ref{Continuity}.\\

\subsection{Existence of Solution}

It turns out that proving (2)--(4) of Section \ref{ProofSketch} is easier when we assume a set of stronger conditions (as in \cite{Lixin06}) \textbf{A1-A2} listed in Assumptions \ref{A123}. Our plan is to first prove the theorem under these assumptions. Then we build upon these to show that (2)--(4) of Section \ref{ProofSketch} will hold even under the general conditions given in Theorem \ref{t3.1}.

\subsubsection{Assumptions}\label{A123}
\begin{itemize}
    \item \textbf{A1} There exists a constant $\tau > 0$ such that $\underset{n \in \mathbb{N}}{\operatorname{sup}} ||\Sigma_n||_{op} \leq \tau$
    \item \textbf{A2} For $k \in \{1,2\}$,
    $\mathbb{E}z_{ij}^{(k)} = 0, |z_{ij}^{(k)}| \leq B_n$, where  $B_n := n^a$ with $\frac{1}{4+\eta_0}< a < \frac{1}{4}$ and $\eta_0 > 0$ is described in (T2) of Theorem \ref{t3.1}
\end{itemize}

We first establish a few important properties of $s_n(\cdot)$ and $h_n(\cdot)$ in Lemma \ref{CompactConvergence} and Lemma \ref{ConcentrationOfSnH}. Then we construct a sequence of deterministic matrices $\Bar{Q}(z) \in \mathbb{C}^{p \times p}$ satisfying $|\frac{1}{p}\operatorname{trace}(Q(z)-\Bar{Q}(z))| \xrightarrow{a.s.} 0$ in Theorem \ref{DeterministicEquivalent}. Finally, we prove the existence of (the) solution to (\ref{h_main_eqn}) in Theorem \ref{ExistenceA123}. 

\begin{lemma}\label{CompactConvergence}
    \textbf{(Compact Convergence)} 
    $\{s_n(z)\}_{n=1}^\infty$ and $\{h_n(z)\}_{n=1}^\infty$ form a normal family, i.e. every subsequence has a further subsequence that converges uniformly on compact subsets of $\mathbb{C}_L$. 
\end{lemma}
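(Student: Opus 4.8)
The plan is to reduce the statement to Montel's theorem on normal families: since (as already observed in the text) both $s_n$ and $h_n$ are analytic on $\mathbb{C}_L$, it suffices to show that each sequence is \emph{locally uniformly bounded} on $\mathbb{C}_L$. For $s_n$, this is immediate from Property 2 of the Stieltjes transform applied to the probability measure $F^{S_n}$: one has $|s_n(z)| \le 1/|\Re(z)|$ for every $z \in \mathbb{C}_L$. For $h_n$, recall the representation given in the text, $h_n(z) = \frac1p\sum_{j=1}^p \frac{a_{jj}}{\mathbbm{i}\lambda_j - z}$ with $a_{jj} \ge 0$ and $\sum_j a_{jj} = \operatorname{trace}(\Sigma_n)$; that is, $h_n$ is the Stieltjes transform of a positive measure on the imaginary axis of total mass $\frac1p\operatorname{trace}(\Sigma_n)$. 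The same one-line estimate then gives $|h_n(z)| \le \big(\tfrac1p\operatorname{trace}(\Sigma_n)\big)\,/\,|\Re(z)|$, and under Assumption \textbf{A1} (or, more generally, \textbf{T4}) we have $\frac1p\operatorname{trace}(\Sigma_n) \le \tau$ for all $n$, so $|h_n(z)| \le \tau/|\Re(z)|$ uniformly in $n$.

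Now fix any compact set $K \subset \mathbb{C}_L$. Since $K$ avoids the imaginary axis, $\delta_K := \inf_{z \in K}|\Re(z)| > 0$, and hence
\[
\sup_{n}\ \sup_{z \in K}\big(|s_n(z)| + |h_n(z)|\big)\ \le\ \frac{1+\tau}{\delta_K}\ <\ \infty .
\]
Thus $\{s_n\}$ and $\{h_n\}$ are each uniformly bounded on every compact subset of $\mathbb{C}_L$, i.e. they are locally bounded families of holomorphic functions on the open set $\mathbb{C}_L$. By Montel's theorem, each family is normal: every subsequence admits a further subsequence converging uniformly on compact subsets of $\mathbb{C}_L$ (necessarily to a holomorphic limit). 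This is exactly the assertion of the lemma.

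I do not anticipate a genuine obstacle here; the only point requiring a moment's care is recognizing that $h_n$ is the Stieltjes transform of a \emph{finite but not necessarily probability} measure, so that its local bound carries the extra factor $\frac1p\operatorname{trace}(\Sigma_n)$, which must then be controlled — this is precisely what Assumption \textbf{A1}/\textbf{T4} supplies. Everything else is the standard normal-families argument.
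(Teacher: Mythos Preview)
Your proposal is correct and follows essentially the same route as the paper: invoke Montel's theorem after showing local uniform boundedness, with $|s_n(z)|\le 1/|\Re(z)|$ and $|h_n(z)|\le \big(\tfrac1p\operatorname{trace}\Sigma_n\big)/|\Re(z)|$, the latter controlled via \textbf{A1}/\textbf{T4}. The only cosmetic difference is that the paper obtains the $h_n$ bound through the trace inequality (R5) rather than the Stieltjes-transform representation you use, and phrases the $\tfrac1p\operatorname{trace}\Sigma_n$ bound as holding for sufficiently large $n$ (which is all that is needed for normality).
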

\begin{proof}
By Montel's theorem (Theorem 3.3 of \cite{SteinShaka}), it is sufficient to show that $s_n$ and $h_n$ are uniformly bounded on every compact subset of $\mathbb{C}_L$. Let $K \subset \mathbb{C}_L$ be an arbitrary compact subset. Define $u_0 := \inf\{|\Re(z)|: z \in K\}$. Then it is clear that $u_0 > 0$. Then for arbitrary $z \in K$, we have 
$$|s_n(z)| = \frac{1}{p}|\operatorname{trace}(Q(z))| \leq \frac{1}{|\Re(z)|} \leq \frac{1}{u_0}$$
    and using by (R5) of \ref{R123} and (T4) of Theorem \ref{t3.1}, we get for sufficiently large $n$,
    \begin{align}\label{h_nBound}\tag{4.8}
        |h_n(z)| =& \frac{1}{p}|\operatorname{trace}(\Sigma_nQ)| \leq \bigg(\frac{1}{p}\operatorname{trace}(\Sigma_n)\bigg) ||Q(z)||_{op} 
        \leq \frac{C}{|\Re(z)|} \leq \frac{C}{u_0}
    \end{align}
using the fact that for sufficiently large $n$, $\frac{1} {p}\operatorname{trace}(\Sigma_n) < C$ from T4 of Theorem \ref{t3.1}.
\end{proof}
 
\begin{theorem}\label{DeterministicEquivalent}
Let $M_n \in \mathbb{C}^{p \times p}$ be a sequence of deterministic matrices with $||M_n||_{op} \leq B$ for some $B \geq 0$. For $z \in \mathbb{C}_L$,
    $\frac{1}{p}\operatorname{trace}\{(Q(z) - \Bar{Q}(z))M_n\} \xrightarrow{a.s.} 0$
where $\Bar{Q}(z) := \bigg(-zI_p + \rho(c_n\mathbb{E}h_n(z))\Sigma_n\bigg)^{-1}$
\end{theorem}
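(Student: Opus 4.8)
The plan is to run the deterministic-equivalent argument of \cite{BaiSilv09}, adapted to the fact that $S_n$ is a sum of $n$ rank-\emph{two} (not rank-one) terms. We work under the stronger Assumptions \textbf{A1}--\textbf{A2}; the passage to the general setting of Theorem \ref{t3.1} is carried out afterwards. Fix $z\in\mathbb{C}_L$ and abbreviate $\omega_n:=\rho(c_n\mathbb{E}h_n(z))$, so that $\bar Q(z)^{-1}=-zI_p+\omega_n\Sigma_n$; since $\mathbb{E}h_n(z)\in\mathbb{C}_R$, \eqref{realOfRho} gives $\Re(\omega_n)>0$, hence $\|\bar Q(z)\|_{op}\le1/|\Re(z)|$. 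The starting point is the resolvent identity
\begin{equation*}
Q(z)-\bar Q(z)=Q(z)\bigl(\bar Q(z)^{-1}-Q(z)^{-1}\bigr)\bar Q(z)=Q(z)\bigl(\omega_n\Sigma_n-S_n\bigr)\bar Q(z),
\end{equation*}
which reduces the claim to $\tfrac1p\operatorname{trace}\{Q(\omega_n\Sigma_n-S_n)\bar QM_n\}\xrightarrow{a.s.}0$. Write $S_n=\sum_{r=1}^nR_r$ with $R_r:=\tfrac1n U_rJU_r^*$, $U_r:=[X_{1r}:X_{2r}]\in\mathbb{C}^{p\times2}$ and $J:=\bigl[\begin{smallmatrix}0&1\\-1&0\end{smallmatrix}\bigr]$ (so that $U_rJU_r^*=X_{1r}X_{2r}^*-X_{2r}X_{1r}^*$); recall $S_{nr}=S_n-R_r$ and $Q_{-r}=(S_{nr}-zI_p)^{-1}$, which is independent of $X_{1r},X_{2r}$. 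The rank-two Sherman--Morrison--Woodbury identity gives $Q\,U_r=Q_{-r}U_r\bigl(I_2+\tfrac1nJ\,U_r^*Q_{-r}U_r\bigr)^{-1}$, whence
\begin{equation*}
\operatorname{trace}\{QR_r\bar QM_n\}=\operatorname{trace}\Bigl\{\tfrac1n J\,U_r^*\bar QM_nQ_{-r}U_r\bigl(I_2+\tfrac1n J\,U_r^*Q_{-r}U_r\bigr)^{-1}\Bigr\}.
\end{equation*}

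The second step is conditional concentration. Since $X_{kr}=\Sigma_n^{1/2}Z_{kr}$ with $Z_{kr}$ having i.i.d.\ entries of zero mean and unit variance, the quadratic-form (trace) inequalities in Appendix-A, together with the truncation $|z_{ij}^{(k)}|\le B_n=n^a$ from \textbf{A2}, show that, conditionally on $S_{nr}$, each $2\times2$ matrix $U_r^*AU_r$ with $A\in\{Q_{-r},\ \bar QM_nQ_{-r}\}$ concentrates around $\operatorname{trace}(\Sigma_nA)\,I_2$ at a polynomial rate in $n$: the diagonal entries around $\operatorname{trace}(\Sigma_nA)$ and the off-diagonal entries around $0$, the latter precisely because $Z_{1r}$ and $Z_{2r}$ are independent. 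Substituting these approximations, replacing $\tfrac1p\operatorname{trace}(\Sigma_nQ_{-r})$ and $Q_{-r}$ by $h_n$ and $Q$ via rank inequalities (Appendix-A), and evaluating the $2\times2$ inverse through the identity $\operatorname{trace}\bigl[J(I_2+wJ)^{-1}\bigr]=\tfrac1{\mathbbm{i}+w}+\tfrac1{-\mathbbm{i}+w}=\rho(w)$ — valid because the eigenvalues of $J$ are $\pm\mathbbm{i}$, which is exactly why $\rho$ enters — one obtains $\operatorname{trace}\{QR_r\bar QM_n\}\approx\tfrac1n\rho(c_nh_n)\operatorname{trace}\{Q\Sigma_n\bar QM_n\}$. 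Summing over $r$ and using Lemma \ref{ConcentrationOfSnH} to pass from $h_n$ to $\mathbb{E}h_n$ (with continuity of $\rho$), $\tfrac1p\operatorname{trace}\{QS_n\bar QM_n\}\approx\tfrac1p\,\omega_n\operatorname{trace}\{Q\Sigma_n\bar QM_n\}=\tfrac1p\operatorname{trace}\{Q\,\omega_n\Sigma_n\,\bar QM_n\}$, so the two terms cancel to leading order.

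To make this quantitative and obtain almost-sure convergence, one writes $\tfrac1p\operatorname{trace}\{(Q-\bar Q)M_n\}$ as a sum over $r$ of (i) fluctuations that are centered given $S_{nr}$, whose conditional second moments are controlled by the trace inequalities and the bounds $\|Q\|_{op},\|\bar Q\|_{op}\le1/|\Re z|$, $\|\Sigma_n\|_{op}\le\tau$, $\|M_n\|_{op}\le B$, and of (ii) deterministic bias terms that vanish with the concentration rate and via Lemma \ref{ConcentrationOfSnH}; one then applies the martingale-difference decomposition of $\tfrac1p\operatorname{trace}\{(Q-\mathbb{E}Q)M_n\}$ along the filtration generated by the columns $(X_{1r},X_{2r})_{r\le k}$ (each increment being a rank-two modification), Burkholder's inequality, and Borel--Cantelli, exploiting the moments of order $4+\eta_0$ retained after truncation.

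The main obstacle is the uniform control of the $2\times2$ ``correction'' matrices $I_2+\tfrac1n J\,U_r^*Q_{-r}U_r$, and of their conditional-mean surrogates $I_2+c_nh_{nr}J$ with $h_{nr}:=\tfrac1p\operatorname{trace}(\Sigma_nQ_{-r})$: one must show these stay uniformly invertible, i.e.\ that $\mathbbm{i}+c_nh_n(z)$ and $-\mathbbm{i}+c_nh_n(z)$ are bounded away from $0$ uniformly in $n$ and locally uniformly in $z\in\mathbb{C}_L$. This holds because $h_n(z)\in\mathbb{C}_R$ (it is a Stieltjes transform of a measure on the imaginary axis, hence has strictly positive real part), so $|\pm\mathbbm{i}+c_nh_n(z)|\ge c_n\Re(h_n(z))$, combined with a locally uniform lower bound on $\Re(h_n(z))$ (which is where the hypothesis $H\neq\delta_0$ is used). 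Carrying this lower bound together with the polynomial concentration rates through the nonlinear $2\times2$ algebra, while retaining uniformity over compact subsets of $\mathbb{C}_L$, is the technical heart of the proof and the point at which this rank-two setting genuinely departs from the rank-one computations of \cite{BaiSilv09}.
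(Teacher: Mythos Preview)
Your proposal is correct and follows essentially the same route as the paper's proof. The paper also begins from the resolvent identity $Q-\bar Q=Q(\bar Q^{-1}-S_n+zI_p-zI_p)\bar Q$, expands $S_n$ column-by-column, applies a rank-two Sherman--Morrison--Woodbury formula (its Lemma \ref{Rank2Woodbury}) to extract $QX_{1j}$ and $QX_{2j}$ in terms of $Q_{-j}$, and then uses quadratic-form concentration (Lemma \ref{quadraticForm}) plus the lower bound $\Re(c_nh_n)\ge K_0>0$ (Lemma \ref{boundedAwayFromZero}) to close the loop and match $\operatorname{Term}_2$ to $\operatorname{Term}_1$; the replacement $h_n\mapsto\mathbb{E}h_n$ is done via the separately proved concentration Lemma \ref{ConcentrationOfSnH}.

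The only stylistic difference is packaging: you organize the rank-two update through the $2\times2$ block $U_r=[X_{1r}:X_{2r}]$, $J=\bigl[\begin{smallmatrix}0&1\\-1&0\end{smallmatrix}\bigr]$ and the clean identity $\operatorname{trace}\bigl[J(I_2+wJ)^{-1}\bigr]=\rho(w)$, whereas the paper unwinds the same $2\times2$ algebra into four scalar coefficients $c_{1j},c_{2j},d_{1j},d_{2j}$ (its Lemma \ref{uniformConvergence}) and recombines them into $\rho(c_nh_n)$ only at the end. Your formulation makes the origin of $\rho$ more transparent, but the two arguments are algebraically equivalent and rely on the same auxiliary lemmas.
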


\textbf{REMARK}: Let $z = -u + \mathbbm{i}v$ with $u > 0$. By Lemma \ref{boundedAwayFromZero}, $\Re(c_n\mathbb{E}h_n(z)) \geq K_0 > 0$ for sufficiently large $n$, where $K_0$ depends only on $z, c, \tau$ and $H$. So for large $n$, $\rho(c_n\mathbb{E}h_n(z))$ is well defined. Expressing $\Sigma_n = P\Lambda P^*$ with $\Lambda = \operatorname{diag}(\{\lambda_j\}_{j=1}^p)$, the $j^{th}$ eigenvalue of $\Bar{Q}(z)$ is $\sigma_j:= (-z + \lambda_j\rho(c_n\mathbb{E}h_n))^{-1}$. Then, for sufficiently large $n$ using (\ref{realOfRho}),
\begin{align*}\label{realOf_Qbar_EV_inverse}\tag{4.9}
\Re(\sigma_j^{-1}) = \Re(-z + \lambda_j\rho(c_n\mathbb{E}h_n)) = -\Re(z) + \lambda_j\Re(c_n\mathbb{E}h_n)\rho_2(c_n\mathbb{E}h_n) \geq u > 0    
\end{align*}
In particular, $(-zI_p + \rho(c_n\mathbb{E}h_n(z))\Sigma_n)$ is invertible for large $n$.

The construction of this deterministic sequence of matrices directly leads to proving the existence of a solution to (\ref{h_main_eqn}). The proof can be found in Section \ref{ProofDeterministicEquivalent}. 
\begin{definition}
    For $z \in \mathbb{C}_L$ and $\Bar{Q}(z)$ as defined in Theorem \ref{DeterministicEquivalent}, we define the following
     \begin{align}
        \Tilde{h}_n(z) :=&\frac{1}{p}\operatorname{trace}\{\Sigma_n \Bar{Q}(z)\} = \displaystyle\int\dfrac{\lambda dF^{\Sigma_n}(\lambda)}{-z + \lambda \rho(c_n\mathbb{E}h_n)} \label{definingHnTilde}\tag{4.10}\\
        \Bar{\Bar{Q}}(z) :=& \bigg(-zI_p + \rho(c_n\Tilde{h}_n(z))\Sigma_n\bigg)^{-1} \label{definingQBarBar}\tag{4.11}\\
        \Tilde{\Tilde{h}}_n(z) :=& \frac{1}{p}\operatorname{trace}\{\Sigma_n \Bar{\Bar{Q}}(z)\} = \displaystyle\int\dfrac{\lambda dF^{\Sigma_n}(\lambda)}{-z + \lambda \rho(c_n\Tilde{h}_n(z))} \label{definingHnTilde2}\tag{4.12}
    \end{align}
\end{definition}

\begin{theorem}\label{ExistenceA123} \textbf{(Existence of Solution)}
Under Assumptions \ref{A123}, for $z \in \mathbb{C}_L$, $h_n(z) \xrightarrow{a.s.} h^\infty(z) \in \mathbb{C}_R$ which satisfies (\ref{h_main_eqn}). Moreover, $s_n(z) \xrightarrow{a.s.} s^\infty(z)$ where $\underset{y \rightarrow +\infty}{\lim}ys^\infty(-y) = 1$ and 
$$s^\infty(z) = \frac{1}{z}\bigg(\frac{2}{c}-1\bigg) + \frac{1}{\mathbbm{i}cz}\bigg(\frac{1}{\mathbbm{i} +ch^\infty(z)} - \frac{1}{-\mathbbm{i} + ch^\infty(z)}\bigg)$$
\end{theorem}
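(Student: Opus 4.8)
The plan is to assemble the limiting equation for $h_n$ out of three ingredients already in place: the deterministic equivalent $\bar Q$ of Theorem \ref{DeterministicEquivalent}, the concentration of $h_n$ around its mean (Lemma \ref{ConcentrationOfSnH}), and the normal-family property of Lemma \ref{CompactConvergence}; uniqueness (Theorem \ref{Uniqueness}) then upgrades subsequential convergence to full convergence. First I would apply Theorem \ref{DeterministicEquivalent} with $M_n = \Sigma_n$, which is admissible since $\|\Sigma_n\|_{op}\le\tau$ under \textbf{A1}; this gives $h_n(z) - \tilde h_n(z) \xrightarrow{a.s.} 0$, where $\tilde h_n(z) = \int \lambda\, dF^{\Sigma_n}(\lambda)/(-z + \lambda\rho(c_n\mathbb{E}h_n(z)))$ is deterministic. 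Combining this with the concentration bound $h_n(z) - \mathbb{E}h_n(z) \xrightarrow{a.s.} 0$ yields the deterministic self-consistent relation
\[
\mathbb{E}h_n(z) = \int \frac{\lambda\, dF^{\Sigma_n}(\lambda)}{-z + \lambda\rho(c_n\mathbb{E}h_n(z))} + o(1), \qquad z \in \mathbb{C}_L .
\]

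Next I would extract a limit. The functions $\mathbb{E}h_n$ inherit the uniform-on-compacts bound of $h_n$ (by \eqref{h_nBound}) and are analytic on $\mathbb{C}_L$, so by Montel's theorem (as in Lemma \ref{CompactConvergence}) they form a normal family; along any subsequence, pass to a further subsequence with $\mathbb{E}h_n \to h^\infty$ uniformly on compact subsets, $h^\infty$ analytic. By Lemma \ref{boundedAwayFromZero}, $\Re(c_n\mathbb{E}h_n(z)) \ge K_0 > 0$ for large $n$, hence $\Re(ch^\infty(z)) \ge K_0 > 0$: this puts $h^\infty$ into $\mathbb{C}_R$ and keeps $ch^\infty(z)$ away from the poles $\pm\mathbbm{i}$ of $\rho$, so $\rho(ch^\infty(z))$ is well defined and $\rho(c_n\mathbb{E}h_n(z)) \to \rho(ch^\infty(z))$ by continuity. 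Since the denominators satisfy $\Re(-z+\lambda\rho(c_n\mathbb{E}h_n)) \ge u > 0$ uniformly in $\lambda$ by \eqref{realOf_Qbar_EV_inverse}, and $F^{\Sigma_n}$ and $H$ are supported in $[0,\tau]$, the integrand $\lambda/(-z+\lambda\rho(c_n\mathbb{E}h_n))$ is bounded and converges uniformly on $[0,\tau]$ to $\lambda/(-z+\lambda\rho(ch^\infty))$; standard uniform-convergence and weak-convergence arguments (using $F^{\Sigma_n}\xrightarrow{d}H$ and $c_n\to c$) then let me pass to the limit in the displayed relation, so $h^\infty$ solves \eqref{h_main_eqn}. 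As this holds at every $z\in\mathbb{C}_L$, every subsequential limit is a function $\mathbb{C}_L\to\mathbb{C}_R$ solving \eqref{h_main_eqn}; by Theorem \ref{Uniqueness} there is only one such function, so the full sequence $\mathbb{E}h_n$ converges to it, and by concentration $h_n(z)\xrightarrow{a.s.} h^\infty(z)$ with $h^\infty$ analytic on $\mathbb{C}_L$ and valued in $\mathbb{C}_R$.

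For the Stieltjes transform I would instead take $M_n = I_p$ in Theorem \ref{DeterministicEquivalent}, giving $s_n(z) - \int dF^{\Sigma_n}(\lambda)/(-z+\lambda\rho(c_n\mathbb{E}h_n(z))) \xrightarrow{a.s.} 0$; the same limiting argument shows $s_n(z)\xrightarrow{a.s.} s^\infty(z) := \int dH(\lambda)/(-z+\lambda\rho(ch^\infty(z)))$. A short algebraic reduction converts this to the claimed closed form: writing $\lambda\rho/(-z+\lambda\rho) = 1 + z/(-z+\lambda\rho)$ and integrating against $H$ gives $\rho(ch^\infty)\,h^\infty = 1 + z\,s^\infty(z)$; substituting $\rho(w) = 2w/(w^2+1)$ with $w = ch^\infty$ and simplifying recovers $s^\infty(z) = z^{-1}(2/c - 1) + (\mathbbm{i}cz)^{-1}\big((\mathbbm{i}+ch^\infty)^{-1} - (-\mathbbm{i}+ch^\infty)^{-1}\big)$. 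The normalization $\lim_{y\to+\infty} y\,s^\infty(-y) = 1$ then follows because $h^\infty(-y)\to 0$ (from \eqref{h_main_eqn} and $\Re(ch^\infty)>0$) and $\rho(0)=0$, so $y\,s^\infty(-y) = \int y\, dH(\lambda)/(y+\lambda\rho(ch^\infty(-y))) \to \int dH = 1$ by dominated convergence.

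I expect the main obstacle to be the passage to the limit in the self-consistent relation, rather than any single estimate. Concretely, the a priori lower bound $\Re(c_n\mathbb{E}h_n(z)) \ge K_0 > 0$ of Lemma \ref{boundedAwayFromZero} is what makes the argument work: it simultaneously prevents the limit $h^\infty$ from drifting onto the imaginary axis (so the fixed-point equation \eqref{h_main_eqn} is not degenerate), keeps $\rho$ away from its poles so the integrands stay bounded and converge uniformly, and guarantees $h^\infty\in\mathbb{C}_R$ so that Theorem \ref{Uniqueness} applies and forces all subsequential limits to coincide. The heavier analytic work (the deterministic equivalent and the concentration inequality) is done beforehand; here the care is in chaining these together through analyticity and uniqueness.
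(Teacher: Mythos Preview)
Your argument is correct, and for the convergence of $h_n$ it is essentially the paper's proof with a mild streamlining: the paper routes the self-consistency through the auxiliary ``double-tilde'' quantity $\tilde{\tilde h}_n(z)=\int\lambda\,dF^{\Sigma_n}(\lambda)/(-z+\lambda\rho(c_n\tilde h_n))$ (Lemma \ref{hn_tilde_tilde2}), whereas you notice directly that $\tilde h_n$ already has $\mathbb{E}h_n$ in the denominator, so combining Theorem \ref{DeterministicEquivalent} with the concentration Lemma \ref{ConcentrationOfSnH} gives the approximate fixed-point relation for $\mathbb{E}h_n$ without the extra step. The remaining passage to the limit (Montel, the lower bound on $\Re(c_n\mathbb{E}h_n)$ from Lemma \ref{boundedAwayFromZero}, weak convergence of $F^{\Sigma_n}$, dominated convergence, and then Theorem \ref{Uniqueness}) matches the paper.

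Where you genuinely diverge is in deriving the formula for $s^\infty$. The paper obtains it by analysing the $2n\times 2n$ co-resolvent $\tilde Q(z)=(\tfrac1n[X_2:X_1]^*[X_1:-X_2]-zI_{2n})^{-1}$, computing the diagonal entries of its two $n\times n$ blocks via the rank-two Woodbury identities (Lemma \ref{Rank2Woodbury}) and the uniform-in-$j$ estimates of Lemma \ref{uniformConvergence}, and then invoking the trace identity between resolvent and co-resolvent. You instead just re-apply Theorem \ref{DeterministicEquivalent} with $M_n=I_p$ to get $s_n-\frac1p\operatorname{tr}\bar Q\to 0$, pass to the limit to obtain $s^\infty=\int dH(\lambda)/(-z+\lambda\rho(ch^\infty))$, and then close the algebra via the identity $\rho(ch^\infty)h^\infty = 1 + zs^\infty$. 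This is a shorter and more elementary route: it exploits the full strength of the deterministic-equivalent theorem (which is stated for arbitrary bounded $M_n$) and avoids the co-resolvent machinery altogether. The paper's approach, on the other hand, makes the ``$p$ versus $2n$'' origin of the factor $2/c-1$ and of the $1/(1+c^2h^2)$ term transparent, which is conceptually nice and is the same device used later in the Hermitian analogue; but for the bare existence theorem your shortcut is entirely adequate.
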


The proof essentially expands Steps 2,3 and 4 outlined in Section \ref{ProofSketch}. These steps require the construction of additional quantities such as $\Tilde{h}_n(z)$ (\ref{definingHnTilde}) and $\Tilde{\Tilde{h_n}}(z)$ (\ref{definingHnTilde2}). The proof can be found in Section 
\ref{ProofExistenceA123}.

\subsection{Proof of Existence of solution under general conditions}\label{ExistenceGeneral}
In this section, we will prove (2)-(4) of Section \ref{ProofSketch} under the general conditions of Theorem \ref{t3.1}. To achieve this, we will create a sequence of matrices \textit{similar} to $\{S_n\}_{n=1}^\infty$ but satisfying \textbf{A1-A2} of Section \ref{A123}. The below steps give an outline of the proof with the essential details split into individual modules. 
\begin{description}
    \item[Step1]  For a p.s.d.  matrix A and $\tau > 0$, let $A^\tau$ represent the matrix obtained by replacing all eigenvalues greater than $\tau$ with 0 in the spectral decomposition of A. For a distribution function $H$ supported on $\mathbb{R}_+$, define $H^{\tau}(t) := \mathbbm{1}_{\{t > \tau\}} + (H(t) + 1 - H(\tau))\mathbbm{1}_{\{0 \leq t \leq \tau\}}$. $H^\tau$ is a distribution function that transfers all mass of $H$ beyond $\tau$ to the point $0$.
    \item[Step2] Denote $\Lambda_n := \Sigma_n^\frac{1}{2}$ and $\Lambda_n^\tau := (\Sigma_n^\tau)^\frac{1}{2}$
    \item[Step3] We have $S_n := \frac{1}{n}\Lambda_n(Z_1Z_2^* - Z_2Z_1^*)\Lambda_n$
    \item[Step4] Define $T_n := \frac{1}{n}\Lambda_n^{\tau}(Z_1Z_2^* - Z_2Z_1^*)\Lambda_n^{\tau}$. $\Lambda_n^\tau$ satisfies \textbf{A1} of Section \ref{A123}.
    \item[Step5] Recall that for $k \in \{1, 2\}$, we have $Z_k = ((z_{ij}^{(k)})) \in \mathbb{C}^{p \times n}$. With $B_n = n^a$ as in \textbf{A2} of Assumptions \ref{A123}, define $\hat{Z}_k := (\hat{z}_{ij}^{(k)})_{ij}$ with $\hat{z}_{ij}^{(k)} = z_{ij}^{(k)}\mathbbm{1}_{\{|z_{ij}^{(k)}| \leq B_n\}}$. Now, let $U_n := \frac{1}{n}\Lambda_n^\tau(\hat{Z_1}\hat{Z_2}^* - \hat{Z_2}\hat{Z_1}^*)\Lambda_n^\tau$.
    \item[Step6] Construct $\Tilde{U}_n:=\frac{1}{n}\Lambda_n^\tau(\Tilde{Z_1}\Tilde{Z_2}^* - \Tilde{Z_2}\Tilde{Z_1}^*)\Lambda_n^\tau$ where $\Tilde{Z}_k = \hat{Z}_k - \mathbb{E}\hat{Z}_k$. Now, $\Sigma_n^\tau$ satisfies \textbf{A1} and $\Tilde{Z}_k$, $k=1,2$ satisfy \textbf{A2} of Assumptions \ref{A123}. Let $s_n(z), t_n(z), u_n(z), \Tilde{u}_n(z)$ be the the Stieltjes transforms of $F^{S_n}, F^{T_n},$ $F^{U_n}, F^{\Tilde{U}_n}$ respectively.
    \item[Step7] By Theorem \ref{ExistenceA123}, $F^{\Tilde{U}_n} \xrightarrow{a.s.} F^\tau$ for some $F^\tau$ which is characterised by a pair $(h^\tau, s^\tau)$ satisfying (\ref{s_main_eqn}) and (\ref{h_main_eqn}) with $H^\tau$ instead of $H$. In particular, $|\Tilde{u}_n(z) - s^\tau(z)| \xrightarrow{a.s.} 0$ by the same theorem.
    \item[Step8] Next we show that $h^\tau$ converges to some limit as $\tau \rightarrow \infty$. Using Montel's Theorem, we are able to show that any arbitrary subsequence of $\{h^\tau\}$ has a further subsequence $\{h^{\tau_m}\}_{m=1}^\infty$ that converges uniformly on compact subsets (of $\mathbb{C}_L$) as $m \rightarrow \infty$. Each subsequential limit will be shown to belong to $\mathbb{C}_R$ and satisfy (\ref{h_main_eqn}). Hence by Theorem \ref{Uniqueness}, all these subsequential limits must be the same which we denote by $h^\infty$. Therefore, $h^\tau \xrightarrow{\tau \rightarrow \infty} h^\infty$.
    \item[Step9] We derive $s^\infty$ from $h^\infty$ using (\ref{s_main_eqn}) and show that $s^\infty$ satisfies the condition in Proposition \ref{GeroHill} to be a Stieltjes transform of a measure over the imaginary axis. So, there exists some distribution $F^\infty$ corresponding to $s^\infty$. 
   \textbf{Step8} and \textbf{Step9} are shown explicitly in Lemma \ref{3.4.1}.
    \item[Step10]  We have
$$|s_n(z) - s^\infty(z)| \leq |s_n(z) - t_n(z)| + |t_n(z)-u_n(z)| + |u_n(z)-\Tilde{u}_n(z)| + |\Tilde{u}_n(z)-s^\tau(z)| + |s^\tau(z) - s^\infty(z)|$$
We will show that each term on the RHS goes to 0 as $n, \tau \rightarrow \infty$. \begin{itemize}
    \item  From Lemma \ref{3.4.2} and (\ref{Levy_vs_uniform}), $L_{im}(F^{S_n}, F^{T_n}) \leq ||F^{S_n} - F^{T_n}||_{im} \xrightarrow{a.s.} 0$
    \item From Lemma \ref{3.4.3} and (\ref{Levy_vs_uniform}), $L_{im}(F^{T_n}, F^{U_n}) \leq ||F^{T_n} - F^{U_n}||_{im} \xrightarrow{a.s.} 0$ 
    \item From Lemma \ref{3.4.3} and (\ref{Levy_vs_uniform}), $L_{im}(F^{U_n}, F^{\Tilde{U}_n}) \leq ||F^{U_n} - F^{\Tilde{U}_n}||_{im} \xrightarrow{a.s.} 0$
    \item Application of Lemma \ref{lA.1} on the above three items gives $|s_n(z) - t_n(z)| \xrightarrow{a.s.} 0$, $|t_n(z) - u_n(z)| \xrightarrow{a.s.} 0$ and $|u_n(z) - \Tilde{u}_n(z)| \xrightarrow{a.s.} 0$ respectively.
    \item From \textbf{Step7}, we have $|\Tilde{u}_n(z) - s^\tau(z)| \xrightarrow{a.s.} 0$
    \item $|s^\tau(z) - s^\infty(z)| \rightarrow 0$ is shown in Lemma \ref{3.4.1}.
\end{itemize}
\item[Step11] Hence  $s_n(z) \xrightarrow{a.s.} s^\infty(z)$ which is a Stieltjes transform. Therefore by Proposition \ref{GeroHill}, $F^{S_n} \xrightarrow{a.s.} F^\infty$ where $F^\infty$ is characterised by $(h^\infty, s^\infty)$ which satisfy (\ref{s_main_eqn}) and (\ref{h_main_eqn}). This concludes the proof of Theorem \ref{t3.1} in the general case.
\end{description}

\subsection{Properties of the LSD}\label{Section4.5}
In this section, we mention a few properties about the LSD in Theorem \ref{t3.1}. 
\begin{lemma}\label{symmetryAboutZero}
    For any $z \in \mathbb{C}_L$, $h(\overline{z}) = \overline{h(z)}$ and $s(\overline{z}) = \overline{s(z)}$ where $h, s$ are as in Theorem \ref{t3.1}. As a consequence, the LSD is symmetric about $0$. 
\end{lemma}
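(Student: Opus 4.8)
The plan is to exploit the uniqueness of the $\mathbb{C}_L \to \mathbb{C}_R$ solution to the functional equation (\ref{h_main_eqn}) established in Theorem \ref{Uniqueness}. Define $g(z) := \overline{h(\overline{z})}$ for $z \in \mathbb{C}_L$. This is well-defined because $\Re(\overline z) = \Re(z) < 0$, so $\overline z \in \mathbb{C}_L$, and moreover $\Re(g(z)) = \Re(h(\overline z)) > 0$, so $g$ maps $\mathbb{C}_L$ into $\mathbb{C}_R$. The first step is to verify that $g$ solves (\ref{h_main_eqn}) verbatim: starting from $h(\overline z) = \int \lambda\, dH(\lambda)\big/\big(-\overline z + \lambda\rho(ch(\overline z))\big)$ and taking complex conjugates of both sides, one uses that $H$ is a positive (hence real) measure so conjugation passes through the integral, that $c$ is real, and that $\rho(\overline w) = \overline{\rho(w)}$ (the REMARK following the definition of $\rho$) to obtain $g(z) = \int \lambda\, dH(\lambda)\big/\big(-z + \lambda\rho(cg(z))\big)$. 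Thus $g$ is a $\mathbb{C}_L \to \mathbb{C}_R$ solution of (\ref{h_main_eqn}), and Theorem \ref{Uniqueness} forces $g \equiv h$, i.e. $h(\overline z) = \overline{h(z)}$.

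The identity for $s$ then follows by substitution into (\ref{s_main_eqn}). Conjugating the right-hand side of (\ref{s_main_eqn}) and using $h(\overline z) = \overline{h(z)}$ together with $\overline{1/(\mathbbm{i} + ch(z))} = 1/(-\mathbbm{i} + c\overline{h(z)}) = 1/(-\mathbbm{i} + ch(\overline z))$ (and the analogous identity with $\pm\mathbbm{i}$ interchanged), plus $\overline{1/(\mathbbm{i} c z)} = 1/(-\mathbbm{i} c \overline z)$, one checks that the resulting expression is exactly the right-hand side of (\ref{s_main_eqn}) evaluated at $\overline z$; hence $s(\overline z) = \overline{s(z)}$.

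For the consequence, let $F$ be the LSD and $\mu$ the associated measure on $\mathbbm{i}\mathbb{R}$, so $s = s_\mu$. Symmetry of $F$ about $0$ amounts to $\mu([\mathbbm{i}a,\mathbbm{i}b]) = \mu([-\mathbbm{i}b,-\mathbbm{i}a])$ for all continuity points $\mathbbm{i}a,\mathbbm{i}b$. I would apply the inversion formula (\ref{measureOfInterval}): $\mu([-\mathbbm{i}b,-\mathbbm{i}a]) = \tfrac{1}{\pi}\lim_{\epsilon\downarrow 0}\int_{-b}^{-a}\Re\big(s_\mu(-\epsilon + \mathbbm{i}x)\big)\,dx$, then substitute $x = -y$ and use $s_\mu(-\epsilon - \mathbbm{i}y) = s_\mu(\overline{-\epsilon + \mathbbm{i}y}) = \overline{s_\mu(-\epsilon + \mathbbm{i}y)}$ to rewrite this as $\tfrac{1}{\pi}\lim_{\epsilon\downarrow 0}\int_a^b\Re\big(s_\mu(-\epsilon + \mathbbm{i}y)\big)\,dy = \mu([\mathbbm{i}a,\mathbbm{i}b])$. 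Thus $\mu$, equivalently its real counterpart $\tilde F$, is symmetric about $0$, which is the asserted symmetry of the LSD.

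There is no genuinely hard step here; the argument is bookkeeping with conjugation symmetry. The only points requiring a little care are: (i) confirming that $g(z) = \overline{h(\overline z)}$ really lands in $\mathbb{C}_R$ and satisfies (\ref{h_main_eqn}), so that Theorem \ref{Uniqueness} applies; and (ii) phrasing the final symmetry statement through the inversion formula (\ref{measureOfInterval}) rather than only through the density (\ref{inversion_density}), so that it simultaneously covers any atoms (cf. (\ref{inversion_pointMass})) and the continuous part.
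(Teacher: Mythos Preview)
Your proposal is correct and follows essentially the same approach as the paper: conjugate the defining equation (\ref{h_main_eqn}), use $\rho(\overline{w}) = \overline{\rho(w)}$ to see that $\overline{h(z)}$ solves (\ref{h_main_eqn}) at $\overline{z}$, invoke the uniqueness Theorem \ref{Uniqueness}, then propagate to $s$ via (\ref{s_main_eqn}) and conclude symmetry from the inversion formulas. Your packaging via $g(z) := \overline{h(\overline{z})}$ is just a relabeling of the paper's argument.
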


\begin{proof}
We have the following equality.
    \begin{align*}
            h(z) &= \int \dfrac{\lambda dH(\lambda)}{-z + \lambda\rho(ch(z))}
            \implies \overline{h(z)} = \int \dfrac{\lambda dH(\lambda)}{-\overline{z} + \lambda\overline{\rho(ch(z))}} = \int \dfrac{\lambda dH(\lambda)}{-\overline{z} + \lambda{\rho(c\overline{h(z)})}}\\
    \end{align*}
Note that $z \in \mathbb{C}_L \Leftrightarrow \overline{z} \in \mathbb{C}_L$ and $h(z) \in \mathbb{C}_R$ uniquely satisfies (\ref{h_main_eqn}). We observe that $\overline{h(z)} \in \mathbb{C}_R$ satisfies (\ref{h_main_eqn}) with $z$ replaced by $\overline{z}$. Therefore by Theorem \ref{Uniqueness}, $h(\overline{z}) = \overline{h(z)}$ for $z \in \mathbb{C}_L$. Finally we see that, 
\begin{align*}
s(\overline{z}) =& \dfrac{1}{\overline{z}}(\dfrac{2}{c}-1) + \dfrac{1}{\mathbbm{i}c\overline{z}}\bigg(\dfrac{1}{\mathbbm{i} + ch(\overline{z})} - \dfrac{1}{-\mathbbm{i}+ch(\overline{z})}\bigg)\\
=& \overline{\frac{1}{z}(\frac{2}{c}-1)}+ \overline{\frac{1}{-\mathbbm{i}cz}\bigg(\frac{1}{-\mathbbm{i}+ch(z)} - \frac{1}{\mathbbm{i} + ch(z)}\bigg)} = \overline{s(z)}
\end{align*}
Using this, (\ref{inversion_pointMass}) and (\ref{measureOfInterval}), the symmetry about $0$ is immediate.
\end{proof}

\begin{theorem}\label{PointMass}
Suppose $H = (1-\beta) \delta_0 + \beta H_1$ where $H_1$ is a probability distribution over $\mathbb{R}$ which has no point mass at $0$ and $0 < \beta \leq 1$. (Note that $\beta > 0$ to exclude the case $H = \delta_0$). Then,
\begin{enumerate}
    \item[1] When $0 < c < 2/\beta$, the LSD $F$ has a point mass at $0$ equal to $1 - \beta$.
    \item[2] When $2/\beta \leq c$, the LSD $F$ has a point mass at $0$ equal to $1 - 2/c$.
\end{enumerate}   
 \end{theorem}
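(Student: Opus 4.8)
\textbf{Proof plan for Theorem \ref{PointMass}.}

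The plan is to extract the point mass at $0$ of $F$ from the behaviour of $s(z)$ as $z \to 0$ along $\mathbb{C}_L$, using the inversion formula (\ref{inversion_pointMass}): $F(\{0\}) = \lim_{\epsilon \downarrow 0}(-\epsilon)s(-\epsilon)$ (taking $x=0$, $z = -\epsilon$). From (\ref{s_main_eqn}),
\begin{equation*}
z s(z) = \frac{2}{c} - 1 + \frac{1}{\mathbbm{i}c}\left(\frac{1}{\mathbbm{i} + ch(z)} - \frac{1}{-\mathbbm{i} + ch(z)}\right) = \frac{2}{c} - 1 - \frac{1}{c}\,\frac{2}{1 + c^2 h(z)^2},
\end{equation*}
so the whole problem reduces to determining $\lim_{z \to 0, z \in \mathbb{C}_L} h(z)$, and in particular whether $ch(z) \to \infty$ or stays bounded. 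First I would analyse (\ref{h_main_eqn}) with $H = (1-\beta)\delta_0 + \beta H_1$: the atom at $0$ contributes nothing, so $h(z) = \beta \int \lambda\,dH_1(\lambda)/[-z + \lambda\rho(ch(z))]$. Writing $g = ch(z) \in \mathbb{C}_R$ and noting $\rho(g) = 2g/(1+g^2)$, the equation becomes $g/(c\beta) = \int \lambda\,dH_1(\lambda)/[-z + 2\lambda g/(1+g^2)]$.

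Next, I would split into the two regimes according to whether $g$ stays bounded as $z \to 0$. Case (i): suppose along some sequence $z \to 0$ we have $g \to g_0$ finite with $\Re(g_0) \geq 0$. Then taking limits (justified because the integrand is dominated once $\Re(g)$ is bounded away from the bad set, using $\Re(-z+\lambda\rho(g)) = -\Re(z) + \lambda\Re(g)\rho_2(g) \geq 0$ as in (\ref{realOf_Qbar_EV_inverse})) gives $g_0/(c\beta) = \int dH_1(\lambda)\,(1+g_0^2)/(2\lambda g_0)$, i.e. $2g_0^2/(c\beta(1+g_0^2)) = \int \lambda^{-1}dH_1(\lambda)$. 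The right side is a fixed positive constant (possibly $+\infty$); this pins down $g_0^2/(1+g_0^2)$, hence a finite limit exists only for a restricted range of $c$. In this case $1 + c^2h(z)^2 = 1+g_0^2 \neq 0$ in the limit, so $zs(z) \to 2/c - 1 - (2/c)/(1+g_0^2)$, and then $F(\{0\}) = \lim(-\epsilon)s(-\epsilon) = -\lim zs(z)$ evaluated along the real approach; one checks this equals $1-\beta$ exactly when the computed $g_0$ is consistent, i.e. for $c < 2/\beta$. Case (ii): if no finite limit exists, I would show $g \to \infty$; then $\rho(g) \sim 2/g \to 0$, and the equation forces a precise rate. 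Writing $g$ large, $-z + 2\lambda/g \approx -z$ would make the integral $\to \int \lambda\,dH_1/(-z)$, which blows up unless $z$ and $1/g$ are comparably small; balancing, one gets $g \sim$ (const)$/\sqrt{-z} \cdot(\cdots)$, and crucially $c^2h^2 = g^2 \to \infty$ so $1/(1+c^2h^2) \to 0$, whence $zs(z) \to 2/c - 1$ and $F(\{0\}) = 1 - (2/c)$. The threshold between the two cases is exactly $c = 2/\beta$: this is where the constant $\int \lambda^{-1}dH_1$ in Case (i) would force $g_0 \to \infty$ (the equation $2g_0^2/(c\beta(1+g_0^2)) = \int\lambda^{-1}dH_1$ ceases to have a solution with $\Re(g_0)\ge 0$ of the right type), matching continuously since at $c = 2/\beta$ both formulas give $1-\beta = 1-2/c$.

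To make this rigorous I would (a) first treat the clean sub-case $H_1$ with $\int \lambda^{-1} dH_1(\lambda) < \infty$, solve the limiting quadratic in $g_0^2$ explicitly, and verify the sign/branch of $g_0$ using $g \in \mathbb{C}_R$ together with the inequality (\ref{lessThanOne}) ($c\rho_2(ch)I_2 < 1$), which restricts how large $\Re(ch)$ can be and thus selects the correct root; (b) handle $\int \lambda^{-1}dH_1 = \infty$ (e.g. $H_1$ with mass accumulating near $0$) by a direct argument that then $g \to \infty$ always, giving regime (2) for all $c > 0$ once $2/\beta$ is interpreted as $0$ — though the statement as given presumes $\int \lambda^{-1} dH_1 > 0$ finite or handles $\beta$ appropriately, so I would state the needed non-degeneracy of $H_1$; (c) for the limit exchanges, use that $h$ is analytic on $\mathbb{C}_L$ with $\Re(ch(z)) \geq K_0 > 0$ on compact subsets (Lemma \ref{boundedAwayFromZero}) and a normal-families/equicontinuity argument near $0$, exactly as in the existence proof, so that $\lim_{z\to 0} h(z)$ along $\mathbb{C}_L$ exists and equals the relevant root.

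The main obstacle I anticipate is controlling $h(z)$ as $z \to 0$: the point $z=0$ is on the boundary of $\mathbb{C}_L$ and is precisely where the support of $F$ may have an atom, so neither analyticity nor the a priori bound $|h(z)| \le C/|\Re(z)|$ survives to the limit. The delicate part is establishing the dichotomy ``$g$ converges or $g \to \infty$'' rigorously (ruling out oscillation), determining the exact blow-up rate of $g$ in regime (ii), and showing the two cases meet continuously at $c = 2/\beta$; this is where the structural inequality (\ref{lessThanOne}) and a careful Montel-type compactness argument on shrinking neighbourhoods of $0$ will do the work.
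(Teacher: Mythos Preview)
Your overall strategy --- compute $F(\{0\})$ via $\lim_{\epsilon\downarrow 0}\epsilon\, s(-\epsilon)$, reduce to the behaviour of $h(-\epsilon)$ as $\epsilon\downarrow 0$, and split into ``$h$ bounded'' versus ``$h\to\infty$'' --- is exactly the paper's. But there is a concrete algebra slip that derails your case analysis. In the limiting equation you write
\[
\frac{g_0}{c\beta} \;=\; \int \frac{1+g_0^2}{2\lambda g_0}\,dH_1(\lambda),
\]
with an extra $\lambda^{-1}$ in the integrand. In fact, as $z\to 0$ the integrand $\lambda/[-z+\lambda\rho(g)]$ tends to $1/\rho(g_0)$, which is \emph{independent of $\lambda$}; the $\lambda$ cancels. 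So the limiting relation is simply
\[
g_0 \;=\; \frac{c\beta}{\rho(g_0)} \;=\; \frac{c\beta(1+g_0^2)}{2g_0},
\qquad\text{i.e.}\qquad (2-c\beta)\,g_0^2 \;=\; c\beta,
\]
with no $\int\lambda^{-1}dH_1$ appearing at all. This immediately gives the threshold $c=2/\beta$ (the equation has a positive real solution iff $c\beta<2$), and the value $g_0^2 = c\beta/(2-c\beta)$ plugs straight into $F(\{0\}) = 1 - 2/c + (2/c)/(1+g_0^2) = 1-\beta$. Your concerns about $\int\lambda^{-1}dH_1<\infty$ versus $=\infty$, and the associated sub-cases in (a)--(b), are therefore spurious: the result depends on $H_1$ only through $\beta$ (and the fact that $H_1(\{0\})=0$, needed to justify the DCT/Fatou steps).

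The paper also simplifies the analytic machinery you propose. Rather than approaching $0$ through all of $\mathbb{C}_L$ and invoking Montel/normal-families, it restricts to real $z=-\epsilon$ and first uses the symmetry $h(\bar z)=\overline{h(z)}$ (Lemma~\ref{symmetryAboutZero}) to conclude $h(-\epsilon)\in(0,\infty)$ is \emph{real}. The problem then becomes a one-dimensional real fixed-point analysis: set $G(\epsilon,t)=\int \beta\lambda\,dH_1(\lambda)/[\epsilon+\lambda\rho(ct)]$, so $h(-\epsilon)$ is the unique $t>0$ with $G(\epsilon,t)=t$. For $c\ge 2/\beta$ a Fatou/DCT argument yields a contradiction from any finite subsequential limit, forcing $h(-\epsilon)\to\infty$. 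For $c<2/\beta$ one shows $h(-\epsilon)\le z_0:=\sqrt{\beta/(c(2-c\beta))}$ by a monotonicity/intersection argument on the graph of $G(\epsilon,\cdot)$, which supplies the boundedness you identify as the delicate step --- no compactness or blow-up rates are needed.
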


\begin{proof}
    Note that from Lemma \ref{symmetryAboutZero}, $\overline{h(-\epsilon)} = h(-\epsilon) \implies h(-\epsilon) \in \mathbb{R}$ for any $\epsilon > 0$. Also, since $h(\mathbb{C}_L) \subset \mathbb{C}_R$, we must have $h(-\epsilon) > 0$. We will now show that when $c\geq 2/\beta$, we must have 
    \begin{align}\label{lim_h_is_infty}
        \underset{\epsilon \downarrow 0}{\lim}h(-\epsilon) = \infty
    \end{align}  
    If we assume the contrary, then there exists some $M > 0$ such that   
    \begin{align}\label{h_epsilon_bound}
        h(-\epsilon) < M
    \end{align}
Then, for any sequence $\{\epsilon_n\}_{n=1}^\infty$ with $\epsilon_n \downarrow 0$, we have $|h(-\epsilon_n)| < M$. So there exists a subsequence $\{n_k\}_{k=1}^\infty$ such that $\underset{k \rightarrow \infty}{\lim} h(-\epsilon_{n_k}) = z_0$ where $z_0 \geq 0$ since $h(\mathbb{C}_L) \subset \mathbb{C}_R$. By Fatou's Lemma, we observe the following inequality.

\begin{align}
     &h(-\epsilon_{n_k}) = \int \frac{\lambda dH(\lambda)}{\epsilon_{n_k} + \lambda \rho(ch(-\epsilon_{n_k}))} = \int \frac{\beta\lambda dH_1(\lambda)}{\epsilon_{n_k} + \lambda \rho(ch(-\epsilon_{n_k}))}\\
  \implies& z_0 = \underset{k \rightarrow \infty}{\liminf} h(-\epsilon_{n_k}) \geq \int \underset{k \rightarrow \infty}{\liminf} \frac{\beta \lambda dH_1(\lambda)}{\epsilon_{n_k} + \lambda \rho(ch(-\epsilon_{n_k}))} \notag
\end{align}

\textbf{Case1}: If $z_0 = 0$, then we get $0 \geq \infty$.

\textbf{Case2:} If $z_0 > 0$, we observe that for large $k$
$$\frac{\beta\lambda}{\epsilon_{n_k} + \lambda\rho(ch(-\epsilon_{n_k}))} \leq \frac{2\beta}{\rho(cz_0)}$$

Applying D.C.T., we get another contradiction as follows
  \begin{align}\label{finite_limit}
      &z_0 = \int \frac{\beta dH_1(\lambda)}{\rho(cz_0)} \notag\\
      \implies& z_0\rho(cz_0) = \beta \notag\\
      \implies& z_0 \frac{2cz_0}{1+c^2z_0^2} = \beta \notag\\
      \implies& 2cz_0^2 = \beta(1 + c^2z_0^2) \notag \\
      \implies& c(2 - c\beta)z_0^2 = \beta > 0
  \end{align}
The LHS of the last inequality is clearly non-positive whereas the RHS is positive. Therefore, $\not \exists M > 0$ such that (\ref{h_epsilon_bound}) holds. Thus, $\underset{\epsilon \downarrow 0}{\lim}h(-\epsilon) = \infty$ for $2/\beta \leq c$.

\textbf{REMARK:} One implication of this is that the existence of a bound ($M > 0$) for some $c < 2/\beta$ is sufficient to imply that any subsequential limit must be equal to the common value of $\sqrt{\frac{\beta}{c(2-c\beta)}}$.

When $0 < c < 2/\beta$, we will show that 
\begin{align}\label{value_of_limit}
 \underset{\epsilon \downarrow 0}{\lim}h(-\epsilon) = +\sqrt{\dfrac{\beta}{c(2-c\beta)}} = z_0 \text{ (say)}   
\end{align}

Note that in light of the remark following (\ref{finite_limit}), all we need to do is show that $h(-\epsilon)$ is bounded. Define the function $G(\epsilon, t): \mathbb{R}_+^2 \rightarrow \mathbb{R}_+$ as follows.
\begin{align}\label{eq:G_epsilon_t_repr}
G(\epsilon, t) := \int \frac{\lambda dH(\lambda)}{\epsilon + \lambda \rho(ct)} = \int \frac{\beta \lambda dH_1(\lambda)}{\epsilon + \lambda \rho(ct)} \text{, where } t > 0, \epsilon > 0
\end{align} 

\textbf{REMARK:} Note that $\rho(x) = \dfrac{2x}{1+x^2}$ attains it maximum at $x = 1$ and goes to $0$ at both ends. For any fixed $\epsilon > 0$, $G(\epsilon, t)$ is continuous in $t$ and attains its minimum when $\rho(ct)$ is maximum, i.e. when $t = 1/c$. So, $G(\epsilon, \cdot)$ decreases in $(0, 1/c)$ and increases in $(1/c, \infty)$. 
Since 
$$\dfrac{\lambda}{\epsilon + \lambda \rho(1)} = \dfrac{\lambda}{\epsilon + \lambda} \leq 1$$  
we apply D.C.T. to get
\begin{align}
    \underset{\epsilon \downarrow 0}{\lim} G(\epsilon, 1/c) = \int \underset{\epsilon \downarrow 0}{\lim} \frac{\beta\lambda dH_1(\lambda}{\epsilon + \lambda \rho(1)}= \frac{\beta}{\rho(1)} = \beta
\end{align}
Since $G(\epsilon, 1/c)$ (strictly) increases as $\epsilon > 0$ decreases, we have $G(\epsilon, 1/c) < \beta$ for any $\epsilon>0$. Now we present the following chain of arguments to establish (\ref{value_of_limit}). 
    Then, observe that 
\begin{align}
    z_0 =& \frac{\beta(1+c^2z_0^2)}{2cz_0} =\frac{\beta}{\rho(cz_0)}\notag\\
    \implies z_0 =& \underset{\epsilon \downarrow 0}{\lim} \int \frac{\beta \lambda dH_1(\lambda)}{\epsilon + \lambda \rho(cz_0)} \text{, by D.C.T.} \notag\\
    \implies z_0 =& \underset{\epsilon \downarrow 0}{\lim}G(\epsilon, z_0) \notag
\end{align}
\begin{enumerate}
    \item[1] For a fixed $t$, $G(\epsilon, t)$ increases as $\epsilon > 0$ decreases.
    \item[2] For any $\epsilon > 0$, $h(-\epsilon)$ is the (unique) point where $G(\epsilon, h)$ cuts the line $y = x$.
    \item[3] So, $G(\epsilon, z_0)$ increases to $z_0$ as $\epsilon > 0$ goes to $0$.
    \item[4] Let $C_H = \int \lambda dH(\lambda) = \beta \int \lambda dH_1(\lambda)$. Clearly, $0 < C_H < \infty$. So, for any $\epsilon>0$, we have $G(\epsilon,0) = C_H/\epsilon > 0$.    
    \item[5] When $0 < c \leq 1/\beta$, we have $z_0 \leq 1/c$. 
    By the remark following (\ref{eq:G_epsilon_t_repr}), $G(\epsilon, \cdot)$ is decreasing on  $(0, 1/c]$. 
    Further, since $G(\epsilon, 0) > 0$ and $G(\epsilon,z_0) < z_0$, in a right neighborhood of $0$, the graph of
    $G(\epsilon, \cdot)$ is in $\{x < y\}$
    and in a neighborhood of $z_0$ it is 
    in $\{x > y\}$.
    Therefore, the curve of $G(\epsilon, \cdot)$ must intersect the diagonal to the left of $z_0$. Thus $h(-\epsilon) \leq z_0$.
    \item[6]  When $1/\beta < c < 2/\beta$, we have $z_0 > 1/c$ and $1/c < \beta$. 
    Since $G(\epsilon,1/c)$ increases to $\beta$ as $\epsilon$ decreases to zero, for $0 < \epsilon < \epsilon_0(c,\beta)$, we have $G(\epsilon,1/c) > 1/c$. 
    Further, by the remark following (\ref{eq:G_epsilon_t_repr}), in $(1/c, \infty)$, $G(\epsilon, \cdot)$ is an increasing 
    function. Also, $G(\epsilon,z_0) < z_0$. 
    Therefore, in $[1/c, z_0]$ the graph of $G(\epsilon, \cdot)$ moves from the region $\{x < y\}$ to the region $\{x > y\}$.  Hence, the graph of $G(\epsilon, \cdot)$ must intersect the diagonal to the left of $z_0$. Thus $h(-\epsilon) \leq z_0$.
    \item[7] Therefore, $z_0$ is an upper bound for $h(-\epsilon)$ for any $0 < c < 2/\beta$.
\end{enumerate}
Now using the arguments presented in (\ref{finite_limit}) and the subsequent remark, we get (\ref{value_of_limit}).

The results of the theorem follow immediately when we plug in (\ref{lim_h_is_infty}) and (\ref{value_of_limit}) to the below formula for the point mass at $0$.
\begin{align}
    F(\{0\})  = \underset{\epsilon \downarrow 0}{\lim}\epsilon s(-\epsilon) = 1 - \frac{2}{c} + \underset{\epsilon \downarrow 0}{\lim}\frac{2}{c}\frac{1}{1 + c^2 h^2(-\epsilon)}
\end{align}

\begin{figure}[ht]
    \centering
    \includegraphics[width=0.75\linewidth]{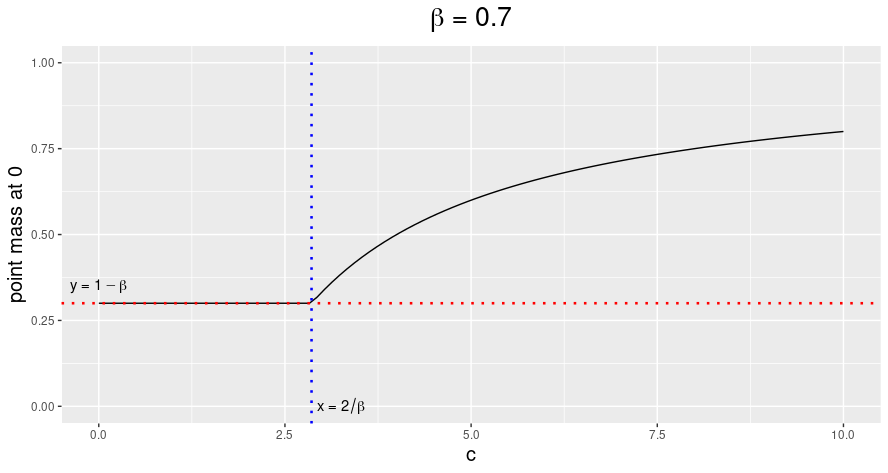}
    \caption{The value of the point mass of the LSD at $0$ as $c$ varies when $\beta = 0.7$}
    \label{fig:1}
\end{figure}
Figure \ref{fig:1} shows the behavior of the LSD at the point $0$. The point mass at 0 remains at $1 - \beta$ as long as $0 < c < 2/\beta$. Once $c \geq 2/\beta$, the point mass is given by $1 - 2/c$.  Note that the results of the theorem depend only on $\beta$ and $c$ and not on the choice of $H$.
\end{proof}

\section{LSD when the common covariance is the identity matrix}\label{sec:identity_covariance}

\subsection{Some properties of the Stieltjes Transform}

When $\Sigma_n = I_p$ a.s., $F^{\Sigma_n} = \delta_1$ $\forall n \in \mathbb{N}$ and $F^{\Sigma_n} \xrightarrow{d} \delta_1$ a.s. From Theorem \ref{t3.1}, there exists a probability distribution function $F$ on the imaginary axis such that $F^{S_n} \xrightarrow{d} F$. The LSD $F$ is characterised by $(h, s_F)$ with $h$ satisfying (\ref{h_main_eqn}) with $H = \delta_1$ and $(h,s_F)$ satisfies (\ref{s_main_eqn}). Moreover, $s_F(z)$ is the Stieltjes Transform of $F$ at $z \in \mathbb{C}_L$. The goal of this section is to recover closed form expressions for the distribution $F$ which is achieved in Theorem \ref{DensityDerivation}.

We first note that $h(z)$, the unique solution to (\ref{h_main_eqn}) with positive real part is the same as $s_F(z)$ in this case. This is shown below. Writing $h(z)$ as $h$ for simplicity, we have from (\ref{h_main_eqn})
\begin{align*}
     &\frac{1}{h} = -z + \frac{1}{\mathbbm{i} +ch}+\frac{1}{-\mathbbm{i} +ch} \text{, note that } \Re(ch) > 0 \text{ by Lemma } \ref{boundedAwayFromZero}\\
     \implies & c^2zh^3 + (c^2-2c)h^2 +zh + 1=0\\
     \implies & c^2zh^3 + zh = -1 - h^2(c^2-2c)\\
     \implies & c^3zh^3 +czh = -c - c^2h^2(c-2)\\
     \implies & czh(c^2h^2 + 1) = 2-c + c^2h^2(2-c) - 2 = (2-c)(c^2h^2+1) - 2\\
     \implies & czh = 2-c - \frac{2}{1 + c^2h^2} = 2-c + \frac{1}{\mathbbm{i}}\bigg(\frac{1}{\mathbbm{i} +ch} - \frac{1}{-\mathbbm{i} +ch}\bigg)\\
     \implies& h = \frac{1}{z}\bigg(\frac{2}{c}-1\bigg) + \frac{1}{\mathbbm{i}cz}\bigg(\frac{1}{\mathbbm{i} +ch} - \frac{1}{-\mathbbm{i} +ch}\bigg) = s_F(z) \text{, by } (\ref{s_main_eqn}) \label{h_equal_s}\tag{5.1}
\end{align*}

Therefore, the Stieltjes Transform ($s_F(z)$) of the LSD at $z \in \mathbb{C}_L$ can be recovered by finding the unique solution with positive real part (exactly one exists by Theorem \ref{t3.1}) to the following equation. 
\begin{align*}\label{5A}\tag{5.2}
     &\frac{1}{m(z)} = -z + \frac{1}{\mathbbm{i} +cm(z)} + \frac{1}{-\mathbbm{i} +cm(z)}
 \end{align*}

By Lemma \ref{boundedAwayFromZero} and (\ref{h_equal_s}), for $z \in \mathbb{C}_L$, $\Re(ch(z)) = \Re(cs_F(z)) > 0$. In particular we do not have to worry about issues like $\pm \mathbbm{i} = cs_F(z)$. Therefore, we simplify (\ref{5A}) to an equivalent functional cubic equation which is more amenable for recovering the roots.
\begin{align*}\label{5B}\tag{5.3}
         &c^2zm^3(z) + (c^2 - 2c)m^2(z) + zm(z) + 1 = 0
\end{align*}

For $z \in \mathbb{C}_L$, we extract the functional roots $\{m_j(z)\}_{j=1}^3$ of (\ref{5B}) using \textit{Cardano's method} (subsection 3.8.2 of \cite{Abramowitz}) and select the one which has a positive real component. This will serve as the Stieltjes Transform of the LSD. 

\subsection{Deriving the functional roots}\label{Cardano}

We define the following quantities as functions of $c \in (0 , \infty)$.
\begin{equation*}\tag{5.4}\label{RQD}
    \left\{ \begin{aligned} 
    &q_0 = \frac{1}{3c^2}; \hspace{18mm} q_2 =-\frac{(c-2)^2}{9c^2}; \hspace{5mm} \Tilde{q} = (q_0, q_2)\\
    &r_1 = -\frac{c+1}{3c^3}; \hspace{12mm} r_3 = -\frac{(c-2)^3}{27c^3}; \hspace{5mm} \Tilde{r} = (r_1, r_3)\\
    &d_0 = \dfrac{1}{27c^6}; \hspace{17mm} d_2 = \dfrac{2c^2 + 10c - 1}{27c^6}; \hspace{5mm}d_4 = \dfrac{(1-2/c)^3}{27c^2}; \hspace{5mm} \Tilde{d} = (d_0,d_2, d_4)\\
    &Q(z) := q_0 + \frac{q_2}{z^2}; \hspace{7mm} R(z) := \frac{r_1}{z} + \frac{r_3}{z^3}; z \in \mathbb{C}\backslash\{0\}
\end{aligned} \right.
\end{equation*}

By Cardano's method, the three roots of the cubic equation (\ref{5B}) are given as follows where $1, \omega_1, \omega_2$ are the cube roots of unity.
\begin{equation*}\tag{5.5}\label{RootFormula}
\left\{ \begin{aligned} 
m_1(z) &= -\dfrac{1-2/c}{3z} + S_{0} + T_{0}\\
m_2(z) & = -\dfrac{1-2/c}{3z} + \omega_1S_{0} + \omega_2T_{0}\\
m_3(z) & = -\dfrac{1-2/c}{3z} + \omega_2S_{0} + \omega_1T_{0}
\end{aligned} \right.
\end{equation*} 

where $S_0$ and $T_0$ are (complex) quantities satisfying 
\begin{align*}\tag{5.6}\label{S0T0}
S_0^3 + T_0^3 &= 2R(z)\\
S_0T_0 &= -Q(z)
\end{align*}
Note that if ($S_0, T_0$) satisfy (\ref{S0T0}), then so do ($\omega_1S_0, \omega_2T_0$) and ($\omega_2S_0, \omega_1T_0$). But exactly one of the functional roots of (\ref{5B}) is the Stieltjes Transform $s_F(z)$. However, the ambiguity in the definition of $S_{0}$ and $T_{0}$ prevents us from pinpointing which one among $\{m_j(z)\}_{j=1}^3$ is the Stieltjes transform of $F$ at $z$ unless we explicitly solve for the roots. 

An interesting property of the roots of equation (\ref{5B}) can be found in Proposition \ref{prop_C.4}. We also have a result regarding the continuity of the Stieltjes Transform ($s_F$) as a function of $c$ in Proposition \ref{continuity_in_c} and regarding the location of the Stieltjes Transform in Proposition \ref{Q2_to_Q1}.

\subsection{Deriving the density of the LSD}

Certain properties of the LSD such as symmetry about $0$ and existence of point mass of $1-2/c$ at $0$ when $c > 2$ have already been established in Section \ref{Section4.5}. Here, we introduce certain quantities that will be required while evaluating the density of the LSD in Theorem \ref{DensityDerivation}. 

The LSD $F$ is totally supported on (a subset of) the imaginary axis. Denote $\Tilde{F}$ as the distribution of a real random variable $Y$ where $\mathbbm{i}Y \sim F$. Theorem \ref{DensityDerivation} is regarding the density of $\Tilde{F}$. We first introduce a few quantities that are essential to parametrize said density.

\begin{definition}\label{supportParameters}
 For $c > 0$, let $\Tilde{d}, R(.), Q(.)$ be as in (\ref{RQD}). Then define
\begin{enumerate}
    \item $R_{\pm} := \dfrac{d_2 \pm \sqrt{d_2^2-4d_0d_4}}{2d_0}; \hspace{5mm}$
    $R_\pm$ are real numbers as shown in Lemma \ref{DistributionParameters}
    \item $L_c := \sqrt{R_{-}\mathbbm{1}_{\{R_{-} > 0\}}}$; \hspace{3mm}  $U_c := \sqrt{R_{+}}$
    \item $S_c := (-U_c, -L_c)\cup (L_c, U_c)$; It denotes the smallest open set excluding the point $0$\footnote{The point $0$ is treated separately in Theorem \ref{DensityDerivation} as the density at $0$ exists only when $0 < c < 2$} where the density of the LSD is finite.
    \item For $x \neq 0$, let $r(x) := \underset{\epsilon \downarrow 0}{\lim} R(-\epsilon + \mathbbm{i}x)$ and $q(x) := \underset{\epsilon \downarrow 0}{\lim} Q(-\epsilon + \mathbbm{i}x)$\\
    Results related to these limits are established in Lemma \ref{DistributionParameters}
    \item For $x \neq 0$, $d(x) :=  d_0 - \dfrac{d_2}{x^2} + \dfrac{d_4}{x^4}$ 
\end{enumerate}
\end{definition}

\begin{theorem}\label{DensityDerivation}
    $\Tilde{F}$ is differentiable at $x \neq 0$ for any $c > 0$. Define $V_\pm(x) := |r(x)| \pm \sqrt{-d(x)}$. The functional form of the density is given by    
    $$f_c(x) = \dfrac{\sqrt{3}}{2\pi}\left((V_+(x))^{\frac{1}{3}} - (V_-(x))^{\frac{1}{3}}\right)\mathbbm{1}_{\{x \in S_c\}}$$ 
    At $x=0$, the derivative exists when $0 < c < 2$ and is given by
    $$f_c(0) = \dfrac{1}{\pi\sqrt{2c - c^2}}$$
    The density is continuous wherever it exists.
    \end{theorem}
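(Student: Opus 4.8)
The plan is to apply the density‐inversion result of Proposition \ref{SilvChoiResult1}, which reduces the problem to computing $\lim_{\epsilon\downarrow 0}\Re(s_F(-\epsilon+\mathbbm{i}x))$ for $x\neq 0$, and separately the limit at $x=0$. Since $s_F=h$ in the identity case (established in \eqref{h_equal_s}) and $s_F(z)$ is the root of the cubic \eqref{5B} with positive real part, the first step is to track which of the Cardano roots $m_1,m_2,m_3$ in \eqref{RootFormula} equals $s_F(z)$ as $z=-\epsilon+\mathbbm{i}x\to\mathbbm{i}x$, and to show this limit exists. Here one uses the boundary values $r(x)=\lim_{\epsilon\downarrow 0}R(-\epsilon+\mathbbm{i}x)$ and $q(x)=\lim_{\epsilon\downarrow 0}Q(-\epsilon+\mathbbm{i}x)$ from Definition \ref{supportParameters}, together with the discriminant quantity $d(x)=d_0-d_2/x^2+d_4/x^4=R(z)^2+Q(z)^3$ evaluated on the axis (the identity $d=R^2+Q^3$ should be verified from \eqref{RQD}). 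The sign of $d(x)$ governs whether the cubic has one real and two complex-conjugate roots or three real roots; the continuous (density) part of $\tilde F$ is supported precisely where the relevant root is non-real, i.e. where $d(x)<0$, which by the definition of $R_\pm$ and $L_c,U_c$ is exactly the set $S_c$.

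Next I would carry out the explicit Cardano computation on $S_c$. When $d(x)<0$, write $R(z)=r(x)$, $Q(z)=q(x)$ real on the axis (after taking limits), and set $S_0^3=r(x)+\mathbbm{i}\sqrt{-d(x)}$, $T_0^3=r(x)-\mathbbm{i}\sqrt{-d(x)}$, so that $|S_0^3|=|T_0^3|=\sqrt{r(x)^2+(-d(x))}$; one checks this modulus equals $|r(x)|$ only in degenerate cases, so in general $|S_0|=|T_0|=(V(x))^{1/3}$ for an appropriate $V$. The real part of the purely imaginary "shift" term $-(1-2/c)/(3z)$ vanishes as $\epsilon\downarrow 0$ (its limit is $(1-2/c)/(3x)\cdot(\text{imaginary})$), so $\Re(s_F(\mathbbm{i}x))$ comes entirely from the appropriate combination $\alpha S_0+\beta T_0$ among $\{1,\omega_1,\omega_2\}$. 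Matching $V_\pm(x)=|r(x)|\pm\sqrt{-d(x)}$ as the two real cube-root radicands and using $\omega_{1,2}=e^{\pm 2\pi\mathbbm{i}/3}$, the real part of the root works out to $\pm\frac{\sqrt3}{2}\big((V_+(x))^{1/3}-(V_-(x))^{1/3}\big)$; dividing by $\pi$ and fixing the sign by positivity of $\Re(s_F)$ on $\mathbb{C}_L$ (hence nonnegativity of the density) yields $f_c(x)$. One must also confirm the limit is $0$ off $S_c$: there $d(x)\geq 0$, all three roots are real in the limit, so $\Re(s_F(-\epsilon+\mathbbm{i}x))\to 0$, giving zero density — consistent with the indicator $\mathbbm{1}_{\{x\in S_c\}}$.

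For the point $x=0$ with $0<c<2$, I would instead take $z=-\epsilon\to 0$ along the negative real axis, where by Lemma \ref{symmetryAboutZero} $h(-\epsilon)$ is real and positive, and by Theorem \ref{PointMass} (with $\beta=1$, $H=\delta_1$) there is no point mass at $0$ when $c<2$; then $h(-\epsilon)\to z_0=\sqrt{1/(c(2-c))}=1/\sqrt{2c-c^2}$. To extract $f_c(0)$ one needs $\lim_{\epsilon\downarrow 0}\Re(s_F(-\epsilon+\mathbbm{i}\cdot 0))$ approached from $\mathbb{C}_L$ transversally, which requires differentiating the cubic relation \eqref{5B} implicitly near $(z,m)=(0,z_0)$: writing $m=z_0+\mathbbm{i}\,m'\,\delta+\cdots$ for $z=-\epsilon+\mathbbm{i}\delta$ and isolating the coefficient, one finds $\Re(s_F)\to \frac{1}{\pi\sqrt{2c-c^2}}\cdot\pi$ after the $1/\pi$ in the inversion formula — i.e. $f_c(0)=1/(\pi\sqrt{2c-c^2})$. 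Finally, continuity of $f_c$ wherever it exists follows because $r(x),q(x),d(x)$ are rational (hence continuous) in $x$ away from $0$, the cube-root function is continuous, and $V_\pm$ vanish in the right way at the endpoints $\pm L_c,\pm U_c$ so the indicator does not create a jump there.

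The main obstacle I anticipate is the \emph{root selection}: Cardano's formulas produce three branches with a sign/cube-root ambiguity (noted in the paper right after \eqref{S0T0}), and one must rigorously argue which branch is $s_F$ uniformly as $z\to\mathbbm{i}x$, using analyticity of $s_F$ on $\mathbb{C}_L$, the constraint $\Re(s_F)>0$, and the behavior near the branch locus $\{d(x)=0\}$. Getting the signs and the precise radicands $V_\pm$ correct — and showing the chosen branch extends continuously to the boundary except possibly at $0$ and at $\pm L_c,\pm U_c$ — is the delicate part; the rest is bookkeeping with the explicit quantities in \eqref{RQD}.
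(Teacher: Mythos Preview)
Your overall strategy matches the paper's: reduce to $\lim_{\epsilon\downarrow 0}\Re(s_F(-\epsilon+\mathbbm{i}x))$ via Proposition \ref{SilvChoiResult1}, then analyse the Cardano roots at the boundary. However, there is a concrete error in your boundary computation. You assert that $r(x)$ and $q(x)$ are both real on the imaginary axis; but from Lemma \ref{DistributionParameters}(3), $r(x)=\mathbbm{i}\operatorname{sgn}(x)\bigl(-r_1/|x|+r_3/|x|^3\bigr)$ is purely \emph{imaginary} (while $q(x)=q_0-q_2/x^2>0$ is indeed real). With your setup $S_0^3=r(x)+\mathbbm{i}\sqrt{-d(x)}$ and $r(x)$ real, one would get $|S_0^3|^2=r(x)^2-d(x)=-q(x)^3<0$ (using $d=r^2+q^3$), a contradiction. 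The paper instead first verifies that $-r_1/|x|+r_3/|x|^3>0$ on $S_c$ and that $|r(x)|>\sqrt{-d(x)}$, so $V_\pm(x)=|r(x)|\pm\sqrt{-d(x)}$ are positive reals; then it sets $s_0:=\mathbbm{i}(V_+)^{1/3}$ and $t_0:=-q(x)/s_0$, both purely imaginary, checks $s_0^3+t_0^3=2r(x)$ and $s_0t_0=-q(x)$, and reads off $\Re(M_j(x))$ directly from \eqref{RootFormula}. On $S_c^c\setminus\{0\}$ the limiting roots are not ``all real'' as you say --- the cubic \eqref{5B} at $z=\mathbbm{i}x$ does not have real coefficients --- but the paper shows all three limits have zero real part via the identity $t_0=-\overline{s_0}$ there.

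Two smaller gaps. First, you never invoke Lemma \ref{SilvChoiResult2}, which is what upgrades the one-sided radial limit $\lim_{\epsilon\downarrow 0}s_F(-\epsilon+\mathbbm{i}x)$ (when it lands in $\mathbb{C}_R$) to the full limit $\lim_{\mathbb{C}_L\ni z\to\mathbbm{i}x}s_F(z)$ required by Proposition \ref{SilvChoiResult1}; without it the density inversion is not justified. Second, your treatment of $x=0$ is over-engineered: since $s_F(-\epsilon)=h(-\epsilon)$ is real and positive by Lemma \ref{symmetryAboutZero}, the radial limit already equals its own real part, and one solves $1/L=2cL/(1+c^2L^2)$ directly to obtain $L=1/\sqrt{2c-c^2}$ --- no implicit differentiation or ``transversal'' expansion is needed. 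For continuity the paper does not rely solely on the algebraic formulas but combines Lemma \ref{SilvChoiResult2} with the boundary-extension Lemma \ref{continuousOnClosure}, which handles the endpoints of $S_c$ cleanly.
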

The proof can be found in Section \ref{ProofDensityDerivation}.

\subsection{Simulation study}

We ran simulations for different values of $c$. Figures \ref{fig:c=0.5}, \ref{fig:c=1}, \ref{fig:c=2}, \ref{fig:c=3} \ref{fig:c=4} and \ref{fig:c=5} below show the comparison of the ESDs of these matrices for different values of $c$ keeping $p = 2000$ against the derived theoretical distribution. The matrices were generated with independent observations, a random half of which were simulated from $\mathcal{N}(0, 1)$\footnote{$\mathcal{N}(\mu,\sigma^2)$ represents the Gaussian distribution with mean $\mu$ and variance $\sigma^2$} and the other half from $\mathcal{U}(-\sqrt{3}, \sqrt{3}$)\footnote{$\mathcal{U}(a,b)$ represents the uniform distribution over the interval $(a,b)$}.

\begin{figure}[ht]
 \begin{subfigure}{0.43\textwidth}
     \includegraphics[width=\textwidth]{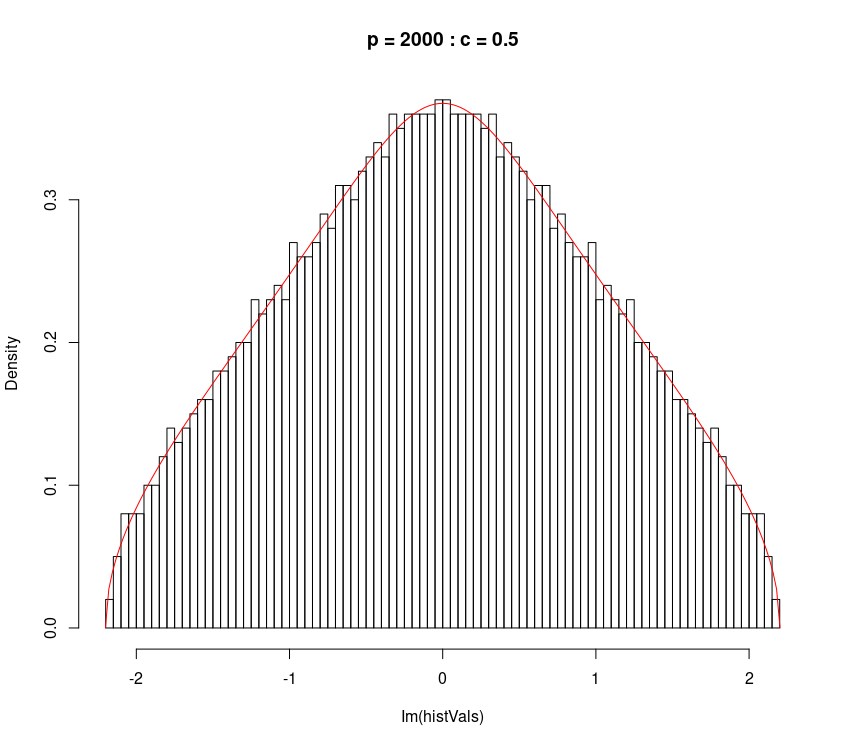}
     \caption{c=0.5}
     \label{fig:c=0.5}
 \end{subfigure}
 \hfill
 \begin{subfigure}{0.43\textwidth}
     \includegraphics[width=\textwidth]{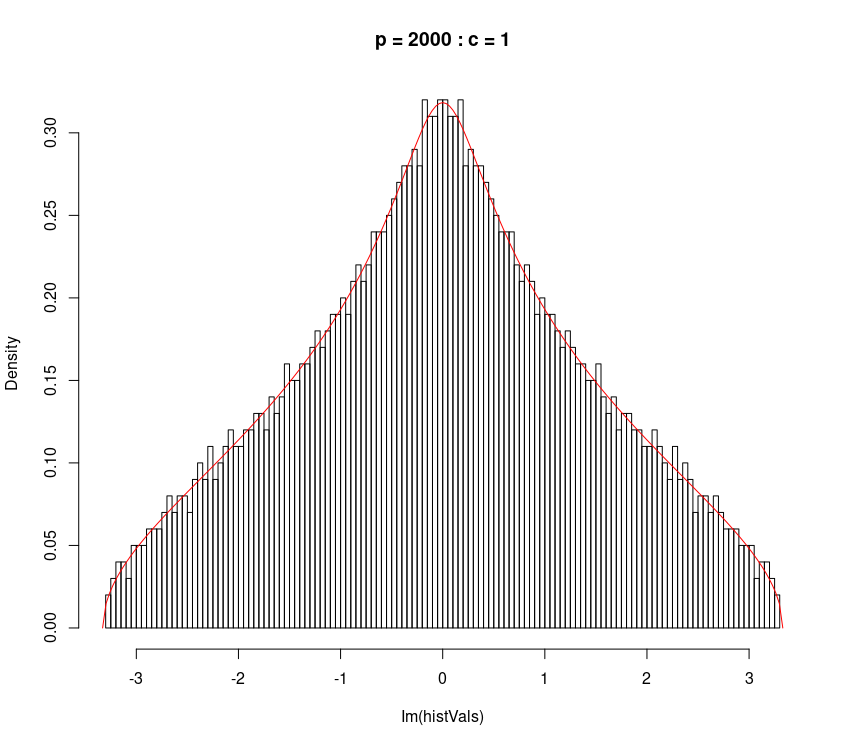}
     \caption{c=1}
     \label{fig:c=1}
 \end{subfigure}
  \medskip

 \begin{subfigure}{0.43\textwidth}
     \includegraphics[width=\textwidth]{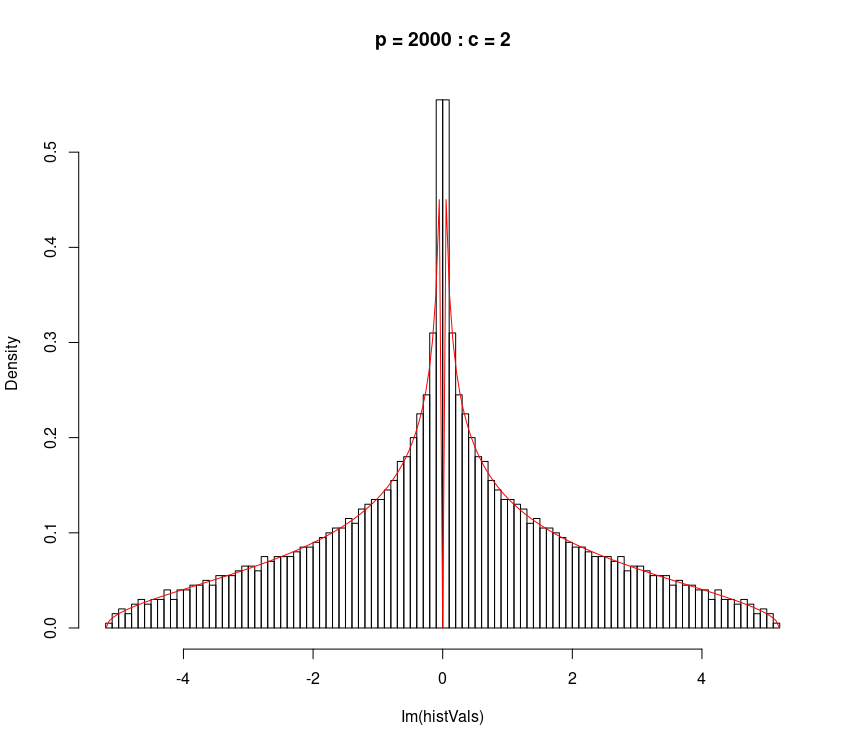}
     \caption{c=2}
     \label{fig:c=2}
 \end{subfigure}
 \hfill
 \begin{subfigure}{0.43\textwidth}
     \includegraphics[width=\textwidth]{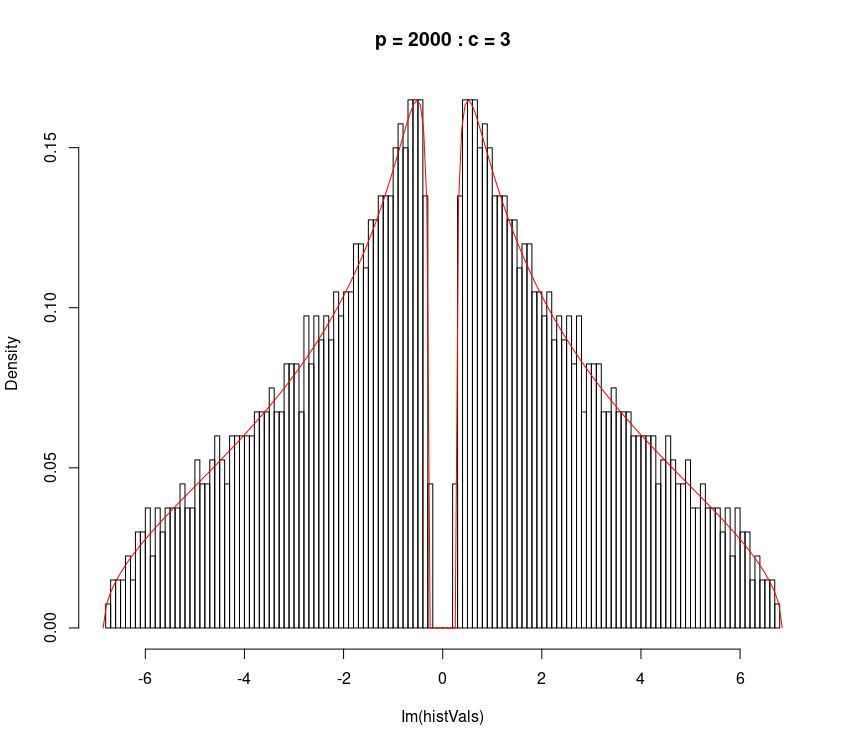}
     \caption{c=3}
     \label{fig:c=3}
 \end{subfigure}

 \medskip
 \begin{subfigure}{0.43\textwidth}
     \includegraphics[width=\textwidth]{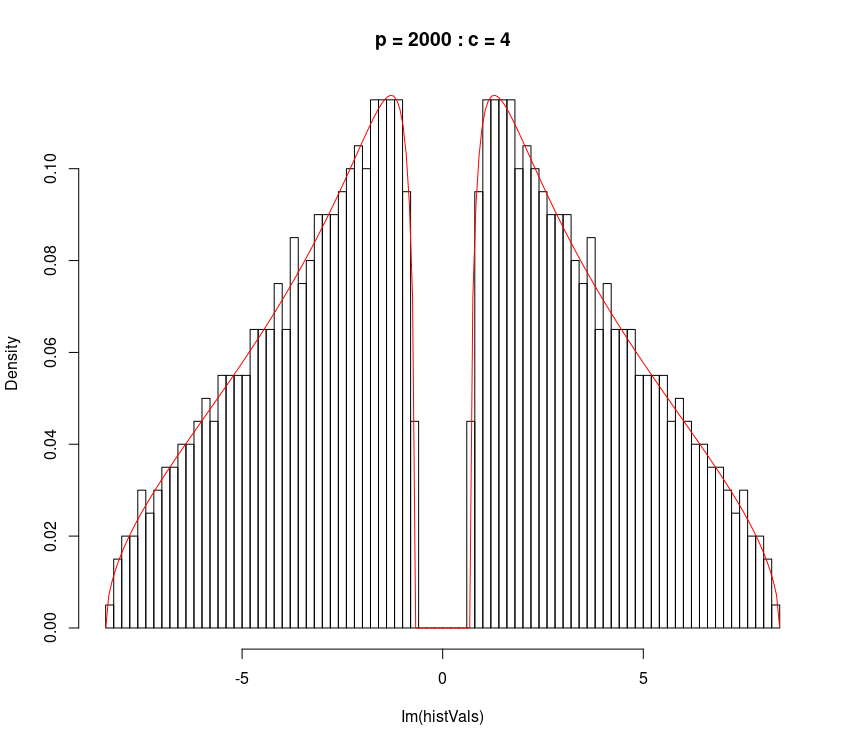}
     \caption{c=4}
     \label{fig:c=4}
 \end{subfigure}
 \hfill
 \begin{subfigure}{0.43\textwidth}
     \includegraphics[width=\textwidth]{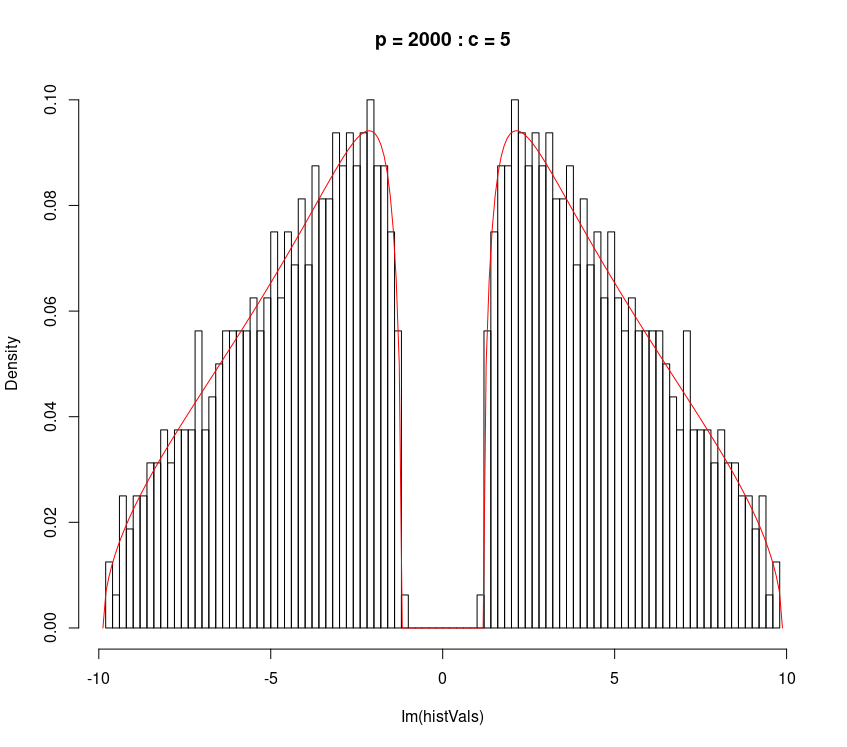}
     \caption{c=5}
     \label{fig:c=5}
 \end{subfigure}
 \caption{Simulated vs. theoretical limit distributions at various levels of $c$ for $\Sigma_n = I_p$}
 \label{Label}
\end{figure}

\newpage
\section{The Hermitian Case}\label{sec:Hermitian}
We have also derived analogous results case for the Hermitian case (i.e. $S_n^+$ in (\ref{defining_Sn})). The conditions under which the ESDs of $S_n^+$ approach a non-random limit are exactly the same as those in Theorem \ref{t3.1}.
The main result is stated below.

\begin{theorem}\label{t6.1}
Let $S_n^+$ be as defined in (\ref{defining_Sn}). Under (\textbf{T1}-\textbf{T4}) of Theorem \ref{t3.1}, $F^{S_n^+} \xrightarrow{d} F^+$ a.s. where $F^+$ is a non-random distribution with  Stieltjes Transform at $z \in \mathbb{C}^+$\footnote{$\mathbb{C}^+ := \{z \in \mathbb{C}: \Im(z) > 0\}$} given by
\begin{equation*}\label{6.1}\tag{6.1}
s(z) = \dfrac{1}{z}\bigg(\dfrac{2}{c}-1\bigg) + \dfrac{1}{cz}\bigg(\dfrac{1}{-1 +ch(z)} - \dfrac{1}{1 +ch(z)}\bigg)    
\end{equation*}
where $h(z) \in \mathbb{C}^+$ is the unique number such that 
\begin{equation*}\label{6.2}\tag{6.2}
h(z) = \int \dfrac{\lambda dH(\lambda)}{\bigg[-z + \lambda\bigg(\dfrac{1}{1 +ch(z)} + \dfrac{1}{-1 + ch(z)}\bigg)\bigg]}    
\end{equation*}
Further $h$ is analytic in $\mathbb{C}^+$ and has a continuous dependence on $H$.
\end{theorem}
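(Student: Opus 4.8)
The plan is to rerun, \emph{mutatis mutandis}, the argument of Section~\ref{sec:general_covariance} (the proof of Theorem~\ref{t3.1}) under three systematic substitutions: the left half-plane $\mathbb{C}_L$ is replaced by the upper half-plane $\mathbb{C}^+$; the modified Stieltjes transform of Section~\ref{sec:Measures} is replaced by the classical Stieltjes transform on $\mathbb{C}^+$, for which the Geronimo--Hill criterion (Theorem~1 of \cite{GeroHill03}) takes its original form $\lim_{y\to\infty}\mathbbm{i}y\,s(\mathbbm{i}y)=-1$, since $S_n^+$ is Hermitian with real spectrum; and the function $\rho$ of \eqref{definingRho} is replaced by $\rho^+(w):=\frac{1}{1+w}+\frac{1}{-1+w}$, with companion $\rho_2^+(w):=\frac{1}{|1+w|^2}+\frac{1}{|-1+w|^2}$. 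The reason for the last substitution is structural: writing $B_j:=[X_{1j}\!:\!X_{2j}]\in\mathbb{C}^{p\times 2}$, one has $X_{1j}X_{2j}^*+X_{2j}X_{1j}^*=B_jJ_+B_j^*$ with $J_+=\left(\begin{smallmatrix}0&1\\1&0\end{smallmatrix}\right)$, whereas the skew case used $J_-=\left(\begin{smallmatrix}0&1\\-1&0\end{smallmatrix}\right)$, and $J_+$ has eigenvalues $\pm1$ where $J_-$ has $\pm\mathbbm{i}$; consequently every $\pm\mathbbm{i}$ appearing in the resolvent bookkeeping (and hence in the functional equations) becomes $\pm1$. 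Throughout I would use $Q^+(z):=(S_n^+-zI_p)^{-1}$, which satisfies $\|Q^+(z)\|_{op}\le1/\Im(z)$ for $z\in\mathbb{C}^+$, and $h_n(z):=\frac1p\operatorname{trace}(\Sigma_nQ^+(z))$, which lies in $\mathbb{C}^+$ since $\Sigma_n$ is p.s.d.\ and nonzero for large $n$.

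First I would prove \emph{uniqueness} of the solution of \eqref{6.2} within the maps $\mathbb{C}^+\to\mathbb{C}^+$, by the H\"older-inequality argument of Theorem~\ref{Uniqueness} with imaginary parts playing the role of real parts. The elementary identity replacing \eqref{realOfRho} is $\Im(\rho^+(w))=-\rho_2^+(w)\Im(w)$ --- the $\pm1$ boundary terms cancel in the imaginary part exactly as the $\pm\mathbbm{i}$ terms cancelled in $\Re(\rho)$, and $\rho_2^+(w)>0$. Given this, if $h_j\in\mathbb{C}^+$ solves \eqref{6.2} then $\Im(h_j)=\Im(z)I_1(h_j,H)+c\,\Im(h_j)\,\rho_2^+(ch_j)\,I_2(h_j,H)$ with $I_k(h_j,H):=\int\lambda^k|-z+\lambda\rho^+(ch_j)|^{-2}\,dH(\lambda)>0$, which forces $c\,\rho_2^+(ch_j)\,I_2(h_j,H)<1$ because $\Im(z)>0$; the same Cauchy--Schwarz step as in Theorem~\ref{Uniqueness} then yields $|h_1-h_2|<|h_1-h_2|$ unless $h_1=h_2$.

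Next I would establish \emph{existence and convergence}, following Sections~\ref{ProofSketch}--\ref{ExistenceGeneral} step for step. Under the stronger assumptions \textbf{A1}--\textbf{A2}: show $\{s_n\},\{h_n\}$ are normal families on $\mathbb{C}^+$ (Montel, using $|s_n(z)|\le1/\Im(z)$, $|h_n(z)|\le C/\Im(z)$); introduce the deterministic equivalent $\bar Q^+(z):=\left(-zI_p+\rho^+(c_n\mathbb{E}h_n(z))\Sigma_n\right)^{-1}$, which is well defined because $\Im\!\left(-z+\lambda\rho^+(c_n\mathbb{E}h_n(z))\right)=-\Im(z)-\lambda\,\rho_2^+(c_n\mathbb{E}h_n(z))\,c_n\Im(\mathbb{E}h_n(z))\le-\Im(z)<0$ for every eigenvalue $\lambda\ge0$ of $\Sigma_n$; and prove $\frac1p\operatorname{trace}\!\left((Q^+(z)-\bar Q^+(z))M_n\right)\xrightarrow{a.s.}0$ by the rank-$2$ leave-one-out (Woodbury) expansion of $Q^+$ in terms of $Q^+_{-j}$, concentration of the quadratic forms $\frac1nX_{kj}^*Q^+_{-j}(z)X_{lj}$ around $\delta_{kl}\,c_nh_n(z)$ (using the truncated moment bound \textbf{A2}), control of the random $2\times2$ inverse $(I_2+\frac1nB_j^*Q^+_{-j}(z)B_jJ_+)^{-1}$, and a martingale-difference argument for the a.s.\ statement. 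Passing to subsequential limits of $h_n$ and invoking uniqueness gives $h^\infty:=\lim h_n$ solving \eqref{6.2}; feeding $h^\infty$ into \eqref{6.1} defines $s^\infty$, and expanding as $z\to\infty$ along $\mathbbm{i}\mathbb{R}_+$ --- where $h^\infty(z)\to0$ (as $h^\infty$ is the Stieltjes transform of a finite measure of total mass $\int\lambda\,dH(\lambda)<\infty$ by \textbf{T4}), hence $\rho^+(ch^\infty(z))\to0$ --- gives $s^\infty(z)=-1/z+o(1/z)$, i.e.\ $\lim_{y\to\infty}\mathbbm{i}y\,s^\infty(\mathbbm{i}y)=-1$, so the Geronimo--Hill criterion yields $F^{S_n^+}\xrightarrow{d}F^+$ a.s. Then the truncation chain of Section~\ref{ExistenceGeneral} --- truncating the eigenvalues of $\Sigma_n$ at $\tau$, truncating and recentering the entries of $Z_k$, and letting $\tau\to\infty$ --- removes \textbf{A1}--\textbf{A2}, now using the rank and Levy-metric perturbation inequalities in their standard (real-line) forms. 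Analyticity of $h$ in $\mathbb{C}^+$ and its continuous dependence on $H$ would follow as in Section~\ref{Continuity}.

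I expect the \emph{main obstacle} to be the deterministic-equivalent estimate $\frac1p\operatorname{trace}\!\left((Q^+-\bar Q^+)M_n\right)\xrightarrow{a.s.}0$, exactly as for $S_n^-$: the bookkeeping of the rank-$2$ Woodbury correction, the inversion of the random matrix $I_2+\frac1nB_j^*Q^+_{-j}(z)B_jJ_+$, and the quadratic-form concentration and martingale estimates under only $(4+\eta_0)$ moments. On the other hand, the Hermitian case should be slightly cleaner: the bound $\Im(-z+\lambda\rho^+(c_n\mathbb{E}h_n))\le-\Im(z)$ is automatic, so the analogue of Lemma~\ref{boundedAwayFromZero} is needed only to provide a quantitative lower bound $\Im(c_n\mathbb{E}h_n(z))\ge K_0(z)>0$ on compact subsets of $\mathbb{C}^+$ for the concentration step, and no separate argument is required to keep $\rho^+$ away from its poles $\pm1$ since $\Im(c_n\mathbb{E}h_n)>0$ already does so.
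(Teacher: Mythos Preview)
Your proposal is correct and mirrors the paper's own argument in Appendix~\ref{sec:AppendixD} essentially step for step: the paper makes precisely the substitutions $\mathbb{C}_L\rightsquigarrow\mathbb{C}^+$, $\rho\rightsquigarrow\sigma$ (your $\rho^+$), real parts $\rightsquigarrow$ imaginary parts, and then reruns the uniqueness argument (Theorem~\ref{Uniqueness_Im}), the deterministic-equivalent construction (Theorem~\ref{DeterministicEquivalent_Im}) via the rank-$2$ Woodbury identity (Lemma~\ref{Rank2Woodbury_Im}), the co-resolvent trick to link $s_n$ and $h_n$ (Theorem~\ref{ExistenceA123_Im}), and the truncation chain of Section~\ref{ExistenceGeneral_Im}. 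Your structural explanation via the eigenvalues of $J_\pm$ is a nice touch not made explicit in the paper, and your observation that $\Im(c_n\mathbb{E}h_n)>0$ already keeps $\rho^+$ off its poles is accurate, though the paper still invokes the analogue of Lemma~\ref{boundedAwayFromZero} (Lemma~\ref{boundedAwayFromZero_Im}) to obtain the quantitative lower bound needed to control $|v_n|$ and the denominators in the leave-one-out expansion.
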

The proof can be found in Appendix \ref{sec:AppendixD}.

Similar results as those in Section \ref{sec:identity_covariance} have also been established for the Hermitian case. As mentioned in the introduction, the limiting distribution (support, point mass at $0$, density) is exactly the same as the one in the skew-Hermitian case except that it is supported on the real axis.

\newpage

\subsection*{Acknowledgement}

The authors are grateful to Professor Arup Bose for some insightful discussions and suggestions. 

\bibliographystyle{plain}
\bibliography{bibli}

\newpage
\appendix
\section{General Results}
\subsection{A few basic results related to matrices }\label{R123}
\begin{itemize}
    \item \textbf{(R0): Resolvent identity}: $A^{-1} - B^{-1} = A^{-1}(B - A)B^{-1} = B^{-1}(B - A)A^{-1}$ 
    \item \textbf{(R1)} For $p \times p$ skew-Hermitian matrices A and B, $||F^A - F^B||_{im} = ||F^{-\mathbbm{i}A} - F^{-\mathbbm{i}B}|| \leq \frac{1}{p}\operatorname{rank}(A - B)$

The first equality follows from (\ref{ESD_sksym_equal_sym}) and the last inequality follows from Lemma 2.4 of \cite{BaiSilv95}.    
    \item \textbf{(R2)} For a rectangular matrix, we have $\operatorname{rank}(A) \leq$ the number of non-zero entries of A. This is a result from Lemma 2.1 of \cite{BaiSilv95}.
    \item \textbf{(R3)} $\operatorname{rank}(AXB-PXQ) \leq \operatorname{rank}(A-P)+\operatorname{rank}(B-Q)$ where the dimensions of relevant matrices are compatible
    \item \textbf{(R4)} \textbf{Cauchy Schwarz Inequality}: $|a^*Xb| \leq ||X||_{op}|a|_2|b|_2$ where $|a|_2 = \sqrt{a^*a}$
    \item \textbf{(R5)}  For a p.d. matrix B and any square matrix A, $|\operatorname{trace}(AB)| \leq ||A||_{op} \operatorname{trace}(B)$. 
    
    To see this let $B = LL^*$ and let $L = [L_1: \ldots: L_n]$ and $B = (B_{ij})_{ij}$. Using (R4) of \ref{R123} we get, \begin{align*}
        &|\operatorname{trace}(AB)|
        = |\operatorname{trace}(ALL^*)|
        = |\operatorname{trace}(L^*AL)|
        = |\sum_{j=1}^n L_j^*AL_j|
        \leq \sum_{j=1}^n |L_j^*AL_j|\\
        \leq& \sum_{j=1}^n ||A||_{op}L_j^*L_j
        =\sum_{j=1}^n ||A||_{op}B_{jj} = ||A||_{op}\operatorname{trace}(B)
    \end{align*}
    \item (\textbf{(R6)}) For $A, B \in \mathbb{R}^{N \times N}$, $|\operatorname{trace}(AB)| \leq N||A||_{op}||B||_{op}$
\end{itemize}

\begin{lemma}\label{lA.1}
    Let $\{F_n, G_n\}_{n=1}^\infty$ be sequences of distribution functions on $\mathbbm{i}\mathbb{R}$ with $s_{F_n}(z), s_{G_n}(z)$ denoting their respective Stieltjes transforms at $z \in \mathbb{C}_L$. If $L_{im}(F_n, G_n) \rightarrow 0$, then $|s_{F_n}(z) - s_{G_n}(z)| \rightarrow 0$.
\end{lemma}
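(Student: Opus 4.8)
The plan is to convert the statement about measures on the imaginary axis into the corresponding statement about measures on the real line via the identification $F \mapsto \tilde F$, and then invoke the classical fact that closeness in the Lévy metric implies pointwise convergence of Stieltjes transforms away from the support. Concretely, write $s_{F_n}(z) = \int (\mathbbm{i}t - z)^{-1}\, dF_n(\mathbbm{i}t)$; substituting $t$ for the real variable and using the definition $F(\mathbbm{i}x) = \tilde F(x)$ from (\ref{CDF_imag_equal_real}) gives $s_{F_n}(z) = \int_{\mathbb{R}} (\mathbbm{i}t - z)^{-1}\, d\tilde F_n(t)$, and similarly for $G_n$. Fix $z = -u + \mathbbm{i}v \in \mathbb{C}_L$, so $u = |\Re(z)| > 0$.

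Next I would write the difference of Stieltjes transforms as a Riemann--Stieltjes integral against $\tilde F_n - \tilde G_n$ and integrate by parts:
\begin{align*}
s_{F_n}(z) - s_{G_n}(z) &= \int_{\mathbb{R}} \frac{d(\tilde F_n - \tilde G_n)(t)}{\mathbbm{i}t - z} = \mathbbm{i}\int_{\mathbb{R}} \frac{(\tilde F_n - \tilde G_n)(t)}{(\mathbbm{i}t - z)^2}\, dt,
\end{align*}
where the boundary terms vanish because $\tilde F_n - \tilde G_n$ is bounded and $(\mathbbm{i}t - z)^{-1} \to 0$ as $t \to \pm\infty$. The kernel satisfies $|(\mathbbm{i}t - z)^{-2}| = |\mathbbm{i}t - z|^{-2} \le ((t+v)^2 + u^2)^{-1}$, which is integrable over $\mathbb{R}$ with integral $\pi/u$. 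Then one bounds $|s_{F_n}(z) - s_{G_n}(z)| \le \|\tilde F_n - \tilde G_n\|_\infty \cdot \pi/u$ if one uses the uniform norm, but since we only have control of the Lévy metric, I would instead split $\mathbb{R}$ into a large compact interval $[-A, A]$ and its complement: on the tail the bound is uniformly small (uniformly in $n$) because $\int_{|t| > A} |(\mathbbm{i}t-z)^{-2}|\, dt \to 0$ as $A \to \infty$ and $\|\tilde F_n - \tilde G_n\|_\infty \le 1$; on $[-A,A]$, small Lévy distance forces $\int_{-A}^A |\tilde F_n - \tilde G_n|(t)\, dt \to 0$ (a standard consequence of the fact that $L(\tilde F_n, \tilde G_n) \to 0$ implies $\tilde F_n - \tilde G_n \to 0$ in $L^1$ on compacts, since $|\tilde F_n(t) - \tilde G_n(t)| \le L_n + (\tilde G_n(t + L_n) - \tilde G_n(t - L_n))$ and the latter integrates to something controlled by $2A L_n$ plus an oscillation term that vanishes). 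Combining the two pieces gives $|s_{F_n}(z) - s_{G_n}(z)| \to 0$.

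The main obstacle is the passage from Lévy-metric smallness to smallness of the weighted integral of $|\tilde F_n - \tilde G_n|$: one must handle the fact that the distribution functions need not be continuous and that the Lévy metric only controls horizontal/vertical displacement of graphs. The clean way around this is the inequality $\tilde F_n(t - L_n) - L_n \le \tilde G_n(t) \le \tilde F_n(t + L_n) + L_n$ (where $L_n := L(\tilde F_n, \tilde G_n)$), which yields the pointwise estimate $|\tilde F_n(t) - \tilde G_n(t)| \le L_n + \big(\tilde F_n(t+L_n) - \tilde F_n(t-L_n)\big)$; integrating over $[-A,A]$ and using monotonicity of $\tilde F_n$ bounds the total variation contribution by $L_n \cdot(\text{const})$, so everything goes to zero with $L_n$. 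Alternatively, and perhaps more simply, one can bypass integration by parts entirely and instead note that $L_{im}(F_n, G_n) \to 0$ forces (by a subsequence argument, since Stieltjes transforms determine measures) that any subsequence has a further subsequence along which $\tilde F_n$ and $\tilde G_n$ converge to a common weak limit, whence the Stieltjes transforms converge to the same thing; but the direct integration-by-parts estimate is more self-contained and gives the result without compactness of the family. I would present the integration-by-parts version.
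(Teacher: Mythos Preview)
Your argument is correct, but it differs from the paper's. The paper instead recalls the equivalence between the L\'evy metric $L$ and the bounded Lipschitz metric $\beta$ (citing Dudley): $\tfrac{1}{2}\beta \le L \le 3\sqrt{\beta}$. It then simply observes that for fixed $z \in \mathbb{C}_L$ the kernel $g_z(t) := (\mathbbm{i}t - z)^{-1}$ is bounded Lipschitz on $\mathbb{R}$, with $\|g_z\|_{BL} \le |\Re(z)|^{-1} + |\Re(z)|^{-2}$. Normalising so that $\|g_z/\|g_z\|_{BL}\|_{BL} = 1$, the definition of $\beta$ gives $|s_{F_n}(z) - s_{G_n}(z)| \le \|g_z\|_{BL}\,\beta(\tilde F_n, \tilde G_n) \to 0$ directly. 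Your route---integration by parts followed by a split into compact and tail pieces, then a direct L\'evy-inequality estimate of $\int_{-A}^A |\tilde F_n - \tilde G_n|$---is more hands-on and fully self-contained (no external metric equivalence needed), at the cost of being slightly longer; the paper's route is a one-liner once the $L$--$\beta$ equivalence is quoted, and highlights that the result is really just ``bounded Lipschitz test functions see no difference.'' Two minor remarks on your write-up: with $z = -u + \mathbbm{i}v$ one has $|\mathbbm{i}t - z|^2 = u^2 + (t-v)^2$, so your $(t+v)^2$ should be $(t-v)^2$ (harmless for the integrability); and your bound $\int_{-A}^A(\tilde G_n(t+L_n) - \tilde G_n(t-L_n))\,dt \le 2L_n$ is actually cleaner than the ``$2AL_n$ plus oscillation'' you wrote---a telescoping change of variables gives it immediately. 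Your final remark that the subsequence/compactness alternative needs tightness (not assumed) and so should be avoided is well taken.
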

\begin{proof}
For a distribution function $F$ on $\mathbbm{i}\mathbb{R}$, we denote its real counterpart as $\Tilde{F}$. Let $\mathcal{P}(\mathbb{R})$ represent the set of all probability distribution functions on $\mathbb{R}$. Then the bounded Lipschitz metric is defined as follows 
$$\beta: \mathcal{P}(\mathbb{R}) \times \mathcal{P}(\mathbb{R}) \rightarrow \mathbb{R}_{+} \text{, where } \beta(\tilde{F}, \tilde{G}) := \underset{}{\sup} \bigg\{\bigg|\int hd\tilde{F} - \int hd\tilde{G}\bigg|: ||h||_{BL} \leq 1\bigg\}$$

$$\text{and }||h||_{BL} = \sup\{|h(x)|: x \in \mathbb{R}\} + \underset{x \neq y}{\sup}\dfrac{|h(x) - h(y)|}{|x - y|}
$$

From Corollary 18.4 and Theorem 8.3 of \cite{Dudley}, we have the following relationship between Levy (L) and bounded Lipschitz ($\beta$) metrics.
\begin{align*}\label{Levy_B}\tag{A.1}
\frac{1}{2}\beta(\tilde{F}, \tilde{G}) \leq L(\tilde{F}, \tilde{G}) \leq 3\sqrt{\beta(\tilde{F}, \tilde{G})}
\end{align*}

Fix $z \in \mathbb{C}_L$ arbitrarily. Define $g_z(x) := (\mathbbm{i}x - z)^{-1}$. First of all, $|g_z(x)| \leq 1/|\Re(z)|$ $\forall x \in \mathbb{R}$. Also, 
$$|g_z(x_1) - g_z(x_2)| = \bigg|\frac{1}{\mathbbm{i}x_1 - z} - \frac{1}{\mathbbm{i}x_2 - z}\bigg| = \frac{|x_1 - x_2|}{|\mathbbm{i}x_1 - z||\mathbbm{i}x_2-z|} \leq |x_1 - x_2|\frac{1}{\Re^2(z)}$$

Note that $||g_z||_{BL} \leq {1}/{|\Re(z)|} + {1}/{\Re ^2(z)} < \infty$. Then for $g := {g_z}/{||g_z||_{BL}}$, we have $||g||_{BL} = 1$.

By (\ref{Levy_B}) and (\ref{Levy_vs_uniform}), we have 
\begin{align*}
L_{im}(F_n, G_n) \rightarrow 0
\leftrightarrow &  L(\tilde{F}_n, \tilde{G}_n) \rightarrow 0 \\
\leftrightarrow  & \beta(\tilde{F}_n, \tilde{G}_n) \rightarrow 0\\
\implies & \bigg|\int_\mathbb{R}g(x)d\tilde{F}_n(x) - \int_\mathbb{R}g(x)d\tilde{G}_n(x)\bigg| \rightarrow 0\\
\implies & \bigg|\int_\mathbb{R}\frac{1}{\mathbbm{i}x - z}dF_n(x) - \int_\mathbb{R}\frac{1}{\mathbbm{i}x-z}dG_n(x)\bigg| \rightarrow 0\\
\implies & |s_{F_n}(z) - s_{G_n}(z)| \longrightarrow 0
\end{align*}
\end{proof}

We state the following result (Lemma B.26 of \cite{BaiSilv09}) without proof.
\begin{lemma}\label{lA.2}
Let $A = (a_{ij})$ be an $n \times n$ non-random matrix and $x = (x_1,\ldots, x_n)^T$ be a vector of independent entries. Suppose $\mathbb{E}x_i = 0, \mathbb{E}|x_i|^2 = 1$, and $\mathbb{E}|x_i|^l \leq \nu_l$. Then for $k \geq 1$, $\exists$ $C_k > 0$ independent of $n$ such that 
\begin{align*}
  \mathbb{E}|x^*Ax - \operatorname{trace}(A)|^k \leq C_k\bigg((\nu_4\operatorname{trace}(AA^*))^\frac{k}{2} + \nu_{2k}\operatorname{trace}\{(AA^*)^\frac{k}{2}\}\bigg)  
\end{align*}
\end{lemma}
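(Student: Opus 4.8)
This is the classical quadratic-form moment bound of \cite{BaiSilv09} (their Lemma B.26); the plan is to reprove it by a martingale (Burkholder) argument combined with Rosenthal's inequality, organized as an induction on the exponent $k$. Write
$$x^*Ax - \operatorname{trace}(A) \;=\; \underbrace{\sum_{i} a_{ii}\big(|x_i|^2-1\big)}_{=:\,D} \;+\; \underbrace{\sum_{i\neq j}\bar x_i\, a_{ij}\, x_j}_{=:\,E},$$
and estimate $\mathbb{E}|D|^k$ and $\mathbb{E}|E|^k$ separately, using $\mathbb{E}|D+E|^k\le 2^{k-1}(\mathbb{E}|D|^k+\mathbb{E}|E|^k)$ and the elementary moment comparisons $\nu_l\le\nu_{l'}$ for $l\le l'$ (all moments being $\ge1$ since $\nu_2=1$) and $\nu_k^2\le\nu_{2k}$. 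Since $\mathbb{E}|W|^k\le(\mathbb{E}|W|^2)^{k/2}$ for $1\le k\le 2$, it suffices to treat $k\ge 2$; the case $k=2$ follows from a direct computation in which, because $\mathbb{E}x_i=0$, $\mathbb{E}|x_i|^2=1$ and the entries are independent, all cross terms vanish and one obtains $\mathbb{E}|W|^2\le(\nu_4+2)\operatorname{trace}(AA^*)$, which is of the asserted form. For $k>2$ I would assume the inequality for every exponent in $[1,k/2]$ and establish it for $k$.

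A repeatedly used auxiliary fact is: for Hermitian positive semidefinite $M$ and $r\ge 1$, $\sum_i (M_{ii})^r\le\operatorname{trace}(M^r)$, which is immediate from the Schur--Horn majorization (the diagonal of $M$ is majorized by its spectrum) together with the convexity of $t\mapsto t^r$. Applied with $M=AA^*$ it gives $\sum_i|a_{ii}|^k\le\sum_i\big((AA^*)_{ii}\big)^{k/2}\le\operatorname{trace}\{(AA^*)^{k/2}\}$, and, after bounding each column/row in $\ell_k$ by its $\ell_2$ norm, also $\sum_{i,j}|a_{ij}|^k\le\operatorname{trace}\{(AA^*)^{k/2}\}$ and $\sum_i\big(\sum_j|a_{ij}|^2\big)^{k/2}\le\operatorname{trace}\{(AA^*)^{k/2}\}$. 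For the diagonal term $D$, a sum of independent centered variables $a_{ii}(|x_i|^2-1)$ with $\mathbb{E}(|x_i|^2-1)^2\le\nu_4$ and $\mathbb{E}\bigl||x_i|^2-1\bigr|^k\le C\nu_{2k}$, Rosenthal's inequality gives $\mathbb{E}|D|^k\le C_k\big[(\nu_4\sum_i|a_{ii}|^2)^{k/2}+\nu_{2k}\sum_i|a_{ii}|^k\big]$, and the auxiliary fact (with $\sum_i|a_{ii}|^2\le\operatorname{trace}(AA^*)$) turns this into a bound of the claimed shape.

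For the off-diagonal term $E$, let $\mathcal{F}_m=\sigma(x_1,\dots,x_m)$, $\mathbb{E}_m=\mathbb{E}[\cdot\mid\mathcal{F}_m]$, and put $\delta_m=(\mathbb{E}_m-\mathbb{E}_{m-1})E=\bar x_m b_m + x_m c_m$ with $b_m=\sum_{j<m}a_{mj}x_j$, $c_m=\sum_{i<m}\bar x_i a_{im}$ (both $\mathcal{F}_{m-1}$-measurable), so that $E=\sum_m\delta_m$ is a sum of martingale differences. Burkholder's inequality then yields, for $k\ge2$,
$$\mathbb{E}|E|^k\le C_k\Big[\,\mathbb{E}\Big(\textstyle\sum_m\mathbb{E}_{m-1}|\delta_m|^2\Big)^{k/2}+\mathbb{E}\sum_m|\delta_m|^k\,\Big].$$
For the jump term, independence of $x_m$ from $\mathcal{F}_{m-1}$ gives $\mathbb{E}|\delta_m|^k\le C_k\nu_k(\mathbb{E}|b_m|^k+\mathbb{E}|c_m|^k)$, and, $b_m$ and $c_m$ themselves being sums of independent centered variables, Rosenthal gives $\mathbb{E}|b_m|^k\le C_k[(\sum_{j<m}|a_{mj}|^2)^{k/2}+\nu_k\sum_{j<m}|a_{mj}|^k]$ (and similarly for $c_m$); summing over $m$ and using $\nu_k^2\le\nu_{2k}$, $\nu_k\le\nu_{2k}$ and the auxiliary fact bounds this term by $C_k\nu_{2k}\operatorname{trace}\{(AA^*)^{k/2}\}$. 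For the predictable-variation term, $\mathbb{E}_{m-1}|\delta_m|^2\le 2(|b_m|^2+|c_m|^2)$ (using $\mathbb{E}x_m=0$, $\mathbb{E}|x_m|^2=1$, $|\mathbb{E}x_m^2|\le1$), so $\sum_m\mathbb{E}_{m-1}|\delta_m|^2\le 2\,x^*M_1x+2\,\bar x^*M_2\bar x$, where $M_1,M_2$ are the Gram matrices of the strictly lower, resp. strictly upper, triangular parts of $A$; these are positive semidefinite with $\operatorname{trace}(M_i)\le\operatorname{trace}(AA^*)$. Writing $x^*M_1x=(x^*M_1x-\operatorname{trace}M_1)+\operatorname{trace}M_1$ and invoking the induction hypothesis at the smaller exponent $k/2$ bounds $\mathbb{E}(x^*M_1x)^{k/2}$ by a constant times $(\nu_4\operatorname{trace}(M_1^2))^{k/4}+\nu_k\operatorname{trace}(M_1^{k/2})+(\operatorname{trace}M_1)^{k/2}$; since $\operatorname{trace}(M_1^2)\le(\operatorname{trace}M_1)^2\le(\operatorname{trace}AA^*)^2$ and, by the classical boundedness of triangular truncation on Schatten $k$-classes, $\operatorname{trace}(M_1^{k/2})\le C_k\operatorname{trace}\{(AA^*)^{k/2}\}$, this is again of the asserted form; $M_2$ is treated identically. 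Combining the diagonal and off-diagonal estimates closes the induction.

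\noindent\emph{Main obstacle.} The analytic machinery (Burkholder's and Rosenthal's inequalities) is routine; the work lies in the bookkeeping needed to force every intermediate quantity into exactly the two prescribed combinations $(\nu_4\operatorname{trace}(AA^*))^{k/2}$ and $\nu_{2k}\operatorname{trace}\{(AA^*)^{k/2}\}$. The most delicate point is the predictable-variation term for $E$: it is a quadratic form in the Gram matrices $M_1,M_2$ of the triangular parts of $A$, and controlling $\operatorname{trace}(M_i^{k/2})$ by $\operatorname{trace}\{(AA^*)^{k/2}\}$ relies on the standard but nontrivial fact that triangular truncation is bounded on Schatten $k$-classes; matching the various powers of the $\nu_\cdot$ via $\nu_l\le\nu_{l'}$ and $\nu_k^2\le\nu_{2k}$, and arranging the induction so that it remains valid for non-integer $k\ge1$ (the range $k\in[1,2]$ being reduced to the self-contained case $k=2$ by Jensen's inequality), are the remaining technical points.
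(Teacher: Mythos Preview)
The paper does not prove this lemma; it simply states it as Lemma~B.26 of \cite{BaiSilv09} and refers the reader there. Your proposal reconstructs essentially the Bai--Silverstein argument (diagonal/off-diagonal split, Rosenthal on the diagonal, Burkholder plus induction on the off-diagonal), so there is no discrepancy of approach to discuss.

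One technical remark on your write-up: the step where you control $\operatorname{trace}(M_1^{k/2})$ by $C_k\operatorname{trace}\{(AA^*)^{k/2}\}$ via ``boundedness of triangular truncation on Schatten $k$-classes'' is correct for the range you need ($k\ge 2$, hence $S_k$ with $k>1$), but this is a genuinely nontrivial operator-theoretic input (Gohberg--Krein/Macaev); the constant blows up as $k\downarrow 1$, so it is worth being explicit that your induction never requires $k=1$ here. Bai and Silverstein's own proof avoids invoking this theorem directly by working with slightly different auxiliary matrices and more elementary trace inequalities, but your route is valid and arguably cleaner once the Schatten-class fact is granted.
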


\textbf{Simplification}: For deterministic matrix $A$ with $||A||_{op} < \infty$, let $B = \frac{A}{||A||_{op}}$. Then $||B||_{op} = 1$ and by (R6) of \ref{R123}, $\operatorname{trace}(BB^*) \leq n||B||_{op}^2 = n$ and $\operatorname{trace}\{(BB^*)^\frac{k}{2}\} \leq n||B||_{op}^k = n$. Then by Lemma \ref{lA.2}, we have

\begin{align*}
    &\mathbb{E}|x^*Bx - \operatorname{trace}(B)|^k 
    \leq C_k\bigg((\nu_4\operatorname{trace}(BB^*))^\frac{k}{2} + \nu_{2k}\operatorname{trace}\{(BB^*)^\frac{k}{2}\}\bigg)\\
    \implies & \frac{\mathbb{E}|x^*Ax - \operatorname{trace}(A)|^k }{||A||_{op}^k} \leq C_k[(n\nu_4)^\frac{k}{2} + n\nu_{2k}]\\
    \implies & \mathbb{E}|x^*Ax - \operatorname{trace}(A)|^k  \leq C_k||A||_{op}^k[(n\nu_4)^\frac{k}{2} + n\nu_{2k}]     \label{I1}\tag{A.2}
\end{align*}

We will be using this form of the inequality going forward.

\begin{lemma}\label{quadraticForm}
     Let $\{z_{jn}: 1 \leq j \leq n\}$ be a triangular array of complex valued random vectors. For $1 \leq r \leq n$, denote the $r^{th}$ element of $z_{jn}$ as $z_{jn}^{(r)}$. Suppose $\mathbb{E}z_{jn}^{(r)} = 0, \mathbb{E}|z_{jn}^{(r)}|^2 = 1$ and for $k \geq 1$, $\nu_k := \mathbb{E}|z_{jn}^{(r)}|^k \leq n^{ak}$ where $0 < a < \frac{1}{4}$. Suppose $A$ is independent of $z_{jn}$ and $||A||_{op} \leq B$ a.s. for some $B > 0$. Then, $$\underset{1\leq j \leq n}{\max} \absmod{\dfrac{1}{n}z_{jn}^*Az_{jn} - \dfrac{1}{n}\operatorname{trace}(A)} \xrightarrow{a.s} 0$$
\end{lemma}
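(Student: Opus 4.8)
The plan is to prove a maximal concentration inequality via a union bound combined with a moment bound and the Borel--Cantelli lemma, which is the standard route for such ``uniform over $j$'' almost sure statements in random matrix theory. First I would condition on the matrix $A$ (which is independent of the $z_{jn}$'s) so that it may be treated as a deterministic matrix with $\|A\|_{op} \leq B$; all probability statements will then be understood conditionally, and the final conclusion follows by integrating out (or simply noting the bound is uniform in the realization of $A$). Fix $\varepsilon > 0$ and a large even integer $k$; by the simplification (\ref{I1}) of Lemma \ref{lA.2} applied to $x = z_{jn}$ and the matrix $A$, we have
\begin{align*}
\mathbb{E}\left|\tfrac{1}{n}z_{jn}^*Az_{jn} - \tfrac{1}{n}\operatorname{trace}(A)\right|^k \leq \frac{C_k}{n^k}\|A\|_{op}^k\left[(n\nu_4)^{k/2} + n\nu_{2k}\right] \leq \frac{C_k B^k}{n^k}\left[n^{k/2}n^{2ak} + n\cdot n^{2ak}\right],
\end{align*}
using $\nu_4 \leq n^{4a}$ and $\nu_{2k} \leq n^{2ak}$. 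This is $O\big(n^{-k/2 + 2ak} + n^{1 - k + 2ak}\big)$, and since $a < 1/4$ the dominant exponent is $-k(1/2 - 2a) < 0$.

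Next I would apply Markov's inequality and a union bound over $j \in \{1,\ldots,n\}$:
\begin{align*}
\mathbb{P}\left(\max_{1\leq j \leq n}\left|\tfrac{1}{n}z_{jn}^*Az_{jn} - \tfrac{1}{n}\operatorname{trace}(A)\right| > \varepsilon\right) \leq \sum_{j=1}^n \frac{\mathbb{E}\left|\tfrac{1}{n}z_{jn}^*Az_{jn} - \tfrac{1}{n}\operatorname{trace}(A)\right|^k}{\varepsilon^k} = O\!\left(n \cdot n^{-k(1/2 - 2a)}\right).
\end{align*}
Since $1/2 - 2a > 0$ is a fixed positive number, I can choose $k$ large enough (depending only on $a$) that $1 - k(1/2 - 2a) < -1$, making the right-hand side summable in $n$. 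By the Borel--Cantelli lemma, the event inside the probability occurs only finitely often almost surely, i.e. $\max_{1\leq j \leq n}\big|\tfrac{1}{n}z_{jn}^*Az_{jn} - \tfrac{1}{n}\operatorname{trace}(A)\big| \to 0$ almost surely. Finally, since the bound on the tail probability depends on $A$ only through $\|A\|_{op} \leq B$, the conclusion holds unconditionally.

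The main obstacle, such as it is, is purely bookkeeping: one must verify that the exponent arithmetic genuinely yields a summable bound, which hinges on the strict inequality $a < 1/4$ providing the gap $1/2 - 2a > 0$ that can be amplified by taking $k$ large. A minor subtlety worth stating carefully is the handling of the independence/conditioning on $A$ and the fact that $\|A\|_{op}$ is bounded only almost surely rather than deterministically; this is handled cleanly because the moment bound in (\ref{I1}) scales as $\|A\|_{op}^k$, so on the almost sure event $\{\|A\|_{op} \leq B\}$ everything goes through, and that event has probability one. No analytic difficulty arises beyond this.
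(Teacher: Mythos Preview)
Your proposal is correct and follows essentially the same argument as the paper: condition on $A$, apply the moment bound (\ref{I1}) from Lemma \ref{lA.2}, combine Markov's inequality with a union bound over $j$, verify that the resulting tail probability is summable by choosing $k$ large enough using $a<1/4$, and conclude via Borel--Cantelli. The paper tracks both exponent terms $k(\tfrac{1}{2}-2a)-1$ and $k(1-2a)-2$ separately rather than absorbing the second into the dominant first, but this is cosmetic.
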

\begin{proof}
    For arbitrary $\epsilon > 0$ and $k \geq 1$, we have
\begin{align*}
p_n:= &\mathbb{P}\bigg(\underset{1\leq j \leq n}{\max} \left|\dfrac{1}{n}z_{jn}^*Az_{jn} - \dfrac{1}{n}\operatorname{trace}(A)\right| > \epsilon\bigg)\\
 \leq& \sum_{j=1}^{n} \mathbb{P}\bigg(\left|\dfrac{1}{n}z_{jn}^*Az_{jn} - \dfrac{1}{n}\operatorname{trace}(A)\right| > \epsilon\bigg) \text{, by union bound}\\
\leq & \sum_{j=1}^{n} \dfrac{\mathbb{E}\left|\dfrac{1}{n}z_{jn}^*Az_{jn} - \dfrac{1}{n}\operatorname{trace}(A)\right|^{k}}{\epsilon^{k}} \text{, for any } k \in \mathbb{N}\\
= & \sum_{j=1}^{n} \dfrac{\mathbb{E} \bigg(\mathbb{E} \bigg[|\dfrac{1}{n}z_{jn}^*Az_{jn} - \dfrac{1}{n}\operatorname{trace}(A)|^k\bigg|A\bigg]\bigg)}{\epsilon^{k}}\\
\leq & \sum_{j=1}^{n} \dfrac{\mathbb{E}||A||_{op}^{k}C_{k}((n\nu_4)^\frac{k}{2} + n\nu_{2k})}{n^{k}\epsilon^{k}}  \text{ by } (\ref{I1})\\
\leq & \sum_{j=1}^{n} \dfrac{K[(n^{1+4a})^\frac{k}{2} + n^{1+2ak}]}{n^{k}} \text{, where } K = C_k\bigg(\frac{B}{\epsilon}\bigg)^k\\
=& \dfrac{nK[n^{\frac{k}{2}+2ak} + n^{1+2ak}]}{n^{k}}
= \frac{K}{n^{k(\frac{1}{2} - 2a) - 1}} + \frac{K}{n^{k(1 - 2a) - 2}}
\end{align*}

Since $0 < a < \frac{1}{4}$, we can choose $k \in \mathbb{N}$ large enough so that $\min\{k(\frac{1}{2} - 2a) - 1, k(1 - 2a) - 2\} > 1$ to ensure that $\sum_{n=1}^\infty p_n$ converges. Therefore by Borel Cantelli lemma we have the result.
\end{proof}

\begin{lemma}\label{lA.5}
    Let $\{X_{jn}, Y_{jn}: 1 \leq j \leq n\}_{n=1}^\infty$ be triangular arrays of random variables. Suppose $\underset{1 \leq j \leq n}{\max}|X_{jn}| \xrightarrow{a.s.} 0$ and $\underset{1 \leq j \leq n}{\max}|Y_{jn}| \xrightarrow{a.s.} 0$. Then $\underset{1 \leq j \leq n}{\max}|X_{jn} + Y_{jn}| \xrightarrow{a.s.} 0$.
\end{lemma}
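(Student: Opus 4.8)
The plan is to reduce the assertion to two elementary facts: the triangle inequality applied pointwise, and the observation that a finite intersection of probability-one events again has probability one. First I would note that for every sample point $\omega$, every $n$, and every $j \in \{1,\dots,n\}$ we have $|X_{jn}(\omega) + Y_{jn}(\omega)| \le |X_{jn}(\omega)| + |Y_{jn}(\omega)| \le \max_{1\le i\le n}|X_{in}(\omega)| + \max_{1\le i\le n}|Y_{in}(\omega)|$, and since the right-hand side does not depend on $j$, taking the maximum over $j$ on the left yields
\[
\max_{1\le j\le n}|X_{jn}(\omega) + Y_{jn}(\omega)| \le \max_{1\le j\le n}|X_{jn}(\omega)| + \max_{1\le j\le n}|Y_{jn}(\omega)|.
\]

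Next I would set $A := \{\omega : \max_{1\le j\le n}|X_{jn}(\omega)| \to 0 \text{ as } n\to\infty\}$ and $B := \{\omega : \max_{1\le j\le n}|Y_{jn}(\omega)| \to 0 \text{ as } n\to\infty\}$. By hypothesis $\mathbb{P}(A) = \mathbb{P}(B) = 1$, so $\mathbb{P}(A\cap B) = 1$. Fix $\omega \in A\cap B$. Then the right-hand side of the displayed inequality is a sum of two sequences of nonnegative reals each tending to $0$, hence tends to $0$; since $0 \le \max_{1\le j\le n}|X_{jn}(\omega)+Y_{jn}(\omega)|$ is bounded above by it, the squeeze theorem forces the left-hand side to tend to $0$ as well. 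As this holds for every $\omega$ in the probability-one event $A\cap B$, we conclude $\max_{1\le j\le n}|X_{jn}+Y_{jn}| \xrightarrow{a.s.} 0$, which is the claim.

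There is no substantive obstacle in this argument; it is pure measure-theoretic bookkeeping layered on the triangle inequality. The only point deserving the slightest care is that almost-sure convergence must be handled through a common probability-one event ($A\cap B$) on which both deterministic limits hold simultaneously, rather than by arguing pointwise on the whole sample space. The same reasoning extends by a trivial induction to any fixed finite number of triangular arrays, which is the form in which the lemma is invoked in \textbf{Step10} of Section \ref{ExistenceGeneral}.
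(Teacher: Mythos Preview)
Your proof is correct and mirrors the paper's own argument almost verbatim: define the two probability-one events on which the respective maxima vanish, intersect them, and apply the triangle inequality pointwise followed by the squeeze theorem. One small inaccuracy in your closing remark: the lemma is actually invoked in the proof of Lemma~\ref{uniformConvergence} (to combine the $E_j(r,s)$ estimates), not in \textbf{Step10} of Section~\ref{ExistenceGeneral}, which uses Lemma~\ref{lA.1} instead.
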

\begin{proof}
Let $A_x := \{\omega: \underset{n \rightarrow \infty}{\lim}\underset{1 \leq j \leq n}{\max}|X_{jn}(\omega)| = 0\}$, $A_y := \{\omega: \underset{n \rightarrow \infty}{\lim}\underset{1 \leq j \leq n}{\max}|Y_{jn}(\omega)| = 0\}$. Then $\mathbb{P}(A_x) = 1 = \mathbb{P}(A_y)$. Then $\forall \omega \in A_x \cap A_y$, we have $0 \leq |X_{jn}(\omega) + Y_{jn}(\omega)| \leq |X_{jn}(\omega)| + |Y_{jn}(\omega)|$. Hence, 
$\underset{n \rightarrow \infty}{\lim}\underset{1 \leq j \leq n}{\max}|X_{jn}(\omega) + Y_{jn}(\omega)| = 0$. But, $\mathbb{P}(A_x \cap A_y) = 1$. Therefore, the result follows.
\end{proof}

\begin{lemma}\label{lA.6}
     Let $\{A_{jn}, B_{jn}, C_{jn}, D_{jn}: 1 \leq j \leq n\}_{n=1}^\infty$ be triangular arrays of random variables. Suppose $\underset{1 \leq j \leq n}{\max}|A_{jn} - C_{jn}| \xrightarrow{a.s.} 0$ and $\underset{1 \leq j \leq n}{\max}|B_{jn} - D_{jn}| \xrightarrow{a.s.} 0$ and $\exists N_0 \in \mathbb{N}$ such that $|C_{jn}| \leq B_1$ a.s. and $|D_{jn}| \leq B_2$ a.s. when $n > N_0$ for some $B_1, B_2 \geq 0$. Then $\underset{1 \leq j \leq n}{\max}|A_{jn}B_{jn} - C_{jn}D_{jn}| \xrightarrow{a.s.} 0$.
\end{lemma}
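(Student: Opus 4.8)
The plan is to reduce the claim to the two hypothesized maximal convergences together with an almost sure uniform bound on $B_{jn}$ that is extracted from the bound on $D_{jn}$. First I would pass to the event $\Omega^\star$ on which all four hypotheses hold simultaneously: $\max_{j}|A_{jn}-C_{jn}|\to 0$, $\max_j|B_{jn}-D_{jn}|\to 0$, and $|C_{jn}|\le B_1$, $|D_{jn}|\le B_2$ for all $j$ and all $n>N_0$. Each of these has probability one, so $\Omega^\star$ does too, and it suffices to prove the convergence pointwise on $\Omega^\star$.

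Fix $\omega\in\Omega^\star$. Then for all sufficiently large $n$,
\[
\max_{1\le j\le n}|B_{jn}|\le \max_{1\le j\le n}|D_{jn}|+\max_{1\le j\le n}|B_{jn}-D_{jn}|\le B_2+1,
\]
since the last maximum tends to $0$. Using the elementary identity $A_{jn}B_{jn}-C_{jn}D_{jn}=(A_{jn}-C_{jn})B_{jn}+C_{jn}(B_{jn}-D_{jn})$, taking absolute values, and maximizing over $1\le j\le n$, the triangle inequality yields, for $n$ large,
\[
\max_{1\le j\le n}|A_{jn}B_{jn}-C_{jn}D_{jn}|\le (B_2+1)\max_{1\le j\le n}|A_{jn}-C_{jn}|+B_1\max_{1\le j\le n}|B_{jn}-D_{jn}|.
\]

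Both terms on the right converge to $0$ on $\Omega^\star$: the first because $\max_j|A_{jn}-C_{jn}|\to 0$ and its multiplier is the fixed constant $B_2+1$, the second directly by hypothesis (one may also invoke Lemma \ref{lA.5} to combine the two vanishing sequences). Hence $\max_{1\le j\le n}|A_{jn}B_{jn}-C_{jn}D_{jn}|\to 0$ on the probability-one event $\Omega^\star$, which is the assertion. The argument is routine; the only point needing care is that $B_{jn}$ is not assumed bounded, so one must first convert the bound on $D_{jn}$ into an eventual uniform bound on $B_{jn}$ using $\max_j|B_{jn}-D_{jn}|\to 0$ before applying the product decomposition.
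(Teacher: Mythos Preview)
Your proof is correct and follows essentially the same approach as the paper: intersect the four probability-one events, use the decomposition $A_{jn}B_{jn}-C_{jn}D_{jn}=(A_{jn}-C_{jn})B_{jn}+C_{jn}(B_{jn}-D_{jn})$, and bound each term via the hypotheses. Your treatment is in fact slightly more careful, writing the eventual bound $\max_j|B_{jn}|\le B_2+1$ rather than $B_2$ as the paper does.
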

\begin{proof}

Let $\Omega_1 = \{\omega: \underset{n \rightarrow \infty}{\lim}\underset{1 \leq j \leq n}{\max}|A_{jn}(\omega) - C_{jn}(\omega)|= 0\}$, $\Omega_2 = \{\omega: \underset{n \rightarrow \infty}{\lim}\underset{1 \leq j \leq n}{\max}|B_{jn}(\omega) - D_{jn}(\omega)|= 0\}$, $\Omega_3 = \{\omega: |C_{jn}(\omega)| \leq B_1 \text{ for } n > N_0\}$ and $\Omega_4 = \{\omega: |D_{jn}(\omega)| \leq B_2 \text{ for } n > N_0\}$. Then $\Omega_0 = \cap_{j=1}^4\Omega_j$ is a set of probability $1$. Then $\forall \omega \in \Omega_0$, $\underset{1 \leq j \leq n}{\max} |B_{jn}(\omega)| \leq B_2$ eventually for large $n$. Therefore for  $\omega \in \Omega_0$ and large $n$,

\begin{align*}
\underset{1 \leq j \leq n}{\max}|A_{jn}B_{jn} - C_{jn}D_{jn}| 
\leq& \underset{1 \leq j \leq n}{\max}|A_{jn} - C_{jn}||B_{jn}| + \underset{1 \leq j \leq n}{\max}|C_{jn}||B_{jn} - D_{jn}|\\
\leq & B_2 \underset{1 \leq j \leq n}{\max}|A_{jn} - C_{jn}| + B_1 \underset{1 \leq j \leq n}{\max}|B_{jn} - D_{jn}| \xrightarrow{a.s.} 0   
\end{align*}
\end{proof}

\begin{lemma}\label{lA.7}
     Let $\{X_{jn}, Y_{jn}: 1 \leq j \leq n\}_{n=1}^\infty$ be triangular arrays of random variables such that $\underset{1 \leq j \leq n}{\max}|X_{jn} - Y_{jn}| \xrightarrow{a.s.} 0$. Then $|\frac{1}{n}\sum_{j=1}^n (X_{jn} - Y_{jn})| \xrightarrow{a.s.} 0$.
\end{lemma}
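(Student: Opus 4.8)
The plan is to control the average by the maximum and then invoke the hypothesis directly, exactly in the spirit of the packaging used for Lemmas \ref{lA.5} and \ref{lA.6}. First I would apply the triangle inequality to obtain, for every $n$,
$$\left|\frac{1}{n}\sum_{j=1}^n (X_{jn} - Y_{jn})\right| \leq \frac{1}{n}\sum_{j=1}^n |X_{jn} - Y_{jn}| \leq \max_{1 \leq j \leq n}|X_{jn} - Y_{jn}|,$$
where the last step uses that the arithmetic mean of the nonnegative numbers $|X_{jn}-Y_{jn}|$, $1 \le j \le n$, is at most their maximum.

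Next I would pass to the almost-sure statement. Let $\Omega_0 := \{\omega : \lim_{n\to\infty}\max_{1\le j\le n}|X_{jn}(\omega)-Y_{jn}(\omega)| = 0\}$, which has probability one by the hypothesis $\max_{1\le j\le n}|X_{jn}-Y_{jn}| \xrightarrow{a.s.} 0$. For each $\omega \in \Omega_0$, the displayed chain of inequalities squeezes $\big|\frac{1}{n}\sum_{j=1}^n (X_{jn}(\omega)-Y_{jn}(\omega))\big|$ between $0$ and a quantity tending to $0$, so it converges to $0$. Since $\mathbb{P}(\Omega_0)=1$, this gives $\big|\frac{1}{n}\sum_{j=1}^n (X_{jn}-Y_{jn})\big| \xrightarrow{a.s.} 0$.

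There is no genuine obstacle here; the only point requiring a moment's care is that, for each fixed $n$, the maximum over $1\le j\le n$ is a finite maximum, so the elementary bound $\frac1n\sum_{j=1}^n a_j \le \max_{1\le j\le n} a_j$ for $a_j \ge 0$ applies verbatim, and the rest is the squeeze theorem on the probability-one event.
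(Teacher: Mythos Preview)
Your proof is correct and follows essentially the same approach as the paper: bound the average by the maximum via the triangle inequality, then pass to the almost-sure limit on the probability-one event where the maximum tends to zero. The only cosmetic difference is that the paper phrases the final step with an arbitrary $\epsilon>0$ whereas you invoke the squeeze theorem directly; the content is identical.
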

\begin{proof}
    Let $M_n := \underset{1 \leq j \leq n}{\max}|X_{jn} - Y_{jn}|$. We have $|\frac{1}{n}\sum_{j=1}^n (X_{jn} - Y_{jn})| \leq \frac{1}{n}\sum_{j=1}^n |X_{jn} - Y_{jn}| \leq M_n$. Let $\epsilon > 0$ be arbitrary. Then $\exists \Omega_0 \subset \Omega$ such that $\mathbb{P}(\Omega_0) = 1$ and $\forall \omega\in \Omega_0$, we have $M_n(\omega) < \epsilon$ for sufficiently large $n \in \mathbb{N}$. Then, $\mathbb{P}(\{\omega: |\frac{1}{n}\sum_{j=1}^n (X_{jn} - Y_{jn})| < \epsilon\}) = 1$. Since $\epsilon > 0$ is arbitrary, the result follows.
\end{proof}

\begin{lemma} \label{Rank2Woodbury}
Let $B \in \mathbb{C}^{p \times p}$ be of the form $B = A - zI_p$ for some skew-Hermitian matrix $A$ and $z \in \mathbb{C}_L$. For vectors $u, v \in \mathbb{C}^p$, define $\langle u, v\rangle := u^*B^{-1}v$. Then,
\begin{description}
    \item[1] $(B+ uv^{*} - vu^{*})^{-1}u = B^{-1}(\alpha_1 u + \beta_1 v)$; $\alpha_1 = (1 - \langle u,v\rangle)D(u,v)$; 
     $\beta_1 = \langle u,u\rangle D(u,v)$
    \item[2] $(B+ uv^{*} - vu^{*})^{-1}v = B^{-1}(\alpha_2 v + \beta_2 u)$; $\alpha_2 = (1 + \langle v,u\rangle) D(u,v)$; $\beta_2 = -\langle v, v\rangle D(u,v)$
\end{description}
 where $D(u,v) =  \bigg((1 - \langle u,v\rangle)(1 + \langle v,u\rangle) +  \langle u,u\rangle\langle v,v\rangle\bigg)^{-1}$
\end{lemma}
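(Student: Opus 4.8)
The plan is to treat $B + uv^* - vu^*$ as a rank-two perturbation of the invertible matrix $B$ and apply the Sherman--Morrison--Woodbury formula. Write the perturbation as $UV^*$ where $U = [u : v] \in \mathbb{C}^{p\times 2}$ and $V = [v : -u] \in \mathbb{C}^{p\times 2}$, so that $UV^* = uv^* - vu^*$. The Woodbury identity then gives
\[
(B + UV^*)^{-1} = B^{-1} - B^{-1}U\left(I_2 + V^*B^{-1}U\right)^{-1}V^*B^{-1}.
\]
Applying both sides to $u$ (resp.\ $v$) reduces everything to computing the $2\times 2$ matrix $I_2 + V^*B^{-1}U$, inverting it explicitly, and multiplying out. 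The entries of $V^*B^{-1}U$ are exactly the four pairings $\langle u,v\rangle, \langle u,u\rangle, \langle v,v\rangle, \langle v,u\rangle$ (up to signs coming from the $-u$ column of $V$), so the $2\times 2$ inverse is governed by the determinant
\[
\det\!\left(I_2 + V^*B^{-1}U\right) = (1-\langle u,v\rangle)(1+\langle v,u\rangle) + \langle u,u\rangle\langle v,v\rangle,
\]
which is precisely $D(u,v)^{-1}$. I would first verify this determinant computation, since it is the algebraic heart of the statement, and then read off $\alpha_1,\beta_1$ (and $\alpha_2,\beta_2$) by expanding $B^{-1}U(I_2+V^*B^{-1}U)^{-1}V^*B^{-1}u$; the vector $V^*B^{-1}u$ has components $\langle v,u\rangle$ and $-\langle u,u\rangle$, and the cofactor structure of the $2\times 2$ inverse produces the claimed coefficients after a short matrix multiplication.

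Before invoking Woodbury I must check that $B$ and $I_2 + V^*B^{-1}U$ are both invertible. Invertibility of $B = A - zI_p$ is immediate: $A$ is skew-Hermitian so its eigenvalues are purely imaginary, while $z \in \mathbb{C}_L$ has nonzero real part, hence $z$ is not an eigenvalue of $A$. For the $2\times 2$ factor, one shows $D(u,v)^{-1} \neq 0$; here I would use that $\langle u,u\rangle = u^*B^{-1}u$ and $\langle v,v\rangle = v^*B^{-1}v$ both have real parts of a definite sign (since $\Re(B^{-1})$ is sign-definite when $\Re(z)\neq 0$ and $A$ is skew-Hermitian: writing $B^{-1} = (A-zI)^{-1}$ and using the spectral decomposition of $A$, the Hermitian part of $B^{-1}$ is $-\Re(z)\,((A-zI)(A-zI)^*)^{-1}$ up to sign), together with $\langle v,u\rangle = \overline{\langle u,v\rangle}$, to conclude the determinant cannot vanish. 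This sign/definiteness bookkeeping is the one place where the skew-Hermitian structure of $A$ and the location of $z$ genuinely enter, and it is the step I expect to require the most care; the rest is a mechanical but unavoidable $2\times 2$ computation.

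An alternative, purely algebraic route avoids Woodbury altogether: set $w := (B+uv^*-vu^*)^{-1}u$ and expand $u = (B+uv^*-vu^*)w = Bw + u(v^*w) - v(u^*w)$, i.e.\ $Bw = u(1 - v^*w) + v(u^*w)$, so $w = B^{-1}u\,(1-v^*w) + B^{-1}v\,(u^*w)$. Taking inner products with $u^*$ and $v^*$ (after left-multiplying by $u^*$ and $v^*$) yields a linear $2\times 2$ system in the scalars $v^*w$ and $u^*w$ with coefficient matrix again having determinant $D(u,v)^{-1}$; solving it and substituting back gives $w = B^{-1}(\alpha_1 u + \beta_1 v)$ with the stated $\alpha_1,\beta_1$. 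The computation for $v$ in place of $u$ is identical with the roles swapped, producing the sign change in $\alpha_2,\beta_2$. I would present whichever of these two derivations is shorter to typeset, but both reduce the lemma to the same determinant identity, which is the crux.
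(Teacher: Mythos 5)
Your main route is exactly the paper's: write $uv^* - vu^* = UV^*$ with $U=[u:v]$, $V=[v:-u]$, apply Woodbury, compute $I_2+V^*B^{-1}U$ and its determinant $(1-\langle u,v\rangle)(1+\langle v,u\rangle)+\langle u,u\rangle\langle v,v\rangle = D(u,v)^{-1}$, and read off the coefficients; the $2\times 2$ algebra checks out and reproduces $\alpha_1,\beta_1,\alpha_2,\beta_2$ as stated.

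One step of your plan is wrong as written, though. You propose to prove $D(u,v)^{-1}\neq 0$ using the identity $\langle v,u\rangle=\overline{\langle u,v\rangle}$, but this is false here: $\overline{u^*B^{-1}v}=v^*(B^*)^{-1}u$ and $B^*=-A-\bar z I \neq B$, so $B^{-1}$ is not Hermitian and the two pairings are not conjugates. The clean fix is to avoid sign bookkeeping entirely: $uv^*-vu^*$ is skew-Hermitian, so $B+uv^*-vu^*=(A+uv^*-vu^*)-zI_p$ is invertible by the same spectral argument you used for $B$ (eigenvalues of a skew-Hermitian matrix are purely imaginary, $\Re(z)\neq 0$); then the determinant identity $\det(B+UV^*)=\det(B)\det(I_2+V^*B^{-1}U)$ forces $\det(I_2+V^*B^{-1}U)\neq 0$, i.e.\ $D(u,v)$ is well defined. (The paper itself is terse on this point, so supplying this argument is an improvement, but not via the conjugation identity.) Your alternative elimination argument is fine and reduces to the same determinant, with the same caveat about how its nonvanishing is justified.
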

\begin{proof}

Clearly, $B$ cannot have zero as eigenvalue. So $\langle u,v \rangle$ is well defined. For $P \in \mathbb{C}^{p\times p}, Q,R \in  \mathbb{C}^{p\times n}$ with $P + QR^*$ and $P$ being invertible, the Woodbury formula states that
\begin{align*}\label{ShermanMorrison}\tag{A.4}
  (P + QR^*)^{-1} =& P^{-1} - P^{-1}Q(I_n + R^*P^{-1}Q)^{-1}R^*P^{-1}\\
  \implies (P + QR^*)^{-1}Q =& P^{-1}Q - P^{-1}Q(I_n + R^*P^{-1}Q)^{-1}R^*P^{-1}Q\\
  =& P^{-1}Q\bigg(I_n - (I_n + R^*P^{-1}Q)^{-1}R^*P^{-1}Q\bigg)
\end{align*}

Let $P = B$, $Q=[u:v]$ and $R=[v: -u]$. Note that $\operatorname{det}(I_2 + R^*P^{-1}Q)^{-1} = D(u,v)$. So, $D(u,v)$ is well-defined. Finally, observing that $B + uv^*-vu^* = P + QR^*$, we use (\ref{ShermanMorrison}) to get
\begin{align*}
    &(B + uv^*-vu^*)^{-1}[u:v]\\
    =& B^{-1}[u:v]\bigg(I_2 - (I_2 + R^*P^{-1}Q)^{-1}R^*P^{-1}Q\bigg)\\
    =& B^{-1}[u:v] \bigg(I_2 -{\begin{bmatrix}
        1+\langle v,u \rangle & \langle v,v \rangle\\
        -\langle u,u \rangle & 1-\langle u,v \rangle
    \end{bmatrix}}^{-1}\begin{bmatrix}
        \langle v,u \rangle &  \langle v,v \rangle\\
        -\langle u,u \rangle &  -\langle u,v \rangle\\        
    \end{bmatrix}\bigg)\\
    =&  B^{-1}[u:v] \bigg(I_2 - D(u,v)\begin{bmatrix}
        1-\langle u,v \rangle & -\langle v,v \rangle\\
        \langle u,u \rangle & 1+\langle v,u \rangle
    \end{bmatrix}\begin{bmatrix}
        \langle v,u \rangle &  \langle v,v \rangle\\
        -\langle u,u \rangle &  -\langle u,v \rangle\\        
    \end{bmatrix}\bigg)\\
    =& D(u,v)B^{-1}[u:v] \begin{bmatrix}
        \alpha_1 & \beta_2\\
        \beta_1 & \alpha_2
    \end{bmatrix}
\end{align*}

\end{proof}

\section{Proofs related to Section \ref{sec:general_covariance}}

\subsection{A few preliminary results}
Here we establish a few results that are required to prove the main theorems of Section \ref{sec:general_covariance}. All of them (except Lemma \ref{tightness}) are proved under Assumptions \ref{A123}. These results directly lead to the proofs of Theorem \ref{DeterministicEquivalent} and Theorem \ref{ExistenceA123}.

\begin{lemma}\label{tightness}
    Under the conditions of Theorem \ref{t3.1}, if we instead had $H = \delta_0$, we have $F^{S_n} \xrightarrow{d} \delta_0$ a.s. When $H\neq \delta_0$, $\{F^{S_n}\}_{n=1}^\infty$ is a tight sequence.
\end{lemma}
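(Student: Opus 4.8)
The plan is to handle the two assertions separately, both via Stieltjes transform arguments. For the first assertion (the case $H = \delta_0$), the idea is that $F^{\Sigma_n} \xrightarrow{d} \delta_0$ forces the operator-relevant part of $\Sigma_n$ to be negligible in an averaged sense, so that $S_n = n^{-1}\Sigma_n^{1/2}(Z_1 Z_2^* - Z_2 Z_1^*)\Sigma_n^{1/2}$ has spectrum concentrating at $0$. Concretely, I would work with the truncated matrices: fix $\tau > 0$, write $\Sigma_n = \Sigma_n^\tau + (\Sigma_n - \Sigma_n^\tau)$, and note that the rank-based bound (R1)--(R3) controls $\|F^{S_n} - F^{T_n}\|_{\mathrm{im}}$ where $T_n$ uses $\Lambda_n^\tau$, since the number of eigenvalues of $\Sigma_n$ exceeding $\tau$ is $o(p)$ a.s. by weak convergence to $\delta_0$. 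For the truncated part, the natural route is to estimate $\frac1p\operatorname{trace}(S_n^\tau)^*(S_n^\tau)$ or simply the second moment $\int x^2 \, dF^{-\mathbbm{i}T_n}(x) = \frac1p\operatorname{trace}(-T_n^2)$; a direct expectation computation shows this is $O(\tau \cdot \frac1p\operatorname{trace}(\Sigma_n^\tau))$ up to constants depending on $c$, and since $\frac1p\operatorname{trace}(\Sigma_n^\tau) \to \int_0^\tau \lambda \, dH(\lambda) = 0$ when $H = \delta_0$, the second moment vanishes; hence $F^{T_n} \xrightarrow{d} \delta_0$, and combining with the rank bound gives $F^{S_n} \xrightarrow{d} \delta_0$ a.s. (letting $\tau \to 0$ is not even needed here since any fixed $\tau$ works once $H=\delta_0$).

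For the second assertion (tightness of $\{F^{S_n}\}$ when $H \neq \delta_0$), I would again pass to real counterparts via \eqref{ESD_sksym_equal_sym} and prove tightness of $\{F^{-\mathbbm{i}S_n}\}$ on $\mathbb{R}$. The cleanest approach is a uniform second-moment bound: show that $\limsup_n \frac1p\operatorname{trace}(-S_n^2) < \infty$ a.s., which by Markov's inequality (applied to the distribution $F^{-\mathbbm{i}S_n}$) immediately yields $\sup_n F^{S_n}(\{|\,\cdot\,| > K\}) \to 0$ as $K \to \infty$, i.e. tightness. The quantity $\frac1p\operatorname{trace}(-S_n^2) = -\frac1p\operatorname{trace}(S_n^2)$ expands as a polynomial in the entries of $Z_1, Z_2$ and the eigenvalues of $\Sigma_n$; taking expectations and using independence, zero mean, unit variance, and the uniform moment bound (T2), one finds it is bounded by a constant times $\left(\frac1p\operatorname{trace}(\Sigma_n)\right)^2$ plus lower-order terms, and (T4) gives $\limsup_n \frac1p\operatorname{trace}(\Sigma_n) < C$. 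An a.s.\ (rather than in-expectation) bound follows from a standard concentration/Borel--Cantelli argument: the variance of $\frac1p\operatorname{trace}(S_n^2)$ is $o(1)$ after truncation of the entries at $n^a$ (mimicking \textbf{Step5}--\textbf{Step6} of Section~\ref{ExistenceGeneral}), so the trace concentrates around its mean.

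For both parts, the technical backbone is the truncation machinery already set up in Section~\ref{ExistenceGeneral}: replace $\Sigma_n$ by $\Sigma_n^\tau$ (controlled in $\|\cdot\|_{\mathrm{im}}$ by rank via (R1)), and replace the entries $z_{ij}^{(k)}$ by their truncated, centered versions (controlled by rank via (R1)--(R3), since the number of truncated entries is $o(pn)$ a.s. under (T2)). After these reductions one is in the bounded-$\|\Sigma_n\|_{\mathrm{op}}$, bounded-entry regime where the moment computations are routine applications of Lemma~\ref{lA.2} and its simplification \eqref{I1}.

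I expect the main obstacle to be the explicit moment bookkeeping for $\mathbb{E}\,\frac1p\operatorname{trace}(S_n^2)$: because $S_n$ is a commutator, $S_n^2$ expands into several groups of terms (e.g.\ $X_1 X_2^* X_1 X_2^*$, $X_1 X_2^* X_2 X_1^*$, and their transposes), and one must check that the genuinely quadratic-in-$\Sigma_n$ contributions do not blow up while the cross terms are negligible. This is bookkeeping rather than a conceptual difficulty, but it is where care is needed; everything else (rank bounds, truncation, Borel--Cantelli, Markov) is standard and already available in the paper's toolkit.
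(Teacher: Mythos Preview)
Your approach is correct in outline but takes a genuinely different route from the paper. The paper avoids moment computations entirely: it factors $S_n = \Sigma_n^{1/2}AB\Sigma_n^{1/2}$ with $A=\tfrac{1}{\sqrt n}[Z_1{:}Z_2]$ and $B=\tfrac{1}{\sqrt n}[Z_2{:}{-Z_1}]^*$, observes that $M_n:=AB\Sigma_n$ has the same spectrum as $S_n$, and then applies Lemma~2.3 of \cite{BaiSilv95} (a tail-of-singular-values bound for products) to get, for $K=K_1K_2K_3$,
\[
F^{S_n}\{|\lambda|>K\}\;\le\; F^{Z_{0n}}\{\lambda>K_1^2\}+F^{Z_{0n}}\{\lambda>K_2^2\}+F^{\Sigma_n}\{\lambda>K_3\},
\]
where $Z_{0n}=AA^*=B^*B=\tfrac1n(Z_1Z_1^*+Z_2Z_2^*)$. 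Both assertions then drop out in a couple of lines: when $H=\delta_0$, the $\Sigma_n$-term vanishes for any $K_3>0$ and the other two vanish by taking $K_1,K_2\to\infty$ (tightness of $\{F^{Z_{0n}}\}$); when $H\neq\delta_0$, tightness of $\{F^{S_n}\}$ is immediate from tightness of $\{F^{Z_{0n}}\}$ and $\{F^{\Sigma_n}\}$.

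Compared to your second-moment route, the paper's argument is considerably shorter and does not need (T4), the entry-truncation machinery of Section~\ref{ExistenceGeneral}, or any concentration/Borel--Cantelli step; it simply transfers tightness from the known Mar\v{c}enko--Pastur object $Z_{0n}$ and from $\Sigma_n$ via the product inequality. Your approach has the advantage of being self-contained (no external Bai--Silverstein lemma) and of producing a quantitative bound on $\int x^2\,dF^{-\mathbbm{i}S_n}$, at the cost of the bookkeeping you flagged in expanding $\operatorname{trace}(S_nS_n^*)$ and the extra work of upgrading the expectation bound to an almost-sure one.
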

\begin{proof}

Let $A := \frac{1}{\sqrt{n}}[Z1:Z2]$ and $B := \frac{1}{\sqrt{n}}[Z_2: -Z_1]^*$. Then, $S_n = \Sigma_n^\frac{1}{2}AB\Sigma_n^\frac{1}{2}$. Now define $M_n = AB\Sigma_n$. Note that $S_n$ and $M_n$ share the same set of eigenvalues and
$$Z_{0n} := \frac{1}{n}(Z_1Z_1^* + Z_2Z_2^*) = AA^* = B^*B$$

In the equations to follow, we highlight the fact that the support of $F^{S_n}$ (and hence of $F^{M_n}$) is purely imaginary. However, the supports of $F^{\sqrt{M_nM_n^*}}$, $F^{Z_{0n}}$ and $F^{\Sigma_n}$ are purely real.

For arbitrary $K_1, K_2, K_3 > 0$, let $K = K_1K_2K_3$. Using Lemma 2.3 of \cite{BaiSilv95},
\begin{align*}
  F^{S_n}\{(-\infty, -\mathbbm{i}K)\cup (\mathbbm{i}K, \infty)\}
= & F^{M_n}\{(-\infty, -\mathbbm{i}K) \cup (\mathbbm{i}K, \infty)\}\\
= & F^{\sqrt{M_nM_n^*}}\{(K, \infty)\}\\
\leq & F^{\sqrt{AA^*}}\{(K_1, \infty)\} + F^{\sqrt{BB^*}}\{(K_2, \infty)\} + F^{\sqrt{\Sigma_n^2}}\{(K_3, \infty)\}\\
=& F^{Z_{0n}}\{(K_1^2, \infty)\} + F^{Z_{0n}}\{(K_2^2, \infty)\} + F^{{\Sigma_n}}\{(K_3, \infty)\} \tag{B.1}\label{B.1}
\end{align*}
In the second term of the last equality, we used the fact that $BB^*$ and $B^*B$ share the same set of eigenvalues.

Now we prove the first result. Suppose $F^{\Sigma_n} \xrightarrow{d} H=\delta_0$ a.s. Choose $\epsilon, K_1, K_2 > 0$ arbitrarily and set $K_3 = {\epsilon}/{K_1K_2}$. Since $\{F^\Sigma_n\}_{n=1}^\infty$ converges weakly to $\delta_0$,
\begin{align*}
  \underset{n \rightarrow \infty}{\limsup}F^{{\Sigma_n}}\bigg\{(\dfrac{\epsilon}{K_1K_2}, \infty)\bigg\} = 0  
\end{align*}

Note that $\{F^{Z_{0n}}\}_{n=1}^\infty$ is a tight sequence. Now letting $K_1, K_2 \rightarrow \infty$ in (\ref{B.1}), we see that
$$\underset{n \rightarrow \infty}{\limsup}F^{S_n}\{(-\infty, -\epsilon)\cup (\epsilon, \infty)\} \leq \underset{K_1 \rightarrow \infty}{\lim}F^{Z_{0n}}\{(K_1^2, \infty)\} + \underset{K_2 \rightarrow \infty}{\lim}F^{Z_{0n}}\{(K_2^2, \infty)\} = 0$$ Since $\epsilon > 0$ was chosen arbitrarily, we conclude that $F^{S_n} \xrightarrow{d} \delta_0$ a.s. This justifies why we exclusively stick to the case where $H \neq \delta_0$ in Theorem \ref{t3.1}.

Now suppose $F^{\Sigma_n} \xrightarrow{d} H \neq \delta_0$ a.s. The tightness of $\{F^{S_n}\}_{n=1}^\infty$ is immediate from (\ref{B.1}) upon observing the tightness of $\{F^{Z_{0n}}\}_{n=1}^\infty$ and $\{F^{\Sigma_n}\}_{n=1}^\infty$.
\end{proof}

\begin{lemma}\label{Rank2Perturbation}
Let $M_n \in \mathbb{C}^{p \times p}$ be a sequence of deterministic matrices with bounded operator norm, i.e. $||M_n||_{op} \leq B$ for some $B \geq 0$. Under Assumptions \ref{A123}, for $z \in \mathbb{C}_L$ and sufficiently large $n$, we have $$\underset{1 \leq j \leq n}{\max}|\operatorname{trace}\{M_nQ(z)\} - \operatorname{trace}\{M_nQ_{-j}(z)\}| \leq \dfrac{4cC B}{\Re^2 (z)} \text{ a.s.}$$
Consequently, $\underset{1 \leq j \leq n}{\max}|\frac{1}{p}\operatorname{trace}\{M_n(Q - Q_{-j})\}| \xrightarrow{a.s.} 0$
\end{lemma}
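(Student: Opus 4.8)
\subsection*{Proof proposal}

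The plan is to exploit that $S_n$ and $S_{nj}$ differ only by the rank-two skew-Hermitian perturbation $S_n - S_{nj} = n^{-1}(X_{1j}X_{2j}^* - X_{2j}X_{1j}^*) = uv^* - vu^*$, where $u := n^{-1/2}X_{1j}$ and $v := n^{-1/2}X_{2j}$, and to estimate the trace difference directly through operator-norm bounds. First I would invoke the resolvent identity (R0) to write $Q(z) - Q_{-j}(z) = Q(z)(S_{nj} - S_n)Q_{-j}(z) = -Q(z)(uv^* - vu^*)Q_{-j}(z)$, and then, by cyclicity of the trace, rewrite $\operatorname{trace}\{M_n(Q - Q_{-j})\} = -\operatorname{trace}\{(uv^* - vu^*)N\} = u^*Nv - v^*Nu$, where $N := Q_{-j}M_nQ$. (This is the rank-two situation of Lemma \ref{Rank2Woodbury}, but for this particular estimate the coarser route below suffices and reproduces the stated constant.)

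Next, since $S_{nj}$ is skew-Hermitian and $z \in \mathbb{C}_L$, the eigenvalues of $Q_{-j}$ are of the form $(\mathbbm{i}\mu - z)^{-1}$ with $\mu$ real, whence $||Q_{-j}||_{op} \le 1/|\Re(z)|$, and likewise $||Q||_{op} \le 1/|\Re(z)|$; combined with $||M_n||_{op} \le B$ this gives $||N||_{op} \le B/\Re^2(z)$. Cauchy--Schwarz (R4) then yields $|u^*Nv| + |v^*Nu| \le 2\,||N||_{op}\,|u|_2\,|v|_2$, so it only remains to bound $|u|_2^2 = n^{-1}X_{1j}^*X_{1j} = n^{-1}Z_{1j}^*\Sigma_nZ_{1j}$ (and the analogue for $v$) uniformly in $j$ --- this is the one place the hypotheses genuinely enter. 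Lemma \ref{quadraticForm} applies with the deterministic matrix $\Sigma_n$: it is independent of the $Z_k$'s by (T3), satisfies $||\Sigma_n||_{op} \le \tau$ by \textbf{A1}, and the entries of the $Z_k$'s have zero mean, unit variance, and obey $\mathbb{E}|z_{ij}^{(k)}|^\ell \le n^{a\ell}$ with $0 < a < 1/4$ by \textbf{A2}. Hence $\max_{1\le j\le n}|n^{-1}Z_{kj}^*\Sigma_nZ_{kj} - n^{-1}\operatorname{trace}(\Sigma_n)| \xrightarrow{a.s.} 0$ for $k = 1,2$, and since $n^{-1}\operatorname{trace}(\Sigma_n) = c_n\,p^{-1}\operatorname{trace}(\Sigma_n) < c_nC$ for large $n$ by (T4) while $c_n \to c$, we get $\max_j|u|_2^2$ and $\max_j|v|_2^2$ bounded by $2cC$ for all large $n$, almost surely. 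Combining, $|\operatorname{trace}\{M_n(Q - Q_{-j})\}| \le 2(B/\Re^2(z))(2cC) = 4cCB/\Re^2(z)$ uniformly in $j$, for all large $n$.

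The stated consequence follows at once: dividing by $p$, $\max_{1\le j\le n}|p^{-1}\operatorname{trace}\{M_n(Q - Q_{-j})\}| \le 4cCB/(p\,\Re^2(z)) \to 0$ as $p \to \infty$. I do not expect a genuine obstacle here; the sole substantive ingredient is the uniform-in-$j$ estimate on $|u|_2^2 = n^{-1}|X_{1j}|_2^2$, where Lemma \ref{quadraticForm} --- and, behind it, the truncation \textbf{A2} supplying the moment-growth condition --- does the work, while the rest is just the resolvent identity, cyclicity of the trace, Cauchy--Schwarz, and the elementary bound $||(\text{skew-Hermitian} - zI)^{-1}||_{op} \le 1/|\Re(z)|$.
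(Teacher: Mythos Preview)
Your proposal is correct and follows essentially the same route as the paper: resolvent identity (R0) to express the difference as a rank-two term, cyclicity of the trace to reduce it to $u^*Nv - v^*Nu$ with $N = Q_{-j}M_nQ$, the operator-norm bound $\|N\|_{op}\le B/\Re^2(z)$ via $\|Q\|_{op},\|Q_{-j}\|_{op}\le 1/|\Re(z)|$, Cauchy--Schwarz (R4), and finally Lemma~\ref{quadraticForm} together with (T4) to control $\max_j n^{-1}Z_{kj}^*\Sigma_n Z_{kj}$ by $2cC$ almost surely for large $n$. The paper's proof is identical in structure and in the constants obtained.
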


\begin{proof}
By (R0) and (R4) of (\ref{R123}), for any $1\leq j \leq n$,
\begin{align*}
        & |\operatorname{trace}\{M_nQ\} - \operatorname{trace}\{M_nQ_{-j}\}|\\
        =&|\operatorname{trace}\{M_n(S_n - zI_p)^{-1}\} - \operatorname{trace}\{M_n(S_{nj}-zI_p)^{-1}\}|\\
        =& |\operatorname{trace}\{M_nQ(\dfrac{1}{n}X_{1j}X_{2j}^* - \dfrac{1}{n}X_{2j}X_{1j}^*)Q_{-j}\}|\\
        =&\frac{1}{n}|X_{2j}^*Q_{-j}M_nQ X_{1j} - X_{1j}^*Q_{-j} M_nQ X_{2j}|\\
        \leq& \dfrac{1}{n}|X_{2j}^*Q_{-j}MQ X_{1j}| +\dfrac{1}{n}|X_{1j}^*Q_{-j} MQ X_{2j}|\\
        \leq& ||Q_{-j}M_nQ||_{op}\bigg(\sqrt{\frac{1}{n}X_{2j}^*X_{2j}}\sqrt{\frac{1}{n}X_{1j}^*X_{1j}} + \sqrt{\frac{1}{n}X_{1j}^*X_{1j}}\sqrt{\frac{1}{n}X_{2j}^*X_{2j}}\bigg)  \label{B.2}\tag{B.2}
\end{align*}

First of all, we have
$$||Q_{-j}M_nQ||_{op} < B/\Re ^2(z) \text{ since } ||Q_{-j}||_{op},||Q||_{op} \leq 1/|\Re(z)|, ||M_n||_{op} < B$$

Secondly, for $k \in \{1,2\}$, we have $X_{kj}^*X_{kj} = Z_{kj}^*\Sigma_nZ_{kj}$ where $\Sigma_n$ satisfies \textbf{A1} and $Z_1, Z_2$ satisfy \textbf{A2} respectively of Assumptions \ref{A123}. Let $z_{jn} = Z_{kj}, 1 \leq j \leq n, n \in \mathbb{N}$ and $A = \Sigma_n$. Then by Lemma \ref{quadraticForm}, we have 
$$\underset{1 \leq j \leq n}{\max}\bigg|\frac{1}{n}X_{kj}^*X_{kj} - \frac{1}{n}\operatorname{trace}(\Sigma_n)\bigg| \xrightarrow{a.s.} 0.$$

From (T1) and (T4) of Theorem \ref{t3.1}, 
$$\dfrac{1}{n}\operatorname{trace}(\Sigma_n) = c_n\bigg(\dfrac{1}{p}\operatorname{trace}(\Sigma_n)\bigg) < 2cC$$ for sufficiently large $n$. This implies that for $k \in \{1,2\}$ and large $n$,
$$\underset{1 \leq j \leq n}{\max}\bigg|\frac{1}{n}X_{kj}^*X_{kj}\bigg| < 2cC \text{ a.s.}$$

Combining everything with (\ref{B.2}), for large $n$, we must have
\begin{align*}
    \underset{1 \leq j \leq n}{\max}|\operatorname{trace}\{M_nQ\} - \operatorname{trace}\{M_nQ_{-j}\}|
    \leq \frac{B}{\Re^2(z)}(2cC + 2cC) = \frac{4cC B}{\Re ^2(z)} \text{ a.s.}
\end{align*}

For $z \in \mathbb{C}_L$, it is clear that for arbitrary $\epsilon > 0$,
$\underset{1 \leq j \leq n}{\max}|\frac{1}{p}\operatorname{trace}\{M(Q - Q_{-j})\}| < \epsilon$ a.s. for large $n$. Therefore, $\underset{1 \leq j \leq n}{\max}|\frac{1}{p}\operatorname{trace}\{M(Q - Q_{-j})\}| \xrightarrow{a.s.} 0$.

\end{proof}

\begin{lemma}\label{ConcentrationOfSnH}
\textbf{Concentration of Stieltjes Transforms}
    Under Assumptions \ref{A123} for $z \in \mathbb{C}_L$, we have $|s_n(z) - \mathbb{E}s_n(z)| \xrightarrow{a.s.} 0$ and $|h_n(z) - \mathbb{E}h_n(z)| \xrightarrow{a.s.} 0$ .
\end{lemma}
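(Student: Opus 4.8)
The plan is to run the standard martingale-difference argument along the columns of $Z_1,Z_2$, combined with a deterministic bound on the effect of deleting one column, and then conclude via Burkholder's inequality and the Borel--Cantelli lemma. I would prove the assertion for $s_n(z)$ in detail; the argument for $h_n(z)$ is word-for-word the same after replacing $I_p$ by $\Sigma_n$.

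\textbf{Martingale decomposition.} Fix $z \in \mathbb{C}_L$. For $0 \le j \le n$ let $\mathcal{F}_j$ be the $\sigma$-algebra generated by the first $j$ columns of $Z_1$ and of $Z_2$ (so $\mathcal{F}_0$ is trivial and $\mathcal{F}_n$ carries all the randomness, recalling that $\Sigma_n$ is treated as non-random), and write $\mathbb{E}_j(\cdot) := \mathbb{E}[\cdot \mid \mathcal{F}_j]$. Put $\gamma_j := (\mathbb{E}_j - \mathbb{E}_{j-1}) s_n(z)$, so that $s_n(z) - \mathbb{E}s_n(z) = \sum_{j=1}^n \gamma_j$ and $\{\gamma_j\}_{j=1}^n$ is a martingale difference sequence for $\{\mathcal{F}_j\}$. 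Since $Q_{-j}(z)$ depends only on the columns other than the $j$-th, and column $j$ is independent of those, $\operatorname{trace}\{Q_{-j}(z)\}$ has the same conditional expectation given $\mathcal{F}_j$ as given $\mathcal{F}_{j-1}$, whence $(\mathbb{E}_j - \mathbb{E}_{j-1})\operatorname{trace}\{Q_{-j}(z)\}=0$ and therefore
$$\gamma_j = (\mathbb{E}_j - \mathbb{E}_{j-1})\Big(\tfrac1p\operatorname{trace}\{Q(z) - Q_{-j}(z)\}\Big).$$

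\textbf{Deterministic per-term bound and Burkholder.} The next step is the key deterministic estimate: $S_n - S_{nj} = \tfrac1n(X_{1j}X_{2j}^* - X_{2j}X_{1j}^*)$ has rank at most $2$, hence so does $Q(z) - Q_{-j}(z)$, while $\|Q(z) - Q_{-j}(z)\|_{op} \le \|Q(z)\|_{op} + \|Q_{-j}(z)\|_{op} \le 2/|\Re(z)|$ because $S_n$ and $S_{nj}$ are skew-Hermitian; consequently $|\operatorname{trace}\{M_n(Q - Q_{-j})\}| \le 2\|M_n\|_{op}\cdot 2/|\Re(z)|$ for any $M_n$ (one may instead invoke the first assertion of Lemma \ref{Rank2Perturbation}). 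Taking $M_n = I_p$ gives $|\gamma_j| \le \tfrac{8}{p\,|\Re(z)|}$ a.s., so $\sum_{j=1}^n |\gamma_j|^2 \le \tfrac{64\, n}{p^2 |\Re(z)|^2} = O(n^{-1})$ since $p/n \to c \in (0,\infty)$. Applying Burkholder's inequality (to the real and imaginary parts separately) with exponent $4$,
$$\mathbb{E}\big|s_n(z) - \mathbb{E}s_n(z)\big|^4 \;\le\; C\,\mathbb{E}\Big(\sum_{j=1}^n |\gamma_j|^2\Big)^{2} \;=\; O(n^{-2}).$$
As $\sum_n O(n^{-2}) < \infty$, Markov's inequality and Borel--Cantelli give $s_n(z) - \mathbb{E}s_n(z) \xrightarrow{a.s.} 0$. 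For $h_n(z) = \tfrac1p\operatorname{trace}\{\Sigma_n Q(z)\}$ one repeats this verbatim with $M_n = \Sigma_n$, which is deterministic with $\|\Sigma_n\|_{op}\le\tau$ by Assumption \textbf{A1}; this bounds the analogous martingale differences by $8\tau/(p|\Re(z)|)$ a.s., and the conclusion follows exactly as before.

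\textbf{Main obstacle.} The argument is essentially routine; the only genuinely load-bearing facts are (i) the uniform deterministic bound $|\operatorname{trace}\{M_n(Q - Q_{-j})\}| = O(1)$, which needs both the bounded-rank nature of a single-column perturbation and the resolvent bound $\|Q(z)\|_{op}\le 1/|\Re(z)|$ available because $S_n$ is skew-Hermitian, and (ii) the proportional regime $p \asymp n$, which is what makes $\sum_j |\gamma_j|^2 = O(n^{-1})$ and hence lets the Burkholder exponent be taken as small as $4$ while still producing a summable bound. One should also verify the measurability point used above, namely $(\mathbb{E}_j - \mathbb{E}_{j-1})\operatorname{trace}\{Q_{-j}(z)\} = 0$.
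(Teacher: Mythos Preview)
Your proof is correct and takes a genuinely cleaner route than the paper's. Both arguments use the same martingale decomposition along columns, insert $Q_{-j}$ to make each martingale difference depend on a single column, and finish with Burkholder's inequality (exponent $4$) plus Borel--Cantelli; the substantive difference is in how the per-column increment $|\operatorname{trace}\{M_n(Q-Q_{-j})\}|$ is controlled. You exploit that $S_n-S_{nj}$ has rank at most $2$, hence so does $Q-Q_{-j}$, and combine this with $\|Q\|_{op},\|Q_{-j}\|_{op}\le 1/|\Re(z)|$ to obtain the deterministic bound $4\|M_n\|_{op}/|\Re(z)|$, which makes the subsequent Burkholder step immediate. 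The paper instead expands $Q-Q_{-j}$ via the resolvent identity and Cauchy--Schwarz (its display~(\ref{B.2})) to reach the \emph{random} bound $|Y_k|\le \tfrac{\tau}{\Re^2(z)}W_{nk}$ with $W_{nk}=\tfrac1n(\|X_{1k}\|^2+\|X_{2k}\|^2)$, and must then compute $\mathbb{E}|W_{nk}|^2$ and $\mathbb{E}|W_{nk}|^4$ using the truncation assumption~\textbf{A2}. Your rank-based estimate sidesteps that moment computation entirely and does not use~\textbf{A2} for this lemma at all; the paper's route is more laborious here but recycles bounds that reappear elsewhere. One minor remark: your parenthetical pointer to Lemma~\ref{Rank2Perturbation} is slightly misleading, since that lemma's bound $4cCB/\Re^2(z)$ is obtained by the paper's quadratic-form route rather than by your rank argument, and in any case is weaker (and only eventual a.s.) compared to the deterministic bound you actually use.
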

\begin{proof}
Define $\mathcal{F}_k = \sigma(\{X_{1j}, X_{2j}:k+1 \leq j \leq n\})$ and for a measurable function $f$, we denote $\mathbb{E}_{k}f(X) := \mathbb{E}(f(X)|\mathcal{F}_k)$ for $0 \leq k \leq n-1$ and $\mathbb{E}_{n}f(X) := \mathbb{E}f(X)$. Then, we observe that

\begin{align*}
    h_n(z) - \mathbb{E}h_n(z)
    =& \dfrac{1}{p}\operatorname{trace}\{\Sigma_nQ\} - \mathbb{E}\bigg(\dfrac{1}{p}\operatorname{trace}\{\Sigma_nQ\}\bigg)\\
    =& \dfrac{1}{p}\sum_{k=1}^{n}(\mathbb{E}_k - \mathbb{E}_{k-1})\operatorname{trace}\{\Sigma_nQ\}\\    
    =& \dfrac{1}{p}\sum_{k=1}^{n}(\mathbb{E}_k - \mathbb{E}_{k-1})\underbrace{(\operatorname{trace}\{\Sigma_nQ\} - \operatorname{trace}\{\Sigma_nQ_{-k}\})}_{:= Y_k}\\
    =& \dfrac{1}{p}\sum_{k=1}^{n}\underbrace{(\mathbb{E}_k - \mathbb{E}_{k-1}) Y_k}_{:= D_k}
    = \frac{1}{p}\sum_{k=1}^{n}D_k
\end{align*}
From (\ref{B.2}), we have

$$|Y_k| = |\operatorname{trace}\{\Sigma_nQ\} - \operatorname{trace}\{\Sigma_nQ_j\}| \leq \dfrac{\tau}{\Re^2(z)}W_{nk} \text{, where } W_{nk} := \frac{1}{n}(||X_{1k}||^2 + ||X_{2k}||^2)$$
So, we have  $\absmod{D_k} \leq \dfrac{2\tau}{\Re^2(z)}W_{nk}$. By Lemma 2.12 of \cite{BaiSilv09}, there exists $K_4$ depending only on $z \in \mathbb{C}_L$ such that
\begin{align*}\label{concInequality}\tag{B.3}
   \mathbb{E}\absmod{h_n(z) - \mathbb{E}h_n(z)}^4 = \mathbb{E}\absmod{\frac{1}{p}\sum_{k=1}^n D_k}^4 \leq& \frac{K_4}{p^4} \mathbb{E}\bigg(\sum_{k=1}^n |D_k|^2\bigg)^2 \leq \frac{16K_4\tau^4}{p^4\Re^8(z)} \mathbb{E}\bigg(\sum_{k=1}^n |W_{nk}|^2\bigg)^2 \\
   =& \frac{K_0}{p^4}\mathbb{E}\bigg(\sum_{k=1}^n|W_{nk}|^4+\sum_{k\neq l}|W_{nk}|^2|W_{nl}|^2\bigg)\\
   =& \frac{K_0}{p^4}\bigg(\sum_{k=1}^n\mathbb{E}|W_{nk}|^4+\sum_{k\neq l}\mathbb{E}|W_{nk}|^2\mathbb{E}|W_{nl}|^2\bigg)
\end{align*}

We have the following inequalities.
\begin{enumerate}
    \item $||X_{1k}||^m = (Z_{1k}^*\Sigma_nZ_{1k})^\frac{m}{2} \leq (||\Sigma_n||_{op}||Z_{1k}||^2)^\frac{m}{2} \leq \tau^\frac{r}{2}||Z_{1k}||^m$ for $m \geq 1$
    \item $|W_{nk}|^2 \leq  \dfrac{2}{n^2}(||X_{1k}||^4 + ||X_{2k}||^4) \leq \dfrac{2\tau^2}{n^2}(||Z_{1k}||^4 + ||Z_{2k}||^4)$
    \item $|W_{nk}|^4 \leq  \dfrac{8}{n^4}(||X_{1k}||^8 + ||X_{2k}||^8) \leq \dfrac{8\tau^4}{n^4}(||Z_{1k}||^8 + ||Z_{2k}||^8)$
\end{enumerate}

Recall that $Z_{1k}$ is the $k^{th}$ column of $Z_1$ and $z_{jk}^{(1)}$ represents the $j^{th}$ element of $Z_{1k}$. Let $M_4>0$ be such that for $r=1,2$, $\mathbb{E}|z_{ij}^{(r)}|^4 \leq M_4$. This exists since the entries of $Z_1, Z_2$ have uniform bound on moments of order $4+\eta_0$. So, we have

\begin{align*}
    \mathbb{E}||Z_{1k}||^4 = \mathbb{E} \bigg(\sum_{j=1}^p|z_{jk}^{(1)}|^2\bigg)^2 = \mathbb{E} \bigg(\sum_{j=1}^p|z_{jk}^{(1)}|^4 + \sum_{j \neq l}|z_{jk}^{(1)}|^2|z_{lk}^{(1)}|^2\bigg) \leq pM_4 + p(p-1)
\end{align*}
and, by Assumptions \ref{A123}, $\mathbb{E}|z_{jk}^{(1)}|^m \leq n^{ma}$, for $m \geq 1$, with $0 < a < 1/4$, 
\begin{align*}
    \mathbb{E}||Z_{1k}||^8 = \mathbb{E} \bigg(\sum_{j=1}^p|z_{jk}^{(1)}|^2\bigg)^4 =& \mathbb{E} \bigg(\sum_{j=1}^p|z_{jk}^{(1)}|^8 + \sum_{j \neq l}|z_{jk}^{(1)}|^6|z_{lk}^{(1)}|^2 + \sum_{j \neq l}|z_{jk}^{(1)}|^4|z_{lk}^{(1)}|^4 \bigg)\\ \leq& p \times
    (M_4 n^{12a})^{1/2} + p(p-1) \times (M_4 n^{8a})^{1/2} \times 1 + p(p-1) \times M_4^2\\
    \leq& p(M_4 n^{12a})^{1/2} + p^2(M_4 n^{8a})^{1/2} + p^2M_4^2
\end{align*}
where the bounds in the second last line follows from $\mathbb{E}|z_{jk}^{(1)}|^8 \leq (\mathbb{E}|z_{jk}^{(1)}|^4 \mathbb{E}|z_{jk}^{(1)}|^{12})^{1/2}$, and 
$\mathbb{E}|z_{jk}^{(1)}|^6 \leq (\mathbb{E}|z_{jk}^{(1)}|^4 \mathbb{E}|z_{jk}^{(1)}|^8)^{1/2}$,
by Cauchy-Schwarz inequality.
Therefore, combining everything, we get
\begin{align*}
    \mathbb{E}|W_{nk}|^2 \leq& \frac{4\tau^2}{n^2}\bigg(pM_4 + p(p-1)\bigg) \leq 4\tau^2c_n^2(M_4/p +1) \text{, recall } c_n = p/n\\
    \mathbb{E}|W_{nk}|^4 \leq& \frac{16\tau^4}{n^4}\bigg(c_nM_4^{1/2} n^{1+6a}+ c_n^2M_4^{1/2} n^{2+4a}+p^2M_4^2\bigg) = O\bigg(\max\{\frac{1}{n^{3-6a}}, \frac{1}{n^{2-4a}}\}\bigg)
\end{align*}
Since $0 < a < 1/4$, we have $\min\{2-4a,3-6a\} = 2-4a > 1$. Using these in (\ref{concInequality}) and by Borel Cantelli Lemma, we have $|h_n(z) - \mathbb{E}h_n(z)| \xrightarrow{a.s.} 0$. The other result follows similarly.
\end{proof}

\begin{lemma}\label{boundedAwayFromZero}
    Recall definitions of $h_n(z)$ (\ref{defining_hn}) and $v_n(z)$ (\ref{defining_vn}). Under \textbf{A1} of Assumptions \ref{A123}, for $z \in \mathbb{C}_L$ and sufficiently large $n$,  $\Re(c_nh_n(z)) \geq K_0$ a.s., $\Re(c_n\mathbb{E}h_n(z)) \geq K_0$ and $|v_n(z)| \leq 1/K_0$ a.s. where $K_0 > 0$ depends on $z, c, \tau$ and $H$.
\end{lemma}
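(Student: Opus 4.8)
The plan is to reduce the whole statement to a lower bound on the imaginary part of a weighted Stieltjes transform of the Hermitian matrix $H_n := -\mathbbm{i}S_n$. Write $z = -u + \mathbbm{i}v$ with $u = -\Re(z) > 0$. Since $\mathbbm{i}(v+\mathbbm{i}u) = z$, we have $S_n - zI_p = \mathbbm{i}H_n - \mathbbm{i}wI_p = \mathbbm{i}(H_n - wI_p)$ with $w := v + \mathbbm{i}u \in \mathbb{C}^+$, hence $Q(z) = -\mathbbm{i}(H_n - wI_p)^{-1}$ and $c_nh_n(z) = -\mathbbm{i}c_ng_n(w)$, where $g_n(w) := \tfrac1p\operatorname{trace}(\Sigma_n(H_n - wI_p)^{-1})$. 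Taking real parts, $\Re(c_nh_n(z)) = c_n\Im(g_n(w))$, and likewise $\Re(c_n\mathbb{E}h_n(z)) = c_n\mathbb{E}\,\Im(g_n(w))$; so it suffices to bound $\Im(g_n(w))$ from below by a positive constant depending only on $z,c,\tau,H$ (eventually a.s., resp.\ eventually in expectation).

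For that I will use the resolvent identity $\Im\big[(H_n - wI_p)^{-1}\big] = u\,(H_n - wI_p)^{-1}(H_n - \bar wI_p)^{-1} = u\big((H_n - vI_p)^2 + u^2I_p\big)^{-1} \succeq 0$. Since $\Sigma_n \succeq 0$ and, on the spectral side, $\big((H_n - vI_p)^2 + u^2I_p\big)^{-1} \succeq \big((\|H_n\|_{op} + |v|)^2 + u^2\big)^{-1}I_p$, this gives
\begin{align*}
\Im(g_n(w)) &= \frac up\operatorname{trace}\!\Big(\Sigma_n\big((H_n - vI_p)^2 + u^2I_p\big)^{-1}\Big)\\
&\ge \frac{u}{(\|H_n\|_{op} + |v|)^2 + u^2}\cdot\frac1p\operatorname{trace}(\Sigma_n) \;\ge\; 0 .
\end{align*}
It remains to control the two factors by non-random quantities. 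For the first, $\|H_n\|_{op} = \|S_n\|_{op} \le \tfrac2n\|X_1\|_{op}\|X_2\|_{op} \le \tfrac{2\tau}{n}\|Z_1\|_{op}\|Z_2\|_{op}$; under \textbf{A2} the entries of $Z_k$ are bounded by $n^a$ with $a < \tfrac14$, so standard operator-norm bounds for random matrices with independent bounded entries give $\|Z_k\|_{op} \le C_0\sqrt n$ a.s.\ for all large $n$, with $C_0$ depending only on $c$, whence $\|S_n\|_{op} \le M' := 2\tau C_0^2$ a.s.\ eventually, $M'$ non-random. For the second, since $F^{\Sigma_n} \Rightarrow H$ with $H$ supported on $\mathbb{R}_+$ and $H \neq \delta_0$, the portmanteau theorem applied to the nonnegative lower semicontinuous function $\lambda \mapsto \lambda$ yields $\liminf_n \tfrac1p\operatorname{trace}(\Sigma_n) \ge \int\lambda\,dH(\lambda) =: C_H > 0$, so $\tfrac1p\operatorname{trace}(\Sigma_n) \ge C_H/2$ for all large $n$. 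Combining with $c_n \ge c/2$ eventually, we obtain $\Re(c_nh_n(z)) = c_n\Im(g_n(w)) \ge \tfrac c2\cdot\tfrac{u}{(M'+|v|)^2+u^2}\cdot\tfrac{C_H}{2} =: K_0 > 0$ a.s.\ for all large $n$, which is the first assertion.

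For the statement on $\mathbb{E}h_n(z)$, note that $\Re(c_nh_n(z)) = c_n\Im(g_n(w)) \ge 0$ holds for every $n$ (trace of a product of positive semidefinite matrices), while on the event $E_n := \{\|S_n\|_{op}\le M'\}$ the bound $\Re(c_nh_n(z))\ge K_0$ holds for all $n$ past a non-random index (the remaining ingredients $\tfrac1p\operatorname{trace}(\Sigma_n)\ge C_H/2$ and $c_n\ge c/2$ being deterministic). Hence $\Re(c_n\mathbb{E}h_n(z)) = \mathbb{E}\big[\Re(c_nh_n(z))\big] \ge K_0\,\mathbb{P}(E_n)$, and since $\mathbb{P}(E_n)\to1$ this is $\ge K_0/2$ for all large $n$; replacing $K_0$ by $K_0/2$ then yields both lower bounds simultaneously. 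Finally, $|v_n(z)| \le 1/K_0$ follows at once from the definition \ref{defining_vn} of $v_n(z)$ together with the lower bound on $\Re(c_nh_n(z))$ just established. The one genuinely delicate point is the need for a \emph{non-random} constant $M'$ bounding $\|S_n\|_{op}$: without it $K_0$ would depend on the sample path. This is precisely where Assumption \textbf{A2} (the truncation $|z_{ij}^{(k)}| \le n^a$, $a<\tfrac14$) enters, ensuring the largest singular value of each $Z_k$ is $O(\sqrt n)$ almost surely with a deterministic constant; every other estimate in the argument is routine.
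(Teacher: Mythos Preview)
Your argument is correct and follows essentially the same route as the paper: both compute $\Re(c_nh_n(z))$ via the spectral structure of $S_n$ and bound it below by $\dfrac{u}{(\|S_n\|_{op}+|v|)^2+u^2}\cdot\dfrac1p\operatorname{trace}(\Sigma_n)$, then control the two factors separately using $\|S_n\|_{op}=O(1)$ a.s.\ and $\tfrac1p\operatorname{trace}(\Sigma_n)\to\int\lambda\,dH>0$. The paper writes this out through the eigenvalues $\mathbbm{i}\lambda_j$ of $S_n$ directly, while you repackage the same computation through the Hermitian matrix $H_n=-\mathbbm{i}S_n$ and its resolvent; these are the same inequality. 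Two small points: (i) your handling of $\mathbb{E}h_n$ via $\Re(c_nh_n)\ge 0$ pointwise plus $\mathbb{P}(E_n)\to1$ is cleaner than the paper's, which appeals to the concentration lemma; (ii) the bound $|v_n|\le 1/K_0$ does not follow ``at once from the definition'' --- you need the short inequality $|1/(1+\zeta^2)|\le 1/|\Re\zeta|$ (obtained from the partial-fraction splitting $\tfrac{1}{1+\zeta^2}=\tfrac{1}{2\mathbbm{i}}\big(\tfrac{1}{\zeta-\mathbbm{i}}-\tfrac{1}{\zeta+\mathbbm{i}}\big)$), which the paper spells out.
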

\begin{proof}

We have $||\Sigma_n||_{op} \leq \tau$. Since $F^{\Sigma_n}$ and $H$ have a compact support $[0, \tau]$ and $F^{\Sigma_n} \xrightarrow{d} H$ a.s., we get
$\int_0^\tau\lambda dF^{\Sigma_n}(\lambda) \xrightarrow{} \int_0^\tau\lambda dH(\lambda) > 0$ since $H \neq \delta_0$. Therefore,
\begin{align*}\label{B.4}\tag{B.4}
\frac{1}{n}\operatorname{trace}(\Sigma_n) = c_n\int_0^\tau\lambda dF^{\Sigma_n}(\lambda) \xrightarrow{} c\int_0^\tau\lambda dH(\lambda) > 0    
\end{align*}

Let $z = -u + \mathbbm{i}v$ with $u > 0$. Let $a_{ij}$ represent the $ij^{th}$ element of $A:= P^*\Sigma_nP$ where $S_n = P\Lambda P^*$ with $\Lambda = \operatorname{diag}(\{\mathbbm{i}\lambda_j\}_{j=1}^p)$ being a diagonal matrix containing the purely imaginary eigenvalues of $S_n$. Then,
    \begin{align*}
        c_nh_n = \frac{1}{n}\operatorname{trace}\{\Sigma_nQ\}
        = \frac{1}{n}\operatorname{trace}\{P^*\Sigma_nP(\Lambda - zI_p)^{-1}\}
         =\frac{1}{n}\sum_{j=1}^p \frac{a_{jj}}{\mathbbm{i}\lambda_j  - z}
    \end{align*}

For any $\delta > 0$, we have
$$||S_n||_{op} = ||\frac{1}{n}X_1X_2^* - \frac{1}{n}X_2X_1^*||_{op} \leq 2\sqrt{||\frac{1}{n}X_1X_1^*||_{op}}\sqrt{||\frac{1}{n}X_2X_2^*||_{op}} \leq 2||\Sigma_n||_{op}\bigg(1+\sqrt{\frac{p}{n}}\bigg)^2 + \delta$$

Let $B = 4\tau(1+\sqrt{c})^2$. Then $\mathbb{P}(|\lambda_j| > B \hspace{2mm} i.o.) = 0$.

\footnote{$\operatorname{sgn}(x)$ is the sign function}
Define $B^* :=\left\{\begin{matrix}
    -B\operatorname{sgn}(v) & \text{ if } v \neq 0 \\
    B &  \text{            if } v = 0
\end{matrix} \right.$

Then $(\lambda_j - v)^2 \leq (B^* - v)^2$. Therefore,
\begin{align*}
    \Re(c_nh_n) =& \frac{1}{n}\sum_{j=1}^p\frac{a_{jj}u}{(\lambda_j - v)^2 + u^2}\\
    \geq & \frac{1}{n}\sum_{j=1}^p \frac{a_{jj}u}{(B^* - v)^2 + u^2}\\
    =& \frac{u}{(B^* - v)^2 + u^2} \bigg(\frac{1}{n}\sum_{j=1}^p a_{jj}\bigg)\\
    =& \frac{u}{(B^* - v)^2 + u^2} \bigg(\frac{1}{n}\operatorname{trace}(\Sigma_n)\bigg) \text{, as } \operatorname{trace}(A) = \operatorname{trace}(\Sigma_n)\\
    \xrightarrow{} & \frac{u}{(B^* - v)^2 + u^2}\bigg(c\int_0^\tau\lambda dH(\lambda)\bigg) := K_0
    > 0 \text{ from } (\ref{B.4})
\end{align*} 

Therefore for sufficiently large $n$, $\Re(c_nh_n) > 0$ a.s. In conjunction with Lemma \ref{ConcentrationOfSnH}, we also get $\Re(c_n\mathbb{E}h_n) \geq K_0 > 0$ for large $n$. Moreover, for large $n$, $c_nh_n \neq \pm \mathbbm{i}$ a.s. and hence, the quantity $v_n$ is well defined almost surely. Noting that for $z \in \mathbb{C}$ with $\Re(z) \neq 0$,
\begin{align*}
    &\left|\frac{1}{1 + z^2}\right|
    = \left|\frac{1}{2\mathbbm{i}}\frac{\mathbbm{i} + z + \mathbbm{i} - z}{1 + z^2}\right|
    \leq \frac{1}{2}\bigg(\frac{1}{|\mathbbm{i}-z|} + \frac{1}{|\mathbbm{i}+z|}\bigg)
    \leq \frac{1}{2}\bigg(\frac{1}{|\Re(z)|} + \frac{1}{|\Re(z)|}\bigg) = \frac{1}{|\Re(z)|}
\end{align*}

we therefore conclude that for sufficiently large $n$, we must have 
$$|v_n| = \left|\frac{1}{1 + (c_nh_n)^2}\right| \leq \frac{1}{\Re(c_nh_n)} \leq \frac{1}{K_0} \text{ a.s. }$$
\end{proof}

\begin{lemma}\label{rhoContinuity}
Suppose $\{X_n, Y_n\}_{n=1}^\infty$ are complex random variables with $\Re(X_n), \Re(Y_n) \geq B$ for some $B > 0$ and $|X_n - Y_n| \xrightarrow{a.s.} 0$. Then, $|\rho(X_n) - \rho(Y_n)| \xrightarrow{a.s.} 0$.
\end{lemma}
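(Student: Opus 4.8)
The plan is to convert the almost-sure convergence $|X_n - Y_n| \xrightarrow{a.s.} 0$ into the desired conclusion by means of a single deterministic Lipschitz estimate for $\rho$ valid on the half-plane $\{z : \Re(z) \ge B\}$. The key observation is that the only singularities of $\rho$ are $\pm\mathbbm{i}$, and for any $z$ with $\Re(z) \ge B$ one has $|\mathbbm{i}+z| \ge |\Re(z)| \ge B$ and $|-\mathbbm{i}+z| \ge |\Re(z)| \ge B$; in particular $\rho(X_n)$ and $\rho(Y_n)$ are well defined (a.s.), since $\Re(\pm\mathbbm{i}) = 0 < B \le \Re(X_n),\,\Re(Y_n)$.

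First I would write the difference of the two $\rho$-values in partial-fraction form,
\[
\rho(X_n) - \rho(Y_n) = \frac{Y_n - X_n}{(\mathbbm{i}+X_n)(\mathbbm{i}+Y_n)} + \frac{Y_n - X_n}{(-\mathbbm{i}+X_n)(-\mathbbm{i}+Y_n)},
\]
and then bound each of the four factors in the denominators below by $B$ using the observation above, obtaining the pointwise estimate $|\rho(X_n) - \rho(Y_n)| \le (2/B^2)\,|X_n - Y_n|$ on the relevant probability-one event. Since the constant $2/B^2$ is independent of $n$ and $|X_n - Y_n| \xrightarrow{a.s.} 0$ by hypothesis, the right-hand side tends to $0$ almost surely, so $|\rho(X_n) - \rho(Y_n)| \xrightarrow{a.s.} 0$, which is the claim.

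There is no real obstacle here: the entire content is the remark that the uniform lower bound on the real parts separates $X_n$ and $Y_n$ from the poles $\pm\mathbbm{i}$, after which $\rho$ is Lipschitz with an explicit constant and the conclusion is automatic. The only point deserving a word of care is the well-definedness of $\rho(X_n), \rho(Y_n)$, which is immediate from the same bound; and the same argument applies verbatim to $\rho_2$ or to compositions such as $\rho(c_n h_n)$ whenever the argument is kept uniformly away from $\pm\mathbbm{i}$, e.g.\ via Lemma \ref{boundedAwayFromZero}.
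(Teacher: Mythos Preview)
Your proof is correct and is essentially identical to the paper's: the paper also writes $|\rho(X_n)-\rho(Y_n)| \le |(\mathbbm{i}+X_n)^{-1}-(\mathbbm{i}+Y_n)^{-1}| + |(-\mathbbm{i}+X_n)^{-1}-(-\mathbbm{i}+Y_n)^{-1}| \le 2|X_n-Y_n|/B^2$ via the same denominator bound $|\pm\mathbbm{i}+z|\ge\Re(z)\ge B$, and concludes immediately.
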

\begin{proof}
The result is clear from the below string of inequalities.
\begin{align*}
    &|\rho(X_n) - \rho(Y_n)| \leq \bigg|\frac{1}{\mathbbm{i}+X_n} - \frac{1}{\mathbbm{i}+Y_n}\bigg| + \bigg|\frac{1}{\mathbbm{-i}+X_n}-\frac{1}{\mathbbm{-i}+Y_n}\bigg| \leq \frac{2|X_n -Y_n|}{\Re(X_n)\Re(Y_n)} \leq \frac{2|X_n-Y_n|}{B^2}
\end{align*}

We mention a two direct implications of this result which will be used later on. Under Assumptions \ref{A123},
$|h_n - \mathbbm{E}h_n| \xrightarrow{a.s.} 0$ by Lemma \ref{ConcentrationOfSnH} and by Lemma \ref{boundedAwayFromZero}, for sufficiently large $n$, we have $\Re(c_nh_n) \geq K_0 > 0$ where $K_0$ depends on $c, z, \tau$ and $H$. Hence, $\Re(\mathbb{E}c_nh_n) \geq K_0 > 0$ for large $n$. Therefore we have 

\begin{align} \label{rhoContinuity1}\tag{B.5}
    |\rho(c_nh_n) - \rho(c_n\mathbb{E}h_n)| \xrightarrow{a.s.} 0
\end{align}

Moreover, by Theorem \ref{DeterministicEquivalent}, we have $|h_n - \Tilde{h}_n| \xrightarrow{a.s.} 0$. Therefore, we have $\Re(c_n\Tilde{h}_n) \geq K_0 > 0$ as well for large $n$. Thus, we also get
\begin{align} \label{rhoContinuity2}\tag{B.6}
    |\rho(c_n\Tilde{h}_n) - \rho(c_n\mathbb{E}h_n)| \xrightarrow{} 0
\end{align}
\end{proof}

\begin{lemma}\label{normBound}
    Under Assumptions \ref{A123}, the operator norms of the matrices $\Bar{Q}(z), \Bar{\Bar{Q}}(z)$ defined in Theorem \ref{DeterministicEquivalent} and (\ref{definingQBarBar}) respectively are bounded by $1/|\Re(z)|$ for $z \in \mathbb{C}_L$.
\end{lemma}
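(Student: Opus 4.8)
The plan is to use that both $\bar Q(z)$ and $\bar{\bar Q}(z)$ are inverses of \emph{normal} matrices of the common shape $-zI_p + \rho(w)\Sigma_n$, so that their operator norms are the reciprocals of the smallest modulus of an eigenvalue of $-zI_p + \rho(w)\Sigma_n$. Concretely, write $\Sigma_n = P\Lambda P^*$ with $P$ unitary and $\Lambda = \operatorname{diag}(\{\lambda_j\}_{j=1}^p)$, $\lambda_j \ge 0$. For any complex scalar $w$ with $\Re(w) > 0$ (so that $\rho(w)$ is well defined), $-zI_p + \rho(w)\Sigma_n = P\big(-zI_p + \rho(w)\Lambda\big)P^*$, hence this matrix is invertible with inverse $P\,\operatorname{diag}\big(\{(-z+\lambda_j\rho(w))^{-1}\}_{j=1}^p\big)\,P^*$, and therefore
$$\big\|(-zI_p + \rho(w)\Sigma_n)^{-1}\big\|_{op} = \max_{1\le j\le p}\big|{-z}+\lambda_j\rho(w)\big|^{-1} = \Big(\min_{1\le j\le p}\big|{-z}+\lambda_j\rho(w)\big|\Big)^{-1}.$$

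The next step is the elementary lower bound on each $|{-z}+\lambda_j\rho(w)|$. Using $\Re(\rho(w)) = \rho_2(w)\Re(w)$ from (\ref{realOfRho}), the positivity $\rho_2(w) > 0$, $\lambda_j \ge 0$, $\Re(w) > 0$, and $\Re(-z) = |\Re(z)| =: u > 0$ (because $z \in \mathbb{C}_L$), we obtain $|{-z}+\lambda_j\rho(w)| \ge \Re\big({-z}+\lambda_j\rho(w)\big) = u + \lambda_j\rho_2(w)\Re(w) \ge u$. Combined with the display above, this gives $\|(-zI_p + \rho(w)\Sigma_n)^{-1}\|_{op} \le 1/u = 1/|\Re(z)|$ for every such $w$; this is the REMARK following Theorem \ref{DeterministicEquivalent}, restated as an operator-norm bound.

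It then remains to verify that the two scalars of interest have positive real part for all sufficiently large $n$. For $\bar Q(z)$ take $w = c_n\mathbb{E}h_n(z)$: Lemma \ref{boundedAwayFromZero} gives $\Re(c_n\mathbb{E}h_n(z)) \ge K_0 > 0$ eventually, so $\|\bar Q(z)\|_{op} \le 1/|\Re(z)|$. For $\bar{\bar Q}(z)$ in (\ref{definingQBarBar}) take $w = c_n\tilde h_n(z)$ with $\tilde h_n(z)$ as in (\ref{definingHnTilde}); from that representation one computes $\Re(\tilde h_n(z)) = \int \lambda\big(u + \lambda\rho_2(c_n\mathbb{E}h_n)\Re(c_n\mathbb{E}h_n)\big)\,\big|{-z}+\lambda\rho(c_n\mathbb{E}h_n)\big|^{-2}\,dF^{\Sigma_n}(\lambda)$, which is strictly positive for large $n$ since $F^{\Sigma_n}$ does not converge to $\delta_0$ (recall $H \ne \delta_0$); hence $\Re(c_n\tilde h_n(z)) > 0$ eventually, and the bound of the previous paragraph gives $\|\bar{\bar Q}(z)\|_{op} \le 1/|\Re(z)|$. (Alternatively, $\Re(c_n\tilde h_n(z)) \ge K_0 > 0$ for large $n$ is already recorded in the proof of Lemma \ref{rhoContinuity}.)

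I do not expect a genuine obstacle here: every step is a one-line computation once the normality/diagonalization observation is in place. The only point requiring a moment's care is confirming $\Re(c_n\tilde h_n(z)) > 0$, which is precisely where the hypothesis $H \ne \delta_0$ enters, mirroring its role in Lemma \ref{boundedAwayFromZero}.
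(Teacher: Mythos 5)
Your argument for $\bar Q(z)$ is exactly the paper's: diagonalize $\Sigma_n$, note that the eigenvalues of $\bar Q(z)^{-1}$ have real part at least $-\Re(z)=|\Re(z)|$ by (\ref{realOf_Qbar_EV_inverse}) together with Lemma \ref{boundedAwayFromZero}, and invert. For $\bar{\bar Q}(z)$ you diverge slightly: the paper establishes $\Re(\rho(c_n\tilde h_n))>0$ by combining $\Re(\rho(c_n\mathbb{E}h_n))>0$ with $|\rho(c_n\tilde h_n)-\rho(c_n\mathbb{E}h_n)|\to 0$ from Lemma \ref{rhoContinuity} (whose relevant implication (\ref{rhoContinuity2}) is itself quoted from Theorem \ref{DeterministicEquivalent}), whereas you read off $\Re(c_n\tilde h_n)>0$ directly from the integral representation (\ref{definingHnTilde}), since the integrand $\lambda/(-z+\lambda\rho(c_n\mathbb{E}h_n))$ has nonnegative real part $\lambda\bigl(u+\lambda\rho_2\Re(c_n\mathbb{E}h_n)\bigr)\,|{-z}+\lambda\rho(c_n\mathbb{E}h_n)|^{-2}$, strictly positive once $F^{\Sigma_n}\neq\delta_0$. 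Your route is self-contained and avoids leaning on the deterministic-equivalent theorem (which in the paper's organization uses the norm bound on $\bar Q$ via (\ref{QbarMQ_bound})), so it is arguably the cleaner dependency order; both yield the same conclusion, and the rest of the computation is identical.
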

\begin{proof}
    Let $\Sigma_n = P\Lambda P^*$ where $\Lambda = \operatorname{diag}(\{\lambda_j\}_{j=1}^p)$ with $\lambda_j > 0$. Then, 
\begin{align*}
    \Bar{Q}(z) = (-zPP^* + \rho(c_n\mathbb{E}h_n)P\Lambda P^*)^{-1}
    = P(-zI_p + \rho(c_n\mathbb{E}h_n)\Lambda)^{-1}P^*
\end{align*}

Now note that for any $1 \leq j \leq p$, we have 
\begin{align*}
\bigg|\dfrac{1}{-z + \lambda_j\rho(c_n\mathbb{E}h_n)}\bigg|
\leq & \frac{1}{|\Re(-z + \lambda_j\rho(c_n\mathbb{E}h_n))|}
\leq \dfrac{1}{|\Re(z)|} \text{, using } (\ref{realOf_Qbar_EV_inverse})
\end{align*}
This proves that $||\Bar{Q}(z)||_{op} \leq 1/|\Re(z)|$. For the other result note that,
\begin{align*}
    \Bar{\Bar{Q}}(z) =& (-zPP^* + \rho(c_n\Tilde{h}_n)P\Lambda P^*)^{-1}
    = P(-zI_p + \rho(c_n\Tilde{h}_n)\Lambda)^{-1}P^*
\end{align*}
We have $|\rho(c_n\Tilde{h}_n) - \rho(c_n\mathbb{E}h_n)| \rightarrow 0$ from Lemma \ref{rhoContinuity} and by Lemma \ref{boundedAwayFromZero}, for sufficiently large $n$, $\Re(\rho(c_n\mathbb{E}h_n)) > 0$. Therefore, we have $||\Bar{\Bar{Q}}(z)||_{op} \leq {1}/{|\Re(z)|}$.
\end{proof}

\begin{lemma}\label{hn_tilde_tilde2}
Under Assumptions \ref{A123} and $z \in \mathbb{C}_L$, we have $|\Tilde{h}_n(z) - \Tilde{\Tilde{h}}_n(z)| \rightarrow 0$    
\end{lemma}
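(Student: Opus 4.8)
The plan is to express the difference $\Tilde{h}_n(z) - \Tilde{\Tilde{h}}_n(z)$ through a resolvent-type identity, turning it into a uniformly bounded integral functional applied to $\rho(c_n\Tilde{h}_n(z)) - \rho(c_n\mathbb{E}h_n(z))$, and then to invoke (\ref{rhoContinuity2}) of Lemma~\ref{rhoContinuity}, which already gives $|\rho(c_n\Tilde{h}_n) - \rho(c_n\mathbb{E}h_n)| \rightarrow 0$. Throughout, fix $z = -u + \mathbbm{i}v \in \mathbb{C}_L$, so that $u = |\Re(z)| > 0$.

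The first step is to record that all objects involved are well defined for large $n$ and that the relevant denominators stay away from $0$. By Lemma~\ref{boundedAwayFromZero}, together with the remarks surrounding (\ref{rhoContinuity2}) (which use $|h_n - \Tilde{h}_n| \rightarrow 0$ from Theorem~\ref{DeterministicEquivalent}), there is a constant $K_0 > 0$, depending only on $z, c, \tau$ and $H$, with $\Re(c_n\mathbb{E}h_n) \geq K_0$ and $\Re(c_n\Tilde{h}_n) \geq K_0$ for all large $n$; in particular $c_n\mathbb{E}h_n, c_n\Tilde{h}_n \notin \{\mathbbm{i}, -\mathbbm{i}\}$, so $\rho(c_n\mathbb{E}h_n)$ and $\rho(c_n\Tilde{h}_n)$ make sense, and, arguing as in the Remark following Theorem~\ref{DeterministicEquivalent}, $-zI_p + \rho(c_n\Tilde{h}_n)\Sigma_n$ is invertible, so $\Bar{\Bar{Q}}(z)$ and $\Tilde{\Tilde{h}}_n(z)$ are well defined. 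Moreover, using (\ref{realOfRho}) together with $\rho_2 > 0$ and $\lambda \geq 0$, exactly as in (\ref{realOf_Qbar_EV_inverse}),
\[
\Re\bigl(-z + \lambda\rho(c_n\mathbb{E}h_n)\bigr) \geq u \qquad\text{and}\qquad \Re\bigl(-z + \lambda\rho(c_n\Tilde{h}_n)\bigr) \geq u
\]
for every $\lambda \in \operatorname{supp}(F^{\Sigma_n}) \subseteq [0, \tau]$ (assumption \textbf{A1}) and all large $n$.

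The second step is the identity itself: subtracting the integral representations (\ref{definingHnTilde}) and (\ref{definingHnTilde2}), combining the two fractions, and pulling out the factor $\rho(c_n\Tilde{h}_n) - \rho(c_n\mathbb{E}h_n)$ (which is independent of $\lambda$),
\[
\Tilde{h}_n(z) - \Tilde{\Tilde{h}}_n(z) = \bigl(\rho(c_n\Tilde{h}_n) - \rho(c_n\mathbb{E}h_n)\bigr)\int \frac{\lambda^2\, dF^{\Sigma_n}(\lambda)}{\bigl(-z + \lambda\rho(c_n\mathbb{E}h_n)\bigr)\bigl(-z + \lambda\rho(c_n\Tilde{h}_n)\bigr)}.
\]
Taking moduli, bounding each denominator below by $u$ via the previous step, and using $\lambda \leq \tau$ and $\int dF^{\Sigma_n} = 1$, we obtain, for all large $n$,
\[
\bigl|\Tilde{h}_n(z) - \Tilde{\Tilde{h}}_n(z)\bigr| \leq \frac{\tau^2}{u^2}\,\bigl|\rho(c_n\Tilde{h}_n(z)) - \rho(c_n\mathbb{E}h_n(z))\bigr|.
\]
The right-hand side tends to $0$ by (\ref{rhoContinuity2}), which completes the proof.

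The computation is routine; the only genuinely load-bearing ingredient is the uniform lower bound $\Re(c_n\Tilde{h}_n) \geq K_0 > 0$ that keeps the denominator $-z + \lambda\rho(c_n\Tilde{h}_n)$ away from $0$, and this is already supplied by Lemma~\ref{boundedAwayFromZero} combined with Theorem~\ref{DeterministicEquivalent}. Hence I do not expect any serious obstacle in carrying out this plan.
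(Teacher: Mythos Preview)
Your proposal is correct and follows essentially the same approach as the paper: expressing $\Tilde{h}_n - \Tilde{\Tilde{h}}_n$ via a resolvent identity, factoring out $\rho(c_n\Tilde{h}_n) - \rho(c_n\mathbb{E}h_n)$, bounding the remaining factor by a constant depending only on $\tau$ and $|\Re(z)|$, and finishing with (\ref{rhoContinuity2}). The only cosmetic difference is that the paper works with the matrix-trace form $\frac{1}{p}\operatorname{trace}\{\Sigma_n(\Bar{Q} - \Bar{\Bar{Q}})\}$ and bounds via (R5) and Lemma~\ref{normBound}, whereas you use the equivalent integral representation against $F^{\Sigma_n}$; the resulting constants differ ($C\tau/\Re^2(z)$ versus your $\tau^2/\Re^2(z)$), but both are valid.
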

\begin{proof}
From definitions (\ref{definingHnTilde}), (\ref{definingHnTilde2})
\begin{align*}
    |\Tilde{h}_n(z) - \Tilde{\Tilde{h}}_n(z)|
    =& \frac{1}{p}|\operatorname{trace}\{\Sigma_n (\Bar{Q} - \Bar{\Bar{Q}})\}|\\
    =& \frac{1}{p}|\operatorname{trace}\{\Sigma_n \Bar{Q} [\rho(c_n\Tilde{h}_n) - \rho(c_n\mathbb{E}h_n)]\Sigma_n \Bar{\Bar{Q}}\}| \text{, by resolvent identity (\ref{R123})}\\
    \leq& \bigg(\frac{1}{p}\operatorname{trace}(\Sigma_n)\bigg) |\rho(c_n\Tilde{h}_n) - \rho(c_n\mathbb{E}h_n)| \times ||\Bar{Q}\Sigma_n \Bar{\Bar{Q}}||_{op} \text{, by (R5) of } (\ref{R123})\\
    \leq & C |\rho(c_n\Tilde{h}_n) - \rho(c_n\mathbb{E}h_n))| \frac{\tau}{\Re^2 (z)} \text{, for large } n \text{ and using Lemma \ref{normBound}}\\ 
    =& |\rho(c_n\Tilde{h}_n) - \rho(c_n\mathbb{E}h_n)| \frac{C\tau}{\Re^2 (z)}
\end{align*}

Now, by (\ref{rhoContinuity2}), we finally have $|\Tilde{h}_n(z) - \Tilde{\Tilde{h}}_n(z)| \xrightarrow{a.s.}0$.
\end{proof}

\begin{lemma}\label{uniformConvergence}
Under Assumptions \ref{A123}, the quantities $c_{1j},c_{2j}, d_{1j}, d_{2j}, F_j(r,s), r,s \in \{1,2\}, v_n, m_n$ as defined throughout the proof of Theorem \ref{DeterministicEquivalent} satisfy the following results.
\begin{align*}
    &\underset{1\leq j\leq n}{\max }|c_{1j} - v_n| \xrightarrow{a.s.} 0 \hspace{18mm} \underset{1\leq j\leq n}{\max }|d_{1j} - v_n|\xrightarrow{a.s.} 0\\
    &\underset{1\leq j\leq n}{\max }|c_{2j} - c_nh_nv_n|\xrightarrow{a.s.} 0 \hspace{10mm} \underset{1\leq j\leq n}{\max }|d_{2j} - c_nh_nv_n|\xrightarrow{a.s.} 0\\
    & \underset{1\leq j\leq n}{\max }|F_j(r,r) - m_n| \xrightarrow{a.s.} 0, r \in \{1,2\}\\
    & \underset{1\leq j\leq n}{\max }|F_j(r,s)| \xrightarrow{a.s.} 0  \text{ where } r \neq s, r,s \in \{1,2\} 
\end{align*}  
\end{lemma}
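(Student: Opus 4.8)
Recall from the proof of Theorem~\ref{DeterministicEquivalent} that, for fixed $j$, with $u=n^{-1/2}X_{1j}$, $v=n^{-1/2}X_{2j}$ and $B=S_{nj}-zI_p=Q_{-j}(z)^{-1}$, one has $S_n-zI_p=B+uv^{*}-vu^{*}$, so that by Lemma~\ref{Rank2Woodbury} the vectors $Q(z)X_{1j}$ and $Q(z)X_{2j}$ equal $Q_{-j}(z)$ applied to linear combinations of $X_{1j},X_{2j}$ whose coefficients are $c_{1j},c_{2j}$ and $d_{1j},d_{2j}$; these are the scalars $\alpha_1,\beta_1,\alpha_2,\beta_2$ of that lemma, built from the brackets $\langle u,u\rangle=n^{-1}X_{1j}^{*}Q_{-j}X_{1j}$, $\langle v,v\rangle=n^{-1}X_{2j}^{*}Q_{-j}X_{2j}$, $\langle u,v\rangle=n^{-1}X_{1j}^{*}Q_{-j}X_{2j}$, $\langle v,u\rangle=n^{-1}X_{2j}^{*}Q_{-j}X_{1j}$ and the scalar $D(u,v)$. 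Recall also that $v_n=(1+(c_nh_n)^2)^{-1}$, and that after this substitution each $F_j(r,s)$ is a quadratic or bilinear form $n^{-1}X_{rj}^{*}K^{(j)}X_{sj}$ in which $K^{(j)}$ is measurable with respect to $\mathcal{F}_{-j}:=\sigma(\{X_{1t},X_{2t}:t\neq j\})$ and of operator norm bounded uniformly in $j$ and $n$ (it is a product of factors among $Q_{-j}$, $\Bar{Q}$, $M_n$, $\Sigma_n$, with norms at most $1/|\Re(z)|$, $1/|\Re(z)|$ by Lemma~\ref{normBound}, $B$, $\tau$), while $m_n=n^{-1}\operatorname{trace}(\Sigma_nK^{(j)})$ is the common value of these forms on the diagonal, up to the customary replacements $Q_{-j}\leftrightarrow Q$ (Lemma~\ref{Rank2Perturbation}) and $\mathbb{E}h_n\leftrightarrow h_n$ (Lemmas~\ref{ConcentrationOfSnH},~\ref{rhoContinuity}). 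The plan is: handle the brackets first, propagate through $D(u,v)$ to the $c_{ij},d_{ij}$, and finally treat the $F_j(r,s)$. The two workhorses will be the uniform quadratic-form bound of Lemma~\ref{quadraticForm} and a matching bilinear-form bound, combined with Lemmas~\ref{Rank2Perturbation},~\ref{boundedAwayFromZero},~\ref{lA.5} and~\ref{lA.6}.

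For the concentration inputs: since $X_{rj}=\Sigma_n^{1/2}Z_{rj}$, a diagonal form equals $n^{-1}Z_{rj}^{*}(\Sigma_n^{1/2}A^{(j)}\Sigma_n^{1/2})Z_{rj}$, with $\Sigma_n^{1/2}A^{(j)}\Sigma_n^{1/2}$ independent of $Z_{rj}$ and of operator norm at most $\tau\|A^{(j)}\|_{op}$, which is uniformly bounded for all kernels $A^{(j)}$ that arise; Lemma~\ref{quadraticForm} then gives $\max_{1\le j\le n}|\,n^{-1}X_{rj}^{*}A^{(j)}X_{rj}-n^{-1}\operatorname{trace}(\Sigma_nA^{(j)})\,|\xrightarrow{a.s.}0$. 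For a bilinear form with $r\neq s$ I would set $Y_j:=(Z_{1j}^{T},Z_{2j}^{T})^{T}\in\mathbb{C}^{2p}$, whose entries are independent with zero mean and unit variance, let $C:=\Sigma_n^{1/2}A^{(j)}\Sigma_n^{1/2}$, and let $\widetilde{C}$ be the $2p\times2p$ Hermitian, trace-zero matrix with upper-right $p\times p$ block $C$, lower-left block $C^{*}$ and zero diagonal blocks (so $\|\widetilde{C}\|_{op}=\|C\|_{op}$ and $Y_j^{*}\widetilde{C}Y_j=2\Re(Z_{1j}^{*}CZ_{2j})$): Lemma~\ref{quadraticForm} then forces $\max_j n^{-1}|\Re(Z_{1j}^{*}CZ_{2j})|\xrightarrow{a.s.}0$, and repeating with $\mathbbm{i}C$ in place of $C$ handles the imaginary part; hence $\max_{1\le j\le n}|n^{-1}X_{1j}^{*}A^{(j)}X_{2j}|\xrightarrow{a.s.}0$ for every $\mathcal{F}_{-j}$-measurable $A^{(j)}$ of uniformly bounded operator norm.

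Granting these, I would argue as follows. By the diagonal input and Lemma~\ref{Rank2Perturbation}, $n^{-1}\operatorname{trace}(\Sigma_nQ_{-j})=c_n\,p^{-1}\operatorname{trace}(\Sigma_nQ_{-j})$ differs from $c_nh_n$ by at most $\max_j|p^{-1}\operatorname{trace}(\Sigma_n(Q-Q_{-j}))|\to0$, hence $\max_j|\langle u,u\rangle-c_nh_n|\xrightarrow{a.s.}0$ and likewise for $\langle v,v\rangle$, while $\max_j|\langle u,v\rangle|$ and $\max_j|\langle v,u\rangle|\xrightarrow{a.s.}0$ by the bilinear input. Since $|h_n|\le\tau/|\Re(z)|$ under \textbf{A1}, the quantities $\langle u,u\rangle,\langle v,v\rangle,c_nh_n$ are uniformly bounded, so Lemmas~\ref{lA.5} and~\ref{lA.6} give $\max_j|D(u,v)^{-1}-(1+(c_nh_n)^2)|=\max_j|D(u,v)^{-1}-v_n^{-1}|\xrightarrow{a.s.}0$; as $|v_n^{-1}|=|1+(c_nh_n)^2|\ge K_0>0$ for large $n$ (Lemma~\ref{boundedAwayFromZero}), for large $n$ also $\min_j|D(u,v)^{-1}|\ge K_0/2$, whence $\max_j|D(u,v)-v_n|\le\tfrac{2}{K_0^{2}}\max_j|D(u,v)^{-1}-v_n^{-1}|\xrightarrow{a.s.}0$. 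Plugging these limits into the expressions of Lemma~\ref{Rank2Woodbury} for $c_{1j},c_{2j},d_{1j},d_{2j}$ and using Lemmas~\ref{lA.5} and~\ref{lA.6} once more (products of uniformly bounded factors with vanishing pairwise differences) yields the first four claims. For $r=s$, $F_j(r,r)=n^{-1}X_{rj}^{*}K^{(j)}X_{rj}$ concentrates, by the diagonal input, around $n^{-1}\operatorname{trace}(\Sigma_nK^{(j)})$, a quantity that does not depend on $r$ and, after the usual identifications, equals $m_n$; for $r\neq s$, $F_j(r,s)$ is a cross bilinear form and $\max_j|F_j(r,s)|\xrightarrow{a.s.}0$ by the bilinear input. (If the definition fixed in Theorem~\ref{DeterministicEquivalent} leaves a residual factor $Q$ inside $K^{(j)}$, reduce it to $Q_{-j}$ first via Lemma~\ref{Rank2Woodbury} and the rank-two estimate~(\ref{B.2}) at uniformly $o(1)$ cost, then recombine via Lemmas~\ref{lA.5} and~\ref{lA.6}.)

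The genuinely laborious part is the bookkeeping: matching the kernels $K^{(j)}$ that actually occur in the expansion of $\operatorname{trace}\{M_n(Q-\Bar{Q})\}$ with the definitions of the $F_j(r,s)$ and of $m_n$ fixed earlier, so that all diagonal forms really collapse onto a single $m_n$ and all off-diagonal ones are of cross type. The probabilistic content --- uniform-in-$j$, almost sure control of the quadratic and bilinear forms --- is already packaged in Lemma~\ref{quadraticForm}, whose proof exploits the truncation $|z_{ij}^{(k)}|\le n^{a}$ with $a<1/4$ from Assumption~\textbf{A2} to make the Borel--Cantelli tail series summable, so no new estimate is required.
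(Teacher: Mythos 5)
Your proposal is correct and follows essentially the same route as the paper: reduce everything to uniform almost-sure control of the quadratic and bilinear forms $E_j(r,s)$ and $F_j(r,s)$ via Lemma \ref{quadraticForm} and Lemma \ref{Rank2Perturbation}, then propagate through $Den(j)=D(u,v)$ and the Woodbury coefficients using Lemmas \ref{boundedAwayFromZero}, \ref{lA.5} and \ref{lA.6}. The only genuine difference is the polarization step for the cross terms: the paper applies Lemma \ref{quadraticForm} to the combined vectors $\tfrac{1}{\sqrt{2}}(Z_{1j}+Z_{2j})$ and $\tfrac{1}{\sqrt{2}}(Z_{1j}+\mathbbm{i}Z_{2j})$ and solves the resulting two relations for $E_j(1,2)\pm E_j(2,1)$, whereas you stack $Z_{1j},Z_{2j}$ into a $2p$-vector and apply the lemma to a trace-zero Hermitian block matrix, recovering real and imaginary parts separately; both are equivalent polarization devices resting on the same moment/truncation estimate, and your variant is, if anything, slightly cleaner since it isolates $E_j(1,2)$ and $E_j(2,1)$ individually rather than via their sum and difference. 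Your inversion estimate $\max_j|Den(j)-v_n|\leq \tfrac{2}{K_0^{2}}\max_j|Den(j)^{-1}-v_n^{-1}|$ and the handling of $F_j(r,r)$ via the additional $Q_{-j}\leftrightarrow Q$ correction (Lemma \ref{Rank2Perturbation} with kernel $\Sigma_n\Bar{Q}M_n$) match the paper's argument.
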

\begin{proof}
For $n \in \mathbb{N}$ and $1 \leq j \leq n$, define 
\begin{align*}\label{defining_Aj}\tag{B.7}
A_j = \Sigma_n^\frac{1}{2}Q_{-j}\Sigma_n^\frac{1}{2}    
\end{align*}

Recall the definition of $E_j(r,s)$ (\ref{definingEjrs}). For $r \in \{1,2\}$, let $z_{jn} = Z_{rj}, 1 \leq j \leq n$ . We have $||A_j||_{op} \leq {\tau}/{|\Re(z)|}$. Then $\{z_{jn}: 1 \leq j \leq n\}_{n=1}^\infty$ and $A_j$ satisfy the conditions of Lemma \ref{quadraticForm}. Thus we have 
\begin{align*}\label{B.8}\tag{B.8}
 \underset{1\leq j \leq n}{\max} |E_j(r,r) - \dfrac{1}{n}\operatorname{trace}(A_j)| = \underset{1\leq j \leq n}{\max} \left|E_j(r,r) - \dfrac{1}{n}\operatorname{trace}\{\Sigma_n Q_{-j}\}\right| \xrightarrow{a.s.} 0   
\end{align*}

From Lemma \ref{Rank2Perturbation}, $|\frac{1}{n}\operatorname{trace}\{\Sigma_n(Q- Q_{-j})\}| \xrightarrow{a.s.} 0$. Observing that $c_nh_n = \dfrac{1}{n}\operatorname{trace}\{\Sigma_nQ\}$ we get
\begin{align*}\label{E_j_rr}\tag{B.9}
    \underset{1\leq j \leq n}{\max} \left|E_j(r,r) - c_nh_n\right| \xrightarrow{a.s.} 0
\end{align*}

With $\{z_{jn} = \frac{1}{\sqrt{2}}(Z_{1j} + Z_{2j}): 1 \leq j \leq n, n \in \mathbb{N}\}$ and $A_j$ from (\ref{defining_Aj}), Lemma \ref{quadraticForm} and Lemma \ref{Rank2Perturbation} gives us 
\begin{align*}\label{B.10}\tag{B.10}
    &\underset{1\leq j \leq n}{\max} \left|\frac{1}{2}(E_j(1,1) - c_nh_n) + \frac{1}{2}(E_j(2,2) - c_nh_n) + \frac{1}{2}(E_j(1,2)+E_j(2,1))\right| \xrightarrow{a.s.} 0
\end{align*}

Finally using $\{z_{jn} = \frac{1}{\sqrt{2}}(Z_{1j} + \mathbbm{i}Z_{2j}): 1 \leq j \leq n, n \in \mathbb{N}\}$ and $A_j$ from (\ref{defining_Aj}), Lemma \ref{quadraticForm} and Lemma \ref{Rank2Perturbation} gives us 
\begin{align*}\label{B.11}\tag{B.11}
\underset{1\leq j \leq n}{\max} \left|\frac{1}{2}(E_j(1,1) - c_nh_n) + \frac{1}{2}(E_j(2,2) - c_nh_n) + \frac{1}{2}(E_j(1,2)-E_j(2,1))\right| \xrightarrow{a.s.} 0 
\end{align*}

From (\ref{E_j_rr}), (\ref{B.10}) and (\ref{B.11}) and using Lemma \ref{lA.5}, we get 
\begin{itemize}
    \item $\underset{1\leq j \leq n}{\max} |E_j(1,2)+E_j(2,1)| \xrightarrow{a.s.} 0$
    \item $\underset{1\leq j \leq n}{\max} |E_j(1,2)-E_j(2,1)| \xrightarrow{a.s.} 0$\end{itemize}
A further application of Lemma \ref{lA.5} gives us 
\begin{align*}\label{E_j_rs}\tag{B.12}
\underset{1\leq j \leq n}{\max}|E_j(1,2)|\xrightarrow{a.s.}0 \hspace{6mm}\text{ and } \hspace{6mm} \underset{1\leq j \leq n}{\max}|E_j(2,1)| \xrightarrow{a.s.}0
\end{align*}

Note that by Lemma \ref{normBound}, 
\begin{align*}\label{QbarMQ_bound}\tag{B.13}
||\Bar{Q}M_nQ_{-j}||_{op} \leq ||\Bar{Q}||_{op}||M_n||_{op}||Q_{-j}||_{op} \leq  \frac{B}{\Re ^2(z)}
\end{align*}

Therefore, repeating the same arguments presented through (\ref{E_j_rr})-(\ref{E_j_rs}) (replacing $Q_{-j}$ with $\Bar{Q}M_nQ_{-j}$ throughout), we get the following uniform almost sure convergence results.
\begin{itemize}
    \item $\underset{1\leq j\leq n}{\max }|F_j(r,r) - m_n| \xrightarrow{a.s.} 0, r \in \{1,2\}$
    \item $\underset{1\leq j\leq n}{\max }|F_j(r,s)| \xrightarrow{a.s.} 0  \text{ where } r \neq s, r,s \in \{1,2\} $
\end{itemize}

We now prove the result related to $c_{1j}$ defined in (\ref{defining_cj}). To show $\underset{1 \leq j \leq n}{\max}|c_{1j} - v_n| \xrightarrow{a.s.} 0$,  define for $1 \leq j \leq n$, 
\begin{description}
    \item[1] $A_{jn} = 1 - E_j(1,2)$
    \item[2] $B_{jn} = Den(j)$ \text{, defined in} (\ref{defining_cj})
    \item[3] $C_{jn} = 1$ and $D_{jn} = v_n$
\end{description}
Due to Lemma \ref{boundedAwayFromZero}, we see that $A_{jn}, B_{jn}, C_{jn}, D_{jn}$ satisfy the conditions of Lemma \ref{lA.6}. Therefore, we have the result associated with $c_{1j}$. The results for $c_{2j}, d_{1j}, d_{2j}$ can be established by similar arguments. 
\end{proof}

\subsection{Proof of Theorem \ref{DeterministicEquivalent}}\label{ProofDeterministicEquivalent}

\begin{proof}
Let $z \in \mathbb{C}_L$. Define $F(z) := \bigg(\Bar{Q}(z)\bigg)^{-1}$. From the resolvent identity (\ref{R123}), we have 
\begin{align*}\label{B.14}\tag{B.14}
Q - \Bar{Q} = Q \bigg(F + zI_p - \dfrac{1}{n}\sum_{j=1}^{n}(X_{1j}X_{2j}^{*} - X_{2j}X_{1j}^{*})\bigg) \Bar{Q}    
\end{align*}

Using the above, we get
\begin{align*}\label{T1_minu_T2_expansion}\tag{B.15}
& \frac{1}{p}\operatorname{trace}\{(Q - \Bar{Q})M_n\}\\
=&\frac{1}{p}\operatorname{trace} \{Q(F + zI_p)\Bar{Q}M_n \} - \frac{1}{p} \operatorname{trace} \{Q \bigg(\sum_{j=1}^{n}\dfrac{1}{n} (X_{1j}X_{2j}^{*} - X_{2j}X_{1j}^{*})\bigg) \Bar{Q} M_n \} \\
=&\frac{1}{p}\operatorname{trace} \{(F + zI_p)\Bar{Q}M_nQ \} - \frac{1}{p} \operatorname{trace} \{\bigg(\sum_{j=1}^{n}\dfrac{1}{n} (X_{1j}X_{2j}^{*} - X_{2j}X_{1j}^{*})\bigg) \Bar{Q} M_n Q\} \\
= & \underbrace{\frac{1}{p}\operatorname{trace} \{(F + zI_p)\Bar{Q}M_nQ \}}_{{Term}_1} - \underbrace{\frac{1}{p} \sum_{j=1}^{n}\dfrac{1}{n} (X_{2j}^{*}\Bar{Q}M_nQX_{1j} - X_{1j}^{*}\Bar{Q}M_nQX_{2j})}_{{Term}_2} 
\end{align*}

To establish ${Term}_1 - {Term}_2 \xrightarrow{a.s.} 0$, we define the following.
\begin{align*}\label{definingEjrs}\tag{B.16}
    &E_j(r,s) := \frac{1}{n}X_{rj}^*Q_{-j}X_{sj} = \frac{1}{n}Z_{rj}^*\Sigma_n^\frac{1}{2}Q_{-j}\Sigma_n^\frac{1}{2}Z_{sj} \text{ for } r,s \in \{1,2\}, 1\leq j \leq n
\end{align*}
\begin{align*}\label{definingFjrs}\tag{B.17}
        &F_j(r,s) := \frac{1}{n}X_{rj}^*\Bar{Q}M_nQ_{-j}X_{sj} \text{ for } r,s \in \{1,2\}, 1\leq j \leq n
\end{align*}
\begin{align*}\label{defining_vn}\tag{B.18}
        &v_n(z) := \dfrac{1}{1 + (c_nh_n(z))^2}
\end{align*}
\begin{align*}\label{defining_mn}\tag{B.19}
        &m_n(z) := \frac{1}{n}\operatorname{trace}\{\Sigma_n\Bar{Q}M_nQ\} 
\end{align*}

Simplifying ${Term}_2$ using Lemma \ref{Rank2Woodbury}, with $A = Q_{-j}(z)$ (see (\ref{notations})), $u = \frac{1}{\sqrt{n}}X_{1j}$ and $v = \frac{1}{\sqrt{n}}X_{2j}$, we get

\begin{align*}\label{defining_cj}\tag{B.20}
        \frac{1}{\sqrt{n}} QX_{1j} =& Q_{-j} \bigg(\frac{1}{\sqrt{n}}X_{1j} c_{1j} + \frac{1}{\sqrt{n}}X_{2j} c_{2j}\bigg)\\ 
        \text{where } c_{1j} =& (1 - E_j(1,2))Den(j); \hspace{5mm} c_{2j} =E_j(1,1)Den(j)\\
        Den(j) =& \bigg((1 - E_j(1,2))(1 + E_j(2,1)) + E_j(1,1)E_j(2,2)\bigg)^{-1} 
\end{align*}
and
\begin{align*}\label{defining_dj}\tag{B.21}
        \frac{1}{\sqrt{n}} QX_{2j} =& Q_{-j} \bigg(\frac{1}{\sqrt{n}}X_{2j} d_{1j} - \frac{1}{\sqrt{n}}X_{1j} d_{2j}\bigg)\\ 
        \text{where } d_{1j} =& (1 + E_j(2,1))Den(j);\hspace{5mm} d_{2j} = E_j(2,2)Den(j)
\end{align*}

Using (\ref{defining_cj}) and (\ref{defining_dj}), $Term_2$ of (\ref{T1_minu_T2_expansion}) can be simplified as follows.
\begin{align*}\label{T2_simplification}\tag{B.22}
&{Term}_2
        =\frac{1}{p}\sum_{j=1}^{n}\dfrac{1}{n} (X_{2j}^{*}\Bar{Q}M_nQX_{1j} - X_{1j}^{*}\Bar{Q}M_nQX_{2j}) \\
        =&\sum_{j=1}^{n}\dfrac{1}{p\sqrt{n}} X_{2j}^*\Bar{Q}M_n \bigg(\frac{1}{\sqrt{n}} Q X_{1j}\bigg) -  \sum_{j=1}^{n}\dfrac{1}{p\sqrt{n}} X_{1j}^*\Bar{Q}M_n \bigg( \frac{1}{\sqrt{n}}QX_{2j}\bigg)\\
        =& \sum_{j=1}^{n}\dfrac{1}{p\sqrt{n}} X_{2j}^*\Bar{Q}M_n Q_{-j} \bigg(\frac{X_{1j}c_{1j} + X_{2j}c_{2j}}{\sqrt{n}}\bigg) - \sum_{j=1}^{n}\dfrac{1}{p\sqrt{n}} X_{1j}^*\Bar{Q}M_n Q_{-j}\bigg( \frac{X_{2j}d_{1j} - X_{1j}d_{2j}}{\sqrt{n}}\bigg)\\
        =& \frac{1}{p}\sum_{j=1}^{n}\Bigg[\bigg(c_{1j}F_j(2,1) + c_{2j}F_j(2,2)\bigg) - \bigg(d_{1j}F_j(1,2) - d_{2j}F_j(1,1)\bigg)\Bigg]  \text{ using definition } (\ref{definingFjrs})
\end{align*}

To proceed further, we need the limiting behaviour of $c_{1j}, c_{2j}, d_{1j}, d_{2j}, F_j(r,s), r,s \in \{1,2\}$ for $1 \leq j \leq n$. This is established in Lemma \ref{uniformConvergence} and the summary of results is given below.
\begin{equation*}\label{uniformResults}\tag{B.23}
\left\{ \begin{aligned} 
    &\underset{1\leq j\leq n}{\max }|c_{1j} - v_n| \xrightarrow{a.s.} 0 \hspace{5mm}
    \underset{1\leq j\leq n}{\max }|d_{1j} - v_n|\xrightarrow{a.s.} 0\\
    &\underset{1\leq j\leq n}{\max }|c_{2j} - c_nh_nv_n|\xrightarrow{a.s.} 0 \hspace{5mm}
    \underset{1\leq j\leq n}{\max }|d_{2j} - c_nh_nv_n|\xrightarrow{a.s.} 0\\
    & \underset{1\leq j\leq n}{\max }|F_j(r,r) - m_n| \xrightarrow{a.s.} 0, r \in \{1,2\}\\
    & \underset{1\leq j\leq n}{\max }|F_j(r,s)| \xrightarrow{a.s.} 0  \text{ where } r \neq s, r,s \in \{1,2\} 
\end{aligned} \right.
\end{equation*}

Note that \begin{enumerate}
    \item For sufficiently large $n$, $|v_n|$ is bounded above by Lemma \ref{boundedAwayFromZero} 
    \item By (\ref{h_nBound}), $|h_n| \leq C/|\Re(z)|$ 
    \item Using (R5) of \ref{R123} and (\ref{QbarMQ_bound}), for sufficiently large n,
\begin{align*}\label{mn_bound}\tag{B.24}
|m_n| = \bigg|\frac{1}{n}\operatorname{trace}\{\Sigma_n\Bar{Q}M_nQ\}\bigg| \leq \bigg(\frac{1}{n}\operatorname{trace}(\Sigma_n)\bigg) ||\Bar{Q}M_nQ||_{op} \leq \frac{BC}{\Re ^2(z)}    
\end{align*}

\end{enumerate}

From (\ref{uniformResults}), the above bounds and applying Lemma \ref{lA.6}, we get the following results 
\begin{enumerate}
    \item $\underset{1 \leq j \leq n}{\max}|c_{1j}F_j(2,1)| \xrightarrow{a.s.} 0$; \hspace{5mm} $\underset{1 \leq j \leq n}{\max}|d_{1j}F_j(1,2)| \xrightarrow{a.s.} 0$
    \item $\underset{1 \leq j \leq n}{\max}|c_{2j}F_j(2,2) - c_nh_nv_nm_n| \xrightarrow{a.s.} 0$; \hspace{5mm} $\underset{1 \leq j \leq n}{\max}|d_{2j}F_j(1,1) - c_nh_nv_nm_n| \xrightarrow{a.s.} 0$    
\end{enumerate}

With the above results, Lemma \ref{lA.7} applied on (\ref{T2_simplification}) gives 
\begin{align*}\label{T2_simplification2}\tag{B.25}
|Term_2 - 2h_nv_nm_n| \xrightarrow{a.s.} 0    
\end{align*}

Now note that 
\begin{align*}\tag{B.26}\label{T2_expansion}
    2h_nv_nm_n =& \frac{n}{p} \dfrac{2c_nh_n}{1 + (c_nh_n)^2}\frac{1}{n}\operatorname{trace}\{\Sigma_n \Bar{Q}M_nQ\} \text{, by definitions } (\ref{defining_vn}), (\ref{defining_mn}) \\
    =& \frac{1}{p} \bigg[\frac{1}{\mathbbm{i} + c_nh_n} + \frac{1}{-\mathbbm{i} + c_nh_n}\bigg]\operatorname{trace}\{\Sigma_n\Bar{Q}M_nQ\}\\
    =&\frac{1}{p} \operatorname{trace}\{\rho(c_nh_n)\Sigma_n\Bar{Q}M_nQ\}
\end{align*}
where the last equality follows from definition \ref{definingRho}. Finally from (\ref{rhoContinuity1}) and (\ref{mn_bound}), we get
\begin{align*}\label{interim1}\tag{B.27}
\bigg|\frac{1}{p} \operatorname{trace}\{\rho(c_nh_n)\Sigma_n\Bar{Q}M_nQ\} - \frac{1}{p} \operatorname{trace}\{\rho(c_n\mathbb{E}h_n)\Sigma_n\Bar{Q}M_nQ\}\bigg| \xrightarrow{a.s.} 0    
\end{align*}

Combining (\ref{T2_simplification2}), (\ref{T2_expansion}) and (\ref{interim1}), we get
\begin{align*}
    &|Term_2 - \frac{1}{p} \operatorname{trace}\{\rho(c_n\mathbb{E}h_n)\Sigma_n\Bar{Q}M_nQ\}| \xrightarrow{a.s.} 0\\
    \implies & |Term_2 - \frac{1}{p}\operatorname{trace}\{[zI_p -zI_p + \rho(c_n\mathbb{E}h_n)\Sigma_n]\Bar{Q}M_nQ\}| \xrightarrow{a.s.} 0\\
    \implies & |Term_2 - \frac{1}{p} \operatorname{trace}\{(F(z) + zI_p)\Bar{Q}M_nQ\}| \xrightarrow{a.s.} 0\\
    \implies& |Term_2 - Term_1| \xrightarrow{a.s.} 0
\end{align*}
This concludes the proof.
\end{proof}

\subsection{Proof of Theorem \ref{ExistenceA123}}\label{ProofExistenceA123}

\begin{proof}
By Lemma \ref{CompactConvergence}, every sub-sequence of $\{h_n(z)\}$ has a further sub-sequence that converges uniformly in each compact subset of $\mathbb{C}_L$. Let $h_0(z)$ be one such sub-sequential limit corresponding to the sub-sequence $\{h_{k_m}(z)\}_{m=1}^\infty$. For simplicity, denote  $g_m = h_{k_m}, \Tilde{g}_m = \Tilde{h}_{k_m}, \Tilde{\Tilde{g}}_m = \Tilde{\Tilde{h}}_{k_m}, d_m = c_{k_m}$ and $G_m = F^{\Sigma_{k_m}}$ for $m \in \mathbb{N}$. Thus we have $g_m \xrightarrow{a.s.} h_0$. By Theorem \ref{DeterministicEquivalent}, we have, 

$$|\Tilde{g}_m - h_0| = |\Tilde{h}_{k_m} - h_0| \leq |\Tilde{h}_{k_m} - h_{k_m}| + |h_{k_m} - h_0| \rightarrow 0$$

Therefore, $\Tilde{g}_m \rightarrow h_0$. By Lemma \ref{hn_tilde_tilde2}, we have
\begin{align*}\label{gm_tilde_tilde2}\tag{B.28}
&\Tilde{g}_m(z) - \Tilde{\Tilde{g}}_m(z) \rightarrow 0\\
\implies &\displaystyle \Tilde{g}_m(z)- \int\dfrac{\lambda dG_m(\lambda)}{-z + \lambda\rho(d_m\Tilde{g}_m(z))} \longrightarrow 0\\
\implies &\Tilde{g}_m(z) -\int\dfrac{\lambda d\{G_m(\lambda)-H(\lambda)\}}{-z +\lambda\rho(d_m\Tilde{g}_m(z))} -
\int\dfrac{\lambda dH(\lambda)}{-z + \lambda\rho(d_m\Tilde{g}_m(z))} \longrightarrow 0
\end{align*}

For large $m$, the common integrand in the second and third terms of (\ref{gm_tilde_tilde2}) can be bounded above as follows.
\begin{align}\label{integrandBound}\tag{B.29}
    &\bigg|\frac{\lambda}{-z + \lambda\rho(d_m\Tilde{g}_m)}\bigg|
    \leq \frac{|\lambda|}{|\Re(-z + \lambda\rho(d_m\Tilde{g}_m))|}
    \leq \frac{|\lambda|}{|\Re(\lambda\rho(d_m\Tilde{g}_m))|} = \frac{1}{\Re(\rho(d_m\Tilde{g}_m))
    } \xrightarrow{} \frac{1}{\Re(\rho(ch_0))}
\end{align}

The limit in (\ref{integrandBound}) follows because of the following argument. First note that $\Re(ch_0) > 0$. To see this, note that $g_m \xrightarrow{a.s.} h_0, d_m \rightarrow c$ and by Lemma \ref{boundedAwayFromZero}, for sufficiently large m, $\Re(d_mg_m) \geq K_0(c,z,H, \tau) > 0$ a.s. Therefore, $\Re(ch_0) > 0$.
Secondly, $d_m\Tilde{g}_m = c_{k_m}\Tilde{h}_{k_m} \rightarrow ch_0$. By continuity of $\rho$ at $ch_0$, we have $\rho(d_m\Tilde{g}_m) \rightarrow \rho(ch_0)$. Finally by (\ref{realOfRho}), $\Re(\rho(ch_0)) = \Re(ch_0)\rho_2(ch_0) > 0$.

So the second term of (\ref{gm_tilde_tilde2}) can be made arbitrarily small as $G_m \xrightarrow{d} H$. Applying D.C.T. in the third term of (\ref{gm_tilde_tilde2}), we get 
\begin{align*}\label{subsequentialLimit}\tag{B.30}
h_0(z) = \displaystyle\int\dfrac{\lambda dH(\lambda)}{-z + \lambda\rho(ch_0(z))}
\end{align*}

Thus any subsequential limit ($h_0(z) \in \mathbb{C}_R$) satisfies (\ref{h_main_eqn}). By Lemma \ref{Uniqueness}, all sub-sequential limits must be the same, say $h^\infty(z)$. This implies $h_n(z) \xrightarrow{a.s.} h^\infty(z)$ where the convergence is uniform in each compact subset of $\mathbb{C}_L$. Therefore, the limit $h^\infty$ must be analytic in $\mathbb{C}_L$ itself.

To complete the proof, we need to prove that $s_n(z) \xrightarrow{a.s.} s^\infty(z)$. Here we define an intermediate quantity 
\begin{align*}\label{defining_sn_tilde}\tag{B.31}
\Tilde{s}_n(z) := \frac{1}{z}\bigg(\frac{2}{c_n}-1\bigg) + \frac{1}{\mathbbm{i}c_nz}\bigg(\frac{1}{\mathbbm{i} +c_n \Tilde{h}_n(z)} - \frac{1}{-\mathbbm{i} +c_n \Tilde{h}_n(z)}\bigg)    
\end{align*}

It is clear that $\Tilde{s}_n(z) \rightarrow s^\infty(z)$ since $c_n \rightarrow c$ and $\Tilde{h}_n(z) \rightarrow h^\infty(z)$. So it is sufficient to show $s_n(z) - \Tilde{s}_n(z) \xrightarrow{a.s.} 0$. We will utilise use relationship between the resolvent and the co-resolvent for this. The co-resolvent of $S_n$ is given by 
$$\Tilde{Q}(z) := \bigg(\dfrac{1}{n}\begin{bmatrix}
    X_2^*\\
    X_1^*\\
\end{bmatrix}[X_1:-X_2] - zI_{2n}\bigg)^{-1}$$ 

First we simplify the expression a bit. Let $A = \frac{1}{\sqrt{n}}[X_{1}:-X_{2}]$ and $B = \frac{1}{\sqrt{n}}[X_{2}:X_{1}]^*$. Then $Q(z) = (AB - zI_p)^{-1}$ and $\Tilde{Q}(z) = (BA - zI_{2n})^{-1}$. Observe that

\begin{align*}\label{defining_Qtilde}\tag{B.32}
(BQA - I_{2n})(BA - zI_{2n}) =& zI_{2n} \\
\implies \Tilde{Q}(z) = \frac{1}{z}BQA - \frac{1}{z}I_{2n}
=& \dfrac{1}{nz}\begin{bmatrix}
    X_2^*\\
    X_1^*\\
\end{bmatrix}Q[X_1:-X_2] - \dfrac{1}{z}I_{2n}\\
=& \frac{1}{z}\begin{bmatrix}
    \frac{1}{n}X_2^*QX_1 - I_n & -\frac{1}{n}X_2^*QX_2\\
    \frac{1}{n}X_1^*QX_1 & -\frac{1}{n}X_1^*QX_2 - I_n
\end{bmatrix}\
=: \frac{1}{z}\begin{bmatrix}
    U & *\\
    * & V\\
\end{bmatrix}    
\end{align*}

We focus only on the two diagonal blocks $U$ and $V$ of $\Tilde{Q}$ (\ref{defining_Qtilde}). Then for $1 \leq j \leq n$, 
\begin{align*}\label{zAjj}\tag{B.33}
U_{jj}
=&\frac{1}{n}X_{2j}^*QX_{1j} - 1\\
=& \frac{1}{n}X_{2j}^*Q_{-j}(c_{1j} X_{1j} + c_{2j} X_{2j}) - 1 \text{, where } c_{1j}, c_{2j} \text{ are as per } (\ref{defining_cj})\\
=& c_{1j} E_j(2,1) + c_{2j}E_j(2,2) - 1
\end{align*}

From Lemma \ref{uniformConvergence}, we have $\underset{1\leq j\leq n}{\max}|E_j(2,1)| \xrightarrow{a.s.} 0$, $\underset{1\leq j\leq n}{\max}|E_j(2,2) - c_nh_n| \xrightarrow{a.s.} 0$,\\    
$\underset{1 \leq j \leq n}{\max}|c_{1j} - v_n| \xrightarrow{a.s.} 0$ and $\underset{1 \leq j \leq n}{\max}|c_{2j} -c_nh_nv_n| \xrightarrow{a.s.} 0$. Using Lemma \ref{lA.6}, (\ref{h_nBound}) and Lemma \ref{boundedAwayFromZero}, we have 
\begin{itemize}
    \item $|c_{1j}E_j(2,1)| \xrightarrow{a.s.} 0$
    \item $|c_{2j}E_j(2,2) - (c_nh_n)^2v_n| \xrightarrow{a.s.} 0$
\end{itemize}

Using the above results in (\ref{zAjj}) and by Lemma \ref{lA.5}, we get 
\begin{align*}
&\underset{1\leq j \leq n}{\max}|U_{jj} - (0 + (c_nh_n)^2v_n - 1)| \xrightarrow{a.s.} 0\\
\implies & \underset{1\leq j \leq n}{\max}\bigg|U_{jj} - \dfrac{(c_nh_n)^2}{1 + (c_nh_n)^2} + 1\bigg| \xrightarrow{a.s.} 0\\
\implies & \underset{1\leq j \leq n}{\max}|U_{jj} +(1+(c_nh_n)^2)^{-1}| \xrightarrow{a.s.} 0
\end{align*}

 Now from Theorem \ref{DeterministicEquivalent}, we have $|h_n - \Tilde{h}_n| \xrightarrow{a.s.} 0$ and from Lemma \ref{boundedAwayFromZero}, for sufficiently large $n$, $|v_n| = |1 + (c_nh_n)^2|^{-1}$ is bounded which implies $|1 + (c_n\Tilde{h}_n)^2|^{-1}$ is bounded. Moreover by (\ref{h_nBound}), we have $|h_n| \leq C/|\Re(z)|$ which implies $|\Tilde{h}_n| \leq C/|\Re(z)|$. Therefore,

\begin{align}\tag{B.34}
    \bigg|\dfrac{1}{1 + (c_nh_n)^2} - \dfrac{1}{1 + (c_n\Tilde{h}_n)^2}\bigg| \leq \dfrac{c_n^2|h_n - \Tilde{h}_n|(|h_n| + |\Tilde{h}_n|)}{|1 + (c_nh_n)^2||1 + (c_n\Tilde{h}_n)^2|} \xrightarrow{} 0
\end{align}

Hence, we have 
$\underset{1 \leq j \leq n}{\max}|U_{jj} + (1 + c_n^2 \Tilde{h}_n^2)^{-1}| \xrightarrow{a.s.} 0$. Similarly, $\underset{1 \leq j \leq  n}{\max} |V_{jj} + (1 + c_n^2 \Tilde{h}_n^2)^{-1}| \xrightarrow{a.s.} 0$. Therefore using Lemma \ref{lA.7},
\begin{align*}\label{Ujj_Vjj}\tag{B.35}
&\frac{1}{2n}\bigg|\sum_{j=1}^{n} \bigg(U_{jj} + V_{jj} + 2(1 + (c_n^2\Tilde{h}_n^2))^{-1}\bigg)\bigg| \xrightarrow{a.s.} 0\\
\implies& \bigg|\frac{1}{2n}\operatorname{trace}(\Tilde{Q}) + z^{-1}(1 + (c_n^2\Tilde{h}_n^2))^{-1}\bigg| \xrightarrow[]{} 0
\end{align*}

Using the identity linking the trace of the resolvent with that of the co-resolvent, we get
\begin{align*}
& \frac{1}{2n}\operatorname{trace}(\Tilde{Q}) = \frac{1}{2n}\operatorname{trace}(Q) + \frac{p-2n}{2nz}\\
\implies & \bigg|\frac{1}{z}\frac{1}{1 + c_n^2 \Tilde{h}_n^2} + \frac{c_n}{2}s_n(z) + \frac{1}{z}\bigg(\frac{c_n}{2} - 1\bigg)\bigg| \xrightarrow{a.s.} 0\\
\implies & \bigg|s_n(z) + \frac{1}{z}\bigg(1 - \frac{2}{c_n}\bigg) + \frac{2}{c_nz}\bigg(\frac{1}{1 + c_n^2 \Tilde{h}_n^2(z)}\bigg)\bigg| \xrightarrow{a.s.} 0\\
\implies & \bigg|s_n(z) - \dfrac{1}{z}\bigg(\dfrac{2}{c_n}-1\bigg) - \dfrac{1}{\mathbbm{i}c_nz}\bigg(\dfrac{1}{\mathbbm{i} +c_n\Tilde{h}_n(z)} - \dfrac{1}{-\mathbbm{i} +c_n\Tilde{h}_n(z)}\bigg)\bigg| \xrightarrow{a.s.} 0\\
\implies & |s_n(z) - \Tilde{s}_n(z)| \xrightarrow{a.s.} 0 \text{, using the definition of } \Tilde{s}_n \text{ from } (\ref{defining_sn_tilde})
\end{align*}

Therefore, $s_n(z) \xrightarrow{a.s.} s^\infty(z)$. From (\ref{h_nBound}), for sufficiently large $n$, $|h_n| \leq {C}/{|\Re(z)|}$ for $z \in \mathbb{C}_L$. Thus for $y > 0$, $|h^\infty(-y)| \leq {C}/{|y|}$ and $\underset{y \rightarrow \infty}{\lim}h^\infty(-y) = 0$. This implies that 
\begin{align*}\tag{B.36}
\underset{y \rightarrow +\infty}{\lim}ys^\infty(-y) = 1 - \frac{2}{c} - \underset{y \rightarrow \infty}{\lim}\frac{1}{\mathbbm{i}c}\bigg(\frac{1}{\mathbbm{i} +ch^\infty(-y)} - \frac{1}{-\mathbbm{i} +ch^\infty(-y)}\bigg) = 1    
\end{align*}

Since $s^\infty(.)$ satisfies the necessary and sufficient condition from Proposition \ref{GeroHill}, it is a Stieltjes transform of some probability distribution $F^\infty$ and $F^{S_n} \xrightarrow{d} F^\infty$ a.s. So we have proved (2)-(4) of Section \ref{ProofSketch} under \textbf{A1-A2} of Section \ref{A123}.
\end{proof}

\subsection{Results related to Proof of Existence under General Conditions}

\begin{lemma}\label{3.4.1}
    $h^\tau \rightarrow h^\infty$, $s^\tau \rightarrow s^\infty$  as $\tau \rightarrow \infty$
\end{lemma}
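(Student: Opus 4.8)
The plan is to establish convergence of $h^\tau$ as $\tau \to \infty$ first, and then deduce convergence of $s^\tau$ from the explicit formula (\ref{s_main_eqn}) relating the two. Recall from Step 7 of Section \ref{ExistenceGeneral} that for each $\tau$, the pair $(h^\tau, s^\tau)$ satisfies (\ref{h_main_eqn}) and (\ref{s_main_eqn}) with $H^\tau$ in place of $H$, where $H^\tau$ transfers all mass of $H$ beyond $\tau$ to the origin. Since $H \neq \delta_0$ has compact support assumptions are not available here, but $\int \lambda\, dH^\tau(\lambda) \leq \int \lambda\, dH(\lambda) =: C_H < \infty$ uniformly in $\tau$ by (T4), and $H^\tau \xrightarrow{d} H$ weakly as $\tau \to \infty$ with convergence of first moments (by dominated convergence, using the integrable envelope $\lambda$ from the tail of $H$). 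This is the key input.

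First I would show that $\{h^\tau\}$ is a normal family on $\mathbb{C}_L$. For $z = -u + \mathbbm{i}v \in \mathbb{C}_L$, taking real parts in (\ref{h_main_eqn}) with $H^\tau$ and arguing as in Lemma \ref{boundedAwayFromZero} (the bound there depends on $H$ only through $\int \lambda\, dH(\lambda)$ and an operator-norm bound $\tau$ on $\Sigma_n$, but at the level of the limiting equation only through $\int \lambda\, dH^\tau(\lambda) > 0$ and the imaginary support of the underlying measure), one gets $\Re(c h^\tau(z)) \geq K_0(z)$ for a constant independent of $\tau$ (for $\tau$ large enough that $\int \lambda\, dH^\tau > 0$, which holds once $\tau$ exceeds some $\tau_0$ since $H \neq \delta_0$). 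Combined with $|h^\tau(z)| \leq C_H/\Re(\rho(ch^\tau(z))) \leq C_H/K_0(z)$ via the same manipulation as in (\ref{integrandBound}), this gives a locally uniform bound on $h^\tau$, so by Montel's theorem every sequence $\tau_m \to \infty$ has a subsequence along which $h^{\tau_m}$ converges uniformly on compacts to some limit $h_0$ analytic on $\mathbb{C}_L$, with $\Re(ch_0(z)) \geq K_0(z) > 0$, hence $h_0(\mathbb{C}_L) \subset \mathbb{C}_R$.

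Next I would pass to the limit in the functional equation. Writing (\ref{h_main_eqn}) for $h^{\tau_m}$ as
$$
h^{\tau_m}(z) = \int \frac{\lambda\, dH^{\tau_m}(\lambda)}{-z + \lambda \rho(c h^{\tau_m}(z))},
$$
I split the integral against $dH^{\tau_m}$ into an integral against $d(H^{\tau_m} - H)$ plus an integral against $dH$, exactly as in (\ref{gm_tilde_tilde2})--(\ref{subsequentialLimit}) in the proof of Theorem \ref{ExistenceA123}. The integrand $\lambda/(-z + \lambda\rho(ch^{\tau_m}(z)))$ is bounded in absolute value by $1/\Re(\rho(ch^{\tau_m}(z))) \to 1/\Re(\rho(ch_0(z)))$ since $\rho$ is continuous at $ch_0(z)$ and $\Re(ch_0(z)) > 0$; continuity of $\rho$ also gives pointwise convergence of the integrand to $\lambda/(-z+\lambda\rho(ch_0(z)))$. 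The $d(H^{\tau_m}-H)$ term vanishes because $H^{\tau_m} \xrightarrow{d} H$ and the integrand is a fixed bounded continuous function of $\lambda$ (for $m$ large, using the uniform envelope), and the $dH$ term converges by dominated convergence. Hence $h_0$ satisfies (\ref{h_main_eqn}) with $H$ itself, and by the uniqueness Theorem \ref{Uniqueness} (applicable since $H \neq \delta_0$), $h_0 = h^\infty$. As every subsequential limit equals $h^\infty$, we conclude $h^\tau \to h^\infty$ uniformly on compact subsets of $\mathbb{C}_L$. Finally, plugging this into (\ref{s_main_eqn}): since $\Re(ch^\tau(z)) \geq K_0(z) > 0$ bounds $|\mathbbm{i} + ch^\tau(z)|$ and $|-\mathbbm{i} + ch^\tau(z)|$ away from zero uniformly in $\tau$, the map $h \mapsto \frac{1}{z}(\frac{2}{c}-1) + \frac{1}{\mathbbm{i}cz}(\frac{1}{\mathbbm{i}+ch} - \frac{1}{-\mathbbm{i}+ch})$ is continuous at $h = h^\infty(z)$, so $s^\tau(z) \to s^\infty(z)$, where $s^\infty$ is the function built from $h^\infty$ via (\ref{s_main_eqn}). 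The main obstacle is verifying that the constant $K_0$ in the lower bound on $\Re(ch^\tau)$ can be taken independent of $\tau$; this needs the uniform positivity $\inf_{\tau \geq \tau_0} \int \lambda\, dH^\tau(\lambda) > 0$, which follows since $H \neq \delta_0$ forces $\int \lambda\, dH_1 > 0$ and $H^\tau \to H$ preserves enough mass away from $0$ for all large $\tau$.
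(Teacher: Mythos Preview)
Your overall architecture --- normal family, subsequential limit, pass to the limit in (\ref{h_main_eqn}), invoke Theorem~\ref{Uniqueness}, then conclude $s^\tau\to s^\infty$ from (\ref{s_main_eqn}) --- matches the paper's. The genuine gap is precisely the ``main obstacle'' you flag but do not resolve: the uniform-in-$\tau$ lower bound $\Re(ch^\tau(z))\ge K_0(z)$. Your appeal to Lemma~\ref{boundedAwayFromZero} does not deliver this: the constant there is
\[
K_0=\frac{u}{(B^*-v)^2+u^2}\cdot c\!\int\lambda\,dH^\tau(\lambda),\qquad B^*=\pm 4\tau(1+\sqrt c)^2,
\]
so $K_0\to 0$ as $\tau\to\infty$, and the dependence on $\tau$ enters through the eigenvalue bound $B^*$, not only through $\int\lambda\,dH^\tau$. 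Your subsequent boundedness argument $|h^\tau|\le 1/\Re(\rho(ch^\tau))$ is then circular, since it presupposes exactly the lower bound you are trying to establish.

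The paper circumvents this in two moves. First, it gets the normal-family bound \emph{not} from the functional equation but from the pre-limit characterization: $h^\tau=\lim_n h_n^\tau$ with $|h_n^\tau(z)|\le \tfrac{1}{p}\operatorname{trace}(\Sigma_n^\tau)\,\|Q\|_{op}\le C/|\Re(z)|$ by (T4), and this bound is manifestly independent of $\tau$. Second, for any subsequential limit $h_0$ one only has $\Re(h_0)\ge 0$ a~priori; the paper upgrades this to $\Re(h_0)>0$ via a separate argument (Claim~\ref{Claim}): if $\Re(h_0(z_0))=0$ and $h_0$ is non-constant, the Open Mapping Theorem yields a point with $\Re(h_0)<0$, a contradiction; if $h_0$ is constant, the symmetry $h^\tau(\bar z)=\overline{h^\tau(z)}$ forces $h_0\equiv 0$, and feeding this back through (\ref{realOf_h}) and (\ref{lessThanOne}) leads to $\int\lambda\,dH=0$, contradicting $H\neq\delta_0$. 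Only after this does the passage to the limit in the integral (your middle paragraph, which is fine) go through. You also omit the verification that $\lim_{y\to\infty}ys^\infty(-y)=1$, needed for Step~9 to identify $s^\infty$ as a genuine Stieltjes transform; the paper checks this from $|h^\infty(-y)|\le C/y\to 0$.
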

\begin{proof}    
Since Theorem \ref{t3.1} holds for $\Tilde{U}_n$, we have $F^{\Tilde{U}_n} \xrightarrow{d} F^{\tau}$ for some LSD $F^{\tau}$ and for $z \in \mathbb{C}_L$, there exists functions $s^\tau(z)$ and $h^\tau(z)$ satisfying (\ref{s_main_eqn}) and (\ref{h_main_eqn}) with $H^\tau$ replacing $H$ and mapping $\mathbb{C}_L$ to $\mathbb{C}_R$ and analytic on $\mathbb{C}_L$. We have to show existence of analogous quantities for the sequence $\{F^{S_n}\}_{n=1}^\infty$.

 First assume that H has a bounded support. If $\tau > \sup \operatorname{supp}(H)$, then $H^{\tau}(t) = H(t)$ and $H(\tau) = 1$. By the uniqueness property from Lemma \ref{Uniqueness}, $h^{\tau}(z)$ must be the same for all large $\tau$. Hence $s^{\tau}(z)$ and in turn $F^{\tau}(.)$ must also be the same for all large $\tau$.  Denote this common LSD by $F^\infty$ and the common value of $h^\tau$ and $s^\tau$ by $h^\infty$ and $s^\infty$ respectively.
 This proves the case when H has a bounded support. 

Now we analyse the case where H has unbounded support. We need to show there exist functions $h^\infty$, $s^\infty$ that satisfy equations (\ref{s_main_eqn}) and (\ref{h_main_eqn}) and an LSD $F^\infty$ serving as the limit for the ESDs of $\{S_n\}_{n=1}^\infty$.

 We will show that $\mathcal{H} = \{h^\tau: \tau > 0\}$ forms a normal family. Following arguments similar to those used in the proof of Lemma \ref{CompactConvergence}, let $K \subset \mathbb{C}_L$ be an arbitrary compact subset. Then $u_0 > 0$ where $u_0 := \inf\{|\Re(z)|: z \in K\}$. For arbitrary $z \in K$, using (R5) of \ref{R123} and (T4) of Theorem \ref{t3.1}, for sufficiently large $n$, we have
    \begin{align}\label{hn_tau_Bound}\tag{B.37}
        |h_n^\tau(z)| =& \frac{1}{p}|\operatorname{trace}\{\Sigma_n^\tau Q\}| \leq \bigg(\frac{1}{p}\operatorname{trace}(\Sigma_n^\tau)\bigg) ||Q||_{op} 
        \leq \frac{C}{|\Re(z)|} \leq \frac{C}{u_0}
    \end{align}

By Theorem \ref{ExistenceA123}, for any $\tau > 0$, $h^\tau(z)$ is the uniform limit of $h_n^\tau(z) := \frac{1}{n}\operatorname{trace}\{\Sigma_n^\tau Q(z)\}$. Therefore, for $z \in K$,    
\begin{align*}\label{h_tauBound}\tag{B.38}
|h^\tau(z)|\leq \frac{C}{|\Re(z)|} \leq \frac{C}{u_0} 
\end{align*}
Therefore as a consequence of \textit{Montel's theorem}, any subsequence of $\mathcal{H}$ has a further convergent subsequence that converges uniformly on compact subsets of $\mathbb{C}_L$. Let $\{h^{\tau_m}(z)\}_{m=1}^\infty$ be such a sequence with $h_0(z)$ as the subsequential limit where $\tau_m \rightarrow \infty$ as $m \rightarrow \infty$.
By Lemma \ref{boundedAwayFromZero}, $\Re(h^{\tau_m}(z)) \geq K_0(z,c,\tau_m, H^{\tau_m}) > 0$ which implies that $\Re(h_0(z)) \geq 0$ $\forall z \in \mathbb{C}_L$.

\begin{claim}\label{Claim}
We must have $\Re(h_0(z)) > 0$ for all $z \in \mathbb{C}_L$. For a proof of this claim, see \ref{subsec:ProofOfClaim}.    
\end{claim}

By (\ref{realOfRho}), and the fact that $\Re(h_0) > 0$, we have $\Re(\rho(ch_0)) = \Re(ch_0)\rho_2(ch_0)> 0$. Therefore by continuity of $\rho$ at $ch_0$, 
\begin{align*}\label{integrandBound3}\tag{B.39}
\bigg|\frac{\lambda}{-z + \lambda\rho(ch^{\tau_m})}\bigg|
    \leq \frac{|\lambda|}{|\Re(-z + \lambda\rho(ch^{\tau_m}))|}
    \leq \frac{|\lambda|}{|\Re(\lambda\rho(ch^{\tau_m}))|} = \frac{1}{\Re(\rho(ch^{\tau_m}))} \xrightarrow{} \frac{1}{\Re(\rho(ch_0))}  < \infty  
\end{align*}
as $m \rightarrow \infty$. Now by Theorem \ref{ExistenceA123}, $(h^{\tau_m}, H^{\tau_m})$ satisfy the below equation.

\begin{align*}
   h^{\tau_m}(z) =& \int \dfrac{\lambda dH^{\tau_m}(\lambda)}{-z + \lambda\rho(ch^{\tau_m})}
  = \int \dfrac{\lambda d\{H^{\tau_m}(\lambda) - H(\lambda)\}}{-z + \lambda\rho(ch^{\tau_m})} + \int \dfrac{\lambda dH(\lambda)}{-z +\lambda\rho(ch^{\tau_m})}\\
\end{align*}
Note that the first term of the last expression can be made arbitrarily small as the integrand is bounded by (\ref{integrandBound3}) and $H^{\tau_m} \xrightarrow{d} H$. The same bound on the integrand also allows us to apply D.C.T. in the second term thus giving us
\begin{align*}
  &\underset{m \rightarrow \infty}{\lim} h^{\tau_m}(z) = \underset{m \rightarrow \infty}{\lim} \int \dfrac{\lambda dH(\lambda)}{-z +\lambda\rho(ch^{\tau_m})}\\
  \implies &  h_0(z) = \int \dfrac{\lambda dH(\lambda)}{-z +\lambda\rho(ch_0(z))}\label{subsequentialLimit2}\tag{B.40}
\end{align*}

Now $\{\tau_m\}_{m=1}^\infty$ is a further subsequence of an arbitrary subsequence and $\{h^{\tau_m}(z)\}$ converges to $h_0(z) \in \mathbb{C}_R$ that satisfies (\ref{h_main_eqn}). By Theorem \ref{Uniqueness}, all these subsequential limits must be the same and that $\{h^\tau(z)\}$ must converge uniformly to this common limit which we denote by $h^\infty(z)$. The uniform convergence of analytic functions $h^\tau$ in each compact subset of $\mathbb{C}_L$ imply that the limit $h^\infty$ must be analytic in $\mathbb{C}_L$.

Notice that $s^\tau(z) \rightarrow s^\infty(z)$ as $\tau \rightarrow \infty$ since 
\begin{align*}\label{s_tau_limit}\tag{B.41}
    \underset{\tau \rightarrow \infty}{\lim}s^\tau(z) 
    =&\underset{\tau \rightarrow \infty}{\lim} \frac{1}{z}\bigg(\frac{2}{c}-1\bigg) + \frac{1}{\mathbbm{i}cz}\bigg(\frac{1}{\mathbbm{i} +ch^\tau(z)} - \frac{1}{-\mathbbm{i} +ch^\tau(z)}\bigg)\\
    =& \frac{1}{z}\bigg(\frac{2}{c}-1\bigg) + \frac{1}{\mathbbm{i}cz}\bigg(\frac{1}{\mathbbm{i} +ch^\infty(z)} - \frac{1}{-\mathbbm{i} +ch^\infty(z)}\bigg)
    =s^\infty(z)
\end{align*}

From (\ref{h_tauBound}), $|h^\tau(z)| \leq {C}/{|\Re(z)|}$. Thus, $|h^\infty(z)| \leq {C}/{|\Re(z)|}$ implying that $\underset{y \rightarrow \infty}{\lim}h^\infty(-y) = 0$. Therefore, 
$$\underset{y \rightarrow +\infty}{\lim}ys^\infty(-y) = \bigg(1 - \frac{2}{c}\bigg) - \underset{y \rightarrow \infty}{\lim}\frac{1}{\mathbbm{i}c}\bigg(\frac{1}{\mathbbm{i} +ch^\infty(-y)} - \frac{1}{-\mathbbm{i} +  ch^\infty(-y)}\bigg)= 1$$ So, we have established that \begin{itemize}
    \item $h^\tau \rightarrow h^\infty$ and $s^\tau \rightarrow s^\infty$
    \item $h^\infty$ satisfies (\ref{h_main_eqn}) and is analytic on $\mathbb{C}_L$
    \item $s^\infty$ satisfies the conditions of Proposition \ref{GeroHill} for a Stieltjes Transform (of a probability measure on the imaginary axis)
\end{itemize}
\end{proof}

\begin{lemma}\label{3.4.2}
    $||F^{S_n} - F^{T_n}||_{im} \xrightarrow{a.s.} 0$
\end{lemma}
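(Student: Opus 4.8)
The approach is to reduce the statement to a pure rank estimate. Both $S_n$ and $T_n$ are skew-Hermitian, and they differ only through the spectral truncation of $\Sigma_n$; since a rank perturbation of a skew-Hermitian matrix moves its spectral distribution by a correspondingly small amount in the uniform metric (via (R1) of \ref{R123}), it is enough to show that the rank of $S_n-T_n$, relative to $p$, is small.

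Concretely, since $S_n^* = -S_n$ and $T_n^* = -T_n$, (R1) of \ref{R123} gives
$$\|F^{S_n} - F^{T_n}\|_{im} = \|F^{-\mathbbm{i}S_n} - F^{-\mathbbm{i}T_n}\| \le \frac{1}{p}\operatorname{rank}(S_n - T_n).$$
Writing $W_n := \tfrac{1}{n}(Z_1 Z_2^* - Z_2 Z_1^*)$, we have $S_n = \Lambda_n W_n \Lambda_n$ and $T_n = \Lambda_n^\tau W_n \Lambda_n^\tau$, so (R3) of \ref{R123} yields
$$\operatorname{rank}(S_n - T_n) = \operatorname{rank}\bigl(\Lambda_n W_n \Lambda_n - \Lambda_n^\tau W_n \Lambda_n^\tau\bigr) \le 2\,\operatorname{rank}(\Lambda_n - \Lambda_n^\tau).$$
Because $\Lambda_n = \Sigma_n^{1/2}$ and $\Lambda_n^\tau = (\Sigma_n^\tau)^{1/2}$ are built from the same spectral decomposition of $\Sigma_n$, the matrix $\Lambda_n - \Lambda_n^\tau$ is diagonal in that eigenbasis with $j$-th entry $\sqrt{\lambda_j}\,\mathbbm{1}_{\{\lambda_j > \tau\}}$, where $\lambda_1,\dots,\lambda_p$ are the eigenvalues of $\Sigma_n$; hence $\operatorname{rank}(\Lambda_n - \Lambda_n^\tau) = \#\{j : \lambda_j > \tau\} = p\bigl(1 - F^{\Sigma_n}(\tau)\bigr)$. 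Combining the three estimates,
$$\|F^{S_n} - F^{T_n}\|_{im} \le 2\bigl(1 - F^{\Sigma_n}(\tau)\bigr).$$

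To finish, choose $\tau$ among the continuity points of $H$ (all but countably many values) and apply the almost sure weak convergence $F^{\Sigma_n} \xrightarrow{d} H$ from (T3) of Theorem~\ref{t3.1}, so that almost surely
$$\limsup_{n\to\infty}\ \|F^{S_n} - F^{T_n}\|_{im} \le 2\bigl(1 - H(\tau)\bigr),$$
a bound that vanishes as $\tau \to \infty$ through continuity points of $H$ — which is precisely the form in which the lemma is used in Step~10, where $n$ and $\tau$ are sent to infinity together (via Lemma~\ref{lA.1} this passes to $\limsup_n |s_n(z) - t_n(z)|$ being bounded by a multiple of $1-H(\tau)$ for each $z \in \mathbb{C}_L$). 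There is no real obstacle here: the only points requiring care are the restriction to continuity points of $H$ and the observation that the displayed bound does not depend on $Z_1,Z_2$ at all, so the sole source of randomness — hence the only reason for the almost-sure qualifier — is the convergence of $F^{\Sigma_n}$ to $H$.
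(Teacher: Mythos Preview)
Your proof is correct and follows essentially the same route as the paper: both use (R1) to bound $\|F^{S_n}-F^{T_n}\|_{im}$ by $\tfrac{1}{p}\operatorname{rank}(S_n-T_n)$, then apply (R3) to reduce to $\tfrac{2}{p}\operatorname{rank}(\Lambda_n-\Lambda_n^\tau)=2(1-F^{\Sigma_n}(\tau))$, and finish via $F^{\Sigma_n}\xrightarrow{d}H$ at continuity points of $H$ followed by $\tau\to\infty$. Your added remarks on the eigenbasis representation of $\Lambda_n-\Lambda_n^\tau$ and on the source of the almost-sure qualifier are correct elaborations of steps the paper leaves implicit.
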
 
\begin{proof}
Using (R1) and (R3) of Section \ref{R123}, we get
\begin{align*}
  ||F^{S_n} - F^{T_n}||_{im} \leq& \dfrac{1}{p}\operatorname{rank}(S_n - T_n)\\
  =& \frac{1}{p}\operatorname{rank}\bigg(\Lambda_n(Z_1Z_2^* - Z_2Z_1^*)\Lambda_n - \Lambda_n^\tau(Z_1Z_2^* - Z_2Z_1^*)\Lambda_n^\tau\bigg)\\
  \leq& \dfrac{2}{p}\operatorname{rank}(\Lambda_n - \Lambda_n^\tau)\\
  =& \dfrac{2}{p}\operatorname{rank}(\Sigma_n - \Sigma_n^\tau)\\
  =& 2(1 - F^{\Sigma_n}(\tau)) \xrightarrow{n \rightarrow \infty} 2(1 - H(\tau)) \xrightarrow{\text{ as } \tau \rightarrow \infty} 0
\end{align*}
Here $\tau$ approaches $\infty$ only through continuity points of $H$.

\end{proof}

\begin{lemma} \label{3.4.3}
    $||F^{T_n} - F^{U_n}||_{im} \xrightarrow{a.s.} 0$, $||F^{U_n}- F^{\Tilde{U}_n}||_{im} \xrightarrow{a.s.} 0$
\end{lemma}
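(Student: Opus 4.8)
The plan is to treat the two estimates separately: the first is a rank–counting argument in the spirit of Lemma~\ref{3.4.2}, whereas for the second a rank bound is not available and one must instead control the operator norm of the perturbation. For the bound $\|F^{T_n}-F^{U_n}\|_{im}\xrightarrow{a.s.}0$, I would start from (R1) of~\ref{R123}, which gives $\|F^{T_n}-F^{U_n}\|_{im}\le \tfrac1p\operatorname{rank}(T_n-U_n)$. Writing $Z_1Z_2^*-\hat Z_1\hat Z_2^*=(Z_1-\hat Z_1)Z_2^*+\hat Z_1(Z_2-\hat Z_2)^*$ (and symmetrically for $Z_2Z_1^*-\hat Z_2\hat Z_1^*$), subadditivity of rank together with $\operatorname{rank}(\Lambda_n^\tau A\Lambda_n^\tau)\le\operatorname{rank}(A)$ yield $\operatorname{rank}(T_n-U_n)\le 2\operatorname{rank}(Z_1-\hat Z_1)+2\operatorname{rank}(Z_2-\hat Z_2)$. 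Since the $(i,j)$ entry of $Z_k-\hat Z_k$ equals $z_{ij}^{(k)}\mathbbm{1}_{\{|z_{ij}^{(k)}|>B_n\}}$, (R2) of~\ref{R123} gives $\operatorname{rank}(Z_k-\hat Z_k)\le N_n^{(k)}:=\#\{(i,j):|z_{ij}^{(k)}|>B_n\}$, so everything reduces to showing $N_n^{(k)}/p\xrightarrow{a.s.}0$ for $k=1,2$.

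For this, the moment bound $\mathbb{E}|z_{ij}^{(k)}|^{4+\eta_0}\le M$ gives $\mathbb{P}(|z_{ij}^{(k)}|>B_n)\le MB_n^{-(4+\eta_0)}=Mn^{-a(4+\eta_0)}$, hence $\mathbb{E}N_n^{(k)}/p\le Mn^{1-a(4+\eta_0)}\to0$ because $a>\tfrac1{4+\eta_0}$. To upgrade to almost-sure convergence I would bound the fourth central moment of the sum of independent indicators $N_n^{(k)}$, obtaining $\mathbb{E}\,|N_n^{(k)}-\mathbb{E}N_n^{(k)}|^4\le C\big[(pn\,n^{-a(4+\eta_0)})^2+pn\,n^{-a(4+\eta_0)}\big]$; by Markov's inequality and $p\asymp n$ this gives $\mathbb{P}(|N_n^{(k)}-\mathbb{E}N_n^{(k)}|>\epsilon p)\le C\epsilon^{-4}\big(n^{-2a(4+\eta_0)}+n^{-2-a(4+\eta_0)}\big)$, which is summable in $n$ since $a(4+\eta_0)>1$ makes both exponents exceed $1$. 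Borel--Cantelli then yields $N_n^{(k)}/p\xrightarrow{a.s.}0$, completing the first bound.

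For $\|F^{U_n}-F^{\tilde U_n}\|_{im}\xrightarrow{a.s.}0$, set $\mu_k:=\mathbb{E}\hat Z_k$ (deterministic), so that $\hat Z_k=\tilde Z_k+\mu_k$ and $\hat Z_1\hat Z_2^*-\tilde Z_1\tilde Z_2^*=\hat Z_1\mu_2^*+\mu_1\tilde Z_2^*$. Since $\mathbb{E}z_{ij}^{(k)}=0$, the $(i,j)$ entry of $\mu_k$ is $-\mathbb{E}[z_{ij}^{(k)}\mathbbm{1}_{\{|z_{ij}^{(k)}|>B_n\}}]$, whose square is $\le \mathbb{E}|z_{ij}^{(k)}|^2\,\mathbb{P}(|z_{ij}^{(k)}|>B_n)\le Mn^{-a(4+\eta_0)}$ by Cauchy--Schwarz, so $\|\mu_k\|_{op}\le\|\mu_k\|_F\le\sqrt{pnM}\,n^{-a(4+\eta_0)/2}=o(\sqrt n)$, again because $a(4+\eta_0)>1$. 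Hence $U_n-\tilde U_n$ is a sum of four terms of the shape $\tfrac1n\Lambda_n^\tau A\mu_\ell^*\Lambda_n^\tau$ or $\tfrac1n\Lambda_n^\tau\mu_\ell A^*\Lambda_n^\tau$ with $A\in\{\hat Z_1,\hat Z_2,\tilde Z_1,\tilde Z_2\}$; using $\|\Lambda_n^\tau\|_{op}^2=\|\Sigma_n^\tau\|_{op}\le\tau$, the a.s. bound $\|\hat Z_k\|_{op}=O(\sqrt n)$ (largest singular value of a random matrix with mean-zero, bounded-variance, truncated entries, of the type already used in Lemma~\ref{boundedAwayFromZero}; whence also $\|\tilde Z_k\|_{op}\le\|\hat Z_k\|_{op}+\|\mu_k\|_{op}=O(\sqrt n)$), each term has operator norm $\le \tfrac{\tau}{n}\cdot O(\sqrt n)\cdot o(\sqrt n)=o(1)$ a.s. Therefore $\|U_n-\tilde U_n\|_{op}\xrightarrow{a.s.}0$, and Weyl's inequality applied to the Hermitian matrices $-\mathbbm{i}U_n,-\mathbbm{i}\tilde U_n$ gives $L_{im}(F^{U_n},F^{\tilde U_n})=L(F^{-\mathbbm{i}U_n},F^{-\mathbbm{i}\tilde U_n})\le\|U_n-\tilde U_n\|_{op}\xrightarrow{a.s.}0$, which is the form used through Lemma~\ref{lA.1} in Step~10. (If one further assumes that the entries within each $Z_k$ are identically distributed, then $\mu_k$ has rank one, $U_n-\tilde U_n$ has rank at most four, and the uniform-metric statement follows verbatim as in the first bound.)

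The delicate point, and the main obstacle, is this second estimate: unlike $S_n-T_n$ and $T_n-U_n$, the matrix $U_n-\tilde U_n$ need not have vanishing rank, since the bias $\mathbb{E}\hat Z_k$ is generically of full rank; one must instead exploit the $n^{-1}$ normalisation, the $O(\sqrt n)$ operator norm of the data matrices, and the estimate $\|\mathbb{E}\hat Z_k\|_F=o(\sqrt n)$ — a balance that works precisely because $a(4+\eta_0)>1$. The only genuinely external input is the almost-sure bound $\|\hat Z_k\|_{op}=O(\sqrt n)$, i.e.\ an a.s. bound on the largest eigenvalue of a sample covariance matrix with truncated entries, which is of the same nature as estimates already invoked in this appendix.
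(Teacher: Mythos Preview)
For the first bound your argument is essentially the paper's: both reduce $\|F^{T_n}-F^{U_n}\|_{im}$ via (R1) and (R3) to $\tfrac{2}{p}\sum_k\operatorname{rank}(Z_k-\hat Z_k)$ and then bound this by $\tfrac{2}{p}\sum_{i,j}\mathbbm{1}_{\{|z_{ij}^{(k)}|>B_n\}}$. The paper closes with Bernstein's inequality on this sum of independent indicators (and Borel--Cantelli), whereas you use a fourth-moment bound; the difference is purely cosmetic.

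For the second bound the two approaches genuinely differ. You assert that a rank bound is unavailable because $\mathbb{E}\hat Z_k$ is generically of full rank, and you switch to an operator-norm argument: $\|\mathbb{E}\hat Z_k\|_F=o(\sqrt n)$ together with the a.s.\ bound $\|\hat Z_k\|_{op}=O(\sqrt n)$ gives $\|U_n-\tilde U_n\|_{op}\to0$ and hence $L_{im}(F^{U_n},F^{\tilde U_n})\to0$. The paper instead \emph{does} run a rank argument: it writes $\hat Z_k-\tilde Z_k=\mathbb{E}\hat Z_k=-\mathbb{E}\check Z_k$ and asserts $\tfrac{1}{p}\operatorname{rank}(\mathbb{E}\check Z_k)\to0$, from which the uniform-metric statement of the lemma follows exactly as in the first part. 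So the paper obtains the stronger conclusion actually stated in the lemma (uniform metric), while you obtain only the L\'evy-metric conclusion --- which, as you correctly observe, is all that Step~10 requires through Lemma~\ref{lA.1}. Your parenthetical remark about the i.i.d.\ case (where $\mathbb{E}\hat Z_k$ has rank one) is precisely the situation in which the paper's rank argument is transparently valid; in the fully heterogeneous setting your concern about the rank of $\mathbb{E}\hat Z_k$ is legitimate, and your operator-norm route is a more robust substitute, at the cost of importing the largest-singular-value bound and of weakening the metric.
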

\begin{proof}
We have $T_n - U_n = \frac{1}{n}\Lambda_n^\tau(Z_1Z_2^* - \Hat{Z}_1\Hat{Z}_2^*)\Lambda_n^\tau - \frac{1}{n}\Lambda_n^\tau(Z_2Z_1^* - \Hat{Z}_2\Hat{Z}_1^*)\Lambda_n^\tau$. Therefore using (R1) and (R3) of Section \ref{R123}, we get
\begin{align*}\label{Tn_Un}\tag{B.42}
        ||F^{T_n} - F^{U_n}||_{im} \leq & \dfrac{1}{p}\operatorname{rank} (T_n - U_n) \\ 
        \leq&\frac{1}{p} \operatorname{rank}(Z_1Z_2^* - \Hat{Z}_1\Hat{Z}_2^*) + \frac{1}{p} \operatorname{rank}(Z_2Z_1^* - \Hat{Z}_2\Hat{Z}_1^*)\\
        \leq& \frac{2}{p}\bigg(\operatorname{rank}(Z_1 - \Hat{Z}_1) + \operatorname{rank} (Z_2 - \Hat{Z}_2)\bigg)
\end{align*}

Define $I_{ij}^{(k)} := \mathbbm{1}_{\{z_{ij}^{(k)} \neq \Hat{z}_{ij}^{(k)}\}} = \mathbbm{1}_{\{|z_{ij}^{(k)}| > B_n\}}$ where $B_n$ is defined in Assumption \ref{A123}. Using (R2) of Section \ref{R123}, we get $\operatorname{rank}(Z_k - \Hat{Z}_k) \leq \sum_{ij} I_{ij}^{(k)}$. Noting that 
\begin{align*}
 &\mathbb{P}(I_{ij}^{(k)} = 1)
=\mathbb{P}(|z_{ij}^{(k)}| > B_n)
\leq \dfrac{\mathbb{E}|z_{ij}^{(k)}|^{4+\eta_0}}{B_n^{4+\eta_0}}
\leq \dfrac{M_{4+\eta_0}}{n^{a(4+\eta_0)}}
\end{align*}

we get,
\begin{align*}
\dfrac{1}{p}\sum_{i,j}\mathbb{P}(I_{ij}^{(k)} = 1)
\leq \frac{1}{p} \times \dfrac{npM_{4+\eta_0}}{n^{a(4+\eta_0)}}
=\dfrac{M_{4+\eta_0}}{n^{(4+\eta_0)(a - \frac{1}{4+\eta_0})}} \rightarrow 0    
\end{align*}

Also, we have $\operatorname{Var}I_{ij}^{(k)} \leq \mathbb{P}(I_{ij}^{(k)}=1)$. For arbitrary $\epsilon > 0$, we must  have $\sum_{i,j}\operatorname{Var}I_{ij}^{(k)} \leq p\epsilon/2$ for large enough $n$. Finally we use Bernstein's Inequality to get the following bound.
\begin{align*}
    \mathbb{P}\bigg(\dfrac{1}{p}\sum_{i,j}I_{ij}^{(k)} > \epsilon\bigg)
    \leq & \mathbb{P}\bigg(\sum_{i,j}(I_{ij}^{(k)} - \mathbb{P}(I_{ij}^{(k)} = 1)) > \dfrac{p\epsilon}{2}\bigg)\\
    \leq & 2\exp{\bigg(-\dfrac{p^2\epsilon^2/4}{2(p\epsilon/2 + \sum_{i,j} \operatorname{Var}I_{ij}^{(k)})}\bigg)}\\
    \leq & 2\exp{\bigg(-\dfrac{p^2\epsilon^2/4}{2(p\epsilon/2 + p\epsilon/2)}\bigg)} = 2\exp{\bigg(-\dfrac{p\epsilon}{8}\bigg)}
\end{align*}
By Borel Cantelli lemma, $\frac{1}{p}\sum_{ij}I_{ij}^{(k)} \xrightarrow{a.s.}0$ and thus $\frac{1}{p}\operatorname{rank}(Z_k-\hat{Z}_k) \xrightarrow{a.s.} 0$. Combining this with (\ref{Tn_Un}), we have $||F^{T_n} - F^{U_n}||_{im} \xrightarrow{a.s.} 0$.

For the other result, define $\check{Z}_k = (\check{z}^{(k)}_{ij}) := (z_{ij}^{(k)}I_{ij}^{(k)})$ for $k \in \{1,2\}$. Then, 

\begin{align*}\label{rank_convergence}\tag{B.43}
\dfrac{1}{p}\operatorname{rank}\check{Z}_k = \dfrac{1}{p}\operatorname{rank}(Z_k -\hat{Z}_k)\leq \dfrac{1}{p}\sum_{i,j}\mathbb{P}(I_{ij}^{(k)} = 1) \xrightarrow{a.s.} 0    
\end{align*}

Finally we see that, \begin{align*}
    ||F^{U_n} - F^{\Tilde{U}_n}||_{im}
    \leq & \frac{1}{p} \operatorname{rank}(U_n - \Tilde{U}_n)\\
    \leq & \frac{1}{p}\operatorname{rank}(\Lambda_n^\tau(\Hat{Z}_1\Hat{Z}_2^* - \Tilde{Z}_1\Tilde{Z}_2^*)\Lambda_n^\tau) + \frac{1}{p}\operatorname{rank}\Lambda_n^\tau(\Hat{Z}_2\Hat{Z}_1^* - \Tilde{Z}_2\Tilde{Z}_1^*)\Lambda_n^\tau\\
    \leq & \frac{1}{p}\operatorname{rank}(\Hat{Z}_1\Hat{Z}_2^* - \Tilde{Z}_1\Tilde{Z}_2^*) + \frac{1}{p}\operatorname{rank}(\Hat{Z}_2\Hat{Z}_1^* - \Tilde{Z}_2\Tilde{Z}_1^*)\\
    \leq & \frac{2}{p}\operatorname{rank}(\hat{Z}_1 - \Tilde{Z}_1) + \frac{2}{p}\operatorname{rank}(\Hat{Z}_2 - \Tilde{Z}_2) \text{, using (R3) of Section } \ref{R123} \\
    = & \frac{2}{p}\operatorname{rank}(\mathbb{E}\Hat{Z}_1) + \frac{2}{p}\operatorname{rank}(\mathbb{E}\Hat{Z}_2)\\
    =& \frac{2}{p}\operatorname{rank}(\mathbb{E}\check{Z}_1) + \frac{2}{p}\operatorname{rank}(\mathbb{E}\Check{Z}_2) \text{, since } \textbf{0} = \mathbb{E}Z_k = \mathbb{E}\hat{Z}_k + \mathbb{E}\check{Z}_k\\
    & \longrightarrow 0 \text{, using } (\ref{rank_convergence}) 
\end{align*}
\end{proof}

\subsubsection{Proof of Claim \ref{Claim}}\label{subsec:ProofOfClaim}
\begin{proof}
Suppose not, then $\exists z_0 \in \mathbb{C}_L$ with $\Re(h_0(z_0)) = 0$. Either, $h_0$ is non-constant in which case by the Open Mapping Theorem, $h_0(\mathbb{C}_L)$ is an open set containing $h_0(z_0)$ which is purely imaginary. This implies that there exists $z_1 \in \mathbb{C}_L$, $\Re(h_0(z_1)) < 0$ which is a contradiction. 

The other case is that $h_0$ is constant in which case. For some $\zeta \in \mathbb{R}$, let $h_0(z) = \mathbbm{i}\zeta, z \in \mathbb{C}_L$. Note that for any $\tau > 0$, using the fact that $\rho(\overline{z}) =\overline{\rho(z)}$ (see the remark immediately following (\ref{realOfRho})), we get
\begin{align*}
\overline{h^\tau(z)} = \int \frac{\lambda dH(\lambda)}{-\overline{z} + \lambda \overline{\rho(ch^\tau(z))}} = \int \frac{\lambda dH(\lambda)}{-\overline{z} + \lambda \rho(c\overline{h^\tau(z)})} = h^\tau(\overline{z})
\end{align*}
The last equality follows from Theorem \ref{Uniqueness} since $\overline{h^\tau(z)} \in \mathbb{C}_R$ and satisfies (\ref{h_main_eqn}) with $\overline{z} \in \mathbb{C}_L$ instead of $z$.

Therefore, $-\mathbbm{i}\zeta = \overline{h_0(z)} = \underset{m \rightarrow \infty}{\lim}\overline{h^{\tau_m}(z)} = \underset{m \rightarrow \infty}{\lim}h^{\tau_m}(\overline{z}) = h_0(\overline{z}) = \mathbbm{i}\zeta$ so that $\zeta = 0$ and in turn $h_0(z) = 0$. 

Fix $z = -u + \mathbbm{i}v$ with $u > 0$. Recalling $I_1, I_2$ as defined in (\ref{realOf_h}), we have,
\begin{align*}
    &\Re(h^{\tau_m}) = c\Re(h^{\tau_m})\rho_2(ch^{\tau_m})I_2(h^{\tau_m}, H^{\tau_m}) + u I_1(h^{\tau_m}, H^{\tau_m})\\
    \implies& \underset{m \rightarrow \infty}{\lim} I_1(h^{\tau_m}, H^{\tau_m})= 0 \text{, using } (\ref{lessThanOne}) \text{ and } u > 0\\
    \implies & \underset{m \rightarrow \infty}{\lim}\int_0^{\infty}\dfrac{\lambda dH^{\tau_m}(\lambda)}{|-z +\lambda\rho(ch^{\tau_m})|^2} = 0 \label{rightBound}\tag{B.44}
\end{align*}

For arbitrary $M > 0$, choose $m \in \mathbb{N}$ such that $\tau_m > M$. Then noting the relationship between $H$ and $H^{\tau_m}$, we have 
\begin{align*}\tag{B.45}\label{tempResult}
    &\int_0^M\dfrac{\lambda dH(\lambda)}{|-z + \lambda\rho(ch^{\tau_m})|^2}
    \leq      \int_0^{\tau_m}\dfrac{\lambda dH(\lambda)}{|-z +\lambda\rho(ch^{\tau_m})|^2}
    = \int_0^{\infty}\dfrac{\lambda dH^{\tau_m}(\lambda)}{|-z +\lambda\rho(ch^{\tau_m})|^2}
\end{align*}

Since $\Re(\rho(ch^{\tau_m}) = \rho_2(ch^{\tau_m})\Re(ch^{\tau_m}) > 0$, we have for $0 \leq \lambda \leq M$,
\begin{align*}\label{integrandBound2}\tag{B.46}
    \frac{|\lambda|}{|-z + \lambda\rho(ch^{\tau_m})|^2}\leq \frac{|\lambda|}{(\Re(-z + \lambda\rho(ch^{\tau_m})))^2} \leq \frac{M}{u^2}
\end{align*}

From (\ref{tempResult}) and (\ref{rightBound}), we get
\begin{align*}\label{lim_is_zero_arbtry_M}\tag{B.47}
0 \leq \underset{m \rightarrow \infty}{\lim} \displaystyle \int_0^M\dfrac{\lambda dH(\lambda)}{|-z +\lambda\rho(ch^{\tau_m})|^2} \leq \underset{m \rightarrow \infty}{\lim}\int_0^{\infty}\dfrac{\lambda dH^{\tau_m}(\lambda)}{|-z +\lambda\rho(ch^{\tau_m})|^2} = 0    
\end{align*}

Now applying D.C.T. (because of \ref{integrandBound2}) on the first term in (\ref{tempResult}) and using (\ref{lim_is_zero_arbtry_M}) we get
\begin{align*}
0 = \underset{m \rightarrow \infty}{\lim} \displaystyle \int_0^M\dfrac{\lambda dH(\lambda)}{|-z +\lambda\rho(ch^{\tau_m})|^2} = \int_0^M\dfrac{\lambda dH(\lambda)}{|-z +\lambda\rho(0)|^2} = \frac{1}{|z|^2}\int_0^M \lambda dH(\lambda) 
\end{align*}

Since $M > 0$ is arbitrary, it follows that 
$\int_0^\infty \lambda dH(\lambda) = 0$, which implies that $H\{0\} = 1$. This contradicts the assumption that $H$ is not degenerate at 0, and therefore proves the claim that $\Re(h_0(z)) > 0$.    
\end{proof}

\begin{lemma} \label{Continuity}
    The solution to (\ref{h_main_eqn}) has a continuous dependence on H, the distribution function.
\end{lemma}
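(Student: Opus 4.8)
Here "continuous dependence on $H$" should be understood as follows: if $\{H_n\}_{n=1}^\infty$ are probability distribution functions on $\mathbb{R}_+$ with $H_n\neq\delta_0$, $\sup_n\int\lambda\,dH_n(\lambda)<\infty$ and $H_n\xrightarrow{d}H\neq\delta_0$ with $\int\lambda\,dH_n(\lambda)\to\int\lambda\,dH(\lambda)$ (both hold in all our applications, e.g.\ for the truncations $H^\tau$ used in Section \ref{ExistenceGeneral}), then the corresponding solutions $h_n:=h_{H_n}$ and $h:=h_H$ of (\ref{h_main_eqn}) furnished by Theorem \ref{Uniqueness} satisfy $h_n\to h$ uniformly on compact subsets of $\mathbb{C}_L$. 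The plan is to run the same normal-family argument used in the proof of Lemma \ref{3.4.1}, with the truncation level $\tau$ replaced by the index $n$ and $H^\tau$ by $H_n$.

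First I would show that $\{h_n\}$ is a normal family. For $z=-u+\mathbbm{i}v\in\mathbb{C}_L$, since $h_n(z)\in\mathbb{C}_R$ we have $\Re(ch_n(z))>0$, so by (\ref{realOfRho}), $\Re\big(-z+\lambda\rho(ch_n(z))\big)=u+\lambda\,\rho_2(ch_n(z))\Re(ch_n(z))\ge u$; taking moduli in (\ref{h_main_eqn}) gives $|h_n(z)|\le u^{-1}\int\lambda\,dH_n(\lambda)\le C_0/|\Re(z)|$. Each $h_n$ is analytic on $\mathbb{C}_L$ (it is a Stieltjes-transform-type integral, exactly as noted after Definition \ref{defining_hn}), and the bound is uniform on every compact $K\subset\mathbb{C}_L$ because $\inf_{z\in K}|\Re(z)|>0$, so Montel's theorem applies.

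Next I would identify the subsequential limits. Let $\{h_{n_k}\}$ converge uniformly on compacts to an analytic $h_0:\mathbb{C}_L\to\overline{\mathbb{C}_R}$, so $\Re h_0\ge 0$ everywhere. That in fact $\Re h_0(z)>0$ for all $z$ is proved exactly as Claim \ref{Claim}: if $h_0$ is non-constant the Open Mapping Theorem would place a point with negative real part in its image, a contradiction; if $h_0\equiv\mathbbm{i}\zeta$, the relation $h_n(\overline z)=\overline{h_n(z)}$ (which follows from Theorem \ref{Uniqueness} as in Lemma \ref{symmetryAboutZero}) forces $\zeta=0$, and then combining $\Re h_n(z)=c\,\Re(h_n(z))\rho_2(ch_n(z))I_2(h_n,H_n)+u\,I_1(h_n,H_n)$ with (\ref{lessThanOne}) and a dominated-convergence argument yields $\int\lambda\,dH(\lambda)=0$, i.e.\ $H=\delta_0$, a contradiction. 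Given $\Re(ch_0(z))>0$ we have $ch_0(z)\neq\pm\mathbbm{i}$, so $\rho$ is continuous at $ch_0(z)$ and $\rho(ch_{n_k}(z))\to\rho(ch_0(z))$; the integrand $\lambda/(-z+\lambda\rho(ch_{n_k}(z)))$ is dominated by $\lambda/u$, uniformly integrable for $\{H_{n_k}\}$ since $\int\lambda\,dH_{n_k}\to\int\lambda\,dH$, and converges, uniformly on every bounded $\lambda$-interval, to $\lambda/(-z+\lambda\rho(ch_0(z)))$. Passing to the limit in (\ref{h_main_eqn}) gives $h_0(z)=\int\lambda\,dH(\lambda)/(-z+\lambda\rho(ch_0(z)))$, so $h_0\in\mathbb{C}_R$ solves (\ref{h_main_eqn}) with distribution $H$; by Theorem \ref{Uniqueness}, $h_0=h$. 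Since every subsequence of $\{h_n\}$ has a further subsequence converging uniformly on compacts to the same limit $h$, we conclude $h_n\to h$ uniformly on compacts of $\mathbb{C}_L$, which is the asserted continuity.

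The only non-routine point, and the step I expect to be the main obstacle, is the interchange of limit and integral under mere weak convergence of $H_n$: one must simultaneously (i) control mass escaping to infinity — this is where the uniform control of $\int\lambda\,dH_n$, hence uniform integrability, is used — and (ii) know in advance that the limit $h_0$ has strictly positive real part, so that $\rho$ is continuous there and the relevant bounds stay finite in the limit, which is precisely where the non-degeneracy $H\neq\delta_0$ and the Claim \ref{Claim}-style argument enter. Everything else is a transcription of arguments already carried out in the proofs of Theorem \ref{ExistenceA123} and Lemma \ref{3.4.1}.
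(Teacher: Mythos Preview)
Your argument is correct, but the paper takes a genuinely different and more direct route. Instead of a compactness-plus-uniqueness argument, the paper fixes $z\in\mathbb{C}_L$ and, for solutions $h,\ubar{h}$ corresponding to $H,\ubar{H}$, derives the identity
\[
h-\ubar{h}=T_1+(h-\ubar{h})\gamma,\qquad T_1=\int\frac{\lambda\,d(H-\ubar{H})(\lambda)}{-z+\lambda\rho(ch)},
\]
where $\gamma$ is the same integral that appears in the uniqueness proof. Using H\"older exactly as in Theorem~\ref{Uniqueness} together with (\ref{lessThanOne}), the paper shows $|\gamma|<1$ once $\ubar{H}$ is close enough to $H$ (the ``$\epsilon$'' coming from a small $d(H-\ubar{H})$ correction inside one factor), so that $|h-\ubar{h}|\le|T_1|/(1-|\gamma|)$. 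Since the integrand in $T_1$ is bounded by $1/\Re(\rho(ch))$, weak convergence alone makes $T_1$ small.

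What each buys: the paper's contraction argument is self-contained, pointwise in $z$, quantitative, and needs only weak convergence of $\ubar{H}$ to $H$---no first-moment control. Your normal-family route recycles the machinery of Lemma~\ref{3.4.1} and delivers locally uniform convergence on $\mathbb{C}_L$ for free, but as you wrote it you impose the extra hypothesis $\int\lambda\,dH_n\to\int\lambda\,dH$ to dominate by $\lambda/u$. That hypothesis is in fact avoidable in your scheme too: the constant bound $|\lambda/(-z+\lambda\rho(ch_{n_k}))|\le 1/\Re(\rho(ch_{n_k}))\to 1/\Re(\rho(ch_0))$ (as in (\ref{integrandBound}) and (\ref{integrandBound3})) would let you pass to the limit under weak convergence alone.
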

\begin{proof}
 For a fixed $c > 0$ and $z \in \mathbb{C}_L$, let $h, \ubar{h}$ be the unique numbers in $\mathbb{C}_R$ corresponding to distribution functions $H$ and $\ubar{H}$ respectively that satisfy (\ref{h_main_eqn}). Following \cite{PaulSilverstein2009}, we have 
\begin{align*}
    h - \ubar{h}
    = & \int\dfrac{\lambda dH(\lambda)}{-z + \lambda\rho(ch)} - \int\dfrac{\lambda d\ubar{H}(\lambda)}{-z + \lambda\rho(c\ubar{h})}\\
    =& \underbrace{\int\dfrac{\lambda d\{H(\lambda) - \ubar{H}(\lambda)\}}{-z +\lambda\rho(ch)}}_{:=T_1} + \int\dfrac{\lambda d\ubar{H}(\lambda)}{-z +\lambda\rho(ch)} - \int\dfrac{\lambda d\ubar{H}(\lambda)}{-z + \lambda\rho(c\ubar{h})}\\
    =& T_1 + \int\dfrac{\lambda^2(\rho(c\ubar{h})- \rho(ch))}{(-z + \lambda\rho(ch))(-z + \lambda\rho(c\ubar{h}))}d\ubar{H}(\lambda)\\
    =& T_1 + \int\dfrac{\dfrac{\lambda^2c(h-\ubar{h})}{(\mathbbm{i} +ch)(\mathbbm{i} +c\ubar{h})} + \dfrac{\lambda^2c(h-\ubar{h})}{(-\mathbbm{i} +ch)(-\mathbbm{i} +c\ubar{h})}}{(-z + \lambda\rho(ch))(-z + \lambda\rho(c\ubar{h}))}d\ubar{H}(\lambda)\\
    =& T_1 + (h-\ubar{h}) \underbrace{\int\dfrac{\dfrac{\lambda^2c}{(\mathbbm{i} +ch)(\mathbbm{i} +c\ubar{h})} + \dfrac{\lambda^2c}{(-\mathbbm{i} +ch)(-\mathbbm{i} +c\ubar{h})}}{(-z + \lambda\rho(ch))(-z + \lambda\rho(c\ubar{h}))}d\ubar{H}(\lambda)}_{:=\gamma}\\
    =& T_1 + (h-\ubar{h})\gamma
\end{align*}
Note that $\Re(\rho(ch)) = \rho_2(ch)\Re(ch) > 0$ and the integrand in $T_1$ is bounded by $1/\Re(\rho(ch))$. So by making $\ubar{H}$ closer to $H$, $T_1$ can be made arbitrarily small. Now, if we can show that $|\gamma| < 1$, this will essentially prove the continuous dependence of the solution to (\ref{h_main_eqn}) on H. 

\begin{align*}
    \gamma =& \underbrace{\int\dfrac{\dfrac{\lambda^2c}{(\mathbbm{i} +ch)(\mathbbm{i} +c\ubar{h})}}{(-z + \lambda\rho(ch))(-z + \lambda\rho(c\ubar{h}))}d\ubar{H}(\lambda)}_{:=G_1} + \underbrace{\int\dfrac{ \dfrac{\lambda^2c}{(-\mathbbm{i} +ch)(-\mathbbm{i} +c\ubar{h})}}{(-z + \lambda\rho(ch))(-z \lambda\rho(c\ubar{h}))}d\ubar{H}(\lambda)}_{:=G_2}\\
    =& G_1 + G_2
\end{align*}                                     

By $\Ddot{H}$older's Inequality we have,
\begin{align*}
    |G_1| &\leq \displaystyle \sqrt{\underbrace{\int\dfrac{c\lambda^2|\mathbbm{i} +ch|^{-2}d\ubar{H}(\lambda)}{|-z + \lambda\rho(ch)|^2}}_{:= P_1}}
    \sqrt{\underbrace{\int\dfrac{c\lambda^2|\mathbbm{i} +c\ubar{h}|^{-2}d\ubar{H}(\lambda)}{|-z + \lambda\rho(c\ubar{h})|^2}}_{:=P_2}} = \sqrt{P_1 \times P_2}
\end{align*}
From the definitions used in (\ref{realOf_h}), we have $|P_2| = c|\mathbbm{i} +c\ubar{h}|^{-2}I_2(\ubar{h}, \ubar{H})$
and 
\begin{align*}
    |P_1| =& c|\mathbbm{i} +ch|^{-2}\int\dfrac{\lambda^2 d\ubar{H}(\lambda)}{|-z + \lambda\rho(ch)|^2}\\
    & = c|\mathbbm{i} +ch|^{-2}\bigg(\underbrace{\int\dfrac{\lambda^2 d\{\ubar{H}(\lambda)-H(\lambda)\}}{|-z + \lambda\rho(ch)|^2}}_{:=K_1} + \int\dfrac{\lambda^2 dH(\lambda)}{|-z + \lambda\rho(ch)|^2}\bigg)\\
    & =c|\mathbbm{i} +ch|^{-2}K_1 + c|\mathbbm{i} +ch|^{-2}I_2(h, H)\\
    & < \epsilon + c|\mathbbm{i} +ch|^{-2}I_2(h, H)
\end{align*}

for some arbitrarily small $\epsilon > 0$. The last inequality follows since the integrand in $K_1$ is bounded by ${|\Re(\rho(ch))|^{-2}}$, we can arbitrarily control the first term by taking $\ubar{H}$ sufficiently close to $H$ in the Levy metric. The argument for bounding $|G_2|$ is exactly the same. 

Therefore we have $|G_1| < \sqrt{\epsilon + c|\mathbbm{i} +ch|^{-2}I_2(h, H)}\sqrt{c|\mathbbm{i} +c\ubar{h}|^{-2}I_2(\ubar{h}, \ubar{H})}$. \\
Similarly, we get $|G_2| < \sqrt{\epsilon + c|-\mathbbm{i} +ch|^{-2}I_2(h, H)}\sqrt{c|-\mathbbm{i} +c\ubar{h}|^{-2}I_2(\ubar{h}, \ubar{H})}$.

Thus, using the inequality $\sqrt{ac} +\sqrt{bd} \leq \sqrt{a+b}\sqrt{c+d}$ with equality iff $a=b=c=d=0$, we have \begin{align*}
    &|G_1| + |G_2|\\
    < & \sqrt{\epsilon + c|\mathbbm{i} +ch|^{-2}I_2(h, H)}\sqrt{c|\mathbbm{i} +c\ubar{h}|^{-2}I_2(\ubar{h}, \ubar{H})} +\\
    &\sqrt{\epsilon + c|-\mathbbm{i} +ch|^{-2}I_2(h, H)}\sqrt{c|-\mathbbm{i} +c\ubar{h}|^{-2}I_2(\ubar{h}, \ubar{H})}\\
    \leq & \sqrt{2\epsilon + (c|\mathbbm{i} +ch|^{-2} + c|-\mathbbm{i} +ch|^{-2})I_2(h, H)}\sqrt{(c|\mathbbm{i} +c\ubar{h}|^{-2} + c|-\mathbbm{i} +c\ubar{h}|^{-2})I_2(\ubar{h}, \ubar{H})}\\
    =& \sqrt{2\epsilon + c\rho_2(ch)I_2(h,H)}\sqrt{c\rho_2(c\ubar{h})I_2(\ubar{h}, \ubar{H})}
\end{align*}

From (\ref{lessThanOne}), we have $c\rho_2(ch)I_2(h, H) < 1$ and $c\rho_2(c\ubar{h})I_2(\ubar{h}, \ubar{H}) < 1$. By choosing $\epsilon > 0$ arbitrarily small, we finally have $|\gamma| = |G_1 + G_2| \leq |G_1| + |G_2| < 1$ for $\ubar{H}$ sufficiently close to H. This completes the proof.
\end{proof}

\section{Proofs related to Section \ref{sec:identity_covariance}}
\subsection{Results related to the density of the LSD in Section \ref{sec:identity_covariance}}
\begin{lemma}\label{SilvChoiResult2}
If a certain sequence $\{z_n\}_{n=1}^\infty \subset \mathbb{C}_L$ with $z_n \rightarrow \mathbbm{i}x$ satisfies $\underset{n \rightarrow \infty}{\lim} s_F(z_n) = \overline{s} \in \mathbb{C}_R$, then 
$s_F^0(x) := \underset{\mathbb{C}_L \ni z \rightarrow \mathbbm{i}x}{\lim} s_F(z)$ is well defined and equals $\overline{s}$.
\end{lemma}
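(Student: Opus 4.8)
The plan is to use the fact, recorded in equation (\ref{5B}), that when $\Sigma_n=I_p$ the function $s_F$ satisfies, for every $z\in\mathbb{C}_L$, the cubic relation $P(z,s_F(z))=0$, where
\[
P(z,m):=c^{2}zm^{3}+(c^{2}-2c)m^{2}+zm+1 .
\]
First I would pass to the limit along the given sequence: since $z_n\to\mathbbm{i}x$ and $s_F(z_n)\to\overline{s}$, and $P$ is jointly continuous (it is a polynomial in $z$ and $m$), we get $P(\mathbbm{i}x,\overline{s})=0$, so $\overline{s}$ is one of the at most three roots of the limiting polynomial $P(\mathbbm{i}x,\cdot)$. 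Write $r_1,\dots,r_k$ for its distinct roots.

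The core of the argument is continuity of the roots of a polynomial under perturbation of its coefficients. Fix $\varepsilon>0$, small enough that the $\varepsilon$-balls $N_1,\dots,N_k$ centred at $r_1,\dots,r_k$ are pairwise disjoint. There is a $\delta=\delta(\varepsilon)>0$ such that for every $z$ with $|z-\mathbbm{i}x|<\delta$, every root of $P(z,\cdot)$ lies either in $N_1\cup\cdots\cup N_k$ or in $N_\infty:=\{m:|m|>1/\varepsilon\}$ (the last alternative can occur only when $x=0$, where the leading coefficient $c^{2}z$ degenerates and a root may run off to infinity; for $x\neq0$ it never happens). Set $D_\delta:=\mathbb{C}_L\cap\{|z-\mathbbm{i}x|<\delta\}$. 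Being the intersection of the left half-plane with a disc, $D_\delta$ is convex, hence connected, and $s_F$ is continuous on it (it is a Stieltjes transform, analytic off its support), so the connected set $s_F(D_\delta)$ is contained in a single one of the disjoint pieces $N_1,\dots,N_k,N_\infty$. It cannot be $N_\infty$: for $n$ large $z_n\in D_\delta$, yet $s_F(z_n)\to\overline{s}$ is finite, so $s_F(z_n)\notin N_\infty$ eventually. Hence $s_F(D_\delta)\subset N_{j_0}$ for one index $j_0$.

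It then remains only to identify the limit. Since $s_F(z_n)\in N_{j_0}$ for large $n$ while $s_F(z_n)\to\overline{s}$, we get $|\overline{s}-r_{j_0}|\le\varepsilon$; combined with $|s_F(z)-r_{j_0}|<\varepsilon$ for every $z\in D_\delta$, this gives $|s_F(z)-\overline{s}|<2\varepsilon$ for all $z\in D_\delta$. As $\varepsilon>0$ was arbitrary, $s_F(z)\to\overline{s}$ as $z\to\mathbbm{i}x$ within $\mathbb{C}_L$, i.e. $s_F^0(x)$ is well defined and equals $\overline{s}$. This is the analogue, in our setting, of Silverstein and Choi's lemma, with the explicit cubic (\ref{5B}) playing the role of the general Mar\v{c}enko--Pastur equation.

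The hard part will be the degenerate case $x=0$, where $P(\mathbbm{i}x,\cdot)$ has degree strictly less than three and at least one root of $P(z,\cdot)$ escapes to infinity; this is exactly what the extra piece $N_\infty$ is there to absorb, and the hypothesis that $\overline{s}$ is finite (indeed lies in $\mathbb{C}_R$) is precisely what rules out the escaping branch, so the argument above still goes through verbatim. Away from that case the only thing to check is the standard continuity-of-roots statement and the convexity (hence connectedness) of $D_\delta$; once those are in hand, single-valuedness and continuity of $s_F$ together with the separation of the root clusters of $P(z,\cdot)$ forbid any branch switching, and the nontangential limit is forced.
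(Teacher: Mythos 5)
Your proof is correct, but it follows a genuinely different route from the paper's. The paper works with the explicit inverse map $z_F(s)$ obtained from (\ref{s_main_eqn}) with $h=s$: it is analytic and non-constant near $\overline{s}$ (the condition $\overline{s}\in\mathbb{C}_R$ keeps $\overline{s}$ away from the poles $0,\pm\mathbbm{i}/c$), so by the Open Mapping Theorem $z_F(B(\overline{s};\epsilon))$ is an open neighbourhood of $\mathbbm{i}x$; every $z\in\mathbb{C}_L$ close to $\mathbbm{i}x$ therefore has a preimage in $B(\overline{s};\epsilon)$, which Theorem \ref{Uniqueness} identifies with $s_F(z)$. You instead work directly with the cubic (\ref{5B}), invoke the standard continuity of polynomial roots under perturbation of the coefficients (including the possibility of a root escaping to infinity when the leading coefficient $c^2z$ degenerates at $x=0$), and use connectedness of the half-disc $D_\delta$ together with continuity of $s_F$ to forbid branch switching among the separated root clusters. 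Both arguments are sound. The paper's is shorter once the inverse map and the uniqueness theorem are in hand, and it is the direct analogue of Silverstein--Choi; yours is more elementary and self-contained (no Open Mapping Theorem, no explicit inversion), at the cost of having to state and justify the root-continuity lemma, which you do correctly — the constant term of $P(\mathbbm{i}x,\cdot)$ is $1$, so the limiting polynomial is never identically zero and the compactness argument behind root continuity goes through. A minor remark: your argument only uses finiteness of $\overline{s}$, not $\overline{s}\in\mathbb{C}_R$, so it in fact proves a slightly stronger statement; the hypothesis $\overline{s}\in\mathbb{C}_R$ is what the paper needs to keep the ball $B$ inside the domain where $z_F$ is analytic and where the uniqueness theorem applies.
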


\begin{proof}

Consider the unique pair $(z, s_F(z))$ for $z \in \mathbb{C}_L$. Define the function,
    $$z_F:s_F(\mathbb{C}_L) \rightarrow \mathbb{C}_L \hspace{5mm}  z_F(s) := \frac{1}{s}\bigg(\frac{2}{c}-1\bigg) + \frac{1}{\mathbbm{i}cs}\bigg(\frac{1}{\mathbbm{i}+cs} - \frac{1}{-\mathbbm{i}+cs}\bigg)$$
We can extend the domain of $z_F$ to the set $\mathbb{C}\backslash\{0, \pm {\mathbbm{i}}/{c}\}$ where it is analytic. Note that on $s_F(\mathbb{C}_L)$, $z_F$ coincides with the inverse mapping of $s_F$. Clearly $z_F$ is continuous at $\overline{s}$ as $\overline{s} \in \mathbb{C}_R \implies \overline{s} \not \in \{0, \pm \mathbbm{i}/c\}$. Therefore, $z_F(\overline{s}) = z_F(\underset{n \rightarrow \infty}{\lim}s_F(z_n)) = \underset{n \rightarrow \infty}{\lim}z_F(s_F(z_n)) = \underset{n \rightarrow \infty}{\lim}z_n = \mathbbm{i}x$.

Let $\{z_{1n}\}_{n=1}^\infty \subset \mathbb{C}_L$ be any another sequence such that $z_{1n} \rightarrow \mathbbm{i}x$. Since $\overline{s} \in \mathbb{C}_R$, we can choose an arbitrarily small $\epsilon$ such that $0 < \epsilon < \Re(\overline{s})$ and define $B := B(s_0; \epsilon)$\footnote{$B(x;r)$ indicates the open ball of radius $r$ centred at $x \in \mathbb{C}$}. $z_F$ being analytic and non-constant,  $z_F(B)$ is open by the Open Mapping Theorem and $\mathbbm{i}x \in z_F(B)$. So, for large $n$, $z_{1n} \in z_F(B)$. For these $z_{1n}$, there exists $s_{1n} \in B$ such that $z_F(s_{1n}) = z_{1n}$. By Theorem \ref{Uniqueness}, we must have $s_F(z_{1n}) = s_{1n} \in B$. Since $\epsilon > 0$ was arbitrary, the result follows.
\end{proof}

\begin{lemma}\label{DistributionParameters}
    For the quantities defined in (\ref{supportParameters}) and $\Tilde{r},\Tilde{q},\Tilde{d}$ defined in (\ref{RQD}), the following results hold.
    \begin{description}
        \item[1] $L_c < U_c$ \vspace{-2mm}
        \item[2] $d(x) < 0$ on $S_c$ and $d(x) \geq 0$ on $S_c^c\backslash\{0\}$
        \item[3] For $x\neq 0$, $r(x) = \mathbbm{i}\operatorname{sgn}(x)\bigg(-\dfrac{r_1}{|x|}  + \dfrac{r_3}{|x|^3}\bigg)$ and $q(x) = q_0 - \dfrac{q_2}{x^2}$
        \item[4] For $x\neq 0$, $d(x) = r^2(x) + q^3(x)$
    \end{description}
\end{lemma}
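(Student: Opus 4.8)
## Proof proposal for Lemma \ref{DistributionParameters}

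The plan is to reduce everything to explicit computations with the rational functions $R(z)$ and $Q(z)$ of (\ref{RQD}), since all four claims are ultimately algebraic identities or sign analyses once the boundary limits $r(x)$ and $q(x)$ are understood. I would organize the argument so that item \textbf{3} comes first (it is a pure limit computation), then item \textbf{4} (an algebraic identity that follows by substitution), and only then items \textbf{1} and \textbf{2}, which are sign/ordering statements about the quartic $d(x)$ and hence depend on the formula derived in \textbf{4}.

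For item \textbf{3}, I would simply substitute $z = -\epsilon + \mathbbm{i}x$ into $R(z) = r_1/z + r_3/z^3$ and $Q(z) = q_0 + q_2/z^2$ and let $\epsilon \downarrow 0$. Since $x \neq 0$, the point $\mathbbm{i}x$ is not a pole, so the limits exist and are obtained by setting $z = \mathbbm{i}x$: using $1/(\mathbbm{i}x) = -\mathbbm{i}/x = \mathbbm{i}\operatorname{sgn}(x)\cdot(-1/|x|)$ and $1/(\mathbbm{i}x)^3 = \mathbbm{i}/x^3 = \mathbbm{i}\operatorname{sgn}(x)\cdot(1/|x|^3)$, one gets $r(x) = \mathbbm{i}\operatorname{sgn}(x)(-r_1/|x| + r_3/|x|^3)$, and $1/(\mathbbm{i}x)^2 = -1/x^2$ gives $q(x) = q_0 - q_2/x^2$. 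Note in passing that $r(x)$ is purely imaginary and $q(x)$ is real; this is the structural fact that makes $V_\pm(x) = |r(x)| \pm \sqrt{-d(x)}$ well-behaved later.

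For item \textbf{4}, I would verify $d(x) = r^2(x) + q^3(x)$ by expanding both sides in powers of $1/x^2$ using the definition $d(x) = d_0 - d_2/x^2 + d_4/x^4$ from Definition \ref{supportParameters} and the formulas for $r(x), q(x)$ just obtained. Since $r(x)$ is purely imaginary, $r^2(x) = -(-r_1/|x| + r_3/|x|^3)^2 = -(r_1^2/x^2 - 2r_1 r_3/x^4 + r_3^2/x^6)$, and $q^3(x) = (q_0 - q_2/x^2)^3$ expands into a polynomial in $1/x^2$ of degree $3$. Matching coefficients of $1$, $1/x^2$, $1/x^4$, $1/x^6$ against $d_0, -d_2, d_4, 0$ then reduces to checking the identities $q_0^3 = d_0$, $-3q_0^2 q_2 - r_1^2 = -d_2$, $3q_0 q_2^2 + 2r_1 r_3 = d_4$, and $-q_2^3 - r_3^2 = 0$; each is a direct substitution of the closed forms in (\ref{RQD}) in terms of $c$. (The last one, $q_2^3 = -r_3^2$, says $(-(c-2)^2/9c^2)^3 = -((c-2)^3/27c^3)^2$, which checks out.) This is the discriminant identity for the depressed cubic underlying Cardano's method, so it should go through cleanly.

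For items \textbf{1} and \textbf{2}, I would work with the substitution $w = x^2 > 0$, so that $d(x)$ becomes $g(w) := d_0 - d_2/w + d_4/w^2$, and observe that $g(w) < 0$ exactly on the interval of $w$ between the two roots $R_-$ and $R_+$ of the quadratic $d_0 w^2 - d_2 w + d_4 = 0$ (after clearing denominators), provided $d_0 > 0$ and the roots are real and positive in the relevant regime. Thus item \textbf{2} is the statement that $S_c$, defined via $L_c = \sqrt{R_- \mathbbm{1}_{\{R_->0\}}}$ and $U_c = \sqrt{R_+}$, is precisely $\{x : d(x) < 0\}$, and item \textbf{1} ($L_c < U_c$) amounts to $R_- < R_+$ together with $R_+ > 0$. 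The main obstacle here is the sign bookkeeping: I need to confirm $d_0 = 1/(27c^6) > 0$ always, that the discriminant $d_2^2 - 4d_0 d_4$ is nonnegative (so $R_\pm$ are real — this is presumably where a factorization in terms of $c$ is needed, and is the most delicate computation), that $R_+ > 0$ for all $c > 0$, and that $R_-$ may be negative (in which case $L_c = 0$ and the ``lower gap'' closes, consistent with the density being supported down to $0$ when $0 < c < 2$). I would handle the discriminant by writing $d_2^2 - 4 d_0 d_4$ over the common denominator $(27 c^6)^2$ and showing the numerator is a perfect square (or a manifestly nonnegative polynomial) in $c$; once that is in hand, Vieta's formulas $R_+ R_- = d_4/d_0$ and $R_+ + R_- = d_2/d_0$ give the sign of $R_-$ from the sign of $d_4 = (1-2/c)^3/(27c^2)$, which is negative for $c < 2$ and positive for $c > 2$ — exactly matching the $\mathbbm{1}_{\{R_- > 0\}}$ truncation and the $c = 2$ threshold appearing throughout Section \ref{sec:identity_covariance}. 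Finally, $R_- < R_+$ is immediate from the $\pm\sqrt{\cdot}$ formula once $d_0 > 0$, and strict inequality holds because the discriminant cannot vanish identically; I would note the one value of $c$ (if any) where the discriminant is zero and check the degenerate case separately.
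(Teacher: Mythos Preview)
Your proposal is correct and follows essentially the same route as the paper: item \textbf{3} by direct substitution of $z=\mathbbm{i}x$, item \textbf{4} by matching the coefficients $q_0^3=d_0$, $3q_0^2q_2+r_1^2=d_2$, $3q_0q_2^2+2r_1r_3=d_4$, $q_2^3+r_3^2=0$, and items \textbf{1}--\textbf{2} by analyzing the quadratic $d_0w^2-d_2w+d_4$ in $w=x^2$. The only refinement is that the discriminant computation you flag as ``most delicate'' in fact closes to $d_2^2-4d_0d_4=\bigl((4c+1)/(9c^4)\bigr)^3$, strictly positive for all $c>0$, so no degenerate case arises and $R_-<R_+$ is automatic.
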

\begin{proof}
Consider the polynomial $g(x) = d_0x^4 - d_2x^2 + d_4$. Reparametrizing $y = x^2$, the two roots in $y$ are given by $R_{\pm}$ ((1) of \ref{supportParameters}). We start with the fact for any $c \in (0, \infty)$, the discriminant term is positive since 
\begin{align*}\label{discr_positive}\tag{C.1}
    d_2^2 - 4d_0d_4 = \bigg(\frac{4c+1}{9c^4}\bigg)^3 > 0    
\end{align*}
    
Now note that $\forall c \in (0, \infty)$, $R_{+}$ is positive for all values of c. In fact we have $$R_{+} = \frac{d_2 +\sqrt{d_2^2 - 4 d_0 d_4}}{2d_0} = \frac{1}{2}\bigg((2c^2+10c-1) + (4c+1)^{\frac{3}{2}}\bigg) > 0$$ 
    
However, $R_{-}$ is positive depending on the value of c. Note that
    \begin{align*}
            R_{-}=&\dfrac{d_2 - \sqrt{d_2^2 - 4 d_0 d_4}}{2d_0} > 0 \\
            \Leftrightarrow& d_2 > \sqrt{d_2^2 - 4d_0d_4} > 0 \text{, since } d_0 = 1/27c^2 > 0 \\
            \Leftrightarrow& 4d_0d_4 > 0 \Leftrightarrow d_4 > 0 \Leftrightarrow 1 - 2/c > 0 \Leftrightarrow c > 2
    \end{align*}
For $0 < c \leq 2$, $R_{-} \leq 0 < R_{+} \implies L_c < U_c$. For $c > 2$, we have $d_2 > 0$ and using (\ref{discr_positive}) implies 
\begin{align*}
\sqrt{d_2^2 - 4d_0d_4} < d_2 \implies \frac{d_2 - \sqrt{d_2^2 - 4d_0d_4}}{2d_0} < \frac{d_2 + \sqrt{d_2^2 - 4d_0d_4}}{2d_0}
\implies & R_- < R_+ \implies L_c < U_c    
\end{align*}

Therefore, $\forall c > 0$ $(L_c, U_c)$ is a valid interval in $\mathbb{R}$. This proves the first result.

Since $d_0 = 1/(27c^6) > 0$ $\forall$ $c > 0$, the polynomial $g(x)$ is a parabola (in $x^2$) with a convex shape. When $c > 2$, we have $0 < R_- < R_+$. In this case, $g(x) = 0$ when $x^2 = R_\pm$ and $g(x) < 0$ when $x^2 \in (R_{-}, R_{+})$. Thus $\forall x \in (-\sqrt{R_{+}}, -\sqrt{R_{-}}) \cup (\sqrt{R_{-}}, \sqrt{R_{+}})=S_c$, we have $g(x) < 0$. Similarly, for $0 < c \leq 2$, $g(x) < 0$ $\forall x \in (-\sqrt{R_{+}}, 0)\cup(0, \sqrt{R_{+}}) = S_c$. Therefore, for any $c > 0$, we have $g(x) < 0$ on the set $S_c$.    $g(x) \geq 0$ on $S_c^c \backslash\{0\}$ follows from the convexity of $g(\cdot)$ in $x^2$. This establishes the second result.

Let $x \neq 0$ and $\epsilon > 0$. Consider $z = -\epsilon + \mathbbm{i}x$. 
Using the definition of $R(z)$, $Q(z)$ from (\ref{RQD}),
 \begin{align*}
    &r(x) = \underset{\epsilon \downarrow 0}{\lim}R(-\epsilon + \mathbbm{i}x) = \underset{\epsilon \downarrow 0}{\lim} \dfrac{r_1}{-\epsilon + \mathbbm{i}x} + \dfrac{r_3}{(-\epsilon + \mathbbm{i}x)^3}
    = \frac{r_1}{\mathbbm{i}x} + \frac{r_3}{(\mathbbm{i}x)^3}
    = \mathbbm{i}\operatorname{sgn}(x)\bigg(-\frac{r_1}{|x|} + \frac{r_3}{|x|^3}\bigg) \label{value_of_rx}\tag{C.2}\\
    & \text{ and, }\\
    & q(x) = \underset{\epsilon \downarrow 0}{\lim}Q(-\epsilon + \mathbbm{i}x) = \underset{\epsilon \downarrow 0}{\lim} \bigg(q_0 + \frac{q_2}{(-\epsilon + \mathbbm{i}x)^2}\bigg) = q_0 - \frac{q_2}{x^2}
\end{align*}

This proves the third result. For the final result, note that $q_0^3 = d_0$, $d_2 = 3q_0^2q_2 + r_1^2$, $d_4 = 3q_0q_2^2 + 2r_1r_3$ and $q_2^3 + r_3^2 = 0$. Therefore for $x \neq 0$, we have
\begin{align*}
    r^2(x)+q^3(x)
    =& -\bigg(-\frac{r_1}{|x|} + \frac{r_3}{|x|^3}\bigg)^2 + \bigg(q_0 - \frac{q_2}{x^2}\bigg)^3\\
    =& q_0^3 + \frac{-3q_0^2q_2 - r_1^2}{x^2} + \frac{3q_0q_2^2 + 2r_1r_3}{x^4} + \frac{q_2^3 +r_3^2}{x^6}\\
    =& d_0 - \frac{d_2}{x^2} + \frac{d_4}{x^4} = d(x)
\end{align*}
\end{proof}

We state the following result (Theorem 2.2 of \cite{SilvChoi}) without proof. This result will be used to establish the continuity of the density function.
\begin{lemma}\label{continuousOnClosure}
    Let X be an open and bounded subset of $\mathbb{R}^n$, let Y be an open and bounded subset of $\mathbb{R}^m$, and let $f:\overline{X} \rightarrow Y$ be a function continuous on X. If, for all $x_0 \in \partial X$, $\underset{x \in X \rightarrow x_0}{\lim}f(x) = f(x_0)$, then f is continuous on all of $\overline{X}$.
\end{lemma}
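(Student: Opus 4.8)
The plan is to verify continuity of $f$ at each point of $\overline{X}$ straight from the $\varepsilon$–$\delta$ definition. Since $X$ is open, $\overline{X} = X \cup \partial X$, and continuity of $f$ at points of $X$ is already part of the hypothesis; so the only points that require work are those of $\partial X$. Note also that every $x_0 \in \partial X$ is a limit point of $X$ (as $x_0 \in \overline{X}\setminus X$), so the one-sided limit $\lim_{x\in X\to x_0} f(x)$ appearing in the hypothesis is meaningful.

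Fix $x_0 \in \partial X$ and $\varepsilon > 0$. By the hypothesis $\lim_{x\in X\to x_0} f(x) = f(x_0)$, choose $\delta > 0$ such that
\[
x \in X,\ |x - x_0| < \delta \ \Longrightarrow\ |f(x) - f(x_0)| < \tfrac{\varepsilon}{2}.
\]
This already controls $f$ on $X \cap B(x_0;\delta)$. The remaining difficulty — which I expect to be the only genuine obstacle — is to control $f$ on $\partial X \cap B(x_0;\delta/2)$, i.e.\ to upgrade the interior-to-boundary estimate to a boundary-to-boundary estimate, since the hypothesis only directly compares $f(x_0)$ with values of $f$ on $X$, not with values of $f$ on other boundary points.

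To overcome this, take any $x_1 \in \partial X$ with $|x_1 - x_0| < \delta/2$. Because $x_1 \in \overline{X}$, pick a sequence $(y_k) \subset X$ with $y_k \to x_1$; applying the hypothesis at $x_1$ gives $f(y_k) \to f(x_1)$. For all large $k$ we have $|y_k - x_0| \le |y_k - x_1| + |x_1 - x_0| < \delta$, hence $|f(y_k) - f(x_0)| < \varepsilon/2$, and letting $k \to \infty$ yields $|f(x_1) - f(x_0)| \le \varepsilon/2$. Combining the two cases, for every $x \in \overline{X}$ with $|x - x_0| < \delta/2$ (whether $x$ lies in $X$, equals $x_0$, or lies in $\partial X$) we obtain $|f(x) - f(x_0)| < \varepsilon$; this is precisely continuity of $f$ at $x_0$ as a map on $\overline{X}$. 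Since $x_0 \in \partial X$ was arbitrary and continuity on $X$ was assumed, $f$ is continuous on all of $\overline{X}$. (The boundedness of $X$ and $Y$ and the openness of $Y$ are not needed in this argument; they are kept only to match the cited source.)
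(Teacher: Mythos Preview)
Your proof is correct. The paper does not actually prove this lemma; it is stated without proof as a citation of Theorem~2.2 of \cite{SilvChoi}. So there is no ``paper's own proof'' to compare against, and your elementary $\varepsilon$--$\delta$ argument with the auxiliary-sequence trick to pass from interior-to-boundary control to boundary-to-boundary control is a clean and complete verification of the statement. Your closing parenthetical is also apt: the hypotheses that $X$ and $Y$ be bounded and $Y$ be open play no role in the argument.
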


\subsection{Proof of Theorem \ref{DensityDerivation}}\label{ProofDensityDerivation}
\begin{proof}
The density of $\Tilde{F}$ at $x \in \mathbb{R}$ (if it exists) is the same as that of $F$ at $\mathbbm{i}x$. To check for existence (and consequently derive the value), we employ the following strategy. We first show that $\underset{\epsilon \downarrow 0}{\lim}\Re(s_F(-\epsilon + \mathbbm{i}x))$ exists. Then by Lemma \ref{SilvChoiResult2}, the conditions of Proposition \ref{SilvChoiResult1} are satisfied implying existence of density at $x_0$. The value of the density is then extracted by using the formula in (\ref{inversion_density}).

Recall the definition of $r(x)$ and $q(x)$ from (\ref{supportParameters}). We will first show that for $x \in S_c$,
\begin{align*}\label{abs_of_rx}\tag{C.3}
-\frac{r_1}{|x|} + \frac{r_3}{|x|^3} > 0
\end{align*}

For $0 < c \leq 2$, we have $0 = L_c < U_c$ and from (\ref{RQD}),
\begin{align*}
  \frac{r_3}{r_1} = \frac{(c-2)^3}{9(c+1)} < 0  
\end{align*}
Thus $x \in S_c \implies x^2 > 0 > \dfrac{r_3}{r_1}$. 

For $c>2$, we have 
\begin{align*}
  0 < \frac{r_3}{r_1} = \frac{(c-2)^3}{9(c+1)} < \frac{1}{2}((2c^2 + 10c - 1) - (4c+1)^\frac{3}{2}) = L_c^2  
\end{align*}

Thus, $0 < \dfrac{r_3}{r_1} < L_c^2 < U_c^2$. Therefore $x \in S_c \implies x^2 > \dfrac{r_3}{r_1}$. In either case, since $r_1<0$ we have 
\begin{align*}\label{tempResults2}\tag{C.4}
    x^2 > \frac{r_3}{r_1} \implies r_1x^2 < r_3 \implies -\frac{r_1}{|x|} +\frac{r_3}{|x|^3} > 0 \implies |r(x)| = \mathbbm{i}\operatorname{sgn}(x)r(x)
\end{align*}
where the last equality follows from (\ref{value_of_rx}).

Having established this, we are now in a position to derive the value of the density. Without loss of generality, choose $x \in S_c$ such that $x > 0$.  We can do this since the limiting distribution is symmetric about 0 from Section \ref{Section4.5}. Consider $z = -\epsilon + \mathbbm{i}x$. The roots of (\ref{5B}) are given in (\ref{RootFormula}) in terms of quantities $S_0(z), T_0(z)$ that satisfy (\ref{S0T0}). Using (\ref{tempResults2}) and Lemma \ref{DistributionParameters}, we get
\begin{align*}\label{tempResults3}\tag{C.5}
& |r(x)|^2 > (\mathbbm{i}\operatorname{sgn}(x)r(x))^2 - q^3(x) = -(r^2(x) +q^3(x)) =-d(x) > 0\\
\implies& |r(x)| > \sqrt{-d(x)}
\end{align*}

Therefore $V_+(x) > V_-(x) > 0$. Now, let $s_0 := \mathbbm{i}(V_+)^\frac{1}{3}$ and $t_0 := -q(x)/s_0$ (note that $s_0 \neq 0$). Since $q(x) =q_0 - {q_2}/{x^2} > 0$ as $q_0 > 0, q_2 < 0$, both $s_0$ and $t_0$ are purely imaginary. First of all, observe that
\begin{align*}
    V_+(x)V_-(x) = |r(x)|^2 - (\sqrt{-d(x)})^2=-r^2(x) + d(x) = q^3(x)
\end{align*}
Therefore, we get 
\begin{align*}
    t_0^3 = -\frac{q^3(x)}{s_0^3} = \frac{V_+(x)V_-(x)}{\mathbbm{i}V_+(x)} = -\mathbbm{i}V_-(x)
\end{align*}
Finally we observe that $s_0, t_0$ satisfy the below relationship.
\begin{itemize}
    \item $s_0^3+t_0^3 = 2r(x) = \underset{\epsilon \downarrow 0}{\lim}2R(-\epsilon + \mathbbm{i}x) = \underset{\epsilon \downarrow 0}{\lim}\bigg(S_0^3(-\epsilon + \mathbbm{i}x) + T_0^3(-\epsilon + \mathbbm{i}x)\bigg)$
    \item $s_0t_0 = -q(x) = -\underset{\epsilon \downarrow 0}{\lim}Q(-\epsilon + \mathbbm{i}x) = \underset{\epsilon \downarrow 0}{\lim}\bigg(S_0(-\epsilon + \mathbbm{i}x)T_0(-\epsilon + \mathbbm{i}x)\bigg)$
\end{itemize}

From the above it turns out that
\begin{align*}
\bigg\{\underset{\epsilon \downarrow 0}{\lim}S_0^3(-\epsilon + \mathbbm{i}x), \underset{\epsilon \downarrow 0}{\lim}T_0^3(-\epsilon + \mathbbm{i}x)\bigg\} =\{s_0^3, t_0^3\}    
\end{align*}

This leaves us with the following three possibilities.
 \begin{align*}
     \bigg\{\underset{\epsilon \downarrow 0}{\lim}S_0(-\epsilon + \mathbbm{i}x), \underset{\epsilon \downarrow 0}{\lim} T_0(-\epsilon + \mathbbm{i}x)\bigg\} =\{s_0, t_0\} \text{ or } \{\omega_1 s_0, \omega_2 t_0\} \text{ or } \{\omega_2 s_0, \omega_1 t_0\}
 \end{align*}
 
Fortunately, the nature of (\ref{RootFormula}) is such that all three choices lead to the same set of roots denoted by $\{m_j(-\epsilon + \mathbbm{i}x)\}_{j=1}^3$.
Using (\ref{RootFormula}) and shrinking $\epsilon$ to 0, we find in the limit
\begin{equation*}
    \left\{ \begin{aligned} 
M_1(x) := \underset{\epsilon \downarrow 0}{\lim}m_1(-\epsilon + \mathbbm{i}x) &= -\dfrac{1-2/c}{3\mathbbm{i}x} + s_0 + t_0\\
M_2(x) := \underset{\epsilon \downarrow 0}{\lim}m_2(-\epsilon + \mathbbm{i}x) &= -\dfrac{1-2/c}{3\mathbbm{i}x} + \omega_1s_0 + \omega_2t_0\\
M_3(x) := \underset{\epsilon \downarrow 0}{\lim}m_3(-\epsilon + \mathbbm{i}x) &= -\dfrac{1-2/c}{3\mathbbm{i}x} + \omega_2s_0 + \omega_1t_0
\end{aligned} \right.
\end{equation*}

We have $\underset{\epsilon \downarrow 0}{\lim} \Re  \bigg(\dfrac{2/c-1}{3\mathbbm{i}x}\bigg)= 0$ and $\Re(s_0) = 0 = \Re(t_0)$. Therefore, $\Re(M_1(x)) = 0$.
Focusing on the second root,
\begin{align*}
     \Re(M_2(x))
    =\Re(\omega_1s_0 + \omega_2t_0
    =& \Re \bigg(-\frac{s_0+t_0}{2} + \mathbbm{i}\frac{\sqrt{3}}{2}(s_0 - t_0)\bigg)\\
    =& \frac{\sqrt{3}}{2}\Im(t_0 - s_0) = \frac{\sqrt{3}}{2}\bigg((V_-)^\frac{1}{3} -  (V_+)^\frac{1}{3}\bigg) < 0
\end{align*}
and similarly, 
\begin{align*}
     \Re(M_3(x))
    =\Re(\omega_2s_0 + \omega_1t_0)
    =& \Re \bigg(-\frac{s_0+t_0}{2} - \mathbbm{i}\frac{\sqrt{3}}{2}(s_0 - t_0)\bigg)\\
    =& \frac{\sqrt{3}}{2}\Im(s_0 - t_0) = \frac{\sqrt{3}}{2}\bigg((V_+)^\frac{1}{3} - (V_-)^\frac{1}{3}\bigg) > 0
\end{align*}

To summarise till now, we evaluated the roots of (\ref{5B}) at a sequence of complex numbers $-\epsilon + \mathbbm{i}x$ in the left half of the argand plane close to the point $\mathbbm{i}x$ on the imaginary axis. This leads to three sequences of roots $\{m_j(-\epsilon + \mathbbm{i}x)\}_{j=1}^3$ of which only one has real part converging to a positive number. Therefore, for $x \in S_c \cap \mathbb{R}_+$, $s_F(-\epsilon + \mathbbm{i}x) \rightarrow M_3(x)$ as $\epsilon \downarrow 0$ by Theorem \ref{Uniqueness}. So, from (\ref{inversion_density}) and the symmetry about 0, the density at $x \in S_c$ is
\begin{align*}
    f_c(x) &= \dfrac{1}{\pi}\underset{\epsilon \downarrow 0}{\lim} \Re  (s_F(-\epsilon + \mathbbm{i}x)) = \frac{\sqrt{3}}{2\pi}\bigg(V_+(x))^\frac{1}{3} -V_-(x)^\frac{1}{3}\bigg)
\end{align*}

Now we evaluate the density when $x \in S_c^c\backslash\{0\}$. Without loss of generality, let $x > 0$ since the distribution is symmetric about 0. From Lemma \ref{DistributionParameters}, $d(x) \geq 0$ in this case. Noting that $r(x) = -\mathbbm{i}|r(x)|$ from (\ref{tempResults2}), define $s_0 := (\sqrt{d(x)} -\mathbbm{i}|r(x)|)^\frac{1}{3}$ be any cube root and $t_0 := -{q(x)}/{s_0}$. Note that $s_0 \neq 0$ since $d(x) \geq 0$ and $|r(x)| > 0$.
Then, \begin{align*}
    t_0^3 = -\frac{q^3(x)}{s_0^3}=-\frac{d(x)-r^2(x)}{s_0^3} = -\frac{(\sqrt{d(x)} -\mathbbm{i}|r(x)|)(\sqrt{d(x)} + \mathbbm{i}|r(x)|)}{\sqrt{d(x)} - \mathbbm{i}|r(x)|} = -\sqrt{d(x)} - \mathbbm{i}|r(x)|
\end{align*}
Therefore, we have 
\begin{align*}
 s_0^3+t_0^3 = 2r(x) = \underset{\epsilon \downarrow 0}{\lim}2R(-\epsilon + \mathbbm{i}x); \hspace{5mm} s_0t_0 = -q(x) = -\underset{\epsilon \downarrow 0}{\lim}Q(-\epsilon + \mathbbm{i}x)
\end{align*}

Therefore using (\ref{RootFormula}) to find the three roots of (\ref{5B}) and shrinking $\epsilon > 0$ to 0, we get in the limit 

\begin{equation*}
    \left\{ \begin{aligned} 
M_1(x) := \underset{\epsilon \downarrow 0}{\lim}m_1(-\epsilon + \mathbbm{i}x) &= -\dfrac{1-2/c}{3x} + s_0 + t_0\\
M_2(x) := \underset{\epsilon \downarrow 0}{\lim}m_2(-\epsilon + \mathbbm{i}x) &= -\dfrac{1-2/c}{3x} + \omega_1s_0 + \omega_2t_0\\
M_3(x) := \underset{\epsilon \downarrow 0}{\lim}m_3(-\epsilon + \mathbbm{i}x) &= -\dfrac{1-2/c}{3x} + \omega_2s_0 + \omega_1t_0
\end{aligned} \right.
\end{equation*}

Observe that $\bigg|\sqrt{d(x)}-\mathbbm{i}|r(x)|\bigg|^2 = d(x) + |r(x)|^2 = d(x) - r^2(x) = q^3(x)$. Therefore,
$$t_0 = -\dfrac{q(x)}{s_0} = -\dfrac{q(x)\overline{s_0}}{|s_0|^2}
    = -\dfrac{q(x)\overline{s_0}}{|(\sqrt{d(x)} -\mathbbm{i} |r(x)|)|^\frac{2}{3}}
    = -\dfrac{q(x)\overline{s_0}}{|q(x)^3|^\frac{1}{3}}
    = -\dfrac{q(x)\overline{s_0}}{q(x)} = -\overline{s_0}$$
using the fact that $q(x) > 0$ for $x \neq 0$. Therefore, $\Re(s_0)=-\Re(t_0)$ and $\Im(s_0) =\Im(t_0)$. In particular, $s_0 + t_0 = 2\mathbbm{i}\Im(s_0)$ and $s_0 - t_0 = 2\Re(s_0)$.
This leads to the following observations.
\begin{align*}
\Re  (M_1(x)) =& \Re \{s_0+t_0\} = 0\\
\Re  (M_2(x)) =& \Re  \{-\frac{1}{2}(s_0+t_0) + \mathbbm{i}\frac{\sqrt{3}}{2}(s_0 - t_0)\} = 0\\
\Re  (M_3(x)) =& \Re  \{-\frac{1}{2}(s_0+t_0) - \mathbbm{i}\frac{\sqrt{3}}{2}(s_0 - t_0)\} = 0    
\end{align*}

So when $x \in S_c^c\backslash\{0\}$, all three roots (in particular, the one that agrees with the Stieltjes transform) of (\ref{5B}) at $z = -\epsilon + \mathbbm{i}x$ have real component shrinking to 0 as $\epsilon \downarrow 0$. Therefore, by (\ref{inversion_density}) and the symmetry about $0$
\begin{equation*}
    \begin{split}
    f_c(x) &= -\dfrac{1}{\pi}\underset{\epsilon \downarrow 0}{\lim} \Re  (s_F(-\epsilon + \mathbbm{i}x)) = 0\\
    \end{split}
\end{equation*}
So, the density is positive on $S_c$ and zero on $S_c^c\backslash\{0\}$. 

Finally we check if the density can exist at $x=0$ for $0 < c < 2$. For this we evaluate $L:= \underset{\epsilon \downarrow 0}{\lim} \Re(s_F(-\epsilon))$.
\begin{align*}
    &\frac{1}{s_F(-\epsilon)} = -(-\epsilon) + \frac{1}{\mathbbm{i} +cs_F(-\epsilon)} + \frac{1}{-\mathbbm{i} +  s_F(-\epsilon)}\\
    \implies &\frac{1}{L} = \frac{1}{\mathbbm{i} +cL} + \frac{1}{-\mathbbm{i} +cL}  = \frac{2cL}{1 + c^2L^2}\\
    \implies &2cL^2 = 1 + c^2L^2\\
    \implies & \underset{\epsilon \downarrow 0}{\lim} s_F(-\epsilon)= \frac{1}{\sqrt{2c-c^2}}  \text{, considering the positive root as } s_F\\
    & \text{ is a Stieltjes Transform of a measure on the imaginary axis}
\end{align*}

Therefore, when $0 < c < 2$, $f_c(0) = \dfrac{1}{\pi\sqrt{2c-c^2}}$.

Consider the case $0 < c < 2$. We saw that $f_c(x) = 0$ for $x \in S_c^c$. So, we need to show the continuity of $f_c$ in $S_c$. When $0<c<2$, $\underset{\epsilon \downarrow 0}{\lim}\Re(s_F(-\epsilon + \mathbbm{i}x))$ exists for all $x \in \mathbb{R}$. In particular, when $x \in S_c$, $\underset{\epsilon \downarrow 0}{\lim}\Re(s_F(-\epsilon + \mathbbm{i}x)) > 0$. For an arbitrary $x_0 \in S_c$, take an open bounded set $E \subset \mathbb{C}_L$ and choose $K > 0$ such that
$$\mathbbm{i}x_0 \in (-\mathbbm{i}K, \mathbbm{i}K) \subset \partial E$$ Then the below function
$$s_F^0: \overline{E} \rightarrow \mathbb{R}; \hspace{5mm} s_F^0(z_0) = \underset{E \ni z \rightarrow z_0}{\lim}\Re(s_F(z))$$ 
is well defined due to Lemma \ref{SilvChoiResult2}. It
is continuous on $E$ due to the continuity of $\Re(s_F)$ on $\mathbb{C}_L$ and satisfies the conditions of Lemma \ref{continuousOnClosure} by construction. Hence, the continuity of $s_F^0$ and of $f_c$ at $x_0$ is immediate.

Now consider the case when $c \geq 2$. As before, we only need to show the continuity of $f_c$ at an arbitrary $x_0 \in S_c$. Note that $x_0$ cannot be 0 as $0 \not \in S_c$. We already proved that $\underset{\epsilon \downarrow 0}{\lim}\Re(s_F(-\epsilon + \mathbbm{i}x_0)) > 0$. Construct an open bounded set $E \subset \mathbb{C}_L$ such that 
$$\bigg(-\frac{3\mathbbm{i}|x_0|}{2}, -\frac{\mathbbm{i}|x_0|}{2}\bigg) \cup \bigg(\frac{\mathbbm{i}|x_0|}{2}, \frac{3\mathbbm{i}|x_0|}{2}\bigg) \subset \partial E$$
A similar argument establishes the continuity of $f_c$ at $x_0 \neq 0$.
\end{proof}

\subsection{Ancillary results related to the Stieltjes Transform in Section \ref{sec:identity_covariance}}
For the results in this subsection, we denote $\mathbb{Q}_1:= \{u + \mathbbm{i}v: u > 0, v > 0\}$ and $\mathbb{Q}_2 := \{-u+\mathbbm{i}v: u > 0, v > 0\}$

\begin{proposition}\label{prop_C.4}
    For $j \in \{1,2,3\}$, $m_j(z)$ cannot be purely imaginary for $z \in \mathbb{C}_L$ and for $z \in \mathbb{Q}_2$, $m_j(z)$ cannot be purely real.
\end{proposition}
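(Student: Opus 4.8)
The plan is to argue directly from the cubic equation (\ref{5B}), namely $c^2 z m^3 + (c^2-2c)m^2 + zm + 1 = 0$, of which $m_1(z), m_2(z), m_3(z)$ are precisely the three roots, rather than wrestling with the Cardano formulas (\ref{RootFormula}). The common idea for both assertions is that if a root were to land on the forbidden axis, then solving (\ref{5B}) \emph{for} $z$ would pin $z$ down to a line that $\mathbb{C}_L$ (respectively $\mathbb{Q}_2$) does not meet.

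For the first claim, I would suppose $m = \mathbbm{i}t$ with $t \in \mathbb{R}$ solves (\ref{5B}) for some $z \in \mathbb{C}_L$. Substituting and grouping terms gives $\mathbbm{i} t z\,(1 - c^2 t^2) = (c^2 - 2c)t^2 - 1$. The value $t = 0$ is impossible, since the left side of (\ref{5B}) equals $1$ at $m = 0$; and the degenerate values $t = \pm 1/c$, where the coefficient of $z$ vanishes, are excluded by direct substitution, since $m = \pm \mathbbm{i}/c$ makes the left side of (\ref{5B}) equal to $2/c \neq 0$. Hence $1 - c^2 t^2 \neq 0$, and dividing yields $z = \dfrac{(c^2-2c)t^2 - 1}{\mathbbm{i} t (1-c^2 t^2)}$, which is purely imaginary --- contradicting $z \in \mathbb{C}_L$, as $\mathbb{C}_L$ excludes the imaginary axis.

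For the second claim, I would suppose $m = s \in \mathbb{R}$ solves (\ref{5B}) for some $z \in \mathbb{Q}_2$. Then (\ref{5B}) reads $z\,s\,(c^2 s^2 + 1) = -(c^2 - 2c)s^2 - 1$. Again $s = 0$ is impossible, and for $s \neq 0$ the factor $s(c^2 s^2 + 1)$ is a nonzero real number, so $z = \dfrac{-(c^2-2c)s^2 - 1}{s(c^2 s^2 + 1)} \in \mathbb{R}$, forcing $\Im(z) = 0$. This contradicts $z \in \mathbb{Q}_2 = \{-u + \mathbbm{i}v : u,v > 0\}$, whose members have strictly positive imaginary part.

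I do not foresee any genuine difficulty; the only point requiring care is disposing of the degenerate cases ($m = 0$ and $m = \pm\mathbbm{i}/c$) before dividing through by the coefficient of $z$. It is also worth remarking why the second assertion is restricted to $\mathbb{Q}_2$ rather than all of $\mathbb{C}_L$: on the negative real axis $\{-u : u > 0\} \subset \mathbb{C}_L$ the solution $h(-u)$ is in fact real and positive (see Lemma \ref{symmetryAboutZero} and the proof of Theorem \ref{PointMass}), so purely real roots genuinely do occur there.
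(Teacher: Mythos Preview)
Your proof is correct and follows essentially the same approach as the paper: substitute the putative purely imaginary (resp.\ real) root into the defining equation and force $z$ onto the imaginary (resp.\ real) axis. The only cosmetic difference is that you work with the cubic (\ref{5B}) for both parts, whereas the paper uses (\ref{5A}) for the first claim; your treatment of the degenerate values $m=0$ and $m=\pm\mathbbm{i}/c$ via direct substitution into (\ref{5B}) is in fact slightly cleaner, since it applies uniformly to all three roots rather than appealing to properties of $s_F$ alone.
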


\begin{proof}
    First of all $m_j(z)$ cannot be equal to 0 because then we have $1 = 0$ from (\ref{5B}).
    
   Suppose $\exists z = u + \mathbbm{i}v \in \mathbb{C}_L$ such that $\Re(m_j(z)) = 0$ for some $j \in \{1,2,3\}$. Let $m_j(z) = \mathbbm{i}m$ where $m \neq 0$. Since $m_j(z)$ satisfies (\ref{5A}),
    \begin{align*}
            &\dfrac{1}{m_j(z)} = -z + \dfrac{1}{\mathbbm{i} +cm_j(z)} + \dfrac{1}{-\mathbbm{i} +cm_j(z)}\\
            \implies &u + \mathbbm{i}v = -\dfrac{1}{\mathbbm{i}m}+ \dfrac{1}{\mathbbm{i} +  \mathbbm{i}cm} + \dfrac{1}{-\mathbbm{i} +  \mathbbm{i}cm}\\
            \implies &u = -\mathbbm{i}v -\dfrac{1}{\mathbbm{i}m}+ \dfrac{1}{\mathbbm{i} +  \mathbbm{i}cm} + \dfrac{1}{-\mathbbm{i} +  \mathbbm{i}cm}
    \end{align*}
Note that $m \neq \pm 1/c$ from the discussion following (\ref{5A}). So the RHS of the above equation is purely imaginary but the LHS is purely real. This implies that $u = 0$ which leads to a contradiction since $z \in \mathbb{C}_L$.

For the other part, suppose $\exists z = u + \mathbbm{i}v \in \mathbb{Q}_2$ such that $\Im(m_j(z)) = 0$ for some $j \in \{1,2,3\}$. Let $m_j(z) = m$ where $m \in \mathbb{R}\backslash\{0\}$. Since $m_j(z)$ satisfies (\ref{5B}), we have
\begin{equation*}
    \begin{split}
            & c^2zm^3 + (c^2-2c)m^2 +zm + 1 = 0\\
            \implies & (c^2zm^3 + zm) + (c^2-2c)m^2+1=0\\
            \implies & m(u+\mathbbm{i}v)(c^2m^2 + 1) + (c^2-2c)m^2+1=0\\
            \implies & um(c^2m^2+1) + (c^2-2c)m^2+1 =-\mathbbm{i}vm(c^2m^2+1)\\
            \implies & vm(c^2m^2+1) = 0\\
    \end{split}
\end{equation*}
This leads to a contradiction as $v \neq 0$, $m \neq 0$ and clearly $c^2m^2+1 \neq 0$
\end{proof}

\begin{proposition}\label{continuity_in_c}
    Keeping $z \in \mathbb{C}_L$ fixed, the Stieltjes transform s(z, c) is continuous in $c > 0$. 
\end{proposition}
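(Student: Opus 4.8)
The plan is to use the description obtained in equations (\ref{h_equal_s})--(\ref{5B}): when $\Sigma_n=I_p$, the Stieltjes transform $s(z,c):=s_F(z)$ of the LSD is characterized as the unique root lying in $\mathbb{C}_R$ of the cubic
\[
P_c(m) := c^2 z\, m^3 + (c^2 - 2c)\, m^2 + z\, m + 1 = 0 ,
\]
together with the uniform a priori bound $|s(z,c)| \le 1/|\Re(z)|$ coming from the second listed property of Stieltjes transforms in Section \ref{sec:Measures}. Fix $z \in \mathbb{C}_L$ and $c_0 > 0$, and let $\{c_n\}_{n=1}^\infty \subset (0,\infty)$ be an arbitrary sequence with $c_n \to c_0$. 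Writing $s_n := s(z, c_n) \in \mathbb{C}_R$, the sequence $\{s_n\}$ is bounded, so every subsequence has a convergent sub-subsequence; it therefore suffices to show that each subsequential limit of $\{s_n\}$ equals $s(z, c_0)$.

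So suppose $s_{n_k} \to s_0$. Since the coefficients of $P_c$ are polynomials in $c$, passing to the limit in $P_{c_{n_k}}(s_{n_k}) = 0$ yields $P_{c_0}(s_0) = 0$, i.e. $s_0$ is a root of $P_{c_0}$. As $\Re(s_{n_k}) > 0$ for every $k$, we have $\Re(s_0) \ge 0$; and $s_0 \ne 0$ because $P_{c_0}(0) = 1 \ne 0$. By Proposition \ref{prop_C.4} no root of $P_{c_0}$ is purely imaginary (recall $z \in \mathbb{C}_L$), so $\Re(s_0) \ne 0$, and combining this with $\Re(s_0) \ge 0$ forces $s_0 \in \mathbb{C}_R$.

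It remains to identify $s_0$ with $s(z,c_0)$. For any $m \in \mathbb{C}_R$ one has $\Re(c_0 m) > 0$, hence $m \ne 0$ and $c_0 m \notin \{\mathbbm{i}, -\mathbbm{i}\}$; consequently, for $m \in \mathbb{C}_R$, the identity $P_{c_0}(m) = 0$ is equivalent to the scalar equation (\ref{5A}) with $c = c_0$, which is precisely (\ref{h_main_eqn}) in the case $H = \delta_1$. By Theorem \ref{Uniqueness} this equation has at most one solution in $\mathbb{C}_R$; since both $s(z,c_0)$ and $s_0$ are such solutions, $s_0 = s(z,c_0)$. As the subsequential limit was arbitrary and $\{s_n\}$ is bounded, $s_n \to s(z,c_0)$, and since $\{c_n\}$ was an arbitrary sequence tending to $c_0$, the map $c \mapsto s(z,c)$ is continuous at $c_0$.

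The only substantive point is ensuring that a limit point $s_0$ does not land on the imaginary axis: on the axis the reduction of the cubic to the fixed-point equation (\ref{5A}) breaks down, so Theorem \ref{Uniqueness} could not be invoked to pin down $s_0$. This is exactly what Proposition \ref{prop_C.4} rules out, so once that proposition is in hand the argument is elementary — the remaining ingredients being only boundedness of $\{s_n\}$, continuity of the coefficients of $P_c$ in $c$, and the uniqueness of the $\mathbb{C}_R$-solution.
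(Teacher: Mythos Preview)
Your proof is correct and follows essentially the same route as the paper's: bound $s_n$ by $1/|\Re(z)|$, pass to a subsequential limit $s_0$ which solves the limiting cubic, use Proposition~\ref{prop_C.4} to upgrade $\Re(s_0)\ge 0$ to $\Re(s_0)>0$, and then invoke Theorem~\ref{Uniqueness} to conclude $s_0=s(z,c_0)$. The only cosmetic addition is your explicit remark that $P_{c_0}(0)=1\neq 0$ and that for $m\in\mathbb{C}_R$ the cubic is equivalent to (\ref{5A}); the paper handles both points implicitly (the former inside the proof of Proposition~\ref{prop_C.4}, the latter in the derivation of (\ref{5B})).
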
 

\begin{proof}

From Theorem \ref{t3.1}, one of the roots of (\ref{5B}) is a Stieltjes transform and hence analytic. Let us denote this functional root as $s(z, c)$. Let $z_0 \in \mathbb{C}_L$ be fixed and $\{c_n\}_{n=1}^\infty \subset \mathbb{R}_{+}$ be such that $\underset{n \rightarrow \infty}{\lim}c_n = c_0 \in (0, \infty)$. Let $s_n := s(z_0, c_n)$ be the corresponding functional root (with positive real component) of (\ref{5B}) with $c_n$ instead of $c$. Then $\forall n \in \mathbb{N}$, $s_n$ satisfies
\begin{equation*}\tag{C.6}\label{C.6}
    c_n^2z_0s_n^3 + (c_n^2-2c_n)s_n^2 +z_0s_n + 1 = 0
\end{equation*}
Now, $|s_n|\leq 1/|\Re(z_0)|$ on account of being a Stieltjes transform at $z_0$ of a probability measure on the imaginary axis. Therefore, every subsequence has a further subsequence that converges. For such a convergent subsequence $\{s_{n_k}\}_{k=1}^\infty$, let the sub-sequential limit be $l(z_0)$. Then $\Re(l(z_0)) = \underset{n \rightarrow \infty}{\lim}\Re(s_n(z_0)) \geq 0$. We claim that $\Re(l(z_0)) > 0$. Taking limit as $n \rightarrow \infty$ in (\ref{C.6}), we get 
$$c_0^2z_0l^3(z_0) + (c_0^2-2c_0)l^2(z_0) + z_0l(z_0) + 1 = 0$$

Thus $l(z_0)$ satisfies (\ref{5B}) with $c_0, z_0$ in place of $c, z$ respectively. Therefore, by Proposition \ref{prop_C.4}, we must have $\Re(l(z_0))>0$. By Theorem (\ref{Uniqueness}), we conclude that any subsequential limit of $\{s_n\}_{n=1}^\infty$ must be the same and satisfies (\ref{5B}) with $c_0, z_0$ instead of $c, z$ respectively. Therefore, $\underset{n \rightarrow \infty}{\lim}s_n = \underset{n \rightarrow \infty}{\lim}s(z_0, c_n) = s(z_0, c)$. Therefore, $s(z, c)$ is continuous in $c$.
\end{proof}

\begin{proposition}\label{Q2_to_Q1}
For $z \in \mathbb{Q}_2$, there always exists one root of (\ref{5B}) that lies in $\mathbb{Q}_1$. In fact this functional root is the required Stieltjes Transform.
\end{proposition}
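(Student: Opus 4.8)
The plan is to track the Stieltjes transform $s_F(z)$ as $z$ moves from $\mathbb{C}_L$ into the quadrant $\mathbb{Q}_2 \subset \mathbb{C}_L$ and determine the sign of its imaginary part. First I would recall the basic facts already in hand: for $z \in \mathbb{C}_L$ the root $s_F(z)$ of \eqref{5B} satisfying $\Re(s_F(z)) > 0$ exists, is unique (Theorem \ref{Uniqueness}), coincides with $h(z)$ via \eqref{h_equal_s}, and hence $\Re(cs_F(z)) > 0$ by Lemma \ref{boundedAwayFromZero}. By Proposition \ref{prop_C.4}, for $z \in \mathbb{Q}_2$ none of the three roots of \eqref{5B} is purely real, so in particular $\Im(s_F(z)) \neq 0$; combined with $\Re(s_F(z)) > 0$ this shows $s_F(z)$ lies either in $\mathbb{Q}_1$ or in the fourth (open) quadrant $\{u + \mathbbm{i}v : u > 0, v < 0\}$ for every $z \in \mathbb{Q}_2$. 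Thus the whole content of the proposition is to rule out the fourth quadrant.

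The key step is a connectedness/continuity argument. The set $\mathbb{Q}_2$ is connected, $s_F$ is analytic (hence continuous) on $\mathbb{C}_L$, and by the above $s_F(\mathbb{Q}_2)$ is contained in the disjoint union of the two open sets $\mathbb{Q}_1$ and the fourth quadrant. Therefore $s_F(\mathbb{Q}_2)$ lies entirely in one of them, and it suffices to exhibit a single point $z_0 \in \mathbb{Q}_2$ with $\Im(s_F(z_0)) > 0$. The natural choice is to use the symmetry $s(\overline z) = \overline{s(z)}$ from Lemma \ref{symmetryAboutZero}, or — more directly — to examine the behaviour of $s_F$ near the negative real axis or along a ray. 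Concretely, I would take $z = -u + \mathbbm{i}v$ and let $v \downarrow 0$ with $u$ fixed: on the negative real axis $z = -u$ we have $s_F(-u) \in \mathbb{R}_{>0}$ (shown in the proof of Theorem \ref{PointMass}), and then a first-order perturbation analysis of the cubic \eqref{5B} in the parameter $v$, differentiating implicitly, yields the sign of $\Im(s_F(-u + \mathbbm{i}v))$ for small $v > 0$. Plugging $m = s_F(-u)$ real and positive into $\partial m/\partial v = -(\partial P/\partial v)/(\partial P/\partial m)$ where $P(z,m) = c^2 z m^3 + (c^2-2c)m^2 + zm + 1$, one finds $\partial P/\partial v = \mathbbm{i} m (c^2 m^2 + 1)$ and $\partial P/\partial m = 3c^2 z m^2 + 2(c^2-2c)m + z$ evaluated at $z = -u$; a short computation (using that $m > 0$ solves $P(-u,m)=0$) should show $\Im(\partial m/\partial v)\big|_{v=0} > 0$, so $\Im(s_F) > 0$ just inside $\mathbb{Q}_2$.

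With that single anchor point, the connectedness argument forces $s_F(\mathbb{Q}_2) \subset \mathbb{Q}_1$, which is the assertion. The last sentence of the proposition — that this root is ``the required Stieltjes transform'' — is then automatic, since $s_F$ is by construction the Stieltjes transform and we have just located it in $\mathbb{Q}_1$. The main obstacle I anticipate is the sign computation at the anchor point: one must be careful that $\partial P/\partial m \neq 0$ there (i.e.\ $m$ is a simple root, which follows from uniqueness and the fact that the other two roots have nonpositive or differently-signed real parts), and that the implicit-function perturbation is carried out with the correct orientation of $v$; an alternative, should the explicit derivative prove messy, is to invoke the open mapping theorem for the analytic inverse map $z_F$ from Lemma \ref{SilvChoiResult2} together with a degree/winding argument, but the direct perturbation is cleaner if the algebra cooperates.
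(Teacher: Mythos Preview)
Your proposal is correct and takes a genuinely different route from the paper's argument. The paper fixes $z \in \mathbb{Q}_2$ and varies $c$: it first carries out an explicit Cardano computation at the special value $c=2$ (where $q_2 = r_3 = 0$ and the formulas simplify dramatically), verifies directly that one root lands in $\mathbb{Q}_1$, and then invokes the continuity of $s(z,c)$ in $c$ (Proposition~\ref{continuity_in_c}) together with Proposition~\ref{prop_C.4} and the Intermediate Value Theorem to propagate this to all $c > 0$. You instead fix $c$ and vary $z$: connectedness of $\mathbb{Q}_2$ plus Proposition~\ref{prop_C.4} reduces the question to a single anchor, which you place near the negative real axis via implicit differentiation of the cubic.

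Your sign computation does go through: using $P(-u,m)=0$ to eliminate the $(c^2-2c)m$ term gives $\partial P/\partial m\big|_{(-u,m)} = -c^2 u m^2 + u - 2/m$, and the constraint $(1-um)(1+c^2m^2)=2cm^2$ forces $um<1$, from which $um(1-c^2m^2)<2$ follows in both regimes $cm \gtrless 1$; hence $\partial P/\partial m < 0$ and $\Im(\partial m/\partial v)\big|_{v=0} > 0$ as you anticipated. Your argument is arguably more self-contained --- it avoids the somewhat delicate explicit root-tracking through Cardano and does not require the separate continuity-in-$c$ proposition --- whereas the paper's approach has the virtue of making the root structure at $c=2$ completely transparent and reusable.
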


\begin{proof}
We first prove the result for $c=2$. In this case, $r_1 = -1/8; r_3 = 0; q_0 = 1/12; q_2 = 0$ from (\ref{RQD}). Fix an arbitrary $z \in \mathbb{Q}_2$. Then $\theta_z := \operatorname{Arg}(z) \in (\frac{\pi}{2}, \pi)$. By (\ref{RQD}), we have
\begin{align*}
    Q(z) =& q_0 + \frac{q_2}{z^2} = q_0\label{Qz} \tag{C.7}\\
    R(z) =& \dfrac{r_1}{z} = \dfrac{r_1}{|z|}e^{-\mathbbm{i}\theta_z}= -\dfrac{1}{8|z|}(\cos{\theta_z} - \mathbbm{i}\sin{\theta_z}) = -\dfrac{1}{8|z|} (\cos{\theta_z} - \mathbbm{i}\sin{\theta_z}) \in \mathbb{Q}_1  \label{Rz} \tag{C.8}\\
    D(z) =& R^2(z) + Q^3(z) = \frac{r_1^2}{|z|^2}e^{-2\mathbbm{i}\theta_z} + q_0^3 = \bigg(q_0^3 + \frac{r_1^2\operatorname{cos}(2\theta_z)}{|z|^2}\bigg) - \mathbbm{i}\bigg(\frac{r_1^2\operatorname{sin}(2\theta_z)}{|z|^2}\bigg) \in \mathbb{Q}_1 \text{ or } \mathbb{Q}_2  \label{Dz} \tag{C.9}
\end{align*}

(\ref{Rz}) and (\ref{Dz}) follows since $\cos(\theta_z) < 0$, $\sin{(\theta_z)} > 0$ and $\sin{(2\theta_z)} < 0$. Therefore, 
$\operatorname{Arg}(D(z)) := \theta_D \in (0, \pi)$. Since $\theta_D/2 \in (0, \pi/2)$, it is clear that
\begin{align*}\label{rootDz}\tag{C.10}
\sqrt{D(z)} = \sqrt{|D(z)|}e^{\frac{1}{2}\mathbbm{i}\theta_D} \in \mathbb{Q}_1    
\end{align*}
Finally, let us define
 \begin{align*}\label{Vplusminus}\tag{C.11}
 &V_{\pm}(z) := R(z) \pm \sqrt{D(z)}; \hspace{10mm} \theta_{\pm} := \operatorname{Arg(V_\pm(z))}\\
 \implies& V_{\pm}=|V_{\pm}|e^{\mathbbm{i}\theta_{\pm}}
 \end{align*}

For simplicity, we will henceforth refer to $R, Q, D, V_{\pm}$ without the explicit dependence on z. As $R$ and $\sqrt{D}$ both are in $\mathbb{Q}_1$, we have the following results using (\ref{Vplusminus}),
\begin{align*}\label{tempResults1}\tag{C.12}
V_{+} \in \mathbb{Q}_1; \hspace{5mm} |V_{+}| > |V_{-}|; \hspace{5mm} \theta_{+} \in (0, \pi/2)
\end{align*}

To derive the roots of (\ref{5B}), we define two quantities below that satisfy (\ref{S0T0}).
\begin{align*}\label{choiceOfS0T0}\tag{C.13}
s_0 :=& |V_{+}|^{\frac{1}{3}}e^{\frac{1}{3}\mathbbm{i}\theta_+}; \hspace{10mm}
t_0 := -Q/s_0 = -q_0|V_+|^{-\frac{1}{3}}e^{-\frac{1}{3}\mathbbm{i}\theta_+} \text{, where } q_0 = 1/12
\end{align*}

Then $s_0$ is cube root of $V_+$ by definition. Moreover, $t_0$ is a cube root of $V_{-}$ since 
\begin{align*}\label{T0cube_equal_Vminus}\tag{C.14}
 &t_0^3 = \frac{-Q^3}{s_0^3} = \frac{(R + \sqrt{R^2+Q^3})(R-\sqrt{R^2+Q^3})}{R + \sqrt{R^2 + Q^3}} = V_{-} \text{, note } s_0^3 = V_+ \in \mathbb{Q}_1 (\neq 0)\\
 \implies& |t_0| = |V_-|^\frac{1}{3}
\end{align*}
Note that from (\ref{tempResults1}), (\ref{choiceOfS0T0}) and (\ref{T0cube_equal_Vminus}), we have $|s_0| > |t_0|$. Also (\ref{choiceOfS0T0}) implies $\operatorname{Arg}(s_0) = - \operatorname{Arg}(t_0)$. As intended, $s_0, t_0$ satisfy
 $$s_0^3 + t_0^3=V_+ + V_- = 2R; \hspace{10mm} s_0t_0=-Q$$

\textbf{First Root:}
By (\ref{RootFormula}), the first root $m_1 = s_0 + t_0$. Since $\theta_+ \in (0, \pi/2)$, (\ref{choiceOfS0T0}) implies $\Im(s_0), \Im(t_0) > 0 \implies \Im(s_0 + t_0) > 0$. Also, $\Re(s_0) > 0, \Re(t_0) < 0$ but since $|s_0|>|t_0|$ and $\operatorname{Arg}(s_0) = -\operatorname{Arg}(t_0)$ from (\ref{choiceOfS0T0}), we conclude $\Re(s_0 + t_0) > 0$ and thus $m_1 \in \mathbb{Q}_1$.

\textbf{Second Root:}
By (\ref{RootFormula}), the second root is 
\begin{align*}\label{secondRoot}\tag{C.15}
m_2 =& \omega_1s_0 + \omega_2t_0\\
=& |V_+|^\frac{1}{3}e^{\frac{\mathbbm{i}(2\pi+\theta_+)}{3}} + |V_-|^\frac{1}{3}e^{\frac{\mathbbm{i}(4\pi-\theta_+)}{3}}\\
=& (|V_+|^\frac{1}{3} - |V_-|^\frac{1}{3})e^{\frac{\mathbbm{i}(2\pi+\theta_+)}{3}}
\end{align*}
Since $\theta_+ \in (0, \pi/2)$, we have $\cos(\frac{2\pi+\theta_+}{3}) < 0$ and $\sin(\frac{2\pi+\theta_+}{3}) > 0$. Thus we have $m_2 \in \mathbb{Q}_2$.\\
\textbf{Third Root:}
By (\ref{RootFormula}), the last root is 
\begin{align*}\label{thirdRoot}\tag{C.16}
m_3 =& \omega_2s_0 + \omega_1t_0\\
=& |V_+|^\frac{1}{3}e^{\frac{\mathbbm{i}(4\pi+\theta_+)}{3}} + |V_-|^\frac{1}{3}e^{\frac{\mathbbm{i}(2\pi-\theta_+)}{3}}\\
=& (|V_+|^\frac{1}{3} - |V_-|^\frac{1}{3})e^{\frac{\mathbbm{i}(4\pi+\theta_+)}{3}}
\end{align*}
Since $\theta_+ \in (0, \pi/2)$, we have $\cos(\frac{4\pi+\theta_+}{3}) < 0$ and $\sin(\frac{4\pi+\theta_+}{3}) < 0$. Thus we have $m_3 \in \mathbb{Q}_3$.

Thus for $c=2$, it is clear that $s_F = m_1$ is the required Stieltjes Transform. When $c \neq 2$, by the continuity of $s(z, c)$ in $c$ from Proposition \ref{continuity_in_c}, for values of $c$ ``close" to $2$, $s(z, c) \in \mathbb{Q}_1$ will still hold keeping $z \in \mathbb{Q}_2$ fixed. However, we claim that $s(z, c)$ must live in $\mathbb{Q}_1$. Because if for a certain $c \neq 2$, say $\Re(s(z, c)) < 0$, then by Intermediate Value Theorem, there exists $c_1, c_2, c_3 > 0$ such that $\Re(s(z, c_1)) > 0$, $\Re(s(z, c_2)) < 0$ and $\Re(s(z, c_3)) = 0$. But this contradicts Proposition \ref{prop_C.4}. We arrive at another contradiction if we assume $\Im(s(z, c)) < 0$ for some $c \neq 2$. Therefore for all values of $c > 0$, $s(z, c) \in \mathbb{Q}_1$ for $z \in \mathbb{Q}_2$. 
\end{proof}

\section{Proofs related to Section \ref{sec:Hermitian}}\label{sec:AppendixD}

In this section we provide a proof of Theorem \ref{t6.1}. We denote $S_n^+ = \frac{1}{n}\Sigma_n^\frac{1}{2}(Z_1Z_2^* + Z_2Z_1^*)\Sigma_n^\frac{1}{2}$ as $S_n$ for simplicity. The central objects our work such as $Q(z), s_n(z), h_n(z)$ are derived after incorporating this change. The proof sketch is exactly the same as in Section \ref{ProofSketch}. As stated earlier, most of the proofs are nearly identical to the ones we did for the skew-Hermitian version. The most important change is that bounds of important quantities are now characterized in terms of their imaginary components instead of their real components. Moreover, unless explicitly mentioned, the domain of all functions such as $Q, s_n, h_n$ will now be $\mathbb{C}^+$ instead of $\mathbb{C}_L$. Also, by Stieltjes Transform, we mean the transform of a measure on the real line.

We define a function analogous to the function $\rho(\cdot)$ (defined in (\ref{definingRho})) that we used throughout Section \ref{sec:general_covariance}.
\begin{align*}
    \sigma(z) = \frac{1}{1 + z} + \frac{1}{-1 + z} \text{, } z \not \in \{1, -1\}\label{defining_sigma}\tag{D.1}
\end{align*}
\begin{align*}
    \sigma_2(z) = \frac{1}{|1 + z|^2} + \frac{1}{|-1 + z|^2} \text{, } z \not \in \{1, -1\}\label{defining_sigma2}\tag{D.2}
\end{align*}
Then as before for $z \not \in \{1,-1\}$, we have
\begin{align*}\label{ImaginaryOfSigma}\tag{D.3}
    \Im(\sigma(z)) = \frac{\Im(\overline{1+z})}{|1+z|^2}+\frac{\Im(\overline{1-z})}{|1-z|^2} = -\Im(z)\sigma_2(z)
\end{align*}

REMARK: Note that $\sigma(-\overline{z}) = -\overline{\sigma(z)}$ and $\sigma$ is analytic in any open set not containing $\pm 1$. Also $\sigma_2(z) > 0$ in its domain. Now we prove the unique solvability of (\ref{6.2}).

\subsection{Proof of Uniqueness}
\begin{theorem}\label{Uniqueness_Im}
There exists at most one solution to the following equation within the class of functions that map $\mathbb{C}^+$ to $\mathbb{C}^+$.
    $$h(z) = \displaystyle \int \dfrac{\lambda dH(\lambda)}{-z + \dfrac{\lambda}{1 +ch(z)} + \dfrac{\lambda}{-1 +ch(z)}} = \int \dfrac{\lambda dH(\lambda)}{-z + \lambda \sigma(ch(z))}$$
where H is any probability distribution function such that $\operatorname{supp}(H) \subset \mathbb{R}_{+}$ and $H \neq \delta_0$.
\end{theorem}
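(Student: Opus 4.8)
The plan is to follow the same scheme as the proof of Theorem~\ref{Uniqueness}, with the single systematic change of passing from real parts to imaginary parts and from the function $\rho$ (with identity~(\ref{realOfRho})) to the function $\sigma$ (with identity~(\ref{ImaginaryOfSigma})). Fix $z = u + \mathbbm{i}v \in \mathbb{C}^+$, so $v > 0$, and suppose $h_1, h_2 \in \mathbb{C}^+$ both satisfy $h_j = \int \lambda\,dH(\lambda)\big/\big(-z + \lambda\sigma(ch_j)\big)$. Write $h_j = h_{j1} + \mathbbm{i}h_{j2}$ with $h_{j2} > 0$. Since $\Im(ch_j) = ch_{j2} > 0$ we have $ch_j \notin \{1,-1\}$, so $\sigma(ch_j)$ is well defined; moreover $\Im\big(-z + \lambda\sigma(ch_j)\big) = -v - c\lambda h_{j2}\sigma_2(ch_j) < 0$ for all $\lambda \ge 0$, so the denominators never vanish on $\operatorname{supp}(H)$ and $|-z+\lambda\sigma(ch_j)|^2 \ge c^2\lambda^2 h_{j2}^2 \sigma_2(ch_j)^2$, which makes the integrals appearing below finite.

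First I would take imaginary parts of the defining equation. Using $\Im(\bar{w}) = -\Im(w)$ and (\ref{ImaginaryOfSigma}),
\[
h_{j2} = \int \frac{\lambda\big(v + c\lambda h_{j2}\sigma_2(ch_j)\big)}{|-z+\lambda\sigma(ch_j)|^2}\,dH(\lambda) = v\,J_1(h_j,H) + c\,h_{j2}\,\sigma_2(ch_j)\,J_2(h_j,H),
\]
where $J_k(h_j,H) := \int \lambda^k\,dH(\lambda)\big/|-z+\lambda\sigma(ch_j)|^2 > 0$, the positivity coming from $H \neq \delta_0$ and $\operatorname{supp}(H) \subset \mathbb{R}_+$. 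Because $h_{j2} > 0$, $v > 0$ and $J_1(h_j,H) > 0$, this forces
\[
c\,\sigma_2(ch_j)\,J_2(h_j,H) < 1,
\]
the exact analogue of (\ref{lessThanOne}).

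Next I would subtract the two equations. Writing $\sigma(ch_2) - \sigma(ch_1)$ as the sum of the two partial fractions $c(h_1-h_2)/\big((1+ch_1)(1+ch_2)\big)$ and $c(h_1-h_2)/\big((-1+ch_1)(-1+ch_2)\big)$ gives
\[
h_1 - h_2 = (h_1-h_2)\int \frac{\dfrac{c\lambda^2}{(1+ch_1)(1+ch_2)} + \dfrac{c\lambda^2}{(-1+ch_1)(-1+ch_2)}}{\big(-z+\lambda\sigma(ch_1)\big)\big(-z+\lambda\sigma(ch_2)\big)}\,dH(\lambda).
\]
Applying the Cauchy--Schwarz inequality to each of the two summands and then the elementary bound $\sqrt{ac}+\sqrt{bd} \le \sqrt{a+b}\sqrt{c+d}$ exactly as in the proof of Theorem~\ref{Uniqueness}, the modulus of the integral multiplier is at most $\sqrt{c\,\sigma_2(ch_1)J_2(h_1,H)}\,\sqrt{c\,\sigma_2(ch_2)J_2(h_2,H)}$, which is $<1$ by the displayed inequality above. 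Hence $|h_1-h_2| < |h_1-h_2|$ unless $h_1 = h_2$, establishing uniqueness.

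I do not expect a genuine obstacle here: the argument is a faithful transcription of Theorem~\ref{Uniqueness}, and the only parts that require actual checking rather than mechanical copying are that $\sigma(ch_j)$ has no pole (i.e.\ $ch_j \neq \pm 1$) and that the integrals $J_1, J_2$ are finite and strictly positive --- both of which follow from $h_j \in \mathbb{C}^+ \Rightarrow \Im(ch_j) > 0$ together with the sign of $\Im(-z + \lambda\sigma(ch_j))$. The only mild care needed is tracking the sign flips produced by moving from the left half-plane to the upper half-plane.
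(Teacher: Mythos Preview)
Your proposal is correct and follows essentially the same line as the paper's own proof: take imaginary parts to obtain $h_{j2} = vJ_1 + ch_{j2}\sigma_2(ch_j)J_2$ and deduce $c\sigma_2(ch_j)J_2 < 1$, then subtract the two equations, factor out $h_1-h_2$, and bound the resulting integral multiplier via Cauchy--Schwarz and the inequality $\sqrt{ac}+\sqrt{bd}\le\sqrt{a+b}\sqrt{c+d}$. Your explicit verification that $ch_j \neq \pm 1$ and that the denominators are bounded away from zero (hence $J_1,J_2$ are finite) is a small but welcome addition that the paper leaves implicit.
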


\begin{proof}
    Suppose for some $z = u + \mathbbm{i}v \in \mathbb{C}^+$, $\exists$ $h_1, h_2 \in \mathbb{C}^+$ such that for $j \in \{1,2\}$, we have $$h_j = \displaystyle \int \dfrac{\lambda dH(\lambda)}{-z + \lambda \sigma(ch_j)}$$
    Further let $\Re(h_j) = h_{j1}, \Im(h_j) = h_{j2}$ where $h_{j2} > 0$ by assumption for $j \in \{1,2\}$. Using (\ref{ImaginaryOfSigma}), we have  
\begin{align*}
        & h_{j2} = \Im(h_j)
         = \displaystyle\int \dfrac{\lambda\Im(\overline{-z + \lambda\sigma(ch_j)})dH(\lambda)}{|-z + \lambda \sigma(ch_j)|^2} = \int\dfrac{v\lambda + \lambda^2 [\sigma_2(ch_j)\Im(ch_j)]}{|-z + \lambda\sigma(ch_j)|^2}dH(\lambda)\\
\implies & h_{j2} = vJ_1(h_j, H) + ch_{j2}\sigma_2(ch_j)J_2(h_j, H) \label{ImOf_h}\tag{D.4}\\           
\text {where } J_k(h_j, H) :=& \int \dfrac{\lambda^k dH(\lambda)}{|-z + \lambda \sigma(ch_j)|^2} \text{ for } k \in \{1,2\}
\end{align*}

Note that $J_k(h_j, H) > 0, k \in \{1,2\}$ due to the conditions on H. Since $h_{j2} > 0$ and $v > 0$, using (\ref{ImOf_h}), we must have 
\begin{align*}\tag{D.5}\label{lessThanOne_Im}
 c\sigma_2(ch_j)J_2(h_j, H) < 1
\end{align*}
    Then we have
\begin{equation*}
    \begin{split}
         h_1 - h_2
        =& \displaystyle\int \dfrac{(\sigma(ch_2) - \sigma(ch_1))\lambda^2}{[-z + \lambda\sigma(ch_1)][-z + \lambda\sigma(ch_2)]}dH(\lambda)\\
        =& (h_1-h_2)\displaystyle\int \dfrac{\dfrac{c\lambda^2}{(1 +ch_1)(1 +ch_2)} + \dfrac{c\lambda^2}{(-1 +ch_1)(-1 +ch_2)}}{[-z + \lambda\sigma(ch_1)][-z + \lambda\sigma(ch_2)]}dH(\lambda)\\
    \end{split}
\end{equation*}

By $\Ddot{H}$older's inequality, we have $|h_1-h_2| \leq |h_1-h_2|(T_1 + T_2)$ where 

\begin{align*}
T_1 =& \displaystyle\sqrt{\int\dfrac{c|1 +ch_1|^{-2}\lambda^2dH(\lambda)}{|-z + \lambda\sigma(ch_1)|^2}}\sqrt{\int\dfrac{c|1 +ch_2|^{-2}\lambda^2dH(\lambda)}{|-z + \lambda\sigma(ch_2)|^2}}\\    
=& \sqrt{c|1 +ch_1|^{-2}J_2(h_1, H)}\sqrt{c|1 +ch_2|^{-2}J_2(h_2, H)}\\
&\text{and}\\
T_2 =& \displaystyle\sqrt{\int\dfrac{c|-1 +ch_1|^{-2}\lambda^2dH(\lambda)}{|-z + \lambda\sigma(ch_1)|^2}}\sqrt{\int\dfrac{c|-1 +ch_2|^{-2}\lambda^2dH(\lambda)}{|-z +\lambda\sigma(ch_2)|^2}}\\
=& \sqrt{c|-1 +ch_1|^{-2}J_2(h_1, H)}\sqrt{c|-1 +ch_2|^{-2}J_2(h_2, H)}
\end{align*}
Noting that for $a,b,c,d \geq 0$, $\displaystyle\sqrt{ac} + \sqrt{bd} \leq \sqrt{a+b}\sqrt{c+d}$ with equality if and only if $a=b=c=d=0$. So we have
\begin{align*}
    &T_1 + T_2\\
    =& \sqrt{c|1 +ch_1|^{-2}J_2(h_1, H)}\sqrt{c|1 +ch_2|^{-2}J_2(h_2, H)} +  \sqrt{c|-1 +ch_1|^{-2}J_2(h_1, H)}\sqrt{c|-1 +ch_2|^{-2}J_2(h_2, H)}\\
    \leq& \sqrt{(c|1 +ch_1|^{-2} + c|-1 +ch_1|^{-2})J_2(h_1, H)}\sqrt{(c|1 +ch_2|^{-2} + c|-1 +ch_2|^{-2})J_2(h_2, H)}\\
    =& \sqrt{c\sigma_2(ch_1)J_2(h_1, H)}\sqrt{c\sigma_2(ch_2)J_2(h_2, H)}
    <  1 \text{, using } (\ref{lessThanOne_Im})
\end{align*}
This implies that $|h_1 - h_2|< |h_1 - h_2|$ which is a contradiction thus proving the uniqueness of $h(z) \in \mathbb{C}^+$.
\end{proof}
The proof of continuous dependence of $h$ on the distribution function $H$ is given in Lemma \ref{Continuity_Im}.\\
\subsection{Existence of Solution}

As before, we first prove the theorem under Assumptions \ref{A123} and follow it up for the general case. Establishing the necessary properties of $s_n(\cdot)$ and $h_n(\cdot)$ in Lemma \ref{CompactConvergence_Im} and Lemma \ref{ConcentrationOfSnH_Im}, we construct a sequence of deterministic matrices $\Bar{Q}(z) \in \mathbb{C}^{p \times p}$ satisfying $|\frac{1}{p}\operatorname{trace}(Q(z)-\Bar{Q}(z))| \xrightarrow{a.s.} 0$ in Theorem \ref{DeterministicEquivalent_Im}. Finally, we prove the existence of (the) solution to (\ref{6.2}) in Theorem \ref{ExistenceA123_Im}. 

\begin{lemma}\label{CompactConvergence_Im}
    \textbf{(Compact Convergence)} 
    $\{s_n(z)\}_{n=1}^\infty$ and $\{h_n(z)\}_{n=1}^\infty$ form a normal family, i.e. every subsequence has a further subsequence that converges uniformly on compact subsets of $\mathbb{C}^+$. 
\end{lemma}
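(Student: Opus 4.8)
The plan is to follow the proof of Lemma \ref{CompactConvergence} verbatim, with the real part of $z$ replaced throughout by the imaginary part. The reason this substitution works is that for the Hermitian matrix $S_n = S_n^+$ the resolvent $Q(z) = (S_n - zI_p)^{-1}$ has eigenvalues $(\mu_j - z)^{-1}$ with $\mu_j \in \mathbb{R}$, hence $\|Q(z)\|_{op} \le 1/|\Im(z)|$ for $z \in \mathbb{C}^+$ (this is the Hermitian counterpart of the bound $\|Q(z)\|_{op} \le 1/|\Re(z)|$ used in the skew-Hermitian case). By Montel's theorem (Theorem 3.3 of \cite{SteinShaka}) it therefore suffices to show that $\{s_n\}$ and $\{h_n\}$ are uniformly bounded on every compact subset of $\mathbb{C}^+$.

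Concretely, I would fix an arbitrary compact set $K \subset \mathbb{C}^+$ and put $v_0 := \inf\{|\Im(z)| : z \in K\}$, noting $v_0 > 0$ since $K$ is compact and lies in the open upper half-plane. For any $z \in K$,
\[
|s_n(z)| = \frac{1}{p}|\operatorname{trace}(Q(z))| \le \|Q(z)\|_{op} \le \frac{1}{|\Im(z)|} \le \frac{1}{v_0},
\]
and, using (R5) of \ref{R123} together with (T4) of Theorem \ref{t3.1} (so that $\tfrac1p\operatorname{trace}(\Sigma_n) < C$ for all sufficiently large $n$),
\[
|h_n(z)| = \frac{1}{p}|\operatorname{trace}(\Sigma_n Q(z))| \le \Big(\frac{1}{p}\operatorname{trace}(\Sigma_n)\Big)\|Q(z)\|_{op} \le \frac{C}{|\Im(z)|} \le \frac{C}{v_0}
\]
for all large $n$ and all $z \in K$. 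Since dropping finitely many indices does not affect normality, both families are uniformly bounded on $K$; as $K$ was arbitrary, Montel's theorem yields the claim.

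There is no genuine obstacle here — the statement is a soft compactness fact and the argument is essentially identical to the skew-Hermitian case. The only minor point worth flagging is that the bound on $h_n$ is only eventual in $n$; this is harmless because being a normal family is a tail property, but if one prefers to avoid the caveat entirely one can work under Assumption \textbf{A1} of Assumptions \ref{A123}, which gives $\tfrac1p\operatorname{trace}(\Sigma_n) \le \tau$ for every $n$.
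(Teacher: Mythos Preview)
Your proof is correct and is essentially identical to the paper's own proof: both invoke Montel's theorem, fix a compact $K\subset\mathbb{C}^+$, set $v_0=\inf\{|\Im(z)|:z\in K\}>0$, and bound $|s_n(z)|\le 1/v_0$ and (for large $n$, via (R5) and (T4)) $|h_n(z)|\le C/v_0$. Your extra remark about the eventual bound being harmless for normality is a nice clarification the paper leaves implicit.
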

\begin{proof}
By Montel's theorem, it is sufficient to show that $s_n$ and $h_n$ are uniformly bounded on every compact subset of $\mathbb{C}^+$. Let $K \subset \mathbb{C}^+$ be an arbitrary compact subset. Define $v_0 := \inf\{|\Im(z)|: z \in K\}$. Noting that $v_0 > 0$, for arbitrary $z \in K$, we have 
$$|s_n(z)| = \frac{1}{p}|\operatorname{trace}(Q(z))| \leq \frac{1}{|\Im(z)|} \leq \frac{1}{v_0}$$
    and using by (R5) of \ref{R123} and (T4) of Theorem \ref{t6.1}, we get for sufficiently large $n$,
    \begin{align}\label{h_nBound_Im}\tag{D.6}
        |h_n(z)| =& \frac{1}{p}|\operatorname{trace}(\Sigma_nQ)| \leq \bigg(\frac{1}{p}\operatorname{trace}(\Sigma_n)\bigg) ||Q(z)||_{op} 
        \leq \frac{C}{|\Im(z)|} \leq \frac{C}{v_0}
    \end{align}
\end{proof}
 
\begin{theorem}\label{DeterministicEquivalent_Im}
Let $M_n \in \mathbb{C}^{p \times p}$ be a sequence of deterministic matrices with $||M_n||_{op} \leq B$ for some $B \geq 0$. For $z \in \mathbb{C}^+$,
    $\frac{1}{p}\operatorname{trace}\{(Q(z) - \Bar{Q}(z))M_n\} \xrightarrow{a.s.} 0$
where $\Bar{Q}(z) := \bigg(-zI_p + \sigma(c_n\mathbb{E}h_n(z))\Sigma_n\bigg)^{-1}$
\end{theorem}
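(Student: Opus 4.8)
The plan is to transcribe the proof of Theorem~\ref{DeterministicEquivalent} with three systematic substitutions: the real part is replaced throughout by the imaginary part, the function $\rho$ by the function $\sigma$ of \eqref{defining_sigma}, and the skew perturbation $X_{1j}X_{2j}^*-X_{2j}X_{1j}^*$ by the Hermitian perturbation $X_{1j}X_{2j}^*+X_{2j}X_{1j}^*$. First I would record the Hermitian analogues of the auxiliary facts used in that proof, each obtained by the same substitutions and proved line-by-line as in Section~\ref{sec:general_covariance}: the compact-convergence bound (Lemma~\ref{CompactConvergence_Im}); the rank-$2$ perturbation estimate $\max_{1\le j\le n}|\tfrac1n\operatorname{trace}\{\Sigma_n(Q(z)-Q_{-j}(z))\}|\xrightarrow{a.s.}0$ (analogue of Lemma~\ref{Rank2Perturbation}, using $\|Q\|_{op},\|Q_{-j}\|_{op}\le 1/\Im(z)$); the concentration $|h_n(z)-\mathbb{E}h_n(z)|\xrightarrow{a.s.}0$ (Lemma~\ref{ConcentrationOfSnH_Im}); the lower bound $\Im(c_n\mathbb{E}h_n(z))\ge K_0>0$ and $\Im(c_nh_n(z))\ge K_0$ a.s.\ for all large $n$ (analogue of Lemma~\ref{boundedAwayFromZero}, using that $\|S_n\|_{op}$ is a.s.\ eventually bounded and \eqref{ImaginaryOfSigma}), which in particular keeps $c_n\mathbb{E}h_n(z)$ away from the poles $\pm1$ of $\sigma$ and gives $|\sigma(w)|\le 1/\Im(w)$ whenever $\Im(w)>0$; the continuity estimate $|\sigma(X_n)-\sigma(Y_n)|\xrightarrow{a.s.}0$ when $\Im(X_n),\Im(Y_n)\ge B>0$ and $|X_n-Y_n|\xrightarrow{a.s.}0$ (analogue of Lemma~\ref{rhoContinuity}); and $\|\bar Q(z)\|_{op}\le 1/\Im(z)$, since each eigenvalue $(-z+\lambda_j\sigma(c_n\mathbb{E}h_n))^{-1}$ of $\bar Q(z)$ has modulus at most $1/|\Im(-z+\lambda_j\sigma(c_n\mathbb{E}h_n))|\le 1/\Im(z)$ by \eqref{ImaginaryOfSigma}.

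With $F(z):=\bar Q(z)^{-1}=-zI_p+\sigma(c_n\mathbb{E}h_n)\Sigma_n$, the resolvent identity (R0) gives
\[
Q-\bar Q \;=\; Q\Big(F+zI_p-\tfrac1n\textstyle\sum_{j=1}^n(X_{1j}X_{2j}^*+X_{2j}X_{1j}^*)\Big)\bar Q ,
\]
so that taking $\tfrac1p\operatorname{trace}\{\,\cdot\,M_n\}$ and cycling the trace splits $\tfrac1p\operatorname{trace}\{(Q-\bar Q)M_n\}$ as $\mathrm{Term}_1-\mathrm{Term}_2$, with $\mathrm{Term}_1=\tfrac1p\operatorname{trace}\{\sigma(c_n\mathbb{E}h_n)\Sigma_n\bar Q M_n Q\}$ and $\mathrm{Term}_2=\tfrac1p\sum_{j=1}^n\tfrac1n(X_{2j}^*\bar Q M_n Q X_{1j}+X_{1j}^*\bar Q M_n Q X_{2j})$; the only formal change from \eqref{T1_minu_T2_expansion} is the $+$ sign inside $\mathrm{Term}_2$.

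The core step is a rank-$2$ Woodbury identity for $B+uv^*+vu^*$, the $+$-analogue of Lemma~\ref{Rank2Woodbury}: applying \eqref{ShermanMorrison} with $P=B$ and block columns $[u:v]$, $[v:u]$ (so $[u:v][v:u]^*=uv^*+vu^*$), and writing $\langle a,b\rangle:=a^*B^{-1}b$, one gets $(B+uv^*+vu^*)^{-1}u=Den\cdot B^{-1}\big((1+\langle u,v\rangle)u-\langle u,u\rangle v\big)$ and $(B+uv^*+vu^*)^{-1}v=Den\cdot B^{-1}\big(-\langle v,v\rangle u+(1+\langle v,u\rangle)v\big)$ with $Den=\big((1+\langle u,v\rangle)(1+\langle v,u\rangle)-\langle u,u\rangle\langle v,v\rangle\big)^{-1}$ (well-defined since $B+uv^*+vu^*=S_n-zI_p$ and $B=S_{nj}-zI_p$ are both invertible for $z\in\mathbb{C}^+$). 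Taking $B=S_{nj}-zI_p$, $u=n^{-1/2}X_{1j}$, $v=n^{-1/2}X_{2j}$ and $E_j(r,s):=\tfrac1n X_{rj}^*Q_{-j}X_{sj}$, this yields $n^{-1/2}QX_{1j}=Q_{-j}(n^{-1/2}X_{1j}c_{1j}+n^{-1/2}X_{2j}c_{2j})$ and $n^{-1/2}QX_{2j}=Q_{-j}(n^{-1/2}X_{2j}d_{1j}-n^{-1/2}X_{1j}d_{2j})$, with $c_{1j}=(1+E_j(1,2))Den(j)$, $c_{2j}=-E_j(1,1)Den(j)$, $d_{1j}=(1+E_j(2,1))Den(j)$, $d_{2j}=E_j(2,2)Den(j)$ and $Den(j)=\big((1+E_j(1,2))(1+E_j(2,1))-E_j(1,1)E_j(2,2)\big)^{-1}$. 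Substituting into $\mathrm{Term}_2$ produces, exactly as in \eqref{T2_simplification}, a sum over $j$ of products of $c_{1j},c_{2j},d_{1j},d_{2j}$ against $F_j(r,s):=\tfrac1n X_{rj}^*\bar Q M_n Q_{-j}X_{sj}$. I would then establish the uniform-in-$j$ a.s.\ limits (analogue of Lemma~\ref{uniformConvergence}): applying Lemma~\ref{quadraticForm} with the vectors $Z_{rj}$, $\tfrac1{\sqrt2}(Z_{1j}+Z_{2j})$ and $\tfrac1{\sqrt2}(Z_{1j}+\mathbbm{i}Z_{2j})$, together with the rank-$2$ perturbation estimate and Lemma~\ref{lA.5} to isolate cross terms, gives $\max_j|E_j(r,r)-c_nh_n|\xrightarrow{a.s.}0$, $\max_j|E_j(r,s)|\xrightarrow{a.s.}0$ ($r\ne s$), and likewise $\max_j|F_j(r,r)-m_n|\xrightarrow{a.s.}0$, $\max_j|F_j(r,s)|\xrightarrow{a.s.}0$ ($r\ne s$), where $m_n:=\tfrac1n\operatorname{trace}\{\Sigma_n\bar Q M_n Q\}$. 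The lower bound on $\Im(c_n\mathbb{E}h_n)$, hence on $\Im(c_nh_n)$, keeps $Den(j)$ bounded and bounded away from the singularity, so by Lemma~\ref{lA.6} the coefficients $c_{1j},d_{1j}$ converge uniformly to $w_n:=(1-(c_nh_n)^2)^{-1}$ and $c_{2j},d_{2j}$ to $\mp c_nh_nw_n$ respectively. Feeding these into $\mathrm{Term}_2$ and applying Lemma~\ref{lA.7} yields $|\mathrm{Term}_2+2h_nw_nm_n|\xrightarrow{a.s.}0$; since $-2c_nh_nw_n=\sigma(c_nh_n)$ and $m_n=c_n\cdot\tfrac1p\operatorname{trace}\{\Sigma_n\bar Q M_n Q\}$, this is $|\mathrm{Term}_2-\tfrac1p\operatorname{trace}\{\sigma(c_nh_n)\Sigma_n\bar Q M_n Q\}|\xrightarrow{a.s.}0$. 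Finally, $|h_n-\mathbb{E}h_n|\xrightarrow{a.s.}0$, the continuity analogue of Lemma~\ref{rhoContinuity}, and $|m_n|\le BC/\Im^2(z)$ give $|\tfrac1p\operatorname{trace}\{(\sigma(c_nh_n)-\sigma(c_n\mathbb{E}h_n))\Sigma_n\bar Q M_n Q\}|\xrightarrow{a.s.}0$, whence $|\mathrm{Term}_2-\mathrm{Term}_1|\xrightarrow{a.s.}0$, proving the theorem.

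The main obstacle is precisely this core step: re-deriving the rank-$2$ update with the $+$ sign (the $2\times2$ matrix $I_2+R^*B^{-1}Q$ changes from its form in the skew case) and, above all, controlling $Den(j)$, i.e.\ ensuring that $c_n\mathbb{E}h_n(z)$ and its limit stay uniformly away from the poles $\pm1$ of $\sigma$. This is where the Hermitian analogue of Lemma~\ref{boundedAwayFromZero}, giving $\Im(c_n\mathbb{E}h_n(z))\ge K_0>0$ for large $n$, is indispensable; once it is in hand, everything else mirrors Section~\ref{ProofDeterministicEquivalent} verbatim.
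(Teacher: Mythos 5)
Your proposal is correct and follows essentially the same route as the paper's own proof in Appendix D: the same resolvent-identity decomposition into $\mathrm{Term}_1-\mathrm{Term}_2$, the same Hermitian rank-$2$ Woodbury identity (Lemma \ref{Rank2Woodbury_Im}), and the same chain of auxiliary lemmas (compact convergence, concentration, the lower bound $\Im(c_n\mathbb{E}h_n)\geq K_0$, and the uniform limits of $c_{kj},d_{kj},F_j(r,s)$), with only a harmless difference in where you place the minus sign in $c_{2j}$. The one cosmetic slip is the claimed bound $|\sigma(w)|\leq 1/\Im(w)$ (the elementary estimate gives $2/|\Im(w)|$, and what is actually needed is $|1-(c_nh_n)^2|^{-1}\leq 1/\Im(c_nh_n)$ as in Lemma \ref{boundedAwayFromZero_Im}), which does not affect the argument.
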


\textbf{REMARK}: Let $z = u + \mathbbm{i}v$ with $v > 0$. By Lemma \ref{boundedAwayFromZero_Im}, $\Im(c_n\mathbb{E}h_n(z)) \geq K_0 > 0$ a.s. for sufficiently large $n$, where $K_0$ depends only on $z, c, \tau$ and $H$. So for large $n$, $\sigma(c_n\mathbb{E}h_n(z))$ is well defined. Expressing $\Sigma_n = P\Lambda P^*$ with $\Lambda = \operatorname{diag}(\{\lambda_j\}_{j=1}^p)$, the $j^{th}$ eigenvalue of $\Bar{Q}(z)$ is $e_j:= (-z + \lambda_j\sigma(c_n\mathbb{E}h_n))^{-1}$. Then, for sufficiently large $n$ using (\ref{ImaginaryOfSigma}), we observe that
\begin{align*}\label{ImOf_Qbar_EV_inverse}\tag{D.7}
\Im(e_j^{-1}) = \Im(-z + \lambda_j\sigma(c_n\mathbb{E}h_n)) = -\Im(z) - \lambda_j\Im(c_n\mathbb{E}h_n)\sigma_2(c_n\mathbb{E}h_n) \leq -v < 0    
\end{align*}
In particular, $(-zI_p + \sigma(c_n\mathbb{E}h_n(z))\Sigma_n)$ is invertible for large $n$. The proof of Theorem \ref{DeterministicEquivalent_Im} can be found in Section \ref{sec:ProofDeterministicEquivalent_Im}. 

\begin{definition}
    For $z \in \mathbb{C}^+$ with $u >0$, with $\Bar{Q}(z)$ as defined in Theorem \ref{DeterministicEquivalent_Im}, we define the following
     \begin{align}
        \Tilde{h}_n(z) :=&\frac{1}{p}\operatorname{trace}\{\Sigma_n \Bar{Q}(z)\} = \displaystyle\int\dfrac{\lambda dF^{\Sigma_n}(\lambda)}{-z + \lambda \sigma(c_n\mathbb{E}h_n)} \label{definingHnTilde_Im}\tag{D.8}\\
        \Bar{\Bar{Q}}(z) :=& \bigg(-zI_p + \sigma(c_n\Tilde{h}_n(z))\Sigma_n\bigg)^{-1} \label{definingQBarBar_Im}\tag{D.9}\\
        \Tilde{\Tilde{h}}_n(z) :=& \frac{1}{p}\operatorname{trace}\{\Sigma_n \Bar{\Bar{Q}}(z)\} = \displaystyle\int\dfrac{\lambda dF^{\Sigma_n}(\lambda)}{-z + \lambda \sigma(c_n\Tilde{h}_n(z))} \label{definingHnTilde2_Im}\tag{D.10}
    \end{align}
\end{definition}

\begin{theorem}\label{ExistenceA123_Im} \textbf{(Existence of Solution)}
Under Assumptions \ref{A123}, for $z \in \mathbb{C}^+$, $h_n(z) \xrightarrow{a.s.} h^\infty(z) \in \mathbb{C}^+$ which satisfies (\ref{6.2}). Moreover, $s_n(z) \xrightarrow{a.s.} s^\infty(z)$ where $\underset{y \rightarrow \infty}{\lim}\mathbbm{i}ys^\infty(\mathbbm{i}y) = -1$ and 
$$s^\infty(z) = \frac{1}{z}\bigg(\frac{2}{c}-1\bigg) + \frac{1}{cz}\bigg(\frac{1}{-1 +ch^\infty(z)} - \frac{1}{1 + ch^\infty(z)}\bigg)$$
\end{theorem}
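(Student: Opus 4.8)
The plan is to mirror, essentially line by line, the proof of Theorem~\ref{ExistenceA123} given in Section~\ref{ProofExistenceA123}, with $\mathbb{C}_L$ replaced by $\mathbb{C}^+$, the function $\rho$ replaced by $\sigma$ (see~(\ref{defining_sigma})), and every estimate phrased in terms of $|\Re(\cdot)|$ replaced by the corresponding one in terms of $|\Im(\cdot)|$. The structural identity $\Re(\rho(z))=\rho_2(z)\Re(z)$ is replaced by $\Im(\sigma(z))=-\sigma_2(z)\Im(z)$ from~(\ref{ImaginaryOfSigma}), which is precisely what makes $\sigma$ send the relevant half--plane correctly, and the resolvent bound $\|Q(z)\|_{op}\le 1/|\Re(z)|$ is replaced by $\|Q(z)\|_{op}\le 1/|\Im(z)|$.

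First I would fix $z\in\mathbb{C}^+$. By Lemma~\ref{CompactConvergence_Im} and Montel's theorem, every subsequence of $\{h_n(z)\}$ admits a further subsequence $\{h_{k_m}(z)\}$ converging, uniformly on compact subsets of $\mathbb{C}^+$, to some $h_0(z)$; by Lemma~\ref{boundedAwayFromZero_Im} we have $\Im(c_{k_m}h_{k_m}(z))\ge K_0>0$ for large $m$, hence $\Im(ch_0(z))>0$, so $h_0(z)\in\mathbb{C}^+$, $\sigma(ch_0(z))$ is well defined, and $\Im(\sigma(ch_0(z)))=-\sigma_2(ch_0)\,\Im(ch_0)<0$. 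Using Theorem~\ref{DeterministicEquivalent_Im} (so that $|\tilde h_n-h_n|\xrightarrow{a.s.}0$, hence $\tilde h_{k_m}\to h_0$) together with the $\mathbb{C}^+$--analogue of Lemma~\ref{hn_tilde_tilde2} (giving $|\tilde h_n-\tilde{\tilde h}_n|\to 0$), and splitting the defining integral over $d\{F^{\Sigma_{k_m}}-H\}$ and $dH$, I obtain
$$
\tilde h_{k_m}(z)-\int\frac{\lambda\, d\{F^{\Sigma_{k_m}}(\lambda)-H(\lambda)\}}{-z+\lambda\sigma(c_{k_m}\tilde h_{k_m}(z))}-\int\frac{\lambda\, dH(\lambda)}{-z+\lambda\sigma(c_{k_m}\tilde h_{k_m}(z))}\longrightarrow 0 .
$$
The common integrand is bounded by $|\lambda|/|\Im(-z+\lambda\sigma(c_{k_m}\tilde h_{k_m}))|=|\lambda|/\big(\Im(z)+\lambda\,\Im(c_{k_m}\tilde h_{k_m})\sigma_2(c_{k_m}\tilde h_{k_m})\big)\le 1/|\Im(\sigma(c_{k_m}\tilde h_{k_m}))|$, which converges to $1/|\Im(\sigma(ch_0))|<\infty$; so the $d\{F^{\Sigma_{k_m}}-H\}$ term vanishes by weak convergence of $F^{\Sigma_n}$, and the $dH$ term converges by dominated convergence. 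Hence $h_0(z)\in\mathbb{C}^+$ satisfies~(\ref{6.2}), and Theorem~\ref{Uniqueness_Im} forces all subsequential limits to coincide, giving $h_n(z)\xrightarrow{a.s.}h^\infty(z)$ with convergence uniform on compacts; consequently $h^\infty$ is analytic on $\mathbb{C}^+$ and solves~(\ref{6.2}).

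For the Stieltjes transform I would introduce the intermediary $\tilde s_n(z):=\tfrac{1}{z}(\tfrac{2}{c_n}-1)+\tfrac{1}{c_nz}\big(\tfrac{1}{-1+c_n\tilde h_n(z)}-\tfrac{1}{1+c_n\tilde h_n(z)}\big)$, which tends to $s^\infty(z)$ since $c_n\to c$ and $\tilde h_n\to h^\infty$, and then show $s_n(z)-\tilde s_n(z)\xrightarrow{a.s.}0$ by the same resolvent/co-resolvent computation as in Section~\ref{ProofExistenceA123}: with $A=\tfrac{1}{\sqrt n}[X_1:X_2]$ and $B=\tfrac{1}{\sqrt n}[X_2:X_1]^*$ one has $S_n=AB$ and the co--resolvent $\tilde Q(z)=(BA-zI_{2n})^{-1}$, whose diagonal blocks $U_{jj},V_{jj}$ are handled via the rank--two Woodbury expansion (Lemma~\ref{Rank2Woodbury}) and the uniform estimates furnished by the $\mathbb{C}^+$--analogue of Lemma~\ref{uniformConvergence}, yielding $\max_j|U_{jj}+(1+(c_n\tilde h_n)^2)^{-1}|\xrightarrow{a.s.}0$ and similarly for $V_{jj}$; the trace identity $\tfrac{1}{2n}\operatorname{trace}\tilde Q=\tfrac{1}{2n}\operatorname{trace}Q+\tfrac{p-2n}{2nz}$ then produces $|s_n(z)-\tilde s_n(z)|\xrightarrow{a.s.}0$. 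Finally, from the bound~(\ref{h_nBound_Im}), $|h_n|\le C/|\Im(z)|$, one gets $h^\infty(\mathbbm{i}y)\to 0$ as $y\to\infty$, whence $\mathbbm{i}y\,s^\infty(\mathbbm{i}y)\to-1$; the real--line version of Proposition~\ref{GeroHill} (i.e.\ Theorem~1 of~\cite{GeroHill03}) then upgrades $s_n\xrightarrow{a.s.}s^\infty$ to $F^{S_n}\xrightarrow{d}F^\infty$ a.s.

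I expect the main obstacle to be purely the verification that each auxiliary ingredient genuinely transfers to the $\mathbb{C}^+$ setting --- the concentration bounds of Lemma~\ref{ConcentrationOfSnH_Im} and the max--over--$j$ estimates, and in particular that the operator--norm control $\|Q(z)\|_{op},\|\bar Q(z)\|_{op}\le 1/|\Im(z)|$ (via the eigenvalue computation~(\ref{ImOf_Qbar_EV_inverse})) plays exactly the role $1/|\Re(z)|$ played in the skew--Hermitian case. There is no new conceptual content; the only delicate point is keeping signs straight, since here it is $\Im(\sigma(ch))<0$ (rather than $\Re(\rho(ch))>0$) that dominates the integrands, and one must check that Lemma~\ref{boundedAwayFromZero_Im} indeed supplies $\Im(c_n\mathbb{E}h_n)$ and $\Im(c_n\tilde h_n)$ bounded away from zero for large $n$.
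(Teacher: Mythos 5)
Your proposal follows essentially the same route as the paper's proof in Section~\ref{sec:ProofExistenceA123_Im}: Montel/subsequential limits pinned down by Theorem~\ref{Uniqueness_Im}, the $\tilde h_n,\tilde{\tilde h}_n$ intermediaries, and the co-resolvent trace identity. One sign slip of exactly the kind you flag: in the Hermitian case $v_n=(1-(c_nh_n)^2)^{-1}$ and the diagonal-block limit is $\max_j|U_{jj}+(1-(c_n\tilde h_n)^2)^{-1}|\xrightarrow{a.s.}0$ (not $1+(c_n\tilde h_n)^2$), and the Woodbury expansion needed is the $B+uv^*+vu^*$ variant (Lemma~\ref{Rank2Woodbury_Im}), not Lemma~\ref{Rank2Woodbury}.
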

The proof is given in Section \ref{sec:ProofExistenceA123_Im}.

\subsection{Proof of Existence of solution under General Conditions}\label{ExistenceGeneral_Im}
In this section, we will prove (2)-(4) of Section \ref{ProofSketch} under the general conditions of Theorem \ref{t6.1}. To achieve this, we will create a sequence of matrices \textit{similar} to $\{S_n\}_{n=1}^\infty$ but satisfying \textbf{A1-A2} of Section \ref{A123}. The outline of the proof is given below with necessary details split into individual modules.
\begin{description}
    \item[Step1]  For a p.s.d.  matrix A and $\tau > 0$, let $A^\tau$ represent the matrix obtained by replacing all eigenvalues greater than $\tau$ with 0 in the spectral decomposition of A. For a distribution function $H$ supported on $\mathbb{R}_+$, define $H^{\tau}(t) := \mathbbm{1}_{\{t > \tau\}} + (H(t) + 1 - H(\tau))\mathbbm{1}_{\{0 \leq t \leq \tau\}}$. $H^\tau$ is a distribution function that transfers all mass of $H$ beyond $\tau$ to the point $0$.
    \item[Step2] Denote $\Lambda_n := \Sigma_n^\frac{1}{2}$ and $\Lambda_n^\tau := (\Sigma_n^\tau)^\frac{1}{2}$
    \item[Step3] We have $S_n := \frac{1}{n}\Lambda_n(Z_1Z_2^* + Z_2Z_1^*)\Lambda_n$
    \item[Step4] Define $T_n := \frac{1}{n}\Lambda_n^{\tau}(Z_1Z_2^* + Z_2Z_1^*)\Lambda_n^{\tau}$. $\Lambda_n^\tau$ satisfies \textbf{A1} of Section \ref{A123}.
    \item[Step5] Recall that for $k \in \{1, 2\}$, we have $Z_k = ((z_{ij}^{(k)})) \in \mathbb{C}^{p \times n}$. With $B_n = n^a$ as in \textbf{A2} of Assumptions \ref{A123}, define $\hat{Z}_k := (\hat{z}_{ij}^{(k)})_{ij}$ with $\hat{z}_{ij}^{(k)} = z_{ij}^{(k)}\mathbbm{1}_{\{|z_{ij}^{(k)}| \leq B_n\}}$. Now, let $U_n := \frac{1}{n}\Lambda_n^\tau(\hat{Z_1}\hat{Z_2}^* + \hat{Z_2}\hat{Z_1}^*)\Lambda_n^\tau$.
    \item[Step6] Construct $\Tilde{U}_n:=\frac{1}{n}\Lambda_n^\tau(\Tilde{Z_1}\Tilde{Z_2}^* + \Tilde{Z_2}\Tilde{Z_1}^*)\Lambda_n^\tau$ where $\Tilde{Z}_k = \hat{Z}_k - \mathbb{E}\hat{Z}_k$. Then, $\Sigma_n^\tau$ satisfies \textbf{A1} and $\Tilde{Z}_k$, $k=1,2$ satisfy \textbf{A2} of Assumptions \ref{A123}. Let $s_n(z), t_n(z), u_n(z), \Tilde{u}_n(z)$ be the the Stieltjes transforms of $F^{S_n}, F^{T_n},$ $F^{U_n}, F^{\Tilde{U}_n}$ respectively.
    \item[Step7] By Theorem \ref{ExistenceA123_Im}, $F^{\Tilde{U}_n} \xrightarrow{a.s.} F^\tau$ for some $F^\tau$ which is characterised by a pair $(h^\tau, s^\tau)$ satisfying (\ref{6.1}) and (\ref{6.2}) with $H^\tau$ instead of $H$. In particular, $|\Tilde{u}_n(z) - s^\tau(z)| \xrightarrow{a.s.} 0$ by the same theorem.
    \item[Step8] Next we show that $h^\tau$ converges to some non-random limit as $\tau \rightarrow \infty$. Using Montel's Theorem, we are able to show that any arbitrary subsequence of $\{h^\tau\}$ has a further subsequence $\{h^{\tau_m}\}_{m=1}^\infty$ that converges uniformly on compact subsets (of $\mathbb{C}^+$) as $m \rightarrow \infty$. Each subsequential limit will be shown to belong to $\mathbb{C}^+$ and satisfy (\ref{6.2}). Hence by Theorem \ref{Uniqueness_Im}, all these subsequential limits must be the same which we denote by $h^\infty$. Therefore, $h^\tau \xrightarrow{\tau \rightarrow \infty} h^\infty$.
    \item[Step9] We derive $s^\infty$ from $h^\infty$ using (\ref{6.1}) and show that $s^\infty$ satisfies the condition in Theorem 1 of \cite{GeroHill03} (quoted in Proposition \ref{GeroHill} of this paper) to be a Stieltjes transform of a measure over the imaginary axis. So, there exists some distribution $F^\infty$ corresponding to $s^\infty$. 
   \textbf{Step8} and \textbf{Step9} are shown explicitly in Lemma \ref{l.D1}.
    \item[Step10]  We have
$$|s_n(z) - s^\infty(z)| \leq |s_n(z) - t_n(z)| + |t_n(z)-u_n(z)| + |u_n(z)-\Tilde{u}_n(z)| + |\Tilde{u}_n(z)-s^\tau(z)| + |s^\tau(z) - s^\infty(z)|$$
We will show that each term on the RHS goes to 0 as $n, \tau \rightarrow \infty$. \begin{itemize}
    \item  From Lemma \ref{l.D2} and (\ref{Levy_vs_uniform}), $L(F^{S_n}, F^{T_n}) \leq ||F^{S_n} - F^{T_n}|| \xrightarrow{a.s.} 0$
    \item From Lemma \ref{l.D3} and (\ref{Levy_vs_uniform}), $L(F^{T_n}, F^{U_n}) \leq ||F^{T_n} - F^{U_n}|| \xrightarrow{a.s.} 0$ 
    \item From Lemma \ref{l.D3} and (\ref{Levy_vs_uniform}), $L(F^{U_n}, F^{\Tilde{U}_n}) \leq ||F^{U_n} - F^{\Tilde{U}_n}|| \xrightarrow{a.s.} 0$
    \item Application of Lemma \ref{lD.6} on the above three items gives $|s_n(z) - t_n(z)| \xrightarrow{a.s.} 0$, $|t_n(z) - u_n(z)| \xrightarrow{a.s.} 0$ and $|u_n(z) - \Tilde{u}_n(z)| \xrightarrow{a.s.} 0$ respectively.
    \item From \textbf{Step7}, we have $|\Tilde{u}_n(z) - s^\tau(z)| \xrightarrow{a.s.} 0$
    \item $|s^\tau(z) - s^\infty(z)| \rightarrow 0$ is shown in Lemma \ref{l.D1}.
\end{itemize}
\item[Step11] Hence  $s_n(z) \xrightarrow{a.s.} s^\infty(z)$ which is a Stieltjes transform. Therefore, $F^{S_n} \xrightarrow{a.s.} F^\infty$ where $F^\infty$ is characterised by $(h^\infty, s^\infty)$ which satisfy (\ref{6.1}) and (\ref{6.2}). This concludes the proof of Theorem \ref{t6.1} in the general case.
\end{description}

\subsection{A few preliminary results}\label{sec:preliminaries_Im}

\begin{lemma}\label{lD.6}
    Let $\{F_n, G_n\}_{n=1}^\infty$ be sequences of distribution functions on $\mathbb{R}$ with $s_{F_n}(z), s_{G_n}(z)$ denoting their respective Stieltjes transforms at $z \in \mathbb{C}^+$. If $L(F_n, G_n) \rightarrow 0$, then $|s_{F_n}(z) - s_{G_n}(z)| \rightarrow 0$.
\end{lemma}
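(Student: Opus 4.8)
The plan is to reproduce, almost verbatim, the argument of Lemma \ref{lA.1}, the only simplification being that here $F_n$ and $G_n$ already live on the real line, so no passage to ``real counterparts'' is needed and the classical L\'evy metric $L(\cdot,\cdot)$ appears directly. Throughout I keep $z \in \mathbb{C}^+$ fixed.

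First I would introduce the resolvent kernel as a test function: set $g_z : \mathbb{R} \to \mathbb{C}$, $g_z(x) := (x - z)^{-1}$, so that $s_{F_n}(z) - s_{G_n}(z) = \int_{\mathbb{R}} g_z\, dF_n - \int_{\mathbb{R}} g_z\, dG_n$. Two elementary estimates show $g_z$ is bounded Lipschitz: $|g_z(x)| \le 1/|\Im(z)|$ for every $x$, and
$$|g_z(x_1) - g_z(x_2)| = \frac{|x_1 - x_2|}{|x_1 - z||x_2 - z|} \le \frac{|x_1 - x_2|}{\Im^2(z)},$$
so that $\|g_z\|_{BL} \le 1/|\Im(z)| + 1/\Im^2(z) < \infty$, where $\|\cdot\|_{BL}$ is the bounded Lipschitz norm recalled in the proof of Lemma \ref{lA.1}. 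Normalizing, $g := g_z / \|g_z\|_{BL}$ satisfies $\|g\|_{BL} = 1$.

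Next I would invoke the two-sided comparison between the L\'evy metric $L$ and the bounded Lipschitz metric $\beta$ (Corollary 18.4 and Theorem 8.3 of \cite{Dudley}, already quoted as (\ref{Levy_B})): $\tfrac12 \beta(F_n, G_n) \le L(F_n, G_n) \le 3\sqrt{\beta(F_n, G_n)}$. Hence $L(F_n, G_n) \to 0$ forces $\beta(F_n, G_n) \to 0$, and since $\beta$ is the supremum of $|\int h\, dF_n - \int h\, dG_n|$ over test functions with $\|h\|_{BL} \le 1$, applying it to $h = g$ gives $|\int g\, dF_n - \int g\, dG_n| \to 0$. Multiplying by the fixed constant $\|g_z\|_{BL}$ yields $|\int g_z\, dF_n - \int g_z\, dG_n| \to 0$, i.e. $|s_{F_n}(z) - s_{G_n}(z)| \to 0$, which is the claim.

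I do not expect a genuine obstacle here: this is the classical fact that convergence in the L\'evy metric implies pointwise convergence of Stieltjes transforms on $\mathbb{C}^+$, and every ingredient --- the bounded Lipschitz bound on the resolvent kernel and the L\'evy/bounded-Lipschitz comparison --- is already in place in the excerpt. The only point deserving a word of care is that $z$ is held fixed throughout, so all constants depend on $z$ (through $\Im(z)$) but not on $n$; uniformity in $z$ is neither asserted nor needed for the use of this lemma in \textbf{Step10} of Section \ref{ExistenceGeneral_Im}.
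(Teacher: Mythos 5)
Your proposal is correct and is exactly the route the paper takes: the paper's proof of this lemma simply defers to Lemma \ref{lA.1}, whose argument you reproduce faithfully with the real-line kernel $g_z(x)=(x-z)^{-1}$ and bounds in terms of $\Im(z)$ in place of $\Re(z)$. Nothing is missing.
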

\begin{proof}
    The proof is similar to that of Lemma \ref{lA.1}.
\end{proof}

\begin{lemma} \label{Rank2Woodbury_Im}
Let $B \in \mathbb{C}^{p \times p}$ be of the form $B = A - zI_p$ for some Hermitian matrix $A$ and $z \in \mathbb{C}^+$. For vectors $u, v \in \mathbb{C}^p$, define $\langle u, v\rangle := u^*B^{-1}v$. Then,
\begin{description}
    \item[1] $(B+ uv^{*} + vu^{*})^{-1}u = B^{-1}(\alpha_1 u + \beta_1 v);\alpha_1 = (1 + \langle u,v\rangle)D(u,v); \beta_1 = -\langle u,u\rangle D(u,v)$
    \item[2] $(B+ uv^{*} + vu^{*})^{-1}v = B^{-1}(\alpha_2 u + \beta_1 v);\alpha_1 = (1 + \langle v,u\rangle)D(u,v); \beta_1 = -\langle v,v\rangle D(u,v)$
\end{description}
where $D(u,v) =  \bigg((1 + \langle u,v\rangle)(1 + \langle v,u \rangle)) -  \langle u,u\rangle\langle v,v\rangle\bigg)^{-1}$
\end{lemma}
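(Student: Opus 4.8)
\textbf{Proof proposal for Lemma \ref{Rank2Woodbury_Im}.}

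The plan is to mirror exactly the argument used for the skew-Hermitian analogue in Lemma \ref{Rank2Woodbury}, replacing the rank-two perturbation $uv^* - vu^*$ by $uv^* + vu^*$. First I would note that $B = A - zI_p$ with $A$ Hermitian and $z \in \mathbb{C}^+$ cannot have $0$ as an eigenvalue, since the eigenvalues of $A$ are real and $\Im(z) > 0$; hence $B^{-1}$ exists and $\langle u, v\rangle = u^*B^{-1}v$ is well defined for all $u,v \in \mathbb{C}^p$. Then I would write the perturbation in the low-rank form $B + uv^* + vu^* = P + QR^*$ with $P = B$, $Q = [u : v]$ and $R = [v : u]$ (note: here $R = [v:u]$, not $[v:-u]$ as in the skew case — this sign change is the only structural difference), so that $R^*P^{-1}Q = \begin{bmatrix} \langle v,u\rangle & \langle v,v\rangle \\ \langle u,u\rangle & \langle u,v\rangle \end{bmatrix}$.

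Next I would apply the Woodbury identity in the form already displayed as (\ref{ShermanMorrison}) in the proof of Lemma \ref{Rank2Woodbury}, namely
\[
(P + QR^*)^{-1}Q = P^{-1}Q\bigl(I_2 - (I_2 + R^*P^{-1}Q)^{-1}R^*P^{-1}Q\bigr),
\]
and then compute the $2\times 2$ inverse explicitly. With $I_2 + R^*P^{-1}Q = \begin{bmatrix} 1+\langle v,u\rangle & \langle v,v\rangle \\ \langle u,u\rangle & 1+\langle u,v\rangle \end{bmatrix}$, its determinant is precisely $(1+\langle u,v\rangle)(1+\langle v,u\rangle) - \langle u,u\rangle\langle v,v\rangle = D(u,v)^{-1}$, so $D(u,v)$ is well defined (one should remark that $D(u,v)^{-1} \neq 0$; this follows because $P + QR^*$ and $P$ are invertible, so the matrix $I_2 + R^*P^{-1}Q$ must be invertible). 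Multiplying out $B^{-1}[u:v]\bigl(I_2 - D(u,v)\,\mathrm{adj}(I_2+R^*P^{-1}Q)\,R^*P^{-1}Q\bigr)$ and reading off the two columns yields the stated expressions for $(B + uv^* + vu^*)^{-1}u$ and $(B+uv^*+vu^*)^{-1}v$, with the coefficients $\alpha_1, \beta_1, \alpha_2, \beta_2$ as claimed. This is entirely routine linear algebra once the matrix $Q, R$ are set up correctly.

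I do not anticipate any real obstacle here — the lemma is the Hermitian mirror image of Lemma \ref{Rank2Woodbury}, and the only care needed is bookkeeping of signs: the $+vu^*$ term changes $R = [v:-u]$ to $R = [v:u]$, which flips the sign in front of $\langle u,u\rangle$ and $\langle v,v\rangle$ in $R^*P^{-1}Q$ and hence turns the $(1-\langle u,v\rangle)$ factors into $(1+\langle u,v\rangle)$ and the $+\langle u,u\rangle\langle v,v\rangle$ in $D$ into $-\langle u,u\rangle\langle v,v\rangle$. I would also flag the (harmless) typo in the statement: in item \textbf{2} the coefficients should read $\alpha_2 = (1+\langle v,u\rangle)D(u,v)$ and $\beta_2 = -\langle v,v\rangle D(u,v)$, multiplying $u$ and $v$ respectively, by symmetry with item \textbf{1}. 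The proof can therefore be given in a few lines, essentially by citing the computation in the proof of Lemma \ref{Rank2Woodbury} with the indicated sign changes.
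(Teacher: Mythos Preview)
Your proposal is correct and follows essentially the same approach as the paper: the paper's proof also sets $P=B$, $Q=[u:v]$, $R=[v:u]$, notes $\det(I_2+R^*P^{-1}Q)^{-1}=D(u,v)$, and applies the Woodbury identity (\ref{ShermanMorrison}) exactly as in Lemma~\ref{Rank2Woodbury}, reading off the $2\times 2$ block to obtain the coefficients. Your observation about the typo in item~\textbf{2} of the statement is also accurate.
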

\begin{proof}
Clearly, $B$ cannot have zero as eigenvalue. So $\langle u,v \rangle$ is well defined.
Let $P = B$, $Q=[u:v]$ and $R=[v: u]$. Note that $\operatorname{det}(I_2 + R^*P^{-1}Q)^{-1} = D(u,v)$. So, $D(u,v)$ is well-defined. Finally, observing that $B + uv^*+ vu^* = P + QR^*$, we use (\ref{ShermanMorrison}) to get
\begin{align*}
    &(B + uv^* + vu^*)^{-1}[u:v]\\
    =& B^{-1}[u:v]\bigg(I_2 - (I_2 + R^*P^{-1}Q)^{-1}R^*P^{-1}Q\bigg)\\
    =& B^{-1}[u:v] \bigg(I_2 -{\begin{bmatrix}
        1+\langle v,u \rangle & \langle v,v \rangle\\
        \langle u,u \rangle & 1+\langle u,v \rangle
    \end{bmatrix}}^{-1}\begin{bmatrix}
        \langle v,u \rangle &  \langle v,v \rangle\\
        \langle u,u \rangle &  \langle u,v \rangle\\        
    \end{bmatrix}\bigg)\\
    =&  B^{-1}[u:v] \bigg(I_2 - D(u,v)\begin{bmatrix}
        1+\langle u,v \rangle & -\langle v,v \rangle\\
        -\langle u,u \rangle & 1+\langle v,u \rangle
    \end{bmatrix}\begin{bmatrix}
        \langle v,u \rangle &  \langle v,v \rangle\\
        \langle u,u \rangle &  \langle u,v \rangle\\        
    \end{bmatrix}\bigg)\\
    =& D(u,v)B^{-1}[u:v] \begin{bmatrix}
        \alpha_1 & \beta_2\\
        \beta_1 & \alpha_2
    \end{bmatrix}
\end{align*}

\end{proof}

\begin{lemma}\label{Tightness_Im}
    $\{F^{S_n}\}_{n=1}^\infty$ is a tight sequence.
\end{lemma}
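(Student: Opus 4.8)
The plan is to reproduce the argument of Lemma \ref{tightness} almost verbatim, with the skew-symmetrization $[Z_2:-Z_1]$ replaced by the symmetrization $[Z_2:Z_1]$; since $S_n$ is now Hermitian, all the relevant empirical distribution functions are supported on the real line and no passage to $-\mathbbm{i}S_n$ is needed. First I would set $A := \tfrac{1}{\sqrt{n}}[Z_1:Z_2]$ and $B := \tfrac{1}{\sqrt{n}}[Z_2:Z_1]^*$, so that $AB = \tfrac{1}{n}(Z_1Z_2^* + Z_2Z_1^*)$ and hence $S_n = \Sigma_n^{1/2}AB\Sigma_n^{1/2}$. Writing $M_n := AB\Sigma_n$, the matrices $M_n$ and $S_n$ have the same spectrum, and
$$Z_{0n} := \tfrac{1}{n}(Z_1Z_1^* + Z_2Z_2^*) = AA^* = B^*B.$$

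Next, for arbitrary $K_1,K_2,K_3 > 0$ with $K := K_1K_2K_3$, I would apply Lemma 2.3 of \cite{BaiSilv95} (the bound on the singular values of a product, together with the fact that moduli of eigenvalues are dominated by singular values) to obtain
$$F^{S_n}\{\mathbb{R}\setminus[-K,K]\} = F^{\sqrt{M_nM_n^*}}\{(K,\infty)\} \le F^{\sqrt{AA^*}}\{(K_1,\infty)\} + F^{\sqrt{BB^*}}\{(K_2,\infty)\} + F^{\sqrt{\Sigma_n^2}}\{(K_3,\infty)\},$$
which, using that $BB^*$ and $B^*B$ share their nonzero eigenvalues, becomes $F^{Z_{0n}}\{(K_1^2,\infty)\} + F^{Z_{0n}}\{(K_2^2,\infty)\} + F^{\Sigma_n}\{(K_3,\infty)\}$.

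Finally, given $\varepsilon > 0$ I would invoke the tightness of $\{F^{Z_{0n}}\}_{n=1}^\infty$ and of $\{F^{\Sigma_n}\}_{n=1}^\infty$ to choose $K_1,K_2,K_3$ large, and then $n_0$ large, so that each of the three terms on the right is below $\varepsilon/3$ for all $n \ge n_0$, the finitely many remaining $n$ being handled trivially; this yields tightness of $\{F^{S_n}\}_{n=1}^\infty$. The only point needing a word of justification is the tightness of $\{F^{Z_{0n}}\}$, which I expect to be the main (and very minor) obstacle: each $\tfrac{1}{n}Z_kZ_k^*$ is a sample covariance matrix built from i.i.d., zero-mean, unit-variance, finite-moment entries in the regime $p/n \to c$, so its ESD converges a.s. to a Mar\v{c}enko--Pastur law and is therefore tight, and tightness is preserved under the sum $Z_{0n} = \tfrac{1}{n}Z_1Z_1^* + \tfrac{1}{n}Z_2Z_2^*$ by Weyl's inequality --- exactly as was used in the proof of Lemma \ref{tightness}.
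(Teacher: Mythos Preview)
Your proposal is correct and is exactly the approach the paper intends: its own proof of this lemma is the single sentence ``The proof is nearly identical to that of Lemma~\ref{tightness},'' and you have carried out precisely that adaptation --- replacing $B=\tfrac{1}{\sqrt n}[Z_2:-Z_1]^*$ by $B=\tfrac{1}{\sqrt n}[Z_2:Z_1]^*$, working on the real axis, and invoking Lemma~2.3 of \cite{BaiSilv95} together with tightness of $\{F^{Z_{0n}}\}$ and $\{F^{\Sigma_n}\}$. One cosmetic point: in your display the step from $F^{S_n}\{\mathbb{R}\setminus[-K,K]\}$ to $F^{\sqrt{M_nM_n^*}}\{(K,\infty)\}$ should be ``$\le$'' rather than ``$=$'' (as your parenthetical ``moduli of eigenvalues are dominated by singular values'' already suggests); the paper's Lemma~\ref{tightness} has the same slip, and only the inequality is needed.
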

\begin{proof}
The proof is nearly identical to that of Lemma \ref{tightness}.     
\end{proof}

\begin{lemma}\label{Rank2Perturbation_Im}
Let $M_n \in \mathbb{C}^{p \times p}$ be a sequence of deterministic matrices with bounded operator norm, i.e. $||M_n||_{op} < B$ for some $B \geq 0$. Under Assumptions \ref{A123}, for $z \in \mathbb{C}^+$ and sufficiently large $n$, we have $$\underset{1 \leq j \leq n}{\max}|\operatorname{trace}\{M_nQ(z)\} - \operatorname{trace}\{M_nQ_{-j}(z)\}| \leq \dfrac{4cC B}{\Im^2 (z)} \text{ a.s.}$$
Consequently, $\underset{1 \leq j \leq n}{\max}|\frac{1}{p}\operatorname{trace}\{M_n(Q - Q_{-j})\}| \xrightarrow{a.s.} 0$
\end{lemma}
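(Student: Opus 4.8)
The plan is to mirror the proof of Lemma \ref{Rank2Perturbation} essentially verbatim, replacing every bound phrased in terms of $\Re(z)$ by the corresponding bound in terms of $\Im(z)$, since for $z \in \mathbb{C}^+$ and any Hermitian $A$ one has $\|(A - zI_p)^{-1}\|_{op} \le 1/|\Im(z)|$. First I would apply the resolvent identity (R0) of Section \ref{R123}, together with the rank-two perturbation $S_n - S_{nj} = \tfrac1n(X_{1j}X_{2j}^* + X_{2j}X_{1j}^*)$ (note the $+$ rather than the $-$ of the skew-Hermitian case, which is harmless for a modulus estimate), and the cyclic property of the trace, to obtain
$$\operatorname{trace}\{M_n(Q - Q_{-j})\} = -\frac{1}{n}\left(X_{2j}^*Q_{-j}M_nQX_{1j} + X_{1j}^*Q_{-j}M_nQX_{2j}\right).$$

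Next I would bound the modulus of the right-hand side by the triangle inequality and the Cauchy--Schwarz inequality (R4) of Section \ref{R123}, extracting the operator norm $\|Q_{-j}M_nQ\|_{op} \le \|Q_{-j}\|_{op}\,\|M_n\|_{op}\,\|Q\|_{op} \le B/|\Im(z)|^2$, which leaves the quadratic forms $\tfrac1n X_{kj}^*X_{kj} = \tfrac1n Z_{kj}^*\Sigma_n Z_{kj}$, $k=1,2$, to control uniformly over $j$. For this I would invoke Lemma \ref{quadraticForm} with $z_{jn} = Z_{kj}$ and $A = \Sigma_n$ (so $\|\Sigma_n\|_{op} \le \tau$ by \textbf{A1} and $Z_k$ satisfies \textbf{A2} of Assumptions \ref{A123}), giving $\max_{1 \le j \le n}\bigl|\tfrac1n X_{kj}^*X_{kj} - \tfrac1n\operatorname{trace}(\Sigma_n)\bigr| \xrightarrow{a.s.} 0$; combined with (T1) and (T4) of Theorem \ref{t6.1}, namely $\tfrac1n\operatorname{trace}(\Sigma_n) = c_n\bigl(\tfrac1p\operatorname{trace}(\Sigma_n)\bigr) < 2cC$ for $n$ large, this yields $\max_j \tfrac1n X_{kj}^*X_{kj} < 2cC$ almost surely for large $n$. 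Assembling these estimates produces $\max_{1 \le j \le n}|\operatorname{trace}\{M_nQ\} - \operatorname{trace}\{M_nQ_{-j}\}| \le \tfrac{B}{|\Im(z)|^2}(2cC + 2cC) = \tfrac{4cCB}{|\Im(z)|^2}$ a.s., and dividing by $p$ and letting $p,n\to\infty$ gives the stated convergence $\max_j|\tfrac1p\operatorname{trace}\{M_n(Q - Q_{-j})\}| \xrightarrow{a.s.} 0$.

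The only point requiring care is bookkeeping: one must ensure that the several ``for sufficiently large $n$, almost surely'' qualifiers (from Lemma \ref{quadraticForm} applied for $k=1,2$, and from the bound $\tfrac1n\operatorname{trace}(\Sigma_n) < 2cC$) all hold on a common almost-sure event, and verify that the sign change in the rank-two perturbation does not affect any inequality. There is no genuinely new analytic difficulty here beyond what already appears in Lemma \ref{Rank2Perturbation}.
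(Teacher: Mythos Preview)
Your proposal is correct and follows exactly the approach the paper intends: the paper omits an explicit proof of this lemma, stating that it ``mimics its respective counterpart in Appendix B'' (namely Lemma \ref{Rank2Perturbation}), and your write-up does precisely that, with the appropriate replacement of $\Re(z)$ by $\Im(z)$ and the harmless sign change in the rank-two update.
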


\begin{lemma}\label{ConcentrationOfSnH_Im}
\textbf{Concentration of Stieltjes Transforms}
    Under Assumptions \ref{A123} for $z \in \mathbb{C}^+$, we have $|s_n(z) - \mathbb{E}s_n(z)| \xrightarrow{a.s.} 0$ and $|h_n(z) - \mathbb{E}h_n(z)| \xrightarrow{a.s.} 0$ .
\end{lemma}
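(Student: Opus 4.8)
The plan is to repeat, \emph{mutatis mutandis}, the martingale argument that proves Lemma \ref{ConcentrationOfSnH}, with every occurrence of $\Re(z)$ replaced by $\Im(z)$, using the fact that for a Hermitian matrix $A$ and $z \in \mathbb{C}^+$ one has $\|(A - zI_p)^{-1}\|_{op} \le 1/\Im(z)$. Introduce the filtration $\mathcal{F}_k = \sigma(\{X_{1j}, X_{2j} : k+1 \le j \le n\})$ and write $\mathbb{E}_k(\cdot) := \mathbb{E}(\cdot \mid \mathcal{F}_k)$, with $\mathbb{E}_n = \mathbb{E}$. Since $Q_{-k}(z)$ is measurable with respect to both $\mathcal{F}_k$ and $\mathcal{F}_{k-1}$, we get the martingale decomposition
$$
h_n(z) - \mathbb{E}h_n(z) = \frac{1}{p}\sum_{k=1}^n (\mathbb{E}_k - \mathbb{E}_{k-1})\operatorname{trace}\{\Sigma_n Q\} = \frac{1}{p}\sum_{k=1}^n D_k, \qquad D_k := (\mathbb{E}_k - \mathbb{E}_{k-1}) Y_k,
$$
where $Y_k := \operatorname{trace}\{\Sigma_n Q\} - \operatorname{trace}\{\Sigma_n Q_{-k}\}$. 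By the Hermitian analogue of the resolvent/Cauchy--Schwarz estimate established in the proof of Lemma \ref{Rank2Perturbation_Im}, $|Y_k| \le \tfrac{\tau}{\Im^2(z)} W_{nk}$ with $W_{nk} := \tfrac{1}{n}(\|X_{1k}\|^2 + \|X_{2k}\|^2)$, hence $|D_k| \le \tfrac{2\tau}{\Im^2(z)} W_{nk}$.

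Next I would apply Lemma 2.12 of \cite{BaiSilv09} (a Burkholder-type inequality) to the martingale difference sequence $\{D_k\}_{k=1}^n$ to obtain, for a constant $K$ depending only on $z$ and $\tau$,
$$
\mathbb{E}\bigl|h_n(z) - \mathbb{E}h_n(z)\bigr|^4 \le \frac{K}{p^4}\Bigl(\sum_{k=1}^n \mathbb{E}|W_{nk}|^4 + \sum_{k \neq l} \mathbb{E}|W_{nk}|^2\,\mathbb{E}|W_{nl}|^2\Bigr).
$$
Under Assumptions \ref{A123} the entries of $Z_1, Z_2$ are bounded by $B_n = n^a$ with $a < 1/4$ and carry a uniform bound $M_{4+\eta_0}$ on their $(4+\eta_0)$-th moments; the moment estimates for $\|Z_{rk}\|^4$ and $\|Z_{rk}\|^8$ worked out in the proof of Lemma \ref{ConcentrationOfSnH} then give $\mathbb{E}|W_{nk}|^2 = O(1)$ and $\mathbb{E}|W_{nk}|^4 = O\bigl(\max\{n^{-(3-6a)}, n^{-(2-4a)}\}\bigr) = O\bigl(n^{-(2-4a)}\bigr)$, whence $\mathbb{E}|h_n(z) - \mathbb{E}h_n(z)|^4 = O(n^{-(2-4a)})$. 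Since $a < 1/4$ forces $2 - 4a > 1$, this bound is summable, and Borel--Cantelli yields $|h_n(z) - \mathbb{E}h_n(z)| \xrightarrow{a.s.} 0$. The claim for $s_n$ is obtained by the identical argument with $\Sigma_n$ replaced by $I_p$ (so $\tau$ is replaced by $1$).

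The only point requiring care --- and thus the main, if minor, obstacle --- is the rank-two perturbation estimate $|Y_k| \le \tfrac{\tau}{\Im^2(z)} W_{nk}$, i.e. the content of Lemma \ref{Rank2Perturbation_Im}. Its proof parallels that of Lemma \ref{Rank2Perturbation}: one writes $Q - Q_{-k} = -\tfrac{1}{n} Q(X_{1k}X_{2k}^* + X_{2k}X_{1k}^*) Q_{-k}$, expands the trace into four scalar quadratic forms, and bounds each via $\|Q\|_{op}, \|Q_{-k}\|_{op} \le 1/\Im(z)$ (valid since $S_n$ and $S_{nk}$ are Hermitian and $z \in \mathbb{C}^+$) together with $\tfrac{1}{n}X_{rk}^*X_{rk} = \tfrac{1}{n}Z_{rk}^*\Sigma_n Z_{rk}$, whose maximum over $k$ is almost surely bounded by $2cC$ for large $n$ on account of (T1), (T4) and Lemma \ref{quadraticForm}. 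With that estimate in hand, all of the above goes through verbatim.
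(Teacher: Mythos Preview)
Your proposal is correct and follows exactly the approach intended by the paper: the paper explicitly omits the proof of this lemma, stating that it ``mimic[s] those of their respective counterparts in Appendix B,'' and your write-up reproduces the martingale/Burkholder argument of Lemma~\ref{ConcentrationOfSnH} with the sole necessary change $\Re(z)\mapsto\Im(z)$ in the resolvent bounds. A minor quibble: the trace $\operatorname{trace}\{\Sigma_n(Q-Q_{-k})\}$ expands into two (not four) bilinear forms $X_{2k}^*Q_{-k}\Sigma_n Q X_{1k}$ and $X_{1k}^*Q_{-k}\Sigma_n Q X_{2k}$, each of which is then bounded via Cauchy--Schwarz by $\|Q_{-k}\Sigma_n Q\|_{op}\sqrt{\tfrac{1}{n}\|X_{1k}\|^2}\sqrt{\tfrac{1}{n}\|X_{2k}\|^2}$, but this does not affect the argument.
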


\begin{lemma}\label{boundedAwayFromZero_Im}
    Let $z \in \mathbb{C}^+$. Recall definitions of $h_n(z)$ (\ref{defining_hn}) and $v_n(z)$ (\ref{defining_vn_Im}). Under \textbf{A1} of Assumptions \ref{A123}, for sufficiently large $n$,  $\Im(c_nh_n(z)) \geq K_0$ a.s., $\Im(c_n\mathbb{E}h_n(z)) \geq K_0$ and $|v_n(z)| \leq 1/K_0$ a.s. where $K_0 > 0$ depends on $z, c, \tau$ and $H$.
\end{lemma}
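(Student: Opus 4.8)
\noindent\emph{Proof strategy.} The plan is to follow the proof of Lemma~\ref{boundedAwayFromZero} almost verbatim, replacing real parts by imaginary parts, $\rho$ by $\sigma$, and $\mathbb{C}_L$ by $\mathbb{C}^+$, and exploiting that $S_n^+$ is Hermitian: in a spectral decomposition $S_n^+=P\Lambda P^*$ the matrix $\Lambda=\operatorname{diag}(\lambda_1,\dots,\lambda_p)$ has \emph{real} entries rather than purely imaginary ones. First I would record, as in (\ref{B.4}), that under \textbf{A1} the spectra of $\Sigma_n$ and of $H$ lie in $[0,\tau]$, so $F^{\Sigma_n}\xrightarrow{d}H\neq\delta_0$ forces $\int_0^\tau\lambda\,dF^{\Sigma_n}(\lambda)\to\int_0^\tau\lambda\,dH(\lambda)>0$, and hence $\frac1n\operatorname{trace}(\Sigma_n)=c_n\int_0^\tau\lambda\,dF^{\Sigma_n}(\lambda)\to c\int_0^\tau\lambda\,dH(\lambda)>0$.

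Next, writing $z=u+\mathbbm{i}v$ with $v>0$ and $a_{jj}=(P^*\Sigma_nP)_{jj}\geq0$, I would expand $c_nh_n(z)=\frac1n\operatorname{trace}\{\Sigma_nQ(z)\}=\frac1n\sum_j a_{jj}/(\lambda_j-z)$, so that $\Im(c_nh_n(z))=\frac1n\sum_j a_{jj}v\big/\big((\lambda_j-u)^2+v^2\big)$, a sum of nonnegative terms. To bound this away from zero I would invoke the a.s.\ operator-norm estimate $\|S_n^+\|_{op}\le 2\|\Sigma_n\|_{op}(1+\sqrt{p/n})^2+\delta$ (valid eventually for every $\delta>0$, exactly as for $S_n^-$), so that $|\lambda_j|\le B:=4\tau(1+\sqrt c)^2$ for all $j$ eventually almost surely. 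Then $(\lambda_j-u)^2\le(B+|u|)^2$ and
\[
\Im(c_nh_n(z))\ \ge\ \frac{v}{(B+|u|)^2+v^2}\cdot\frac1n\operatorname{trace}(\Sigma_n)\ \longrightarrow\ \frac{v\,c\int_0^\tau\lambda\,dH(\lambda)}{(B+|u|)^2+v^2}\ =:\ K_0\ >\ 0 ,
\]
giving $\Im(c_nh_n(z))\ge K_0$ a.s.\ for large $n$; combined with Lemma~\ref{ConcentrationOfSnH_Im} this also yields $\Im(c_n\mathbb{E}h_n(z))\ge K_0$ for large $n$, and in particular $c_nh_n(z)\neq\pm1$ eventually, so $v_n(z)$ of (\ref{defining_vn_Im}) is well defined. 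Since $v_n(z)$ is, up to sign, the reciprocal of $(c_nh_n(z))^2-1$, the identity $\frac1{w^2-1}=\frac12\big(\frac1{w-1}-\frac1{w+1}\big)$ together with $|w\mp1|\ge|\Im w|$ gives $\big|\frac1{w^2-1}\big|\le\frac1{|\Im w|}$, hence $|v_n(z)|\le 1/\Im(c_nh_n(z))\le 1/K_0$ a.s.\ for large $n$.

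The one genuinely delicate point — identical to the situation in Lemma~\ref{boundedAwayFromZero} — is making the constant $K_0$ depend on $z,c,\tau,H$ only and not on $n$ or the sample point. This is exactly why the argument is staged as above: one first uses the \emph{eventual, a.s.} eigenvalue tail bound $|\lambda_j|\le B$ (a Borel--Cantelli consequence of Bai--Yin type operator-norm control) to obtain a \emph{deterministic} denominator, and then feeds in the \emph{deterministic} limit of $\frac1n\operatorname{trace}(\Sigma_n)$ together with the concentration of Lemma~\ref{ConcentrationOfSnH_Im} to pass from $h_n$ to $\mathbb{E}h_n$. No new ideas beyond those used for the skew-Hermitian case are needed.
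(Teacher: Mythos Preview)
Your proposal is correct and mirrors the paper's own proof essentially line by line: the same trace limit under \textbf{A1}, the same spectral expansion of $c_nh_n$, the same eventual operator-norm bound $B=4\tau(1+\sqrt{c})^2$, and the same partial-fraction estimate $\bigl|\tfrac{1}{1-w^2}\bigr|\le 1/|\Im w|$ for $|v_n|$. The only cosmetic difference is that you write the denominator bound as $(B+|u|)^2$ while the paper introduces $B^*=-B\operatorname{sgn}(u)$, but $(B^*-u)^2=(B+|u|)^2$, so the two are identical.
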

\begin{proof}

We have $||\Sigma_n||_{op} \leq \tau$. Since $F^{\Sigma_n}$ and $H$ have a compact support $[0, \tau]$ and $F^{\Sigma_n} \xrightarrow{d} H$ a.s., we get
$\int_0^\tau\lambda dF^{\Sigma_n}(\lambda) \xrightarrow{} \int_0^\tau\lambda dH(\lambda) > 0$ since $H \neq \delta_0$. Therefore,
\begin{align*}\label{D.11}\tag{D.11}
\frac{1}{n}\operatorname{trace}(\Sigma_n) = c_n\int_0^\tau\lambda dF^{\Sigma_n}(\lambda) \xrightarrow{} c\int_0^\tau\lambda dH(\lambda) > 0    
\end{align*}

Let $z = u + \mathbbm{i}v$ with $v > 0$. Let $a_{ij}$ represent the $ij^{th}$ element of $A:= P^*\Sigma_nP$ where $S_n = P\Lambda P^*$ with $\Lambda = \operatorname{diag}(\{\lambda_j\}_{j=1}^p)$ being a diagonal matrix containing the real eigenvalues of $S_n$. Then,
    \begin{align*}
        c_nh_n = \frac{1}{n}\operatorname{trace}\{\Sigma_nQ\}
        = \frac{1}{n}\operatorname{trace}\{P^*\Sigma_nP(\Lambda - zI_p)^{-1}\}
         =\frac{1}{n}\sum_{j=1}^p \frac{a_{jj}}{\lambda_j  - z}
    \end{align*}

For any $\delta > 0$, we have
$$||S_n||_{op} = ||\frac{1}{n}X_1X_2^* + \frac{1}{n}X_2X_1^*||_{op} \leq 2\sqrt{||\frac{1}{n}X_1X_1^*||_{op}}\sqrt{||\frac{1}{n}X_2X_2^*||_{op}} \leq 2||\Sigma_n||_{op}\bigg(1+\sqrt{\frac{p}{n}}\bigg)^2 + \delta$$

Let $B = 4\tau(1+\sqrt{c})^2$. Then $\mathbb{P}(|\lambda_j| > B \hspace{2mm} i.o.) = 0$.

Define $B^* :=\left\{\begin{matrix}
    -B\operatorname{sgn}(u) & \text{ if } u \neq 0\\
    B &  \text{            if } u = 0
\end{matrix} \right.$

Then $(\lambda_j - u)^2 \leq (B^* - u)^2$. Therefore,
\begin{align*}
    \Im(c_nh_n) =& \frac{1}{n}\sum_{j=1}^p\frac{a_{jj}v}{(\lambda_j - u)^2 + v^2}\\
    \geq & \frac{1}{n}\sum_{j=1}^p \frac{a_{jj}v}{(B^* - u)^2 + v^2}\\
    =& \frac{v}{(B^* - u)^2 + v^2} \bigg(\frac{1}{n}\sum_{j=1}^p a_{jj}\bigg)\\
    =& \frac{v}{(B^* - u)^2 + v^2} \bigg(\frac{1}{n}\operatorname{trace}(\Sigma_n)\bigg) \text{, as } \operatorname{trace}(A) = \operatorname{trace}(\Sigma_n)\\
    \xrightarrow{} & \frac{v}{(B^* - u)^2 + v^2}\bigg(c\int_0^\tau\lambda dH(\lambda)\bigg) := K_0
    > 0 \text{ from } (\ref{D.11})
\end{align*} 

Therefore for sufficiently large $n$, $\Im(c_nh_n) \geq K_0 > 0$ a.s. In conjunction with Lemma \ref{ConcentrationOfSnH_Im}, we also get $\Im(c_n\mathbb{E}h_n) > 0$ for large $n$. Moreover, for large $n$, $c_nh_n \neq \pm 1$ a.s. and hence, the quantity $v_n$ is well defined almost surely. Noting that for $z \in \mathbb{C}$ with $\Im(z) \neq 0$,
\begin{align*}
    &\left|\frac{1}{1 - z^2}\right|
    = \left|\frac{1}{2}\frac{1 + z + 1 - z}{1 - z^2}\right|
    \leq \frac{1}{2}\bigg(\frac{1}{|1-z|} + \frac{1}{|1+z|}\bigg)
    \leq \frac{1}{2}\bigg(\frac{1}{|\Im(z)|} + \frac{1}{|\Im(z)|}\bigg) = \frac{1}{|\Im(z)|}
\end{align*}

we therefore conclude that for sufficiently large $n$, we must have 
$$|v_n| = \left|\frac{1}{1 - (c_nh_n)^2}\right| \leq \frac{1}{\Im(c_nh_n)} \leq \frac{1}{K_0} \text{ a.s. }$$
\end{proof}

\begin{lemma}\label{sigmaContinuity}
Suppose $\{X_n, Y_n\}_{n=1}^\infty$ are complex random variables with $\Im(X_n), \Im(Y_n) \geq B$ for some $B > 0$ and $|X_n - Y_n| \xrightarrow{a.s.} 0$. Then, $|\sigma(X_n) - \sigma(Y_n)| \xrightarrow{a.s.} 0$.
\end{lemma}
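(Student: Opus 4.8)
The plan is to mimic the proof of Lemma~\ref{rhoContinuity} verbatim, replacing the real part by the imaginary part throughout. First I would observe that $\sigma$ is well defined at $X_n$ and $Y_n$: since $\Im(X_n), \Im(Y_n) \geq B > 0$, neither $X_n$ nor $Y_n$ can equal $\pm 1$, so $1 + X_n$, $-1 + X_n$, $1 + Y_n$, $-1 + Y_n$ are all nonzero. In fact I would record the sharper lower bounds $|1 \pm X_n| \geq |\Im(1 \pm X_n)| = |\Im(X_n)| \geq B$ and, likewise, $|1 \pm Y_n| \geq B$; this is the only place the hypothesis $\Im(\cdot) \geq B$ enters.

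With these bounds in hand, the rest is an elementary pathwise computation: writing $\sigma(X_n) - \sigma(Y_n) = \left( \tfrac{1}{1+X_n} - \tfrac{1}{1+Y_n} \right) + \left( \tfrac{1}{-1+X_n} - \tfrac{1}{-1+Y_n} \right)$ and applying the identity $\tfrac{1}{a} - \tfrac{1}{b} = \tfrac{b-a}{ab}$ to each bracket, the triangle inequality yields
\[
|\sigma(X_n) - \sigma(Y_n)| \leq \frac{|X_n - Y_n|}{|1+X_n|\,|1+Y_n|} + \frac{|X_n - Y_n|}{|-1+X_n|\,|-1+Y_n|} \leq \frac{2|X_n - Y_n|}{B^2}.
\]
Since $|X_n - Y_n| \xrightarrow{a.s.} 0$, the right-hand side tends to $0$ almost surely, which is the claim; to be careful about the probabilistic phrasing I would state the last line on the almost sure event on which $|X_n - Y_n| \to 0$.

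There is no genuine obstacle here: the statement is a deterministic Lipschitz-type estimate on a bounded region of the plane, applied pathwise. As with Lemma~\ref{rhoContinuity}, the real point of the lemma is its two intended consequences under Assumptions~\ref{A123}, which I would spell out immediately after: combining this lemma with Lemma~\ref{ConcentrationOfSnH_Im} and Lemma~\ref{boundedAwayFromZero_Im} gives $|\sigma(c_n h_n) - \sigma(c_n \mathbb{E}h_n)| \xrightarrow{a.s.} 0$, and combining it with Theorem~\ref{DeterministicEquivalent_Im} (which supplies $|h_n - \Tilde{h}_n| \xrightarrow{a.s.} 0$) gives $|\sigma(c_n \Tilde{h}_n) - \sigma(c_n \mathbb{E}h_n)| \to 0$; these are the forms actually invoked in the proofs of Theorem~\ref{DeterministicEquivalent_Im} and Theorem~\ref{ExistenceA123_Im}.
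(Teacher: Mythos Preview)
Your proposal is correct and is essentially identical to the paper's proof: both split $\sigma(X_n)-\sigma(Y_n)$ into the two $1/(\pm1+\cdot)$ differences, bound each denominator below by $\Im(\cdot)\geq B$, and obtain $|\sigma(X_n)-\sigma(Y_n)|\leq 2|X_n-Y_n|/B^2$. The paper likewise records exactly the two consequences you describe, namely $|\sigma(c_nh_n)-\sigma(c_n\mathbb{E}h_n)|\xrightarrow{a.s.}0$ and $|\sigma(c_n\Tilde{h}_n)-\sigma(c_n\mathbb{E}h_n)|\to 0$.
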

\begin{proof}
The result is clear from the below string of inequalities.
\begin{align*}
    &|\sigma(X_n) - \sigma(Y_n)| \leq \bigg|\frac{1}{1+X_n} - \frac{1}{1+Y_n}\bigg| + \bigg|\frac{1}{-1+X_n}-\frac{1}{-1+Y_n}\bigg| \leq \frac{2|X_n -Y_n|}{\Im(X_n)\Im(Y_n)} \leq \frac{2|X_n-Y_n|}{B^2}
\end{align*}

A direct implication of this result is as follows. Under Assumptions \ref{A123},
$|h_n - \mathbbm{E}h_n| \xrightarrow{a.s.} 0$ by Lemma \ref{ConcentrationOfSnH_Im} and by Lemma \ref{boundedAwayFromZero_Im}, for sufficiently large $n$, we have $\Im(c_nh_n) \geq K_0 > 0$ where $K_0$ depends on $c, z, \tau$ and $H$. Hence, $\Im(\mathbb{E}c_nh_n) \geq K_0 > 0$ for large $n$. Therefore we have 

\begin{align} \label{sigmaContinuity1}\tag{D.12}
    |\sigma(c_nh_n) - \sigma(c_n\mathbb{E}h_n)| \xrightarrow{a.s.} 0
\end{align}

Moreover, by Theorem \ref{DeterministicEquivalent_Im}, we have $|h_n - \Tilde{h}_n| \xrightarrow{a.s.} 0$. Therefore, we have $\Im(c_n\Tilde{h}_n) \geq K_0 > 0$ as well for large $n$. Thus, we also get
\begin{align} \label{sigmaContinuity2}\tag{D.13}
    |\sigma(c_n\Tilde{h}_n) - \sigma(c_n\mathbb{E}h_n)| \xrightarrow{a.s.} 0
\end{align}
\end{proof}

\begin{lemma}\label{normBound_Im}
    Under Assumptions \ref{A123}, for $z \in \mathbb{C}^+$ the operator norms of the matrices $\Bar{Q}(z), \Bar{\Bar{Q}}(z)$ defined in Theorem \ref{DeterministicEquivalent_Im} and (\ref{definingQBarBar_Im}) respectively are bounded by $1/|\Im(z)|$.
\end{lemma}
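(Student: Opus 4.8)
The plan is to mirror the proof of Lemma \ref{normBound} from the skew-Hermitian setting, replacing $\rho$ by $\sigma$ and real parts by imaginary parts throughout, and invoking the eigenvalue estimate (\ref{ImOf_Qbar_EV_inverse}) in place of (\ref{realOf_Qbar_EV_inverse}).

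First I would write $\Sigma_n = P\Lambda P^*$ with $\Lambda = \operatorname{diag}(\lambda_1,\ldots,\lambda_p)$, $\lambda_j \geq 0$, and $P$ unitary. For $z \in \mathbb{C}^+$ with $\Im(z) = v > 0$ and $n$ large enough that $\sigma(c_n\mathbb{E}h_n(z))$ is well defined (guaranteed by Lemma \ref{boundedAwayFromZero_Im}), one gets
$$\Bar{Q}(z) = \bigl(-zI_p + \sigma(c_n\mathbb{E}h_n)\Sigma_n\bigr)^{-1} = P\,\operatorname{diag}\bigl((-z + \lambda_j\sigma(c_n\mathbb{E}h_n))^{-1}\bigr)_{j=1}^p\,P^*,$$
so that $\|\Bar{Q}(z)\|_{op} = \max_{1\le j\le p}\bigl|-z + \lambda_j\sigma(c_n\mathbb{E}h_n)\bigr|^{-1}$. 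By (\ref{ImOf_Qbar_EV_inverse}), for large $n$ and every $j$ one has $\Im(-z + \lambda_j\sigma(c_n\mathbb{E}h_n)) \leq -v < 0$, hence $\bigl|-z + \lambda_j\sigma(c_n\mathbb{E}h_n)\bigr| \geq v = |\Im(z)|$, which gives $\|\Bar{Q}(z)\|_{op} \leq 1/|\Im(z)|$.

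For $\Bar{\Bar{Q}}(z) = (-zI_p + \sigma(c_n\Tilde{h}_n(z))\Sigma_n)^{-1}$ I would argue identically, the only extra ingredient being $\Im(c_n\Tilde{h}_n(z)) \geq K_0 > 0$ for large $n$: this holds because $|h_n - \Tilde{h}_n| \xrightarrow{a.s.} 0$ by Theorem \ref{DeterministicEquivalent_Im} while $\Im(c_nh_n) \geq K_0$ for large $n$ by Lemma \ref{boundedAwayFromZero_Im} (this is exactly the content underlying (\ref{sigmaContinuity2})). Consequently $\sigma_2(c_n\Tilde{h}_n) > 0$ and, by (\ref{ImaginaryOfSigma}),
$$\Im\bigl(-z + \lambda_j\sigma(c_n\Tilde{h}_n)\bigr) = -v - \lambda_j\,\Im(c_n\Tilde{h}_n)\,\sigma_2(c_n\Tilde{h}_n) \leq -v < 0$$
for every $j$; exactly as before this yields $\|\Bar{\Bar{Q}}(z)\|_{op} \leq 1/|\Im(z)|$.

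There is no genuine obstacle here: the argument is a routine translation of Lemma \ref{normBound}. The only point deserving a little care is the ordering of results — the bound on $\Bar{Q}$ uses only Lemma \ref{boundedAwayFromZero_Im}, whereas the bound on $\Bar{\Bar{Q}}$ invokes $|h_n - \Tilde{h}_n| \to 0$ from Theorem \ref{DeterministicEquivalent_Im}, so one should verify (just as in the skew-Hermitian development, where Lemma \ref{normBound} is used inside the proof of Theorem \ref{DeterministicEquivalent}) that this dependence does not create a circularity with the appeals to Lemma \ref{normBound_Im} within the proof of Theorem \ref{DeterministicEquivalent_Im}.
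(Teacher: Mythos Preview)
Your proposal is correct and follows essentially the same approach the paper takes (implicitly, since the paper omits this proof as mimicking Lemma \ref{normBound}): diagonalize $\Sigma_n$, bound each eigenvalue of $\Bar{Q}(z)$ via (\ref{ImOf_Qbar_EV_inverse}), and for $\Bar{\Bar{Q}}(z)$ invoke $|h_n-\Tilde{h}_n|\to 0$ together with Lemma \ref{boundedAwayFromZero_Im} to get the analogous imaginary-part estimate. Your remark about the dependency structure is apt and matches the paper's usage: the proof of Theorem \ref{DeterministicEquivalent_Im} only requires the $\Bar{Q}$ bound (via (\ref{mn_bound_Im})), while the $\Bar{\Bar{Q}}$ bound is used only afterwards in Lemma \ref{hn_tilde_tilde2_Im}, so no circularity arises.
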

\begin{lemma}\label{hn_tilde_tilde2_Im}
Under Assumptions \ref{A123}, for $z \in \mathbb{C}^+$, we have $|\Tilde{h}_n(z) - \Tilde{\Tilde{h}}_n(z)| \rightarrow 0$    
\end{lemma}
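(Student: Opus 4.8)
The plan is to replicate verbatim the argument used for the skew-Hermitian analogue, Lemma \ref{hn_tilde_tilde2}, with the function $\rho$ replaced by $\sigma$ (defined in (\ref{defining_sigma})) and the real part $\Re(z)$ replaced by the imaginary part $\Im(z)$ throughout, invoking the Hermitian versions of the auxiliary lemmas. First I would use the definitions (\ref{definingHnTilde_Im}) and (\ref{definingHnTilde2_Im}) to write
\[
|\Tilde{h}_n(z) - \Tilde{\Tilde{h}}_n(z)| = \frac{1}{p}\left|\operatorname{trace}\{\Sigma_n(\Bar{Q}(z) - \Bar{\Bar{Q}}(z))\}\right|.
\]
Since $\Bar{Q}(z)^{-1} = -zI_p + \sigma(c_n\mathbb{E}h_n(z))\Sigma_n$ by Theorem \ref{DeterministicEquivalent_Im} and $\Bar{\Bar{Q}}(z)^{-1} = -zI_p + \sigma(c_n\Tilde{h}_n(z))\Sigma_n$ by (\ref{definingQBarBar_Im}), the resolvent identity (R0) of \ref{R123} yields
\[
\Bar{Q}(z) - \Bar{\Bar{Q}}(z) = \Bar{Q}(z)\bigl[\sigma(c_n\Tilde{h}_n(z)) - \sigma(c_n\mathbb{E}h_n(z))\bigr]\Sigma_n\Bar{\Bar{Q}}(z),
\]
where we use that $\sigma(c_n\Tilde{h}_n(z))$ and $\sigma(c_n\mathbb{E}h_n(z))$ are scalars.

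Next I would apply (R5) of \ref{R123} together with assumption (T4) of Theorem \ref{t6.1}, which gives $\tfrac1p\operatorname{trace}(\Sigma_n) < C$ for large $n$, and Lemma \ref{normBound_Im}, which bounds $\|\Bar{Q}(z)\|_{op}$ and $\|\Bar{\Bar{Q}}(z)\|_{op}$ by $1/|\Im(z)|$ for large $n$; combined with $\|\Sigma_n\|_{op}\le\tau$ from \textbf{A1} of Assumptions \ref{A123}, this gives, for sufficiently large $n$,
\[
|\Tilde{h}_n(z) - \Tilde{\Tilde{h}}_n(z)| \le \left(\frac{1}{p}\operatorname{trace}(\Sigma_n)\right)|\sigma(c_n\Tilde{h}_n(z)) - \sigma(c_n\mathbb{E}h_n(z))|\,\|\Bar{Q}(z)\Sigma_n\Bar{\Bar{Q}}(z)\|_{op} \le \frac{C\tau}{\Im^2(z)}\,|\sigma(c_n\Tilde{h}_n(z)) - \sigma(c_n\mathbb{E}h_n(z))|.
\]
Finally, (\ref{sigmaContinuity2}) asserts precisely that $|\sigma(c_n\Tilde{h}_n(z)) - \sigma(c_n\mathbb{E}h_n(z))| \to 0$, and the claim follows.

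The argument is routine, so I expect no substantial obstacle; the work is entirely in confirming that the relevant Hermitian analogues are already in force. The two points that must be available are: (i) $\sigma(\cdot)$ is well defined and Lipschitz-controlled near the region where $c_n\mathbb{E}h_n(z)$ and $c_n\Tilde{h}_n(z)$ lie, which follows from Lemma \ref{boundedAwayFromZero_Im} (giving $\Im(c_n\mathbb{E}h_n(z)),\Im(c_n\Tilde{h}_n(z)) \ge K_0 > 0$ eventually) and Lemma \ref{sigmaContinuity}; and (ii) the operator-norm bounds of Lemma \ref{normBound_Im} hold for large $n$. Both are Hermitian counterparts of results established earlier in the section, so the proof reduces to bookkeeping the substitutions $\rho\mapsto\sigma$ and $\Re\mapsto\Im$.
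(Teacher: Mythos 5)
Your proof is correct and is exactly the argument the paper intends: the paper omits the proof of this lemma, noting it mimics Lemma \ref{hn_tilde_tilde2}, and your write-up is precisely that mimicry (resolvent identity, (R5) with (T4), Lemma \ref{normBound_Im}, and (\ref{sigmaContinuity2}) in place of (\ref{rhoContinuity2})). No gaps.
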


\begin{lemma}\label{uniformConvergence_Im}
Under Assumptions \ref{A123}, the quantities $c_{1j},c_{2j}, d_{1j}, d_{2j}, F_j(r,s), r,s \in \{1,2\}, v_n, m_n$ as defined throughout the proof of Theorem \ref{DeterministicEquivalent} satisfy the following results.
\begin{align*}
    &\underset{1\leq j\leq n}{\max }|c_{1j} - v_n| \xrightarrow{a.s.} 0 \hspace{18mm} \underset{1\leq j\leq n}{\max }|d_{1j} - v_n|\xrightarrow{a.s.} 0\\
    &\underset{1\leq j\leq n}{\max }|c_{2j} - c_nh_nv_n|\xrightarrow{a.s.} 0 \hspace{10mm} \underset{1\leq j\leq n}{\max }|d_{2j} - c_nh_nv_n|\xrightarrow{a.s.} 0\\
    & \underset{1\leq j\leq n}{\max }|F_j(r,r) - m_n| \xrightarrow{a.s.} 0, r \in \{1,2\}\\
    & \underset{1\leq j\leq n}{\max }|F_j(r,s)| \xrightarrow{a.s.} 0  \text{ where } r \neq s, r,s \in \{1,2\} 
\end{align*}  
\end{lemma}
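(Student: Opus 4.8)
The plan is to carry over the proof of Lemma \ref{uniformConvergence} essentially verbatim, making throughout the substitutions $\mathbb{C}_L \to \mathbb{C}^+$, $\Re(\cdot)\to\Im(\cdot)$, $\rho\to\sigma$, replacing the rank-two resolvent identity of Lemma \ref{Rank2Woodbury} by Lemma \ref{Rank2Woodbury_Im} (whose coefficients carry the sign pattern appropriate to the Hermitian perturbation $uv^*+vu^*$ rather than $uv^*-vu^*$), and invoking the Hermitian counterparts Lemma \ref{Rank2Perturbation_Im}, Lemma \ref{ConcentrationOfSnH_Im}, Lemma \ref{boundedAwayFromZero_Im}, Lemma \ref{normBound_Im} in place of their skew-Hermitian analogues. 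Fix $z\in\mathbb{C}^+$. First I would set $A_j := \Sigma_n^{1/2}Q_{-j}(z)\Sigma_n^{1/2}$; under \textbf{A1} this has $||A_j||_{op}\le \tau/|\Im(z)|$, so for each $r\in\{1,2\}$ the triangular array $z_{jn}=Z_{rj}$ together with $A_j$ meets the hypotheses of Lemma \ref{quadraticForm}, giving $\max_{1\le j\le n}|E_j(r,r)-\tfrac1n\operatorname{trace}\{\Sigma_n Q_{-j}\}|\xrightarrow{a.s.}0$; combined with Lemma \ref{Rank2Perturbation_Im} and the identity $c_nh_n=\tfrac1n\operatorname{trace}\{\Sigma_n Q\}$ this upgrades to $\max_j|E_j(r,r)-c_nh_n|\xrightarrow{a.s.}0$.

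Next I would treat the cross terms by the polarization device of the skew case: applying Lemma \ref{quadraticForm} and Lemma \ref{Rank2Perturbation_Im} once to $z_{jn}=\tfrac{1}{\sqrt 2}(Z_{1j}+Z_{2j})$ and once to $z_{jn}=\tfrac{1}{\sqrt 2}(Z_{1j}+\mathbbm{i}Z_{2j})$, then subtracting and applying Lemma \ref{lA.5}, yields $\max_j|E_j(1,2)+E_j(2,1)|\xrightarrow{a.s.}0$ and $\max_j|E_j(1,2)-E_j(2,1)|\xrightarrow{a.s.}0$, hence $\max_j|E_j(1,2)|\xrightarrow{a.s.}0$ and $\max_j|E_j(2,1)|\xrightarrow{a.s.}0$. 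Replacing $Q_{-j}$ throughout by $\bar Q(z)M_nQ_{-j}(z)$, whose operator norm is at most $B/|\Im(z)|^2$ by Lemma \ref{normBound_Im}, and running the identical argument (with $\tfrac1n\operatorname{trace}\{\Sigma_n\bar Q M_n Q_{-j}\}$ now in the role of $c_nh_n$ and converging to $m_n$ via Lemma \ref{Rank2Perturbation_Im}), produces $\max_j|F_j(r,r)-m_n|\xrightarrow{a.s.}0$ and $\max_j|F_j(r,s)|\xrightarrow{a.s.}0$ for $r\neq s$.

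For the last four limits I would insert the explicit expressions for $c_{1j},c_{2j},d_{1j},d_{2j}$ furnished by Lemma \ref{Rank2Woodbury_Im} (with signs adapted to the Hermitian convention used in the proof of Theorem \ref{DeterministicEquivalent_Im}), each of which is a product of a factor of the form $1+E_j(1,2)$, $1+E_j(2,1)$, $E_j(1,1)$ or $E_j(2,2)$ with the common scalar $Den(j)=\big((1+E_j(1,2))(1+E_j(2,1))-E_j(1,1)E_j(2,2)\big)^{-1}$. Since $E_j(1,1),E_j(2,2)$ concentrate uniformly at $c_nh_n$ and $E_j(1,2),E_j(2,1)$ at $0$, the denominators concentrate at $1-(c_nh_n)^2$, so $\max_j|Den(j)-v_n|\xrightarrow{a.s.}0$ with $v_n=(1-(c_nh_n)^2)^{-1}$; by Lemma \ref{boundedAwayFromZero_Im} this $v_n$ is bounded for large $n$, and $|c_nh_n|\le C/|\Im(z)|$ by (\ref{h_nBound_Im}). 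Feeding these into Lemma \ref{lA.6} (with $C_{jn}\equiv 1$ or $C_{jn}\equiv c_nh_n$ and $D_{jn}=v_n$) then delivers the four stated convergences. The only point I expect to require genuine care — it is bookkeeping rather than a real obstacle — is checking that the boundedness hypotheses of Lemma \ref{lA.6} and the nondegeneracy of $Den(j)$ hold with deterministic constants valid for all large $n$ uniformly in $\omega$; this is secured because $\Sigma_n$ is treated as nonrandom and the lower bound $\Im(c_nh_n)\ge K_0>0$ of Lemma \ref{boundedAwayFromZero_Im} is deterministic, the single structural change from the skew-Hermitian argument being the replacement of $1+(c_nh_n)^2$ by $1-(c_nh_n)^2$ in $v_n$ and in $Den(j)$.
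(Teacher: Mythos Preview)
Your proposal is correct and is exactly the approach the paper takes: the paper explicitly omits the proof of Lemma \ref{uniformConvergence_Im}, stating that it mimics its skew-Hermitian counterpart Lemma \ref{uniformConvergence} in Appendix B with the obvious substitutions. The specific adaptations you list (replacing $\Re$ by $\Im$, $\rho$ by $\sigma$, $1+(c_nh_n)^2$ by $1-(c_nh_n)^2$ in $v_n$ and $Den(j)$, and invoking Lemmas \ref{Rank2Woodbury_Im}, \ref{Rank2Perturbation_Im}, \ref{boundedAwayFromZero_Im}, \ref{normBound_Im} in place of their skew-Hermitian analogues) are precisely those the paper intends, and your handling of the polarization device and the final application of Lemma \ref{lA.6} matches the original argument step for step.
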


The proofs of some of the above lemmas have been left out as they mimic those of their respective counterparts in Appendix B.

\subsection{Proof of Theorem \ref{DeterministicEquivalent_Im}}\label{sec:ProofDeterministicEquivalent_Im}
\begin{proof}
Let $z \in \mathbb{C}^+$. Define $F(z) := \bigg(\Bar{Q}(z)\bigg)^{-1}$. From the resolvent identity (\ref{R123}), we have 
\begin{align*}\label{D.14}\tag{D.14}
Q - \Bar{Q} = Q \bigg(F + zI_p - \dfrac{1}{n}\sum_{j=1}^{n}(X_{j1}X_{j2}^{*} + X_{j2}X_{j1}^{*})\bigg) \Bar{Q}    
\end{align*}

Using the above, we get
\begin{align*}\label{T1_minu_T2_expansion_Im}\tag{D.15}
& \frac{1}{p}\operatorname{trace}\{(Q - \Bar{Q})M_n\}\\
=&\frac{1}{p}\operatorname{trace} \{Q(F + zI_p)\Bar{Q}M_n \} - \frac{1}{p} \operatorname{trace} \{Q \bigg(\sum_{j=1}^{n}\dfrac{1}{n} (X_{1j}X_{2j}^{*} + X_{2j}X_{1j}^{*})\bigg) \Bar{Q} M_n \} \\
=&\frac{1}{p}\operatorname{trace} \{(F + zI_p)\Bar{Q}M_nQ \} - \frac{1}{p} \operatorname{trace} \{\bigg(\sum_{j=1}^{n}\dfrac{1}{n} (X_{1j}X_{2j}^{*} + X_{2j}X_{1j}^{*})\bigg) \Bar{Q} M_n Q\} \\
= & \underbrace{\frac{1}{p}\operatorname{trace} \{(F + zI_p)\Bar{Q}M_nQ \}}_{{Term}_1} - \underbrace{\frac{1}{p} \sum_{j=1}^{n}\dfrac{1}{n} (X_{2j}^{*}\Bar{Q}M_nQX_{1j} + X_{1j}^{*}\Bar{Q}M_nQX_{2j})}_{{Term}_2} 
\end{align*}

To establish ${Term}_1 - {Term}_2 \xrightarrow{a.s.} 0$, we need to change the definition of $v_n$ (earlier defined in (\ref{defining_vn})) as follows. The definition of $E_j(r,s), F_j(r,s)$ and $m_n$ remain the same.
\begin{align*}\label{defining_vn_Im}\tag{D.16}
        &v_n(z) := \dfrac{1}{1 - (c_nh_n(z))^2}
\end{align*}

Simplifying ${Term}_2$ using Lemma \ref{Rank2Woodbury_Im}, with $A = Q_{-j}(z)$ (see (\ref{notations})), $u = \frac{1}{\sqrt{n}}X_{1j}$ and $v = \frac{1}{\sqrt{n}}X_{2j}$, we get

\begin{align*}\label{defining_cj_Im}\tag{D.17}
        \frac{1}{\sqrt{n}} QX_{1j} =& Q_{-j} \bigg(\frac{1}{\sqrt{n}}X_{1j} c_{1j} - \frac{1}{\sqrt{n}}X_{2j} c_{2j}\bigg)\\ 
        \text{where } c_{1j} =& (1 + E_j(1,2))Den(j); \hspace{5mm} c_{2j} =E_j(1,1)Den(j)\\
        Den(j) =& \bigg((1 + E_j(1,2))(1 + E_j(2,1)) - E_j(1,1)E_j(2,2)\bigg)^{-1} 
\end{align*}
and
\begin{align*}\label{defining_dj_Im}\tag{D.18}
        \frac{1}{\sqrt{n}} QX_{2j} =& Q_{-j} \bigg(\frac{1}{\sqrt{n}}X_{2j} d_{1j} - \frac{1}{\sqrt{n}}X_{1j} d_{2j}\bigg)\\ 
        \text{where } d_{1j} =& (1 + E_j(2,1))Den(j);\hspace{5mm} d_{2j} = E_j(2,2)Den(j)
\end{align*}

Using (\ref{defining_cj_Im}) and (\ref{defining_dj_Im}), $Term_2$ of (\ref{T1_minu_T2_expansion_Im}) can be simplified as follows.
\begin{align*}\label{T2_simplification_Im}\tag{D.19}
&{Term}_2
        =\frac{1}{p}\sum_{j=1}^{n}\dfrac{1}{n} (X_{2j}^{*}\Bar{Q}M_nQX_{1j} + X_{1j}^{*}\Bar{Q}M_nQX_{2j}) \\
        =&\sum_{j=1}^{n}\dfrac{1}{p\sqrt{n}} X_{2j}^*\Bar{Q}M_n \bigg(\frac{1}{\sqrt{n}} Q X_{1j}\bigg) +  \sum_{j=1}^{n}\dfrac{1}{p\sqrt{n}} X_{1j}^*\Bar{Q}M_n \bigg( \frac{1}{\sqrt{n}}QX_{2j}\bigg)\\
        =& \sum_{j=1}^{n}\dfrac{1}{p\sqrt{n}} X_{2j}^*\Bar{Q}M_n Q_{-j} \bigg(\frac{X_{1j}c_{1j} - X_{2j}c_{2j}}{\sqrt{n}}\bigg) + \sum_{j=1}^{n}\dfrac{1}{p\sqrt{n}} X_{1j}^*\Bar{Q}M_n Q_{-j}\bigg( \frac{X_{2j}d_{1j} - X_{1j}d_{2j}}{\sqrt{n}}\bigg)\\
        =& \frac{1}{p}\sum_{j=1}^{n}\Bigg[\bigg(c_{1j}F_j(2,1) - c_{2j}F_j(2,2)\bigg) + \bigg(d_{1j}F_j(1,2) - d_{2j}F_j(1,1)\bigg)\Bigg]  \text{ using definition } (\ref{definingFjrs})
\end{align*}

To proceed further, we need the limiting behaviour of $c_{1j}, c_{2j}, d_{1j}, d_{2j}, F_j(r,s), r,s \in \{1,2\}$ for $1 \leq j \leq n$. This is established in Lemma \ref{uniformConvergence_Im} and the summary of results is given below.
\begin{equation*}\label{uniformResults_Im}\tag{D.20}
\left\{ \begin{aligned} 
    &\underset{1\leq j\leq n}{\max }|c_{1j} - v_n| \xrightarrow{a.s.} 0 \hspace{5mm}
    \underset{1\leq j\leq n}{\max }|d_{1j} - v_n|\xrightarrow{a.s.} 0\\
    &\underset{1\leq j\leq n}{\max }|c_{2j} - c_nh_nv_n|\xrightarrow{a.s.} 0 \hspace{5mm}
    \underset{1\leq j\leq n}{\max }|d_{2j} - c_nh_nv_n|\xrightarrow{a.s.} 0\\
    & \underset{1\leq j\leq n}{\max }|F_j(r,r) - m_n| \xrightarrow{a.s.} 0, r \in \{1,2\}\\
    & \underset{1\leq j\leq n}{\max }|F_j(r,s)| \xrightarrow{a.s.} 0  \text{ where } r \neq s, r,s \in \{1,2\} 
\end{aligned} \right.
\end{equation*}

Note that \begin{enumerate}
    \item For sufficiently large $n$, $|v_n|$ is bounded above by Lemma \ref{boundedAwayFromZero_Im} 
    \item By (\ref{h_nBound_Im}), $|h_n| \leq C/|\Im(z)|$ 
    \item Using (R5) of \ref{R123} and Lemma \ref{normBound_Im}, for sufficiently large n,
\begin{align*}\label{mn_bound_Im}\tag{D.21}
|m_n| = \bigg|\frac{1}{n}\operatorname{trace}\{\Sigma_n\Bar{Q}M_nQ\}\bigg| \leq \bigg(\frac{1}{n}\operatorname{trace}(\Sigma_n)\bigg) ||\Bar{Q}M_nQ||_{op} \leq \frac{BC}{\Im ^2(z)}    
\end{align*}

\end{enumerate}

From (\ref{uniformResults_Im}), the above bounds and applying Lemma \ref{lA.6}, we get the following results 
\begin{enumerate}
    \item $\underset{1 \leq j \leq n}{\max}|c_{1j}F_j(2,1)| \xrightarrow{a.s.} 0$; \hspace{5mm} $\underset{1 \leq j \leq n}{\max}|d_{1j}F_j(1,2)| \xrightarrow{a.s.} 0$
    \item $\underset{1 \leq j \leq n}{\max}|c_{2j}F_j(2,2) - c_nh_nv_nm_n| \xrightarrow{a.s.} 0$; \hspace{5mm} $\underset{1 \leq j \leq n}{\max}|d_{2j}F_j(1,1) - c_nh_nv_nm_n| \xrightarrow{a.s.} 0$    
\end{enumerate}

With the above results, Lemma \ref{lA.7} applied on (\ref{T2_simplification_Im}) gives 
\begin{align*}\label{T2_simplification2_Im}\tag{D.22}
|Term_2 - (-2h_nv_nm_n)| \xrightarrow{a.s.} 0    
\end{align*}

Now note that 
\begin{align*}\label{T2_expansion_Im}\tag{D.23}
    -2h_nv_nm_n =& \frac{n}{p} \dfrac{2c_nh_n}{-1 + (c_nh_n)^2}\frac{1}{n}\operatorname{trace}\{\Sigma_n \Bar{Q}M_nQ\} \text{, by definitions } (\ref{defining_vn_Im}), (\ref{defining_mn}) \\
    =& \frac{1}{p} \bigg[\frac{1}{1 + c_nh_n} + \frac{1}{-1 + c_nh_n}\bigg]\operatorname{trace}\{\Sigma_n\Bar{Q}M_nQ\}\\
    =&\frac{1}{p} \operatorname{trace}\{\sigma(c_nh_n)\Sigma_n\Bar{Q}M_nQ\}
\end{align*}
where the last equality follows from definition \ref{definingRho}. Finally from (\ref{sigmaContinuity1}) and (\ref{mn_bound_Im}), we get
\begin{align*}\label{interim1_Im}\tag{D.24}
\bigg|\frac{1}{p} \operatorname{trace}\{\sigma(c_nh_n)\Sigma_n\Bar{Q}M_nQ\} - \frac{1}{p} \operatorname{trace}\{\sigma(c_n\mathbb{E}h_n)\Sigma_n\Bar{Q}M_nQ\}\bigg| \xrightarrow{a.s.} 0    
\end{align*}

Combining (\ref{T2_simplification2_Im}), (\ref{T2_expansion_Im}) and (\ref{interim1_Im}), we get
\begin{align*}
    &|Term_2 - \frac{1}{p} \operatorname{trace}\{\sigma(c_n\mathbb{E}h_n)\Sigma_n\Bar{Q}M_nQ\}| \xrightarrow{a.s.} 0\\
    \implies & |Term_2 - \frac{1}{p}\operatorname{trace}\{[zI_p -zI_p + \sigma(c_n\mathbb{E}h_n)\Sigma_n]\Bar{Q}M_nQ\}| \xrightarrow{a.s.} 0\\
    \implies & |Term_2 - \frac{1}{p} \operatorname{trace}\{(F(z) + zI_p)\Bar{Q}M_nQ\}| \xrightarrow{a.s.} 0\\
    \implies& |Term_2 - Term_1| \xrightarrow{a.s.} 0
\end{align*}
This concludes the proof.
\end{proof}

\subsection{Proof of Theorem \ref{ExistenceA123_Im}}\label{sec:ProofExistenceA123_Im}
\begin{proof}
By Lemma \ref{CompactConvergence_Im}, every sub-sequence of $\{h_n(z)\}$ has a further sub-sequence that converges uniformly in each compact subset of $\mathbb{C}^+$. Let $h_0(z)$ be one such sub-sequential limit corresponding to the sub-sequence $\{h_{k_m}(z)\}_{m=1}^\infty$. For simplicity, denote  $g_m = h_{k_m}, \Tilde{g}_m = \Tilde{h}_{k_m}, \Tilde{\Tilde{g}}_m = \Tilde{\Tilde{h}}_{k_m}, d_m = c_{k_m}$ and $G_m = F^{\Sigma_{k_m}}$ for $m \in \mathbb{N}$. Thus we have $g_m \xrightarrow{a.s.} h_0$. By Theorem \ref{DeterministicEquivalent_Im}, we have, 

$$|\Tilde{g}_m - h_0| = |\Tilde{h}_{k_m} - h_0| \leq |\Tilde{h}_{k_m} - h_{k_m}| + |h_{k_m} - h_0| \rightarrow 0$$

Therefore, $\Tilde{g}_m \rightarrow h_0$. By Lemma \ref{hn_tilde_tilde2_Im}, we have
\begin{align*}\label{gm_tilde_tilde2_Im}\tag{D.25}
&\Tilde{g}_m(z) - \Tilde{\Tilde{g}}_m(z) \rightarrow 0\\
\implies &\displaystyle \Tilde{g}_m(z)- \int\dfrac{\lambda dG_m(\lambda)}{-z + \lambda\sigma(d_m\Tilde{g}_m(z))} \longrightarrow 0\\
\implies &\Tilde{g}_m(z) -\int\dfrac{\lambda d\{G_m(\lambda)-H(\lambda)\}}{-z +\lambda\sigma(d_m\Tilde{g}_m(z))} -
\int\dfrac{\lambda dH(\lambda)}{-z + \lambda\sigma(d_m\Tilde{g}_m(z))} \longrightarrow 0
\end{align*}

For large $m$, the common integrand in the second and third terms of (\ref{gm_tilde_tilde2_Im}) can be bounded above as follows.
\begin{align}\label{integrandBound_Im}\tag{D.26}
    &\bigg|\frac{\lambda}{-z + \lambda\sigma(d_m\Tilde{g}_m)}\bigg|
    \leq \frac{|\lambda|}{|\Im(-z + \lambda\sigma(d_m\Tilde{g}_m))|}
    \leq \frac{|\lambda|}{|\Im(\lambda\sigma(d_m\Tilde{g}_m))|} = \frac{1}{|\Im(\sigma(d_m\Tilde{g}_m))|
    } \xrightarrow{} \frac{1}{|\Im(\sigma(ch_0))|}
\end{align}

The limit in (\ref{integrandBound_Im}) follows because of the following argument. First note that $\Im(ch_0) > 0$. To see this, note that $g_m \xrightarrow{a.s.} h_0, d_m \rightarrow c$ and by Lemma \ref{boundedAwayFromZero_Im}, for sufficiently large m, $\Im(d_mg_m) \geq K_0(c,z,H, \tau) > 0$ a.s. Therefore, $\Im(ch_0) > 0$.
Secondly, $d_m\Tilde{g}_m = c_{k_m}\Tilde{h}_{k_m} \rightarrow ch_0$. By continuity of $\sigma$ at $ch_0$, we have $\sigma(d_m\Tilde{g}_m) \rightarrow \sigma(ch_0)$. Finally by (\ref{ImaginaryOfSigma}), $\Im(\sigma(ch_0)) = -\Im(ch_0)\sigma_2(ch_0) < 0$.

So the second term of (\ref{gm_tilde_tilde2_Im}) can be made arbitrarily small as $G_m \xrightarrow{d} H$. Applying D.C.T. in the third term of (\ref{gm_tilde_tilde2_Im}), we get 
\begin{align*}\label{subsequentialLimit_Im}\tag{D.27}
h_0(z) = \displaystyle\int\dfrac{\lambda dH(\lambda)}{-z + \lambda\sigma(ch_0(z))}
\end{align*}

Thus any subsequential limit ($h_0(z) \in \mathbb{C}^+$) satisfies (\ref{6.2}). By Lemma \ref{Uniqueness_Im}, all sub-sequential limits must be the same, say $h^\infty(z)$. This implies $h_n(z) \xrightarrow{a.s.} h^\infty(z)$ where the convergence is uniform in each compact subset of $\mathbb{C}^+$. Therefore, the limit $h^\infty$ must be analytic in $\mathbb{C}^+$ itself.

To complete the proof, we need to prove that $s_n(z) \xrightarrow{a.s.} s^\infty(z)$. Here we define an intermediate quantity 
\begin{align*}\label{defining_sn_tilde_Im}\tag{D.28}
\Tilde{s}_n(z) := \frac{1}{z}\bigg(\frac{2}{c_n}-1\bigg) + \frac{1}{c_nz}\bigg(\frac{1}{-1 +c_n \Tilde{h}_n(z)} - \frac{1}{1 +c_n \Tilde{h}_n(z)}\bigg)    
\end{align*}

It is clear that $\Tilde{s}_n(z) \rightarrow s^\infty(z)$ since $c_n \rightarrow c$ and $\Tilde{h}_n(z) \rightarrow h^\infty(z)$. So it is sufficient to show $s_n(z) - \Tilde{s}_n(z) \xrightarrow{a.s.} 0$. We will utilise use relationship between the resolvent and the co-resolvent for this. The co-resolvent of $S_n$ is given by 
$$\Tilde{Q}(z) := \bigg(\dfrac{1}{n}\begin{bmatrix}
    X_2^*\\
    X_1^*\\
\end{bmatrix}[X_1:X_2] - zI_{2n}\bigg)^{-1}$$ 

First we simplify the expression a bit. Let $A = \frac{1}{\sqrt{n}}[X_{1}:X_{2}]$ and $B = \frac{1}{\sqrt{n}}[X_{2}:X_{1}]^*$. Then $Q(z) = (AB - zI_p)^{-1}$ and $\Tilde{Q}(z) = (BA - zI_{2n})^{-1}$. Observe that

\begin{align*}\label{defining_Qtilde_Im}\tag{D.29}
(BQA - I_{2n})(BA - zI_{2n}) =& zI_{2n} \\
\implies \Tilde{Q}(z) = \frac{1}{z}BQA - \frac{1}{z}I_{2n}
=& \dfrac{1}{nz}\begin{bmatrix}
    X_2^*\\
    X_1^*\\
\end{bmatrix}Q[X_1:X_2] - \dfrac{1}{z}I_{2n}\\
=& \frac{1}{z}\begin{bmatrix}
    \frac{1}{n}X_2^*QX_1 - I_n & \frac{1}{n}X_2^*QX_2\\
    \frac{1}{n}X_1^*QX_1 & \frac{1}{n}X_1^*QX_2 - I_n
\end{bmatrix}\
=: \frac{1}{z}\begin{bmatrix}
    U & *\\
    * & V\\
\end{bmatrix}    
\end{align*}

We focus only on the two diagonal blocks $U$ and $V$ of $\Tilde{Q}$ (\ref{defining_Qtilde_Im}). Then for $1 \leq j \leq n$, 
\begin{align*}\label{zAjj_Im}\tag{D.30}
U_{jj}
=&\frac{1}{n}X_{2j}^*QX_{1j} - 1\\
=& \frac{1}{n}X_{2j}^*Q_{-j}(c_{1j} X_{1j} - c_{2j} X_{2j}) - 1 \text{, where } c_{1j}, c_{2j} \text{ are as per } (\ref{defining_cj_Im})\\
=& c_{1j} E_j(2,1) - c_{2j}E_j(2,2) - 1
\end{align*}

From Lemma \ref{uniformConvergence_Im}, we have $\underset{1\leq j\leq n}{\max}|E_j(2,1)| \xrightarrow{a.s.} 0$, $\underset{1\leq j\leq n}{\max}|E_j(2,2) - c_nh_n| \xrightarrow{a.s.} 0$,\\    
$\underset{1 \leq j \leq n}{\max}|c_{1j} - v_n| \xrightarrow{a.s.} 0$ and $\underset{1 \leq j \leq n}{\max}|c_{2j} -c_nh_nv_n| \xrightarrow{a.s.} 0$. Using Lemma \ref{lA.6} and Lemma \ref{boundedAwayFromZero_Im}, we have 
\begin{itemize}
    \item $|c_{1j}E_j(2,1)| \xrightarrow{a.s.} 0$
    \item $|c_{2j}E_j(2,2) - (c_nh_n)^2v_n| \xrightarrow{a.s.} 0$
\end{itemize}

Using the above results in (\ref{zAjj_Im}) and by Lemma \ref{lA.5}, we get 
\begin{align*}
&\underset{1\leq j \leq n}{\max}|U_{jj} - (0 - (c_nh_n)^2v_n - 1)| \xrightarrow{a.s.} 0\\
\implies & \underset{1\leq j \leq n}{\max}\bigg|U_{jj} + \dfrac{(c_nh_n)^2}{1 - (c_nh_n)^2} + 1\bigg| \xrightarrow{a.s.} 0\\
\implies & \underset{1\leq j \leq n}{\max}|U_{jj} +(1 - (c_nh_n)^2)^{-1}| \xrightarrow{a.s.} 0
\end{align*}

 Theorem \ref{DeterministicEquivalent_Im} implies that $|h_n - \Tilde{h}_n| \xrightarrow{a.s.} 0$ and by Lemma \ref{boundedAwayFromZero_Im}, $|v_n| = |1 - (c_nh_n)^2|^{-1}$ is bounded a.s. which implies $|1 - (c_n\Tilde{h}_n)^2|^{-1}$ is bounded a.s. for sufficiently large $n$. Moreover by (\ref{h_nBound_Im}), we have $|h_n| \leq C/|\Im(z)|$ which implies $|\Tilde{h}_n| \leq C/|\Im(z)|$. Therefore,

\begin{align*}\tag{D.31}
    \bigg|\dfrac{1}{1 - (c_nh_n)^2} - \dfrac{1}{1 - (c_n\Tilde{h}_n)^2}\bigg| \leq \dfrac{c_n^2|h_n - \Tilde{h}_n|(|h_n| + |\Tilde{h}_n|)}{|1 - (c_nh_n)^2||1 - (c_n\Tilde{h}_n)^2|} \xrightarrow{} 0
\end{align*}

Hence, we have 
$\underset{1 \leq j \leq n}{\max}|U_{jj} + (1 - c_n^2 \Tilde{h}_n^2)^{-1}| \xrightarrow{a.s.} 0$. Similarly, $\underset{1 \leq j \leq  n}{\max} |V_{jj} + (1 - c_n^2 \Tilde{h}_n^2)^{-1}| \xrightarrow{a.s.} 0$. Therefore using Lemma \ref{lA.7},
\begin{align*}\label{Ujj_Vjj_Im}\tag{D.32}
&\frac{1}{2n}\bigg|\sum_{j=1}^{n} \bigg(U_{jj} + V_{jj} + 2(1 - (c_n^2\Tilde{h}_n^2))^{-1}\bigg)\bigg| \xrightarrow{a.s.} 0\\
\implies& \bigg|\frac{1}{2n}\operatorname{trace}(\Tilde{Q}) + z^{-1}(1 - (c_n^2\Tilde{h}_n^2))^{-1}\bigg| \xrightarrow[]{} 0
\end{align*}

Using the identity linking the trace of the resolvent with that of the co-resolvent, we get
\begin{align*}
& \frac{1}{2n}\operatorname{trace}(\Tilde{Q}) = \frac{1}{2n}\operatorname{trace}(Q) + \frac{p-2n}{2nz}\\
\implies & \bigg|\frac{1}{z}\frac{1}{1 - c_n^2 \Tilde{h}_n^2} + \frac{c_n}{2}s_n(z) + \frac{1}{z}\bigg(\frac{c_n}{2} - 1\bigg)\bigg| \xrightarrow{a.s.} 0\\
\implies & \bigg|s_n(z) + \frac{1}{z}\bigg(1 - \frac{2}{c_n}\bigg) + \frac{2}{c_nz}\bigg(\frac{1}{1 - c_n^2 \Tilde{h}_n^2}\bigg)\bigg| \xrightarrow{a.s.} 0\\
\implies & \bigg|s_n(z) - \dfrac{1}{z}\bigg(\dfrac{2}{c_n}-1\bigg) - \dfrac{1}{c_nz}\bigg(\dfrac{1}{-1 +c_n\Tilde{h}_n(z)} - \dfrac{1}{1 +c_n\Tilde{h}_n(z)}\bigg)\bigg| \xrightarrow{a.s.} 0\\
\implies & |s_n(z) - \Tilde{s}_n(z)| \xrightarrow{a.s.} 0 \text{, using the definition of } \Tilde{s}_n \text{ from } (\ref{defining_sn_tilde_Im})
\end{align*}

Therefore, $s_n(z) \xrightarrow{a.s.} s^\infty(z)$. From (\ref{h_nBound_Im}), for sufficiently large $n$, $|h_n| \leq {C}/{|\Im(z)|}$ for $z \in \mathbb{C}^+$. Thus for $y \in \mathbb{R}$, $|h^\infty(\mathbbm{i}y)| \leq {C}/{|y|}$ and $\underset{y \rightarrow \infty}{\lim}h^\infty(\mathbbm{i}y) = 0$. This implies that 
\begin{align*}\tag{D.33}
\underset{y \rightarrow \infty}{\lim}\mathbbm{i}ys^\infty(\mathbbm{i}y) = \frac{2}{c}-1 + \underset{y \rightarrow \infty}{\lim}\frac{1}{c}\bigg(\frac{1}{-1 +ch^\infty(\mathbbm{i}y)} - \frac{1}{1 +ch^\infty(\mathbbm{i}y)}\bigg) = -1    
\end{align*}

Since $s^\infty(.)$ satisfies the necessary and sufficient condition in Theorem 1 of \cite{GeroHill03} (quoted in Proposition \ref{GeroHill}), it is a Stieltjes transform of some probability distribution $F^\infty$ and $F^{S_n} \xrightarrow{d} F^\infty$ a.s. So we have proved (2)-(4) of Section \ref{ProofSketch} under \textbf{A1-A2} of Section \ref{A123}.
\end{proof}

\subsection{Results related to Proof of Existence under General Conditions}

\begin{lemma}\label{l.D1}
    $h^\tau \rightarrow h^\infty$, $s^\tau \rightarrow s^\infty$  as $\tau \rightarrow \infty$
\end{lemma}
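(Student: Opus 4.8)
The plan is to mirror, almost line for line, the proof of Lemma~\ref{3.4.1}, replacing $\rho$ by $\sigma$, real parts by imaginary parts, $\mathbb{C}_L$ by $\mathbb{C}^+$, and invoking the Hermitian-case results (Theorems~\ref{Uniqueness_Im} and~\ref{ExistenceA123_Im}, and Lemma~\ref{boundedAwayFromZero_Im}) in place of their skew-Hermitian counterparts. First I would dispose of the case where $H$ has bounded support: for $\tau > \sup\operatorname{supp}(H)$ one has $H^\tau = H$, so Theorem~\ref{Uniqueness_Im} forces $h^\tau$ to be the same function for all large $\tau$; calling it $h^\infty$ and taking the corresponding $s^\infty$ via (\ref{6.1}) finishes this case. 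For $H$ with unbounded support I would first show that $\mathcal{H} := \{h^\tau : \tau > 0\}$ is a normal family on $\mathbb{C}^+$: on a compact $K \subset \mathbb{C}^+$ with $v_0 := \inf\{|\Im z| : z \in K\} > 0$, the bound $|h_n^\tau(z)| = \tfrac{1}{p}|\operatorname{trace}\{\Sigma_n^\tau Q(z)\}| \le \big(\tfrac{1}{p}\operatorname{trace}\Sigma_n\big)\,||Q(z)||_{op} \le C/v_0$ (using (R5) of~\ref{R123} and (T4) of Theorem~\ref{t6.1}) carries over, through Theorem~\ref{ExistenceA123_Im}, to $|h^\tau(z)| \le C/v_0$; Montel's theorem then gives the normal-family property.

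Next I would pick an arbitrary subsequence of $\mathcal{H}$, extract a locally uniformly convergent further subsequence $\{h^{\tau_m}\}$ with limit $h_0$ and $\tau_m \to \infty$, and note that Lemma~\ref{boundedAwayFromZero_Im} gives $\Im(h^{\tau_m}(z)) > 0$, hence $\Im(h_0(z)) \ge 0$ on $\mathbb{C}^+$. The one genuinely non-routine step --- and the one I expect to be the main obstacle --- is upgrading this to $\Im(h_0(z)) > 0$ everywhere, i.e.\ the exact analogue of Claim~\ref{Claim}. Arguing by contradiction, suppose $\Im(h_0(z_0)) = 0$ for some $z_0$. If $h_0$ is non-constant this contradicts $\Im(h_0) \ge 0$ by the Open Mapping Theorem. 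If $h_0 \equiv w$ is constant, the reflection identity $\sigma(-\overline z) = -\overline{\sigma(z)}$ combined with Theorem~\ref{Uniqueness_Im} yields $h^{\tau_m}(-\overline z) = -\overline{h^{\tau_m}(z)}$, hence $w = -\overline w$, so $w$ is purely imaginary, and since $\Im(h_0(z_0)) = 0$ the constant must vanish, i.e.\ $h_0 \equiv 0$; then using the imaginary-part decomposition (\ref{ImOf_h}) for $(h^{\tau_m}, H^{\tau_m})$ together with the bound (\ref{lessThanOne_Im}) gives $J_1(h^{\tau_m}, H^{\tau_m}) \to 0$, which after a dominated-convergence argument (with $\sigma(0) = 0$) forces $\int_0^M \lambda\, dH(\lambda) = 0$ for every $M$, i.e.\ $H = \delta_0$, contradicting the standing hypothesis.

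With $\Im(h_0) > 0$ in hand, (\ref{ImaginaryOfSigma}) gives $\Im(\sigma(ch_0)) = -\Im(ch_0)\sigma_2(ch_0) < 0$, so the integrand $\lambda/(-z + \lambda\sigma(ch^{\tau_m}))$ is bounded uniformly in $m$ by $|\Im(\sigma(ch^{\tau_m}))|^{-1} \to |\Im(\sigma(ch_0))|^{-1} < \infty$. Writing the fixed-point equation of Theorem~\ref{ExistenceA123_Im} for $(h^{\tau_m}, H^{\tau_m})$, splitting off $\int \lambda\, d\{H^{\tau_m} - H\}/(-z + \lambda\sigma(ch^{\tau_m}))$ (negligible since $H^{\tau_m} \xrightarrow{d} H$ and the integrand is bounded) and applying the dominated convergence theorem to the remaining term shows $h_0$ solves (\ref{6.2}). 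Since $h_0(z) \in \mathbb{C}^+$, Theorem~\ref{Uniqueness_Im} forces all subsequential limits to coincide, so $h^\tau \to h^\infty$ locally uniformly on $\mathbb{C}^+$, and $h^\infty$ is analytic there and solves (\ref{6.2}). Finally $s^\tau \to s^\infty$ by continuity of the map (\ref{6.1}) in its $h$-argument, and from $|h^\infty(z)| \le C/|\Im z|$ one gets $h^\infty(\mathbbm{i}y) \to 0$ as $y \to \infty$, whence $\lim_{y\to\infty}\mathbbm{i}y\, s^\infty(\mathbbm{i}y) = -1$; by Proposition~\ref{GeroHill}, $s^\infty$ is the Stieltjes transform of a probability measure $F^\infty$ on $\mathbb{R}$. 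This establishes the claimed convergences together with the analyticity of $h^\infty$, its solving (\ref{6.2}), and $s^\infty$ being a Stieltjes transform.
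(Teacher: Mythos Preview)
Your proposal is correct and follows essentially the same route as the paper's own proof of Lemma~\ref{l.D1}: same bounded-support reduction, same normal-family argument via Montel, same subsequential-limit analysis culminating in the open-mapping/constant dichotomy (your inline argument is exactly the content of Claim~\ref{Claim_Im} and its proof in \S\ref{subsec:ProofOfClaim_Im}), and the same DCT passage to identify $h_0$ as a solution of (\ref{6.2}). The only cosmetic discrepancy is your final citation: Proposition~\ref{GeroHill} as stated is for measures on the imaginary axis, so in the Hermitian setting you should invoke Theorem~1 of \cite{GeroHill03} directly (the paper does this explicitly), though the condition you verify, $\lim_{y\to\infty}\mathbbm{i}y\,s^\infty(\mathbbm{i}y)=-1$, is the correct one.
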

\begin{proof}
    Since Theorem \ref{t6.1} holds for $\Tilde{U}_n$, we have $F^{\Tilde{U}_n} \xrightarrow{d} F^{\tau}$ for some LSD $F^{\tau}$ and for $z \in \mathbb{C}^+$, there exists functions $s^\tau(z)$ and $h^\tau(z)$ satisfying (\ref{6.1}) and (\ref{6.2}) with $H^\tau$ replacing $H$ and mapping $\mathbb{C}^+$ to $\mathbb{C}^+$ and analytic on $\mathbb{C}^+$. We have to show existence of analogous quantities for the sequence $\{F^{S_n}\}_{n=1}^\infty$.

 First assume that H has a bounded support. If $\tau > \sup \operatorname{supp}(H)$, then $H^{\tau}(t) = H(t)$ and $H(\tau) = 1$. By the uniqueness property from Lemma \ref{Uniqueness_Im}, $h^{\tau}(z)$ must be the same for all large $\tau$. Hence $s^{\tau}(z)$ and in turn $F^{\tau}(.)$ must also be the same for all large $\tau$.  Denote this common LSD by $F^\infty$ and the common value of $h^\tau$ and $s^\tau$ by $h^\infty$ and $s^\infty$ respectively.
 This proves the case when H has a bounded support. 

Now we analyse the case where H has unbounded support. We will show that there exist functions $h^\infty$, $s^\infty$ that satisfy equations (\ref{6.1}) and (\ref{6.2}) and an LSD $F^\infty$ serving as the limit for the ESDs of $\{S_n\}_{n=1}^\infty$.

 We start by showing that $\mathcal{H} = \{h^\tau: \tau > 0\}$ forms a normal family. Following arguments similar to those used in the proof of Lemma \ref{CompactConvergence_Im}, let $K \subset \mathbb{C}^+$ be an arbitrary compact subset. Then $v_0 > 0$ where $v_0 := \inf\{|\Im(z)|: z \in K\}$. For arbitrary $z \in K$, using (R5) of \ref{R123} and (T4) of Theorem \ref{t6.1}, for sufficiently large $n$, we have
    \begin{align}\label{hn_tau_Bound_Im}\tag{D.34}
        |h_n^\tau(z)| =& \frac{1}{p}|\operatorname{trace}\{\Sigma_n^\tau Q\}| \leq \bigg(\frac{1}{p}\operatorname{trace}(\Sigma_n^\tau)\bigg) ||Q||_{op} 
        \leq \frac{C}{|\Im(z)|} \leq \frac{C}{v_0}
    \end{align}

By Theorem \ref{ExistenceA123_Im}, for any $\tau > 0$, $h^\tau(z)$ is the uniform limit of $h_n^\tau(z) := \frac{1}{n}\operatorname{trace}\{\Sigma_n^\tau Q(z)\}$. Therefore, for $z \in K$,    
\begin{align*}\label{h_tauBound_Im}\tag{D.35}
|h^\tau(z)|\leq \frac{C}{|\Im(z)|} \leq \frac{C}{v_0} 
\end{align*}
Therefore as a consequence of \textit{Montel's theorem}, any subsequence of $\mathcal{H}$ has a further convergent subsequence that converges uniformly on compact subsets of $\mathbb{C}^+$. Let $\{h^{\tau_m}(z)\}_{m=1}^\infty$ be such a sequence with $h_0(z)$ as the subsequential limit where $\tau_m \rightarrow \infty$ as $m \rightarrow \infty$.
By Lemma \ref{boundedAwayFromZero_Im}, for large $m$, $\Im(h^{\tau_m}(z)) \geq K_0(z,c,\tau_m, H^{\tau_m}) > 0$ which implies that $\Im(h_0(z)) \geq 0$ $\forall z \in \mathbb{C}^+$.

\begin{claim}\label{Claim_Im}
We must have $\Im(h_0(z)) > 0$ for all $z \in \mathbb{C}^+$. For a proof of this claim, see \ref{subsec:ProofOfClaim_Im}.    
\end{claim}

By (\ref{ImaginaryOfSigma}), and the fact that $\Im(h_0) > 0$, we have $\Im(\sigma(ch_0)) = -\Im(ch_0)\sigma_2(ch_0)< 0$. Therefore by continuity of $\sigma$ at $ch_0$, 
\begin{align*}\label{integrandBound3_Im}\tag{D.36}
\bigg|\frac{\lambda}{-z + \lambda\sigma(ch^{\tau_m})}\bigg|
    \leq \frac{|\lambda|}{|\Im(-z + \lambda\sigma(ch^{\tau_m}))|}
    \leq \frac{|\lambda|}{|\Im(\lambda\sigma(ch^{\tau_m}))|} = \frac{1}{|\Im(\sigma(ch^{\tau_m}))|} \xrightarrow{} \frac{1}{|\Im(\sigma(ch_0))|}  < \infty  
\end{align*}
as $m \rightarrow \infty$. Now by Theorem \ref{ExistenceA123_Im}, $(h^{\tau_m}, H^{\tau_m})$ satisfy the below equation.

\begin{align*}
   h^{\tau_m}(z) =& \int \dfrac{\lambda dH^{\tau_m}(\lambda)}{-z + \lambda\sigma(ch^{\tau_m})}
  = \int \dfrac{\lambda d\{H^{\tau_m}(\lambda) - H(\lambda)\}}{-z + \lambda\sigma(ch^{\tau_m})} + \int \dfrac{\lambda dH(\lambda)}{-z +\lambda\sigma(ch^{\tau_m})}\\
\end{align*}
Note that the first term of the last expression can be made arbitrarily small as the integrand is bounded by (\ref{integrandBound3_Im}) and $H^{\tau_m} \xrightarrow{d} H$. The same bound on the integrand also allows us to apply D.C.T. in the second term thus giving us
\begin{align*}
  &\underset{m \rightarrow \infty}{\lim} h^{\tau_m}(z) = \underset{m \rightarrow \infty}{\lim} \int \dfrac{\lambda dH(\lambda)}{-z +\lambda\sigma(ch^{\tau_m})}\\
  \implies &  h_0(z) = \int \dfrac{\lambda dH(\lambda)}{-z +\lambda\sigma(ch_0(z))}\label{subsequentialLimit2_Im}\tag{D.37}
\end{align*}

Now $\{\tau_m\}_{m=1}^\infty$ is a further subsequence of an arbitrary subsequence and $\{h^{\tau_m}(z)\}$ converges to $h_0(z) \in \mathbb{C}^+$ that satisfies (\ref{6.2}). By Theorem \ref{Uniqueness_Im}, all these subsequential limits must be the same and that $\{h^\tau(z)\}$ must converge uniformly to this common limit which we denote by $h^\infty(z)$. The uniform convergence of analytic functions $h^\tau$ in each compact subset of $\mathbb{C}^+$ imply that the limit $h^\infty$ must be analytic in $\mathbb{C}^+$.

Notice that $s^\tau(z) \rightarrow s^\infty(z)$ as $\tau \rightarrow \infty$ since 
\begin{align*}\label{s_tau_limit_Im}\tag{D.38}
    \underset{\tau \rightarrow \infty}{\lim}s^\tau(z) 
    =&\underset{\tau \rightarrow \infty}{\lim} \frac{1}{z}\bigg(\frac{2}{c}-1\bigg) + \frac{1}{cz}\bigg(\frac{1}{-1 +ch^\tau(z)} - \frac{1}{1 +ch^\tau(z)}\bigg)\\
    =& \frac{1}{z}\bigg(\frac{2}{c}-1\bigg) + \frac{1}{cz}\bigg(\frac{1}{-1 +ch^\infty(z)} - \frac{1}{1 +ch^\infty(z)}\bigg)
    =s^\infty(z)
\end{align*}

From (\ref{h_tauBound_Im}), $|h^\tau(z)| \leq {C}/{|\Im(z)|}$. Thus, $|h^\infty(z)| \leq {C}/{|\Im(z)|}$ implying that $\underset{y \rightarrow \infty}{\lim}h^\infty(y) = 0$. Therefore, 
$$\underset{y \rightarrow \infty}{\lim}\mathbbm{i}ys^\infty(\mathbbm{i}y) = \bigg(\frac{2}{c} - 1\bigg) + \underset{y \rightarrow \infty}{\lim}\frac{1}{c}\bigg(\frac{1}{-1 +ch^\infty(\mathbbm{i}y)} - \frac{1}{1 +  ch^\infty(\mathbbm{i}y)}\bigg)= -1$$ So, we have established that \begin{itemize}
    \item $h^\tau \rightarrow h^\infty$ and $s^\tau \rightarrow s^\infty$
    \item $h^\infty$ satisfies (\ref{6.2}) and is analytic on $\mathbb{C}^+$
    \item $s^\infty$ satisfies the conditions in Theorem 1 of \cite{GeroHill03} (quoted in Proposition \ref{GeroHill}) for a Stieltjes Transform
\end{itemize}
\end{proof}

\begin{lemma}\label{l.D2}
    $||F^{S_n} - F^{T_n}|| \xrightarrow{a.s.} 0$
\end{lemma}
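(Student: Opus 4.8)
The plan is to reproduce the proof of Lemma \ref{3.4.2} almost verbatim; the only substantive change is that $S_n$ and $T_n$ are now Hermitian, hence have real spectra, so we work with the ordinary uniform metric $||\cdot||$ in place of $||\cdot||_{im}$ (and correspondingly use the real-axis rank inequality of \cite{BaiSilv95} directly rather than its imaginary-axis transcription in (R1) of Section \ref{R123}).

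First I would apply Lemma 2.4 of \cite{BaiSilv95} to the two Hermitian matrices $S_n$ and $T_n$, giving $||F^{S_n} - F^{T_n}|| \leq \frac{1}{p}\operatorname{rank}(S_n - T_n)$. Recalling from \textbf{Step2}--\textbf{Step4} of Section \ref{ExistenceGeneral_Im} that $S_n = \frac{1}{n}\Lambda_n W \Lambda_n$ and $T_n = \frac{1}{n}\Lambda_n^\tau W \Lambda_n^\tau$ with $W := Z_1Z_2^* + Z_2Z_1^*$, I would then invoke (R3) of Section \ref{R123}, with $W$ as the middle factor, to obtain
\[ \operatorname{rank}(S_n - T_n) \;=\; \operatorname{rank}\big(\Lambda_n W \Lambda_n - \Lambda_n^\tau W \Lambda_n^\tau\big) \;\leq\; 2\operatorname{rank}(\Lambda_n - \Lambda_n^\tau) \;=\; 2\operatorname{rank}(\Sigma_n - \Sigma_n^\tau), \]
the final equality holding because $\Lambda_n = \Sigma_n^{1/2}$ and $\Lambda_n^\tau = (\Sigma_n^\tau)^{1/2}$ are simultaneously diagonalizable and differ only on the eigenspace of $\Sigma_n$ corresponding to eigenvalues exceeding $\tau$. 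Since $\frac{1}{p}\operatorname{rank}(\Sigma_n - \Sigma_n^\tau)$ is exactly the fraction of eigenvalues of $\Sigma_n$ larger than $\tau$, this gives $||F^{S_n} - F^{T_n}|| \leq 2\big(1 - F^{\Sigma_n}(\tau)\big)$.

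Finally, choosing $\tau$ to be a continuity point of $H$, assumption (T3) of Theorem \ref{t3.1} (i.e. $F^{\Sigma_n} \xrightarrow{d} H$ almost surely) gives $1 - F^{\Sigma_n}(\tau) \to 1 - H(\tau)$ almost surely as $n \to \infty$, and letting $\tau \to \infty$ through continuity points of $H$ forces $1 - H(\tau) \to 0$; combining the two limits yields $||F^{S_n} - F^{T_n}|| \xrightarrow{a.s.} 0$. There is no genuine obstacle here — the argument is a purely combinatorial rank estimate identical in structure to Lemma \ref{3.4.2} — and the only point requiring care is the order of the limits ($n\to\infty$ before $\tau\to\infty$) together with the restriction of $\tau$ to continuity points of $H$.
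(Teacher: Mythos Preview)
Your proposal is correct and follows essentially the same approach as the paper's proof: apply the rank inequality for Hermitian matrices, use (R3) of Section~\ref{R123} to bound $\operatorname{rank}(S_n-T_n)$ by $2\operatorname{rank}(\Lambda_n-\Lambda_n^\tau)=2\operatorname{rank}(\Sigma_n-\Sigma_n^\tau)$, identify this with $2p(1-F^{\Sigma_n}(\tau))$, and pass to the limits $n\to\infty$ then $\tau\to\infty$ through continuity points of $H$. The only cosmetic difference is that the paper still cites (R1) (the skew-Hermitian transcription) whereas you invoke Lemma~2.4 of \cite{BaiSilv95} directly, which is indeed the more appropriate reference in the Hermitian setting.
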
 
\begin{proof}
    Using (R1) and (R3) of Section \ref{R123}, we get
\begin{align*}
  ||F^{S_n} - F^{T_n}|| \leq& \dfrac{1}{p}\operatorname{rank}(S_n - T_n)\\
  =& \frac{1}{p}\operatorname{rank}\bigg(\Lambda_n(Z_1Z_2^* + Z_2Z_1^*)\Lambda_n - \Lambda_n^\tau(Z_1Z_2^* + Z_2Z_1^*)\Lambda_n^\tau\bigg)\\
  \leq& \dfrac{2}{p}\operatorname{rank}(\Lambda_n - \Lambda_n^\tau)\\
  =& \dfrac{2}{p}\operatorname{rank}(\Sigma_n - \Sigma_n^\tau)\\
  =& 2(1 - F^{\Sigma_n}(\tau)) \xrightarrow{n \rightarrow \infty} 2(1 - H(\tau)) \xrightarrow{\text{ as } \tau \rightarrow \infty} 0
\end{align*}
Here $\tau$ approaches $\infty$ only through continuity points of $H$.

\end{proof}

\begin{lemma} \label{l.D3}
    $||F^{T_n} - F^{U_n}|| \xrightarrow{a.s.} 0$, $||F^{U_n}- F^{\Tilde{U}_n}|| \xrightarrow{a.s.} 0$
\end{lemma}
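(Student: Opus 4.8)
The plan is to repeat, with only routine changes, the argument used to prove Lemma~\ref{3.4.3} in the skew-Hermitian case. The changes are purely cosmetic: the commutator $Z_1Z_2^* - Z_2Z_1^*$ is replaced by the anticommutator $Z_1Z_2^* + Z_2Z_1^*$ in the definitions of $T_n$, $U_n$, $\Tilde{U}_n$; the metric $\|\cdot\|_{im}$ is replaced by the ordinary uniform metric $\|\cdot\|$ on distribution functions over $\mathbb{R}$; and (R1) of Section~\ref{R123} is applied to Hermitian matrices directly (without passing through $-\mathbbm{i}S$), i.e. via Lemma~2.4 of \cite{BaiSilv95}. Crucially, every estimate involved is a rank estimate, so the $\pm$ sign is immaterial: by (R3) of Section~\ref{R123},
\begin{align*}
\operatorname{rank}(T_n - U_n) &\le \operatorname{rank}(Z_1Z_2^* - \hat{Z}_1\hat{Z}_2^*) + \operatorname{rank}(Z_2Z_1^* - \hat{Z}_2\hat{Z}_1^*) \\
&\le 2\operatorname{rank}(Z_1 - \hat{Z}_1) + 2\operatorname{rank}(Z_2 - \hat{Z}_2),
\end{align*}
exactly as in (\ref{Tn_Un}).

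First I would prove $\|F^{T_n} - F^{U_n}\| \xrightarrow{a.s.} 0$. Combining the displayed rank bound with Lemma~2.4 of \cite{BaiSilv95} reduces matters to showing $\frac{1}{p}\operatorname{rank}(Z_k - \hat{Z}_k) \xrightarrow{a.s.} 0$ for $k\in\{1,2\}$. Writing $I_{ij}^{(k)} := \mathbbm{1}_{\{|z_{ij}^{(k)}| > B_n\}}$ with $B_n = n^a$, (R2) of Section~\ref{R123} gives $\operatorname{rank}(Z_k - \hat{Z}_k) \le \sum_{i,j} I_{ij}^{(k)}$; the uniform $(4+\eta_0)$-th moment bound from (T2) yields $\mathbb{P}(I_{ij}^{(k)}=1) \le M_{4+\eta_0}\, n^{-a(4+\eta_0)}$, so $\frac{1}{p}\sum_{i,j}\mathbb{P}(I_{ij}^{(k)}=1) = O\big(n^{-(4+\eta_0)(a-1/(4+\eta_0))}\big)\to 0$ since $a > 1/(4+\eta_0)$; and a Bernstein inequality applied to $\sum_{i,j}(I_{ij}^{(k)} - \mathbb{P}(I_{ij}^{(k)}=1))$, using $\operatorname{Var}I_{ij}^{(k)} \le \mathbb{P}(I_{ij}^{(k)}=1)$, together with Borel--Cantelli, gives $\frac{1}{p}\sum_{i,j}I_{ij}^{(k)} \xrightarrow{a.s.} 0$. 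This is verbatim the computation in (\ref{Tn_Un})--(\ref{rank_convergence}).

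Next I would prove $\|F^{U_n} - F^{\Tilde{U}_n}\| \xrightarrow{a.s.} 0$. Put $\check{Z}_k := (z_{ij}^{(k)} I_{ij}^{(k)}) = Z_k - \hat{Z}_k$, so the previous step already gives $\frac{1}{p}\operatorname{rank}(\check{Z}_k) \xrightarrow{a.s.} 0$. Using Lemma~2.4 of \cite{BaiSilv95} and (R3) of Section~\ref{R123},
\begin{align*}
\|F^{U_n} - F^{\Tilde{U}_n}\| &\le \frac{1}{p}\operatorname{rank}(U_n - \Tilde{U}_n) \le \frac{2}{p}\operatorname{rank}(\hat{Z}_1 - \Tilde{Z}_1) + \frac{2}{p}\operatorname{rank}(\hat{Z}_2 - \Tilde{Z}_2) \\
&= \frac{2}{p}\operatorname{rank}(\mathbb{E}\hat{Z}_1) + \frac{2}{p}\operatorname{rank}(\mathbb{E}\hat{Z}_2) = \frac{2}{p}\operatorname{rank}(\mathbb{E}\check{Z}_1) + \frac{2}{p}\operatorname{rank}(\mathbb{E}\check{Z}_2),
\end{align*}
where the last equality uses $\textbf{0} = \mathbb{E}Z_k = \mathbb{E}\hat{Z}_k + \mathbb{E}\check{Z}_k$; this tends to $0$ exactly as in the proof of Lemma~\ref{3.4.3}, via (\ref{rank_convergence}).

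I do not expect any genuine obstacle here: the Hermitian metric bounds are if anything simpler than the skew-Hermitian ones, since $\|F^A - F^B\| \le \frac{1}{p}\operatorname{rank}(A-B)$ holds directly for Hermitian $A,B$. The one point that warrants a second look is the passage to the re-centered matrices $\Tilde{Z}_k$, i.e. the control of $\frac{1}{p}\operatorname{rank}(\mathbb{E}\check{Z}_k)$; as in Lemma~\ref{3.4.3} this is absorbed by (\ref{rank_convergence}), and if a fully self-contained argument is preferred one may instead bound $|\mathbb{E}\check{z}_{ij}^{(k)}| \le M_{4+\eta_0}^{1/(4+\eta_0)}\big(\mathbb{P}(|z_{ij}^{(k)}|>B_n)\big)^{(3+\eta_0)/(4+\eta_0)}$ by H\"older, so that $\|\mathbb{E}\check{Z}_k\|_{op} \le \sqrt{pn}\,\max_{i,j}|\mathbb{E}\check{z}_{ij}^{(k)}| \to 0$, whence $\|U_n - \Tilde{U}_n\|_{op} \to 0$ and the uniform distance vanishes. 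In practice I would simply write: ``the proof is identical to that of Lemma~\ref{3.4.3}, replacing $\|\cdot\|_{im}$ by $\|\cdot\|$ and (R1) of Section~\ref{R123} by Lemma~2.4 of \cite{BaiSilv95}.''
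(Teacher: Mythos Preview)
Your proposal is correct and follows essentially the same approach as the paper's own proof, which simply writes down the rank bound (your displayed inequality is their (D.39)) and then says ``the rest of the proof is exactly the same as that of Lemma~\ref{3.4.3} following equation~(\ref{Tn_Un}).'' Your write-up is in fact more detailed than what the paper provides; the optional H\"older/operator-norm alternative you mention at the end is not in the paper but is a harmless aside.
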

\begin{proof}
    We have $T_n - U_n = \frac{1}{n}\Lambda_n^\tau(Z_1Z_2^* - \Hat{Z}_1\Hat{Z}_2^*)\Lambda_n^\tau + \frac{1}{n}\Lambda_n^\tau(Z_2Z_1^* - \Hat{Z}_2\Hat{Z}_1^*)\Lambda_n^\tau$. Therefore using (R1) and (R3) of Section \ref{R123}, we get
\begin{align*}\label{Tn_Un_Im}\tag{D.39}
        ||F^{T_n} - F^{U_n}|| \leq & \dfrac{1}{p}\operatorname{rank} (T_n - U_n) \\ 
        \leq&\frac{1}{p} \operatorname{rank}(Z_1Z_2^* - \Hat{Z}_1\Hat{Z}_2^*) + \frac{1}{p} \operatorname{rank}(Z_2Z_1^* - \Hat{Z}_2\Hat{Z}_1^*)\\
        \leq& \frac{2}{p}\bigg(\operatorname{rank}(Z_1 - \Hat{Z}_1) + \operatorname{rank} (Z_2 - \Hat{Z}_2)\bigg)
\end{align*}
The rest of the proof is exactly the same as that of Lemma \ref{3.4.3} following equation (\ref{Tn_Un}).
\end{proof}

\subsubsection{Proof of Claim \ref{Claim_Im}}\label{subsec:ProofOfClaim_Im}
\begin{proof}
    Suppose not, then $\exists z_0 \in \mathbb{C}^+$ with $\Im(h_0(z_0)) = 0$. Either, $h_0$ is non-constant in which case by the Open Mapping Theorem, $h_0(\mathbb{C}^+)$ is an open set containing $h_0(z_0)$ which is purely real. This implies that there exists $z_1 \in \mathbb{C}^+$, $\Im(h_0(z_1)) < 0$ which is a contradiction. 

The other case is that $h_0$ is constant in which case. For some $\zeta \in \mathbb{R}$, let $h_0(z) = \zeta, \forall z \in \mathbb{C}^+$. Note that for any $\tau > 0$, using the fact that $\sigma(-\overline{z}) =-\overline{\sigma(z)}$ (see the remark immediately following (\ref{ImaginaryOfSigma})), we get
\begin{align*}
-\overline{h^\tau(z)} = \int \frac{\lambda dH(\lambda)}{\overline{z} - \lambda \overline{\sigma(ch^\tau(z))}} = \int \frac{\lambda dH(\lambda)}{-(-\overline{z}) + \lambda \sigma(-c\overline{h^\tau(z)})} = h^\tau(-\overline{z})
\end{align*}
The last equality follows from Theorem \ref{Uniqueness_Im} since $-\overline{h^\tau(z)} \in \mathbb{C}^+$ and satisfies (\ref{6.2}) with $-\overline{z} \in \mathbb{C}^+$ instead of $z$. Therefore we observe that,
$$-\zeta = -\overline{h_0(z)} = \underset{m \rightarrow \infty}{\lim}-\overline{h^{\tau_m}(z)} = \underset{m \rightarrow \infty}{\lim}h^{\tau_m}(-\overline{z}) = h_0(-\overline{z}) = \zeta$$
implying that $\zeta = 0$ and in turn $h_0(z) = 0$ for all $z \in \mathbb{C}^+$. 

Fix $z = u + \mathbbm{i}v$ with $v > 0$. Recalling $J_1, J_2$ as defined in (\ref{ImOf_h}), we have,
\begin{align*}
    &\Im(h^{\tau_m}) = c\Im(h^{\tau_m})\sigma_2(ch^{\tau_m})J_2(h^{\tau_m}, H^{\tau_m}) + v J_1(h^{\tau_m}, H^{\tau_m})\\
    \implies& \underset{m \rightarrow \infty}{\lim} J_1(h^{\tau_m}, H^{\tau_m})= 0 \text{, using } (\ref{lessThanOne_Im}) \text{ and } v > 0\\
    \implies & \underset{m \rightarrow \infty}{\lim}\int_0^{\infty}\dfrac{\lambda dH^{\tau_m}(\lambda)}{|-z +\lambda\sigma(ch^{\tau_m})|^2} = 0 \label{rightBound_Im}\tag{D.40}
\end{align*}

For arbitrary $M > 0$, choose $m \in \mathbb{N}$ such that $\tau_m > M$. Then noting the relationship between $H$ and $H^{\tau_m}$, we have 
\begin{align*}\label{tempResult_Im}\tag{D.41}
    &\int_0^M\dfrac{\lambda dH(\lambda)}{|-z + \lambda\sigma(ch^{\tau_m})|^2}
    \leq      \int_0^{\tau_m}\dfrac{\lambda dH(\lambda)}{|-z +\lambda\sigma(ch^{\tau_m})|^2}
    = \int_0^{\infty}\dfrac{\lambda dH^{\tau_m}(\lambda)}{|-z +\lambda\sigma(ch^{\tau_m})|^2}
\end{align*}

Since $\Im(\sigma(ch^{\tau_m}) = -\Im(ch^{\tau_m})\sigma_2(ch^{\tau_m}) < 0$, we have for $0 \leq \lambda \leq M$,
\begin{align*}\label{integrandBound2_Im}\tag{D.42}
    \frac{|\lambda|}{|-z + \lambda\sigma(ch^{\tau_m})|^2}\leq \frac{|\lambda|}{(\Im(-z + \lambda\sigma(ch^{\tau_m})))^2} \leq \frac{M}{v^2}
\end{align*}

From (\ref{tempResult_Im}) and (\ref{rightBound_Im}), we get
\begin{align*}\label{lim_is_zero_arbtry_M_Im}\tag{D.43}
0 \leq \underset{m \rightarrow \infty}{\lim} \displaystyle \int_0^M\dfrac{\lambda dH(\lambda)}{|-z +\lambda\sigma(ch^{\tau_m})|^2} \leq \underset{m \rightarrow \infty}{\lim}\int_0^{\infty}\dfrac{\lambda dH^{\tau_m}(\lambda)}{|-z +\lambda\sigma(ch^{\tau_m})|^2} = 0    
\end{align*}

Now applying D.C.T. (because of \ref{integrandBound2_Im}) on the first term in (\ref{tempResult_Im}) and using (\ref{lim_is_zero_arbtry_M_Im}) we get
\begin{align*}
0 = \underset{m \rightarrow \infty}{\lim} \displaystyle \int_0^M\dfrac{\lambda dH(\lambda)}{|-z +\lambda\sigma(ch^{\tau_m})|^2} = \int_0^M\dfrac{\lambda dH(\lambda)}{|-z +\lambda\sigma(0)|^2} = \frac{1}{|z|^2}\int_0^M \lambda dH(\lambda) 
\end{align*}

Since $M > 0$ is arbitrary, it follows that 
$\int_0^\infty \lambda dH(\lambda) = 0$, which implies that $H\{0\} = 1$. This contradicts the assumption that $H$ is not degenerate at 0, and therefore proves the claim that $\Im(h_0(z)) > 0$.
\end{proof}

\begin{lemma} \label{Continuity_Im}
    The solution to (\ref{6.2}) has a continuous dependence on H, the distribution function.
\end{lemma}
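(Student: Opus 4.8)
The plan is to follow the proof of Lemma \ref{Continuity} for the skew-Hermitian case verbatim, making throughout the substitutions $\rho\mapsto\sigma$, $\rho_2\mapsto\sigma_2$, $\Re\mapsto\Im$, $\mathbb{C}_L\mapsto\mathbb{C}^+$, $\pm\mathbbm{i}\mapsto\pm1$, $I_k\mapsto J_k$, and $(\ref{lessThanOne})\mapsto(\ref{lessThanOne_Im})$. Fix $c>0$ and $z=u+\mathbbm{i}v\in\mathbb{C}^+$, and let $h,\ubar h\in\mathbb{C}^+$ be the unique solutions of $(\ref{6.2})$ (which exist by Theorem \ref{Uniqueness_Im}) associated with probability distribution functions $H$ and $\ubar H$ supported on $\mathbb{R}_+$ with $H,\ubar H\neq\delta_0$. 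First I would derive, by adding and subtracting $\int\lambda\,d\ubar H(\lambda)/(-z+\lambda\sigma(ch))$ and invoking the partial-fraction identity $\sigma(c\ubar h)-\sigma(ch)=c(h-\ubar h)\big[(1+ch)^{-1}(1+c\ubar h)^{-1}+(-1+ch)^{-1}(-1+c\ubar h)^{-1}\big]$ already used inside the proof of Theorem \ref{Uniqueness_Im}, the decomposition $h-\ubar h=T_1+(h-\ubar h)\gamma$, where
\begin{align*}
T_1:=\int\frac{\lambda\,d\{H(\lambda)-\ubar H(\lambda)\}}{-z+\lambda\sigma(ch)},\qquad
\gamma:=\int\frac{\dfrac{c\lambda^2}{(1+ch)(1+c\ubar h)}+\dfrac{c\lambda^2}{(-1+ch)(-1+c\ubar h)}}{(-z+\lambda\sigma(ch))(-z+\lambda\sigma(c\ubar h))}\,d\ubar H(\lambda).
\end{align*}

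Next I would dispose of $T_1$: since $h\in\mathbb{C}^+$ we have $\Im(ch)>0$, hence by $(\ref{ImaginaryOfSigma})$ the integrand of $T_1$ is bounded in modulus by $|\Im(\sigma(ch))|^{-1}=(\Im(ch)\sigma_2(ch))^{-1}<\infty$ uniformly in $\lambda$, so $|T_1|$ can be made arbitrarily small by taking $\ubar H$ sufficiently close to $H$ in the L\'evy metric. Consequently $|h-\ubar h|\le|T_1|+|\gamma|\,|h-\ubar h|$, and it remains only to show $|\gamma|<1$ for $\ubar H$ close enough to $H$; then $|h-\ubar h|\le|T_1|/(1-|\gamma|)\to0$, which is the assertion.

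For the bound on $\gamma$ I would split $\gamma=G_1+G_2$ along the two summands in its numerator and apply H\"older's inequality to each, exactly as for $G_1,G_2$ in Lemma \ref{Continuity}, obtaining $|G_1|\le\sqrt{P_1}\,\sqrt{P_2}$ with $P_2=c|1+c\ubar h|^{-2}J_2(\ubar h,\ubar H)$ (the $J_k$ of $(\ref{ImOf_h})$) and, writing $d\ubar H=d(\ubar H-H)+dH$,
\begin{align*}
P_1=c|1+ch|^{-2}\Big(\int\frac{\lambda^2\,d\{\ubar H(\lambda)-H(\lambda)\}}{|-z+\lambda\sigma(ch)|^2}+J_2(h,H)\Big)<\epsilon+c|1+ch|^{-2}J_2(h,H)
\end{align*}
for $\ubar H$ close to $H$, since the integrand $\lambda^2/|-z+\lambda\sigma(ch)|^2\le|\Im(\sigma(ch))|^{-2}$ is bounded uniformly in $\lambda$; the estimate for $G_2$ is identical with $|-1+\cdot|^{-2}$ in place of $|1+\cdot|^{-2}$. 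Applying $\sqrt{ac}+\sqrt{bd}\le\sqrt{a+b}\sqrt{c+d}$ then yields
\begin{align*}
|\gamma|\le|G_1|+|G_2|<\sqrt{2\epsilon+c\,\sigma_2(ch)J_2(h,H)}\;\sqrt{c\,\sigma_2(c\ubar h)J_2(\ubar h,\ubar H)},
\end{align*}
and by $(\ref{lessThanOne_Im})$ applied to both $h$ (with $H$) and $\ubar h$ (with $\ubar H$), each factor under the square roots apart from $2\epsilon$ is strictly below $1$; choosing $\epsilon$ small enough gives $|\gamma|<1$.

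The only genuinely delicate step is the last one, the strict inequality $|\gamma|<1$: the subtle ingredients are that the integral in $P_1$ is against $\ubar H$ rather than $H$ (handled by the weak convergence $\ubar H\to H$ together with the uniform-in-$\lambda$ boundedness of the integrand furnished by $\Im(ch)>0$) and the strictness in $(\ref{lessThanOne_Im})$, which is precisely what supplies the slack below $1$. Everything else is a mechanical transcription of the skew-Hermitian argument, with $\Im$ taking over the role of $\Re$ by virtue of $(\ref{ImaginaryOfSigma})$ and the remark following it (that $\sigma_2>0$ on its domain).
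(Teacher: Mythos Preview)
Your proposal is correct and follows essentially the same approach as the paper: the paper's proof of Lemma \ref{Continuity_Im} is precisely the mechanical transcription of Lemma \ref{Continuity} under the substitutions you list, deriving $h-\ubar h=T_1+(h-\ubar h)\gamma$, bounding $T_1$ via $|\Im(\sigma(ch))|^{-1}$, splitting $\gamma=G_1+G_2$, applying H\"older and the $\sqrt{ac}+\sqrt{bd}\le\sqrt{a+b}\sqrt{c+d}$ inequality, and invoking $(\ref{lessThanOne_Im})$ together with the weak-convergence adjustment in $P_1$ to obtain $|\gamma|<1$. Your identification of the two delicate points (the measure in $P_1$ being $\ubar H$ rather than $H$, and the strictness from $(\ref{lessThanOne_Im})$) matches exactly what the paper relies on.
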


\begin{proof}
 For a fixed $c > 0$ and $z \in \mathbb{C}^+$, let $h, \ubar{h}$ be the unique numbers in $\mathbb{C}^+$ corresponding to distribution functions $H$ and $\ubar{H}$ respectively that satisfy (\ref{6.2}). Following \cite{PaulSilverstein2009}, we have 
\begin{align*}
    h - \ubar{h}
    = & \int\dfrac{\lambda dH(\lambda)}{-z + \lambda\sigma(ch)} - \int\dfrac{\lambda d\ubar{H}(\lambda)}{-z + \lambda\sigma(c\ubar{h})}\\
    =& \underbrace{\int\dfrac{\lambda d\{H(\lambda) - \ubar{H}(\lambda)\}}{-z +\lambda\sigma(ch)}}_{:=T_1} + \int\dfrac{\lambda d\ubar{H}(\lambda)}{-z +\lambda\sigma(ch)} - \int\dfrac{\lambda d\ubar{H}(\lambda)}{-z + \lambda\sigma(c\ubar{h})}\\
    =& T_1 + \int\dfrac{\lambda^2(\sigma(c\ubar{h})- \sigma(ch))}{(-z + \lambda\sigma(ch))(-z + \lambda\sigma(c\ubar{h}))}d\ubar{H}(\lambda)\\
    =& T_1 + \int\dfrac{\dfrac{\lambda^2c(h-\ubar{h})}{(1 +ch)(1 +c\ubar{h})} + \dfrac{\lambda^2c(h-\ubar{h})}{(-1 +ch)(-1 +c\ubar{h})}}{(-z + \lambda\sigma(ch))(-z + \lambda\sigma(c\ubar{h}))}d\ubar{H}(\lambda)\\
    =& T_1 + (h-\ubar{h}) \underbrace{\int\dfrac{\dfrac{\lambda^2c}{(1 +ch)(1 +c\ubar{h})} + \dfrac{\lambda^2c}{(-1 +ch)(-1 +c\ubar{h})}}{(-z + \lambda\sigma(ch))(-z + \lambda\sigma(c\ubar{h}))}d\ubar{H}(\lambda)}_{:=\gamma}\\
    =& T_1 + (h-\ubar{h})\gamma
\end{align*}
Note that $\Im(\sigma(ch)) = -\Im(ch)\sigma_2(ch) < 0$ and the integrand in $T_1$ is bounded by $1/|\Im(\sigma(ch))|$. So by making $\ubar{H}$ closer to $H$, $T_1$ can be made arbitrarily small. Now, if we can show that $|\gamma| < 1$, this will essentially prove the continuous dependence of the solution to (\ref{h_main_eqn}) on H. 

\begin{align*}
    \gamma =& \underbrace{\int\dfrac{\dfrac{\lambda^2c}{(1 +ch)(1 +c\ubar{h})}}{(-z + \lambda\sigma(ch))(-z + \lambda\sigma(c\ubar{h}))}d\ubar{H}(\lambda)}_{:=G_1} + \underbrace{\int\dfrac{ \dfrac{\lambda^2c}{(-1 +ch)(-1 +c\ubar{h})}}{(-z + \lambda\sigma(ch))(-z \lambda\sigma(c\ubar{h}))}d\ubar{H}(\lambda)}_{:=G_2}\\
    =& G_1 + G_2
\end{align*}                                     

By $\Ddot{H}$older's Inequality we have,
\begin{align*}
    |G_1| &\leq \displaystyle \sqrt{\underbrace{\int\dfrac{c\lambda^2|1 +ch|^{-2}d\ubar{H}(\lambda)}{|-z + \lambda\sigma(ch)|^2}}_{:= P_1}}
    \sqrt{\underbrace{\int\dfrac{c\lambda^2|1 +c\ubar{h}|^{-2}d\ubar{H}(\lambda)}{|-z + \lambda\sigma(c\ubar{h})|^2}}_{:=P_2}} = \sqrt{P_1 \times P_2}
\end{align*}
From the definitions used in (\ref{ImOf_h}), we have $|P_2| = c|1 +c\ubar{h}|^{-2}J_2(\ubar{h}, \ubar{H})$
and 
\begin{align*}
    |P_1| =& c|1 +ch|^{-2}\int\dfrac{\lambda^2 d\ubar{H}(\lambda)}{|-z + \lambda\sigma(ch)|^2}\\
    & = c|1 +ch|^{-2}\bigg(\underbrace{\int\dfrac{\lambda^2 d\{\ubar{H}(\lambda)-H(\lambda)\}}{|-z + \lambda\sigma(ch)|^2}}_{:=K_1} + \int\dfrac{\lambda^2 dH(\lambda)}{|-z + \lambda\sigma(ch)|^2}\bigg)\\
    & =c|1 +ch|^{-2}K_1 + c|1 +ch|^{-2}J_2(h, H)\\
    & < \epsilon + c|1 +ch|^{-2}J_2(h, H)
\end{align*}

for some arbitrarily small $\epsilon > 0$. The last inequality follows since the integrand in $K_1$ is bounded by ${|\Im(\sigma(ch))|^{-2}}$, we can arbitrarily control the first term by taking $\ubar{H}$ sufficiently close to $H$ in the Levy metric. The argument for bounding $|G_2|$ is exactly the same. 

Therefore we have $|G_1| < \sqrt{\epsilon + c|1 +ch|^{-2}J_2(h, H)}\sqrt{c|1 +c\ubar{h}|^{-2}J_2(\ubar{h}, \ubar{H})}$. \\
Similarly, we get $|G_2| < \sqrt{\epsilon + c|-1 +ch|^{-2}J_2(h, H)}\sqrt{c|-1 +c\ubar{h}|^{-2}J_2(\ubar{h}, \ubar{H})}$.

Thus, using the inequality $\sqrt{ac} +\sqrt{bd} \leq \sqrt{a+b}\sqrt{c+d}$ with equality iff $a=b=c=d=0$, we have \begin{align*}
    &|G_1| + |G_2|\\
    < & \sqrt{\epsilon + c|1 +ch|^{-2}J_2(h, H)}\sqrt{c|1 +c\ubar{h}|^{-2}J_2(\ubar{h}, \ubar{H})} +\\
    &\sqrt{\epsilon + c|-1 +ch|^{-2}J_2(h, H)}\sqrt{c|-1 +c\ubar{h}|^{-2}J_2(\ubar{h}, \ubar{H})}\\
    \leq & \sqrt{2\epsilon + (c|1 +ch|^{-2} + c|-1 +ch|^{-2})J_2(h, H)}\sqrt{(c|1 +c\ubar{h}|^{-2} + c|-1 +c\ubar{h}|^{-2})J_2(\ubar{h}, \ubar{H})}\\
    =& \sqrt{2\epsilon + c\sigma_2(ch)J_2(h,H)}\sqrt{c\sigma_2(c\ubar{h})J_2(\ubar{h}, \ubar{H})}
\end{align*}

From (\ref{lessThanOne_Im}), we have $c\sigma_2(ch)J_2(h, H) < 1$ and $c\sigma_2(c\ubar{h})J_2(\ubar{h}, \ubar{H}) < 1$. By choosing $\epsilon > 0$ arbitrarily small, we finally have $|\gamma| = |G_1 + G_2| \leq |G_1| + |G_2| < 1$ for $\ubar{H}$ sufficiently close to H. This completes the proof.
\end{proof}

\end{document}